\numberwithin{equation}{section}
\newcommand{\eps}{\varepsilon}
\newcommand{\PP}{\mathbb{P}}
\newcommand{\EE}{\mathbb{E}}
\newcommand{\RR}{\mathbb{R}}
\newcommand{\NN}{\mathbb{N}}
\newcommand{\NNbar}{\overline{\mathbb{N}}}
\newcommand{\MG}{\mathbb{MG}}
\newcommand{\UU}{\mathbb{U}}
\newcommand{\calG}{\mathcal{G}}
\newcommand{\calA}{\mathcal{A}}
\newcommand{\calM}{\mathcal{M}}
\newcommand{\calB}{\mathcal{B}}
\newcommand{\calF}{\mathcal{F}}
\newcommand{\calC}{\mathcal{C}}
\newcommand{\calD}{\mathcal{D}}
\newcommand{\calN}{\mathcal{N}}
\newcommand{\calR}{\mathcal{R}}
\newcommand{\calT}{\mathcal{T}}
\newcommand{\calS}{\mathcal{S}}
\newcommand{\calP}{\mathcal{P}}
\newcommand{\calW}{\mathcal{W}}
\newcommand{\calE}{\mathcal{E}}
\newcommand{\calQ}{\mathcal{Q}}
\DeclareMathOperator{\Frag}{Frag}
\DeclareMathOperator{\CoalFrag}{CoalFrag}
\DeclareMathOperator{\sizes}{masses}
\DeclareMathOperator{\comp}{comp}
\DeclareMathOperator{\dis}{dis}
\DeclareMathOperator{\Coal}{Coal}
\DeclareMathOperator{\diam}{diam}
\DeclareMathOperator{\supdiam}{supdiam}
\DeclareMathOperator{\suplength}{suplength}
\DeclareMathOperator{\surplus}{surplus}
\DeclareMathOperator{\core}{core}
\DeclareMathOperator{\Leb}{Leb}
\DeclareMathOperator{\leb}{leb}
\DeclareMathOperator{\leaves}{leaves}
\newcommand{\II}{\bm{1}}
\newcommand{\Glambda}{\mathcal{G}_\lambda}
\newcommand{\bfX}{{\bm{X}}}
\newcommand{\bfY}{{\bm{Y}}}
\newcommand{\bfR}{{\bm{R}}}
\newcommand{\bfA}{{\bm{A}}}
\newcommand{\bfG}{{\bm{G}}}
\newcommand{\bfS}{{\bm{S}}}
\newcommand{\bfT}{{\bm{T}}}
\newtheorem{defi}{Definition}[section]
\newtheorem{lemm}[defi]{Lemma}
\newtheorem{prop}[defi]{Proposition}
\newtheorem{coro}[defi]{Corollary}
\newtheorem{theo}[defi]{Theorem}
\newtheorem{rem}[defi]{Remark}
\newtheorem{hyp}[defi]{Hypothesis}
\newenvironment{dem}{\vskip 2mm\noindent {\it Proof:} }
                    {\hfill $\square$ \vskip 2mm \noindent} 
\newcommand{\emphdef}[1]{{\bf #1}}
\begin{document}

\begin{frontmatter}

\title{Scaling limit of dynamical percolation\\ on critical Erdös-Rényi random graphs.}
\runtitle{Dynamical percolation on critical random graphs}

\begin{aug}
\author{\fnms{Raphaël} \snm{Rossignol}\ead[label=e1]{raphael.rossignol@univ-grenoble-alpes.fr}}
\runauthor{R. Rossignol}

\affiliation{Univ. Grenoble Alpes, CNRS, Institut Fourier, \\ F-38000 Grenoble, France}

\address{Raphaël Rossignol\\
Université Grenoble Alpes\\
Institut Fourier\\
CS 40700\\
38058 Grenoble cedex 9\\
France\\
\printead{e1}}
\end{aug}

\begin{abstract}
Consider a critical Erd\H{o}s-Rényi random graph: $n$ is the number of vertices, each one of the $\binom{n}{2}$ possible edges is kept in the graph independently from the others with probability $n^{-1}+\lambda n^{-4/3}$, $\lambda$ being a fixed real number. When $n$ goes to infinity, Addario-Berry, Broutin and Goldschmidt \cite{AdBrGolimit} have shown that the collection of connected components, viewed as suitably normalized measured compact metric spaces, converges in distribution to a continuous limit $\Glambda$ made of random real graphs. In this paper, we consider notably the dynamical percolation on  critical Erd\H{o}s-Rényi random graphs. To each pair of vertices is attached a Poisson process of intensity $n^{-1/3}$, and every time it rings, one resamples the corresponding edge. Under this process, the collection of connected components undergoes coalescence and fragmentation. We prove that this process converges in distribution, as $n$ goes to infinity, towards a fragmentation-coalescence process on the continuous limit $\Glambda$. We also prove convergence of discrete coalescence and fragmentation processes and provide general Feller-type properties associated to fragmentation and coalescence.
\end{abstract}

\begin{keyword}[class=MSC]
\kwd[Primary ]{60K35}
\kwd[; secondary ]{05C80, 60F05.}
\end{keyword}

\begin{keyword}
\kwd{Erd\H{o}s-Rényi} 
\kwd{random graph}
\kwd{coalescence}
\kwd{fragmentation}
\kwd{dynamical percolation}
\kwd{scaling limit}
\kwd{Gromov-Hausdorff-Prokhorov distance}
\kwd{Feller property}
\end{keyword}

\end{frontmatter}

\tableofcontents
\noindent\hrulefill

\section{Introduction}
Starting with the complete graph with $n$ vertices, $K_n$, the Erd\H{o}s-Rényi random graph $\calG(n,p)$ is the graph obtained from $K_n$ by deleting its edges independently with probability $p$. A well-known phase transition occurs around $p_c=\frac{1}{n}$: when $p=\frac{c}{n}$ with $c<1$, the largest connected component is of order $\log n$, as $n$ goes to infinity, while if  $p=\frac{c}{n}$ with $c>1$, the largest component is of order $n$. This is the so-called {\it appearance of the giant component}, cf. \cite[section 6]{Bollobas}. A even more precise \emph{critical window} was discovered, as early as in the seminal work of Erd\H{o}s and Rényi \cite{ErdosRenyi}: when $p(\lambda,n):=n^{-1}+\lambda n^{-4/3}$, the largest components are of order $n^{2/3}$, and their diameter is of order $n^{1/3}$. Then, when $\lambda$ goes to infinity with $n$, a component starts to dominate the others, and swallows them step by step. Inside this scaling window, i.e for fixed $\lambda$ and large $n$, there is a clean procedure, due to \cite{AdBrGolimit}, to capture the metric structure of those components: if one assigns mass $n^{-2/3}$ to each vertex  and length $n^{-1/3}$ to each edge, the largest components, seen as measured metric spaces, converge to a collection of random $\RR$-graphs $\Glambda$ (see Theorem~\ref{theo:cvGnp} below, or \cite[Theorem~24]{AdBrGolimit} for a more precise statement). Let us mention that we are particularly interested in the large components because in some sense, they contain all the complexity of the graph: small components are with high probability either trees or unicyclic components, cf. \cite[sections 4--6]{Bollobas}. Subsequently, the last decade has seen similar results for critical percolation on other random graphs of mean-field type (See notably \cite{BhamidiBudhirajaWang2014,BhamidiHofstadSen2018,DharaHofstadLeeuwaardenSen2016arXiv,DharaHofstadLeeuwaardenSen2017} and section~\ref{sec:perspectives} for further references). Despite being interesting on its own, this phase transition is also related to the study of the minimal spanning tree on the graph on which percolation is performed, cf. \cite{Braunsteinetal2007,AdBrGoMi2017}. 

In this article, we shall be interested in dynamical versions of the scaling limits just described. To be more precise, put the following dynamic on $\calG(n,p(\lambda,n))$: each pair of vertices is equipped with an independent Poisson process with rate $\gamma_n$ and every time it rings, one \emph{refreshes} the corresponding edge, meaning that one replaces its state by a new independent state: present with probability $p(\lambda,n)$, absent with probability $1-p(\lambda,n)$. This procedure corresponds to \emph{dynamical percolation} on the complete graph with $n$ vertices, at rate $\gamma_n$. A natural question now is ``At which rate should we refresh the edges in order to see a non trivial process in the large $n$ limit ?'' In this question,  it is understood that one remains interested in the same scaling as before concerning masses and lengths: each vertex is assigned mass $n^{-2/3}$ and each edge is assigned  length $n^{-1/3}$.

A moment of thought suggests that a good choice should be $\gamma_n=n^{-1/3}$. Indeed, since large components are of size $\Theta(n^{2/3})$, in a pair of components there are $\Theta(n^{4/3})$ pairs of vertices which after refreshment lead to $\Theta(n^{4/3}p(\lambda,n))=\Theta(n^{1/3})$ edges added. Thus, choosing $\gamma_n=\Theta(n^{-1/3})$ will lead large components to coalesce at rate $\Theta(1)$. Furthermore, the main result of \cite{AdBrGolimit} implies that those large components can be seen, once the graph metric has been divided by $n^{1/3}$, as compact continuous trees with a finite number of additional cycles. Thus, typical distances are of order $\Theta(n^{1/3})$. An edge will be destroyed at rate $\gamma_n(1-p(\lambda,n))$ so the geometry inside such a component will be affected at rate $\Theta(n^{1/3}\gamma_n(1-p(\lambda,n)))$, which is again of order $\Theta(1)$ when $\gamma_n=\Theta(n^{-1/3})$. Of course, instead of refreshing the edges, one may decide to only add edges, or to only destroy edges. In the first case, one will observe coalescence of components and in the second case,  fragmentation. Once again, one may ask the same question as before: what is the right rate in order to obtain a non-trivial process in the large $n$ limit, and what is this limit process ? One of the main purposes of this article is to give an answer to these questions for the three cases that we just defined informally: dynamical percolation, coalescence and fragmentation. The limit processes will be dynamical percolation, coalescence and fragmentation processes acting on the limit $\Glambda$ obtained in \cite{AdBrGolimit}. Furthermore, we will show that coalescence is the time-reversal of fragmentation on this limit. Our approach is to provide Feller-type properties for coalescence and fragmentation, which we hope will be useful in the future to study scaling limits of similar dynamics on other critical random graphs. Notice that the study of coalescence of graphs is a central tool in the work of \cite{BhamidiBroutinSenWang2014arXiv} to show convergence of a number of critical random graphs to $\Glambda$ (configuration models, inhomogeneous random graphs etc.).

Since a large amount of notation is needed in order to make such statements precise we will switch to the presentation of notation in section~\ref{sec:notations} and then announce the main results and outline the plan of the rest of the article in section~\ref{sec:mainresults}. We finish this section by describing informally some works related to coalescence or dynamical percolation.

{\bf Background.} The most important work for the present study is that of Aldous~\cite{Aldous97Gnp}. First, note that there is a natural coupling of  $\{\calG(n,p(\lambda,n)),\;\lambda\in\RR\}$ obtained by assigning i.i.d random variables $(U_e)$ to the edges of the complete graph on $n$ vertices and putting edge $e$ in $\calG(n,p(\lambda,n))$ if and only if $U_e\leq p(\lambda,n)$. Studying the coalescence of $\calG(n,p(\lambda,n))$ at rate $n^{-1/3}$ or the coupled collection $(\calG(n,p(\lambda,n)))_{\lambda \geq 0}$ is essentially equivalent for our purpose. To describe the first major contribution of \cite{Aldous97Gnp}, let us introduce the surplus of a connected graph, which is the minimal number of edges which need to be deleted in order to get a tree. When $\lambda$ is fixed, Aldous proved convergence in distribution of the rescaled masses of the largest components of $\calG(n,p(\lambda,n))$, jointly with their surplus. Aldous' approach  consists in studying the Lukasiewicz walk associated to an exploration process of the graph -- a sequential revealment of the states of the edges following the graph structure -- and to show convergence in distribution of this walk to a Brownian motion with parabolic drift. The second main contribution of \cite{Aldous97Gnp} deals with the dynamics when $\lambda$ increases. Noting that connected components merge at rate proportional to the product of their (rescaled) sizes when $\lambda$ grows, Aldous defined an abstract version of this process, which he called the \emph{multiplicative coalescent}: weighted points merge two by two at a rate proportional to the product of their weights. He managed to define this process when the weights are in $\ell^2$, and proved that the process then satisfies the Feller property for the $\ell^2$-topology. Together with his first main contribution, this implies the convergence of the finite dimensional marginals of the process of rescaled sizes of $\calG(n,p(\lambda,n))$, when $\lambda$ increases. Aldous' work has a large prosperity. We shall only mention two works which go in the direction of tracking the dynamic of the structure of the graph during coalescence, in that they track the dynamic of the surplus (but not of the whole graph structure). Indeed, between the two contributions described above, Aldous has lost the dynamic of the surplus. In \cite{BhamidiBudhirajaWang2014}, the authors enrich Aldous' multiplicative coalescent by taking into account the dynamic of the surplus of the connected components. This leads to what they call the \emph{augmented coalescent}, which they prove to satisfy the Feller property  for a topology that we will not use in this paper (see however section~\ref{sec:perspectives} for more comments). This allows to show convergence of the finite dimensional distributions of the processes of rescaled sizes and surplus of $\{\calG(n,p(\lambda,n)),\;\lambda\geq 0\}$, and even of other random graphs, namely Achlioptas processes with a bounded-size rule. Finally, in \cite{BroutinMarckert2016}, the authors manage to prove the convergence of a sequence of two-parameter processes, where the first parameter is the exploration parameter of $\calG(n,p(\lambda,n))$ and the second is $\lambda$. This allows them to obtain the convergence of the process (in $\lambda$) of rescaled sizes of $\calG(n,p(\lambda,n))$ together with their surplus, this time in the Skorokhod sense (not only in the sense of finite dimensional marginals). In order to get this convergence, they define the exploration process using the \emph{Prim order} on vertices of the complete graph. This order is consistant in $\lambda$, in the sense that connected components are always intervals of the Prim order, and when $\lambda$ increases, only adjacent intervals coalesce. Unfortunately, the Prim order seems to be inconsistent with the internal structure of the graph (see  \cite[section~4.1]{BroutinMarckert2016} and notably the remarks after Theorem~4). This approach is therefore not adapted for the purpose of the present article.

Let us finish this short review of related works by focusing on dynamical percolation. This theme was introduced in \cite{HaggstromPeresSteif97}, and studied in a number of subsequent works by various authors. In the context of \cite{HaggstromPeresSteif97}, only the edges of some fixed infinite graph are resampled while in the definition above, we resample the edges of a finite complete graph. The scaling limit of dynamical percolation for critical percolation on the two dimensional triangular lattice was obtained in \cite{GarbanPeteSchramm2018}, with techniques quite different from the ones used in the present paper. More related to the present paper is the work \cite{RobertsSengul2018}, where dynamical percolation on critical Erd\H{o}s-Rényi random graphs, as introduced above, is studied notably at rate $1$. The authors show that the size of the largest connected component that appears during the time interval $[0,1]$ is of order $n^{2/3}\log^{1/3}n$ with probability tending to one as $n$ goes to infinity. They also study ``quantitative noise-sensitivity'' of the event $A_n$ that the largest component of $\calG(n,p(\lambda,n))$ is of size at least $an^{2/3}$ for some fixed $a>0$ (see \cite[Proposition~2.2]{RobertsSengul2018}). The results in the present paper can be used to find the precise scaling of quantitative noise-sensitivity for events concerning the sizes of the largest components (like $A_n$ for instance). However, we leave this question and precise statements for future work.

\section{Notation and Background}
\label{sec:notations}
\subsection{General notation}
\label{subsec:gennot}
If $(X,\tau)$ is a topological space, we denote by $\calB(X)$ the Borel $\sigma$-field on $X$.

If $\psi$ is a measurable map between $(E,\calE)$ and $(F,\calF)$, and $\mu$ is a measure on $(E,\calE)$, then we denote by $\psi\sharp\mu$ the push-forward of $\mu$ by $\psi$: $\psi\sharp\mu(A)=\mu(\psi^{-1}(A))$ for any $A\in \calF$.

We shall frequently use Poisson processes. Let $(E,\calE,\mu)$ be a measurable set with $\mu$ a $\sigma$-finite measure. Denote by $\Leb(\RR^+)$ the Lebesgue $\sigma$-field on $\RR^+$, $\leb_{\RR^+}$ the Lebesgue measure on $\RR^+$ and let $\gamma\geq 0$. If $\calP$ is a Poisson random set with intensity $\gamma$ on $(E\times\RR^+,\calE\times \Leb(\RR^+),\mu\times \leb_{\RR^+})$ (that is with intensity measure $\gamma\mu\otimes \leb_{\RR^+}$)  we shall denote by $\calP_t$ the multiset containing the points of $\calP$ with birthtime at most $t$, with multiplicity the number of times they appear before $t$.
$$\calP_t:=\{\{x\in E:\exists s\leq t,\;(x,s)\in \calP\}\}\;.$$
Notice that $\calP_t$ is in general a multiset, not a set, but is a set if $\mu$ is diffuse. One can equivalently see $\calP_t$ as a counting measure on $E$. The disjoint union of two multisets $A$ and $B$ will be denoted by $A\sqcup B$.

When $(M,d)$ is a Polish space, let $\calF([0,\infty),M)$ (resp. $\calD([0,\infty),M)$) be the set of functions (resp. càdlàg functions) from $\RR^+$ to $M$. We shall use two topologies on $\calF([0,\infty),M)$ and $\calD([0,\infty),M)$: the Skorokhod topology and the \emph{topology of compact convergence} (also known as topology of uniform convergence on compact sets)  which is finer than Skorokhod's topology. Although it is not crucial to use this topology, it turns out that it is more natural, in our setting, to make approximations with this topology, which has furthermore the advantage that the limit of càdlàg functions are càdlàg. Convergence in distributions for random processes will however be obtained for the Skorokhod topology. Everything (very little in fact) needed for these topologies is gathered in Appendix~\ref{sec:compactcv}. Furthermore, we shall always suppose that our processes are defined on a complete probability space $(\Omega,\calF,\PP)$ (completing the original space if necessary). We shall occasionnally need $\PP^*$,  the outer measure associated to $\PP$ defined on $\calP(\Omega)$ by
\[\PP^*(A):=\inf\{\PP(B)\,:\,A\subset B\text{ and }B\in \calF\}\;.\]
We shall use the notation $\NN$ for the natural numbers (including $0$), $\NN^*$ for the positive natural numbers and $\NNbar:=\NN\cup\{+\infty\}$. We shall also adopt the convention that the infimum over an empty set equals $+\infty$.

Finally, let us define, for $p\geq 1$:
$$\ell^p_+:=\left\{x\in(\RR^+)^{\NN^*}:\sum_{i\geq 1}|x_i|^p<\infty\right\}\;,$$
and
$$\ell^p_\searrow:=\{x\in\ell^p_+:x_1\geq x_2\geq \ldots\}\;.$$

\subsection{Discrete graphs and dynamical percolation}
\label{subsec:discretedynperco}
We will talk of a \emph{discrete graph} to mean the usual graph-theoretic notion of an unoriented graph, that is a pair $G=(V,E)$ with $V$ a finite set and $E$ a subset of $\binom{V}{2}:=\{ \{u,v\}:u\not=v\in V\}$. Often, $E$ is seen as a point in $\{0,1\}^{\binom{V}{2}}$, where $0$ codes for the absence of the corresponding edge and $1$ for its membership to $E$. 

For a positive integer $n$ and $p\in[0,1]$, the \emph{Erd\H{o}s-Rényi random graph} (or Gilbert random graph) $\calG(n,p)$ is the random graph with vertices $[n]:=\{1,\ldots,n\}$ such that each edge is present with probability $p$, independently from the others. Alternatively, one may see it as a Bernoulli bond percolation with parameter $p$ on the complete graph with $n$ vertices $K_n=([n],\binom{[n]}{2})$. 

 Let $\gamma_+$ and $\gamma_-$ be non-negative real numbers. If $G=([n],E)$ is a discrete graph on $n$ vertices, define a random process $N_{\gamma_+,\gamma_-}(G,t)=(G,E_t)$, $t\geq 0$ as follows on the set of subgraphs of the complete graph $K_n$. To each pair $e\in\binom{[n]}{2}$, we attach two Poisson processes on  $\RR^+$: $\calP^{+}_e$ of intensity $\gamma_+$ and $\calP^{-}_e$ of intensity $\gamma_-$. We suppose that all the $2\binom{[n]}{2}$ Poisson processes are independent. Each time  $\calP^{+}_e$ rings, we replace $E_{t^-}$ by $E_{t^-}\cup\{e\}$ (nothing changes if $e$ already belongs to $E_{t^-}$), and each time $\calP^{-}_e$ rings, we replace $E_{t^-}$ by $E_{t^-}\setminus\{e\}$ (nothing changes if $e$ doe not belong to $E_{t^-}$). The letter $N$ is reminiscent of ``noise''. If one wants to insist on the Poisson processes, we shall write $N(G,(\calP^+,\calP^-)_t)$ instead of $N_{\gamma_+,\gamma_-}(G,t)$, with an implicit definition for the map $N$.

One may take only $\calP^+$ or only $\calP^-$ into account: write $N^+(G,\calP^+_t)$ for $N(G,(\calP^+,\emptyset)_t)$ and  $N^-(G,\calP^+_t)$ for $N(G,(\emptyset,\calP^-)_t)$. Then, $N^+(G,\calP^+_t)$ will be referred as \emph{the discrete coalescent process  of intensity $\gamma^+$ started at $G$} and $N^-(G,\calP^+_t)$ as \emph{the discrete fragmentation process  of intensity $\gamma^+$ started at $G$}. 

Now, \emph{dynamical percolation of parameter $p$ and intensity $\gamma$}, as described in the introduction, corresponds to the process  $N_{\gamma p,\gamma (1-p)}$, and is in its stationary state when started with $\calG(n,p)$ (independently of the Poisson processes used to define the dynamical percolation). 

All these processes will have continuous couterparts, which will be defined in sections~\ref{subsec:gluing} and \ref{subsec:frag}.

\subsection{Measured semi-metric spaces}
The main characters in this article are the connected components of Erd\H{o}s-Rényi random graphs and their continuum limit, each one undergoing the updates due to dynamical percolation. One task is therefore to define a proper space where those characters can live, and first to state precisely what we mean by ``the connected components of a graph'' seen as a single object. One option is to order the components by decreasing order of size\footnote{It requires some device to break ties, but those disappear in the continuum limit, for the Erd\H{o}s-Rényi random graphs at least.}, as in \cite{AdBrGolimit}, or in a size-biased way, as in \cite{Aldous97Gnp}, and thus see the collected components of a graph as a sequence of graphs. However, this order is not preserved under the process of dynamical percolation. Also, looking only at the mass to impose which graphs are pairwise compared between two collections of graphs might lead to a larger distance than what one would expect. Indeed, suppose that $(G_1,G_2)$ and $(G'_1,G'_2)$ are two pairs of graphs, with $G_1$ (resp. $G'_1$) having slightly larger mass than $G_2$ (resp. $G'_2$). One might have $G_1$ close to $G'_2$ and $G_2$ close to $G'_1$ in some topology (the Gromov-Hausdorff-Prokhorov topology to be defined later), but $G_1$ far from $G'_1$ in this topology. For all these reasons, I found it somewhat uncomfortable to work with such a topology in the dynamical context. The topology we will use will be defined in section~\ref{subsec:GHP}, and the story begins with the definition of a semi-metric space.

One way to present the connected components of a graph is to consider the graph as a metric space using the usual graph distance,  allowing the metric to take the value $+\infty$ between points which are not in the same connected component, as in \cite{BuragoBuragoIvanov}, page~1. In addition, the main difficulty in defining dynamical percolation on the continuum limit will be in defining coalescence. In this process some points will be identified, and one clear way to present this is to modify the metric and allow it to be equal to zero between different points rather than performing the corresponding quotient operation. This type of space is called a semi-metric space in \cite{BuragoBuragoIvanov}, Definition 1.1.4, and we shall stick to this terminology.

\begin{defi}
A \emphdef{semi-metric space} is a pair $(X,d)$ where $X$ is a non-empty set and $d$ is a function from $X\times X$ to $\RR^+\cup\{+\infty\}$ such that for all $x$, $y$ and $z$ in $X$:
\begin{itemize}
\item $d(x,z)\leq d(x,y)+d(y,z)$,
\item $d(x,x)=0$,
\item $d(x,y)=d(y,x)$.
\end{itemize}
A semi-metric space $(X,d)$ is a \emphdef{metric space} if in addition
\begin{itemize}
\item $d(x,y)=0\Rightarrow x=y$. 
\end{itemize}
A metric or semi-metric space $(X,d)$ is said to be \emphdef{finite} if $d$ is finite.
\end{defi}
Of course, when thinking about a semi-metric space $(X,d)$, one may visualize the \emph{quotient metric space} $(X/d,d)$ where points at  null distance are identified. $X$ and $X/d$ are at zero Gromov-Hausdorff distance (we shall use a version of Gromov-Hausdorff distance extended to semi-metric spaces, defined in section~\ref{subsec:GHP} below). Notice that $(X,d)$ is not necessarily a Hausdorff space (different points cannot always be separated by disjoint neighborhoods), but $(X/d,d)$ always is. Furthermore, $(X,d)$ is separable if and only if $(X/d,d)$ is separable.

\begin{defi}
If $(X,d)$ is a semi-metric space, the relation $\mathcal{R}$ defined by:
$$x\mathcal{R} y\Leftrightarrow d(x,y)<\infty$$
is an equivalence relation. Each equivalence class is called a \emphdef{component} of $(X,d)$ and $\comp(X,d)$ denotes the set of components. We denote by $\diam(X)$ the diameter of $(X,d)$:
$$\diam(X)=\sup_{x,y\in X} d(x,y)$$
and by $\supdiam (X)$ the supremum of the diameters of its components:
$$\supdiam(X)=\sup_{m\in\comp(X,d)}\diam(m)\;.$$
\end{defi}

\begin{defi}
A \emphdef{measured semi-metric space} (m.s-m.s) is a triple $\bfX=(X,d,\mu)$ where $(X,d)$ is a semi-metric space and $\mu$ is a measure on $X$ defined on a $\sigma$-field containing the Borel $\sigma$-field for the topology induced by $d$. 

An m.s-m.s $(X,d,\mu)$ is said to be \emphdef{finite} if $(X,d)$ is a finite totally bounded semi-metric space and  $\mu$ is a finite measure. 

Finally, we define $\comp(\bfX):=\comp(X,d)$ and 
$$\sizes(\bfX):=(\mu(m))_{m\in\comp(\bfX)}\;.$$
\end{defi}
Notice that a finite m.s-m.s has only one component.
\begin{rem}
  \begin{enumerate}[(i)]
    \item The reason why we allow $\mu$ to be defined on a larger field than the Borel $\sigma$-field is the following. We want to keep $X$ and $\mu$ unchanged along coalescence, only the semi-metric will change, say to a new semi-metric $d'$, by performing various identifications. When one performs identifications, the topology shrinks: there are less and less open sets. Thus, the Borel $\sigma$-field shrinks too, and the original measure $\mu$, defined on the original Borel $\sigma$-field assocated to $d$, is now defined on a $\sigma$-field which is larger than the Borel $\sigma$-field associated to $d'$.
    \item One might feel more comfortable after realizing the following. Let $\pi$ denote the projection from $(X,d)$ to $X':=X/d$, $\calB'$ the Borel $\sigma$-field on $X'$ and $\calB$ the Borel $\sigma$-field on $X$. Then, $\pi^{-1}(\calB')=\calB$ and the image measure $\pi\sharp\mu$ on $X'$ is a Borel measure.
      \end{enumerate}
\end{rem}

\subsection{The Gromov-Hausdorff-Prokhorov distance}
\label{subsec:GHP}
In the introduction, we mentioned that $\calG(n,p(\lambda,n))$ converges in distribution, but we did not mention precisely the underlying topology. The main topological ingredient in \cite{AdBrGoMi2017} is the Gromov-Hausdorff-Prokhorov distance between two components of the graph, and we shall use this repeatedly. To define it, we need to recall some definitions from \cite{AdBrGoMi2017}.

If $\bfX=(X,d,\mu)$ and $\bfX'=(X',d',\mu')$ are two measured semi-metric spaces
 a \emph{correspondence} $\calR$ between $X$ and $X'$ is a measurable subset of $X\times X'$ such that:
$$\forall x\in X,\;\exists x'\in X':(x,x')\in\calR$$
and
$$\forall x'\in X',\;\exists x\in X:(x,x')\in\calR\;.$$
We let $C(X,X')$ denote the set of correspondences  between $X$ and $X'$. The \emph{distortion} of a correspondence $\calR$ is defined as
$$\dis(\calR):=\inf\left\{\eps >0:\forall (x,x'),(y,y')\in\calR,\; \left(\begin{array}{c}d(x,y)\leq d'(x',y')+\eps \\ \text{ and }\\d'(x',y')\leq d(x,y)+\eps\end{array}\right)\right\}$$

The Gromov-Hausdorff distance between two semi-metric spaces $(X,d)$ and $(X',d')$ is defined as:
$$d_{GH}((X,d),(X',d')):=\inf_{\calR\in C(X,X')}\frac{1}{2}\dis(\calR)\;.$$

We denote by $M(X,X')$ the set of finite Borel measures on $X\times X'$. For $\pi$ in $M(X,X')$, we denote by $\pi_1$ (resp. $\pi_2$) the first (resp. the second) marginal of $\pi$. For any $\pi\in M(X,X')$, and any finite measures $\mu$ on $X$ and $\mu'$ on $X'$ one defines:
$$D(\pi;\mu,\mu')=\|\pi_1-\mu\|+\|\pi_2-\mu'\|$$
where $\|\nu\|$ is the total variation of a signed measure $\nu$.

The \emph{Gromov-Hausdorff-Prokhorov distance}  is defined as follows in \cite{AdBrGoMi2017}.
\begin{defi}
\label{defi:GHPMiermont}
If $\bfX=(X,d,\mu)$ and $\bfX'=(X',d',\mu')$ are two m.s-m.s, the Gromov-Hausdorff-Prokhorov distance between them is defined as:
$$d_{GHP}(\bfX,\bfX')=\inf_{\substack{\pi\in M(X,X')\\ \calR\in C(X,X')}}\{D(\pi;\mu,\mu')\lor \frac{1}{2}\dis(\calR)\lor \pi(\calR^c)\}\;.$$
\end{defi}
It is not difficult to show that $d_{GHP}$ satisfies the axioms of a semi-metric. Let us give a bit more intuition to what the Gromov-Hausdorff-Prokhorov distance measures. On a semi-metric space $(X,d)$, let us denote by $\delta_H$ the Hausdorff distance and by $\delta_{LP}$ the Lévy-Prokhorov distance. Let us recall their definition. For $B\subset X$ and $\eps>0$, let
$$B^\eps:=\{x\in X:\exists y\in B,\;d(x,y)<\eps\}\;.$$
Now, for $A$ and $B$ subsets of $X$,
$$\delta_H(A,B):=\inf\{\eps>0: A\subset B^\eps\text{ and }B\subset A^\eps\}$$
and for finite measures $\mu$ and $\nu$ on $X$,
\begin{equation}
  \label{eq:defLP}
  \delta_{LP}(\mu,\nu):=\inf\left\{\eps>0:\forall B\in\calB(X),\;\left(\begin{array}{c}\mu(B)\leq \nu(B^\eps)+\eps\\\text{ and }\\\nu(B)\leq \mu(B^\eps)+\eps\end{array}\right)\right\}\;.
  \end{equation}

The following lemma shows that the Gromov-Hausdorff-Prokhorov distance measures how well two measured semi-metric spaces can be put in the same ambient space so that simultaneously their measures are close in Prokhorov distance and their geometries are close in Hausdorff distance. It shows that the definitions of \cite{AdBrGoMi2017} and \cite{AbrahamDelmasHoscheit2013} are equivalent. Its proof is a small variation on the proof of \cite[Proposition~6]{MiermontTessellations09}, where only probability measures were considered, so we leave it to the reader.
\begin{lemm}
\label{lemm:MiermontHoscheit}
If $\bfX=(X,d,\mu)$ and $\bfX'=(X',d',\mu')$ be two measured separable semi-metric spaces, let
$$\tilde d_{GHP}(\bfX,\bfX'):=\inf_{d''}\{\delta_H(X,X')\lor \delta_{LP}(\mu,\mu')\}$$
where the infimum is over all semi-metric $d''$ on the disjoint union $X \cup X'$ extending $d$ and $d'$. Then,
$$\frac{1}{2}\tilde d_{GHP}(\bfX,\bfX') \leq d_{GHP}(\bfX,\bfX') \leq \tilde d_{GHP}(\bfX,\bfX')\;.$$
\end{lemm}

It is easy to see that two m.s-m.s $\bfX$ and $\bfX'$  are at zero $d_{GHP}$-distance if and only if there are two distance and measure preserving maps $\phi$ and $\phi'$ such that $\phi$ is a map from $\bfX$ to $\bfX'$ and $\phi'$ a map from $\bfX'$ to $\bfX$. Let $\calC$ denote the class of finite measured semi-metric spaces and $\calR$ the equivalence relation on $\calC$ defined by $\bfX\calR \bfX'\Leftrightarrow d_{GHP}(\bfX,\bfX')=0$. The quotient $\calC / \calR$ can be seen as a set (cf. appendix \ref{sec:calM}), and we denote this set by the letter $\calM$, which stands thus for \emph{the  set of isometry classes of finite measured semi-metric spaces}. The following is shown in \cite{AbrahamDelmasHoscheit2013}.
\begin{theo}
$(\calM,d_{GHP})$ is a complete separable metric space.
\end{theo}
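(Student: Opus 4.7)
The plan is to verify in turn that $d_{GHP}$ descends to a bona fide metric on $\calM$, that $(\calM,d_{GHP})$ is separable, and that it is complete. Symmetry is immediate from the definition. For the triangle inequality, given $\eps$-optimal correspondences $\calR_{12}\in C(X_1,X_2)$, $\calR_{23}\in C(X_2,X_3)$ and couplings $\pi_{12},\pi_{23}$, I would compose $\calR_{13}:=\{(x_1,x_3):\exists x_2,\,(x_1,x_2)\in\calR_{12}\text{ and }(x_2,x_3)\in\calR_{23}\}$, noting that $\dis(\calR_{13})\leq\dis(\calR_{12})+\dis(\calR_{23})$, and disintegrate-then-reglue $\pi_{12}$ and $\pi_{23}$ along $X_2$ to produce a coupling $\pi_{13}\in M(X_1,X_3)$ with the required bounds on marginals and on $\pi_{13}(\calR_{13}^c)$. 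Non-degeneracy on the quotient is built into the definition of $\calM$; that vanishing $d_{GHP}$ forces existence of distance- and measure-preserving identifications follows from Lemma~\ref{lemm:MiermontHoscheit} applied with $\eps_n\downarrow 0$ and an extraction of a Hausdorff/Prokhorov limit embedding.

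For separability, I would exhibit a countable dense family $\calD_0\subset\calM$ consisting of finite measured metric spaces of finite cardinality with all pairwise distances rational and all atomic masses rational. Given $\bfX=(X,d,\mu)\in\calM$, which up to $d_{GHP}$-equivalence we may assume to be a compact subset of $\UU$, and $\eps>0$, fix a finite $\eps$-net $\{x_1,\ldots,x_k\}\subset X$. Pair each $x\in X$ with a nearest net point: this produces a correspondence of distortion at most $2\eps$ and, under the pushforward of $\mu$, a coupling concentrated on it. Rounding the $\binom{k}{2}$ pairwise distances and the $k$ atomic masses to nearby rationals gives an element of $\calD_0$ at $d_{GHP}$-distance $O(\eps)$ from $\bfX$.

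The main obstacle is completeness, which I would handle by iteratively gluing along the estimates of Lemma~\ref{lemm:MiermontHoscheit}. Given a Cauchy sequence, extract a subsequence $(\bfX_n)_{n\geq 1}$ with $d_{GHP}(\bfX_n,\bfX_{n+1})<2^{-n}$. For each $n$, invoke Lemma~\ref{lemm:MiermontHoscheit} to fix a semi-metric $\delta_n$ on the disjoint union $X_n\sqcup X_{n+1}$ extending $d_n,d_{n+1}$ and making the Hausdorff and L\'evy--Prokhorov distances between the two sides at most $2^{-n+1}$. Iteratively gluing these via the usual shortest-path formula along the common factors produces a single semi-metric $\delta$ on $Z:=\bigsqcup_n X_n$ extending every $d_n$. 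Let $(\hat Z,\hat\delta)$ be the completion of the quotient metric space of $(Z,\delta)$. Inside $\hat Z$, the images of the $X_n$ form a $\hat\delta_H$-Cauchy sequence of closed sets, and hence converge to some closed set $X_\infty$; the pushed-forward measures $\mu_n$ form a $\hat\delta_{LP}$-Cauchy sequence of uniformly bounded finite mass, hence converge weakly to a finite Borel measure $\mu_\infty$ supported in $X_\infty$. The delicate point, which I expect to be the main technical step, is showing that $X_\infty$ is compact: for any $\eps>0$, a finite $\eps/2$-net of $X_n$ for $n$ large maps via the Hausdorff convergence to an $\eps$-net of $X_\infty$ in the complete space $\hat Z$, so $X_\infty$ is totally bounded, hence compact. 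The candidate limit $\bfX_\infty:=(X_\infty,\hat\delta|_{X_\infty},\mu_\infty)$ is then a finite m.s-m.s, and the second inequality of Lemma~\ref{lemm:MiermontHoscheit}, combined with the embeddings provided by $\delta$, yields $d_{GHP}(\bfX_n,\bfX_\infty)\to 0$.
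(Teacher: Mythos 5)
The paper does not prove this theorem: immediately above the statement it writes ``The following is shown in \cite{AbrahamDelmasHoscheit2013}'' and supplies no argument, so there is no in-paper proof to compare your sketch against. Your outline is the standard route, essentially the one of the cited reference and of the related Proposition~6 in \cite{MiermontTessellations09}: verify the metric axioms via composed correspondences and glued couplings, get separability from finite nets with rational data, and get completeness by embedding a fast Cauchy subsequence into a common ambient semi-metric space via Lemma~\ref{lemm:MiermontHoscheit}, passing to the completion, extracting Hausdorff and L\'evy--Prokhorov limits, and checking total boundedness.

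Two places in your sketch would need genuine care in a full write-up. First, the ``disintegrate-then-reglue'' step in the triangle inequality does not apply directly: the second marginal of $\pi_{12}$ and the first marginal of $\pi_{23}$ need not agree on $X_2$; each is only within total-variation $\eps$ of $\mu_2$, so one must first pass to a common dominated part (for instance the measure-theoretic minimum of the two marginals), disintegrate both plans against it, glue, and then account for the $O(\eps)$ residual mass; this is exactly what produces the additive error in the triangle inequality rather than a clean subadditivity. Second, in the completeness argument the iterated shortest-path gluing must be checked to still \emph{extend} each $d_n$ (the potential worry being that paths detouring through other factors could shorten distances within a single $X_n$; this is ruled out by the triangle inequality for each $\delta_n$, but it is an explicit verification, not automatic). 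Neither point is a conceptual gap --- both are handled in the literature you implicitly rely on --- but both should be named if the sketch is to be turned into a proof.
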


Now, the Gromov-Hausdorff-Prokhorov distance in Definition \ref{defi:GHPMiermont} is too strong for our purposes when applied to m.s-m.s which have an infinite number of components: it essentially amounts to a uniform control of the $d_{GHP}$-distance between paired components. We are interested in a weaker distance which localizes around the largest components. We shall restrict to countable unions of finite semi-metric spaces with the additional property that for any $\eps>0$, there are only a finite number of components whose size exceeds $\eps$. To formulate the distance, it will be convenient to view those semi-metric spaces as a set of counting measures on $\calM$.

\begin{defi}
\label{defi:calN}For any $\eps > 0$, let 
$$\calM_{>\eps}=\{[(X,d,\mu)]\in\calM\mbox{ s.t. }\mu(X)> \eps\}\;.$$
For any  counting measure $\nu$ on $\calM$, denote by $\nu_{>\eps}$ the restriction of $\nu$ to $\calM_\eps$. Denote by $\calN$ the set of counting measures $\nu$ on $\calM$ such that for any $\eps>0$, $\nu_{>\eps}$ is a finite measure
and such that $\nu$ does not have atoms of mass $0$, that is:
$$\nu(\{[(X,d,\mu)]\in\calM\mbox{ s.t. }\mu(X)=0\})=0$$
When $\bfX$ is a measured semi-metric space whose components are finite, we denote by $\nu_{\bfX}$ the counting measure on $\calM$ defined by
$$\nu_{\bfX}:=\sum_{m\in\comp(\bfX)}\delta_{[m]}$$
Abusing notation, we shall say that $\bfX\in\calN$ if $\nu_{\bfX}$ belongs to $\calN$, and we shall denote by $\bfX_{>\eps}$  the disjoint union of components of $\bfX$ whose masses are larger than $\eps$.
\end{defi}
Notice that $\bfX$ is in $\calN$ if and only if it has an at most countable number of components, each one of its components has positive mass and for any $\eps>0$, $\bfX_{>\eps}$ is the disjoint union of a finite number of components, each one being totally bounded and equipped with a finite measure. 

Essentially, we want to define a metric on $\calN$ such that a sequence $\nu_n$ converges to $\nu$ if and only if for any $\eps>0$, the components of $\nu$ of mass larger than $\eps$ are close (for $d_{GHP}$) to the components of $\nu_n$ of mass larger than $\eps$, for $n$ large enough. The idea is similar to the metrization of vague convergence for measures on $\RR^d$, with components of mass zero playing the role of infinity. In this vein, there is an  abstract notion of locally finite measure in \cite{KallenbergRandomMeasures}, but we shall note use it.

First, let $\delta_{LP}$ be the Lévy-Prokhorov distance on the set of finite measures on the metric space $(\calM,d_{GHP})$. Recall the definition of this distance in \eqref{eq:defLP}. Now, for $\bfX=[(X,d,\mu)]\in\calM$ and $k\geq 1$, define a function $f_k$ by

$$f_k(\bfX):=\left\lbrace\begin{array}{ll}
1&\text{ if }\mu(X)\geq \frac{1}{k}\\
k(k+1)\left(\mu(X)-\frac{1}{k+1}\right)&\text{ if }\mu(X)\in \left[\frac{1}{k+1},\frac{1}{k}\right[\\
0&\text{ if }\mu(X)<\frac{1}{k+1}
\end{array}\right.
$$

The following distance is an analogue of the distance in \cite[Lemma~4.6]{KallenbergRandomMeasures} where it is used to metrize the vague topology on locally finite measures.
\begin{defi}
  \label{defi:dGHPcountableproduct}
  If $\nu$ and $\nu'$ are counting measures on $\calM$, then we define:
  $$L_{GHP}(\nu,\nu'):=\sum_{k\geq 1}2^{-k}\{1\land \delta_{LP}(f_k\nu,f_k\nu')\}\;,$$
 where $f_k\nu$ is defined as follows for $\nu=\sum_{i\in I}\delta_{x_i}$:
  $$f_k\nu:=\sum_{i\in I}f_k(x_i)\delta_{x_i}\;.$$
\end{defi}
We shall prove later, in Proposition~\ref{prop:Npolish} that $(\calN,L_{GHP})$ is a complete separable metric space. Notice that any m.s-m.s of $\calN$ is at zero $L_{GHP}$-distance from a m.s-m.s whose components are compact metric spaces. In this article, we really are interested in equivalence classes of m.s-m.s for the equivalence relation ``being at zero $L_{GHP}$-distance'', although in order to define random processes such as coalescence and fragmentation, it will be convenient  to have in mind a particular representative of such a class.

\subsection{Gluing and coalescence}
\label{subsec:gluing}
\subsubsection{Gluing and $\delta$-gluing}
Gluing corresponds to identification of points which can belong to the same semi-metric space or to different semi-metric spaces. A formal definition is as follows, for a single semi-metric space (see also \cite[pp. 62--64]{BuragoBuragoIvanov}). 
\begin{defi}
Let $(X,d)$ be a semi-metric space and $\calR$ be an equivalence relation on $X$. The \emphdef{gluing of $(X,d)$ along $\calR$}, is the semi-metric space $(X,d_{\calR})$ with semi-metric defined on $X^2$ by
$$d_{\calR}(x,y) := \inf\{\sum_{i=0}^k d(p_i,q_i) \;:\; p_0 = x, q_k = y, k \in\NN\}$$
where the infimum is taken over all choices of $\{p_i\}_{0\leq i\leq k}$ and $\{q_i\}_{0\leq i\leq k}$ such that $(q_i,p_{i+1})\in\calR$ for all $i= 0,\ldots,k-1$.
\end{defi}

When performing dynamical percolation on a dicrete graph, edges appear, and these are not of length zero. Thus one needs a definition of gluing which leaves the possibility to add those edges. We shall define the $\delta$-gluing of a semi-metric space $X$ along a multiset $\tilde\calR$ with elements in $X^2$ as the operation of joining every pair $(x,x')\in \tilde\calR$ by an isometric copy of the interval $[0,\delta]$. If $(X,d)$ and $(X',d')$ are two semi-metric spaces, let us denote by $d_{X\sqcup X'}$ the \emph{disjoint union semi-metric} on the disjoint union $X\sqcup X'$, which is the semi-metric equal to $d$ on $X\times X$, to $d'$ on $X'\times X'$ and to $+\infty$ on $(X\times X')\cup (X'\times X)$. This notion extends trivially to the disjoint union of a collection of metric spaces.
\begin{defi}
  \label{defi:deltagluing}
  Let $(X,d)$ be a semi-metric space, $\tilde\calR$ be a multiset of elements in $X^2$ and $\delta\geq 0$. 
\begin{itemize}
\item If $\delta=0$, let $\calR$ denote the equivalence relation generated by $\tilde\calR$ and let $X_{\tilde\calR,\delta}=X$ and $d'=d$.
\item If $\delta>0$, for every pair $(x,x')\in\tilde\calR$, let $I_{x,x'}=[a_{x,x'},b_{x,x'}]$ be an isometric copy of $[0,\delta]$. Let us denote by $X_{\tilde\calR,\delta}$ the disjoint union of $X$ and all the $I_{x,x'}$, for $(x,x')$ in $\tilde\calR$ and let $d'$ denote the disjoint union distance on $X_{\tilde\calR,\delta}$. Let $\calR$ denote the equivalence relation on $X_{\tilde\calR,\delta}$ generated by
  \[\bigcup_{(x,x')\in\tilde\calR}\{(x,a_{x,x'}),(x',b_{x,x'})\}\;.\]
\end{itemize}
Then, the $\delta$-gluing of $(X,d)$ is the metric space $(X_{\tilde\calR,\delta},d_{\tilde\calR,\delta})$ which is the gluing of $(X_{\tilde\calR,\delta},d')$ along $\calR$.

When $\delta>0$, let us denote by $\calF$ the following $\sigma$-field on $X_{\tilde\calR,\delta}$:
\[\calF:=\{A\cup B\,:\,A\in \calB(X),\,B\subset \bigcup_{(x,x')\in\tilde\calR}I_{x,x'}\}\]
We can lift trivally any measure $\mu$ from $\calB(X)$ on $\calF$ as a measure $\tilde\mu$ on $X$ by:
\[\tilde\mu(C):=\mu(C\cap X)\]
When $\bfX=(X,d,\mu)$ is a measured semi-metric space, we equip the $\delta$-gluing of $(X,d)$ along $\tilde\calR$ with this lifted measure, but we shall still denote this measure by $\mu$ and denote the resulting semi-metric space by $\Coal_\delta(\bfX,\tilde\calR)=(X_{\tilde\calR,\delta},d_{\tilde\calR,\delta},\mu)$.
\end{defi}
\begin{rem}
\label{rem:deltagluing}
\begin{enumerate}[(i)]
\item For any $(x,y)\in X^2$, if $\delta>0$, one may see that
$$d_{\tilde\calR,\delta}(x,y) = \inf\{k\delta+\sum_{i=0}^k d(p_i,q_i) \;:\; p_0 = x, q_k = y, k \in\NN\}\;.$$
where the infimum is taken over all choices of $\{p_i\}_{0\leq i\leq k}$ and $\{q_i\}_{0\leq i\leq k}$ such that $(q_i,p_{i+1})\in\tilde\calR$ for all $i= 0,\ldots,k-1$. Furthermore, the same is true for $\delta=0$ if $\tilde\calR$ is already an equivalent relation.
\item The previous expressions allow to prove that for $A,B\subset X^2$,
  \[\Coal_{0}(\bfX,A\cup B)=\Coal_{0}(\Coal_{0}(\bfX,A),B)\;.\]
  Also, for any multisets $A$, $B$ and for any $\delta>0$,
  \[\Coal_{\delta}(\bfX,A\sqcup B)=\Coal_{\delta}(\Coal_{\delta}(\bfX,A),B)\;.\] See appendix~\ref{sec:commutcoalfrag}.
\item If $\delta>0$, one may like to consider the space $X$ with (the restriction of) the metric $\delta_{\tilde\calR,\delta}$: it can be seen as forgetting the interiors of the intervals $I_{x,x'}$ that have been added in $X_{\tilde \calR,\delta}$ (while keeping the same metric). If $\bfX=(X,d,\mu)\in\calN$, 
  \begin{equation}
  \label{eq:forgetinterior}L_{GHP}((X,d_{\tilde\calR,\delta},\mu),\Coal_\delta(\bfX,\tilde\calR))\leq \delta\;.
  \end{equation}
  Indeed, using the notations of Definition~\ref{defi:deltagluing}, one may use the following correspondance on $X\times X_{\tilde\calR,\delta}$:
  $$C=X^2\cup\bigcup_{(x,x')\in\tilde\calR}\{(x,y)\,:\,y\in I_{x,x'}\}$$
  which has distortion $\delta$ when $X$ is equipped with $d_{\tilde\calR,\delta}$, and then use $\pi$ as the trivial coupling of $\mu$ on $X$ and $\mu$ on $X_{\tilde\calR,\delta}$, which satisfies
  \[D(\pi;\mu,\mu)=\pi(C^c)=0\;.\]
  This can be done on each component of $(X,d_{\tilde\calR,\delta})$ separately to show that~\eqref{eq:forgetinterior} holds.
\item It is easy to see that for any $\bfX=(X,d,\mu)$ and $\tilde R\subset X^2$,
  \[\Coal_0(\bfX/d,\tilde R/d)/d_{\tilde\calR,0}=\Coal_0(\bfX,\tilde R)/d_{\tilde\calR,0}\]
  thus one may without harm identify the semi-metric spaces with their metric quotient before or after coalescence (in fact the collection of the metric quotient of its components). 
\end{enumerate}
\end{rem}

\subsubsection{The coalescence processes}
\label{subsec:coal}
When $(X,d,\mu)$ is a measured semi-metric space, there is a natural coalescence process (of mean-field type) which draws pairs of points $(x,y)$ with intensity $\mu(dx)\mu(dy)$ (and unit intensity in time) and identifies points $x$ and $y$, changing the metric accordingly. To describe the process of addition of edges during the dynamical percolation on the Erd\H{o}s-Rényi random graph, one needs to replace the identification of $x$ and $y$ by the fact that the distance between $x$ and $y$ drops to $n^{-1/3}$ (if $x\neq y$). This leads to the following definition.
\begin{defi}
\label{defi:coal}
Let $\bfX=(X,d,\mu)$ be an m.s-m.s with $\mu$ sigma-finite and $\delta\geq 0$. Let $\calP^+$ be a Poisson random set on $X^2\times \RR^+$ of intensity measure $\frac{1}{2}\mu^{2}\times \leb_{\RR^+}$. The \emphdef{coalescence process with edge-lengths $\delta$ started from $\bfX$,} denoted by $(\Coal_{\delta}(\bfX,t))_{t\geq 0}$, is the random process of m.s-m.s $(\Coal_{\delta}(\bfX,\calP^+_t))_{t\geq 0}$.
\end{defi}
Notice that this process inherits the strong Markov property from the strong Markov property of the Poisson process, and the fact that for multisets $A$ $B$ of elements of $X^2$, $\Coal_{\delta}(\bfX,A\sqcup B)=\Coal_{\delta}(\Coal_{\delta}(\bfX,A),B)$, cf. Remark~\ref{rem:deltagluing} (ii).

When $\delta>0$, if one wants to keep the space fixed and change only the metric, Remark~\ref{rem:deltagluing} (iii) shows that one can do so at the price of an $L_{GHP}$-distance at most $\delta$. In this paper, one wants typically to understand scaling limits of $N^+(G_n,\calP^+_t)$ as defined in section~\ref{subsec:discretedynperco} with $\calP^+$ of intensity $\gamma_n$ and $G_n$ a discrete graph equipped with the distance $d_n$ which is the graph distance multiplied by some $\delta_n>0$ going to zero as $n$ goes to infinity. See for instance Theorem \ref{theo:cvcoalGnp} below. If one equips $G_n$ and $N^+(G_n,\calP_t^+)$ with their counting measures multiplied by $\sqrt{\gamma_n}$, $N^+_{\gamma_n}(G_n,t)$ is at $L_{GHP}$-distance at most $\delta_n$ from $(\Coal_{\delta_n}((G_n,d_n,\mu_n),t))_{t\geq 0}$ (under a natural coupling), so the scaling limits will be the same. We shall want to identify the limit itself as $(\Coal_0(\Glambda,t))_{t\geq 0}$, and part of our work will consist in showing that it is a nicely behaved process. In order to accomplish this task, we need to define some subsets of $\calN$.
\begin{defi}
For $p>0$, we define $\calN_p$ to be the set of elements $\nu$ of $\calN$ such that if $\nu$ is written as $\sum_{m\in I}\delta_m$ for some countable index set $I$,
$$\sum_{m\in I}\mu(m)^p<\infty\;.$$
For $\nu$ in $\calN_p$, we let $\sizes(\nu)$ to be the sequence in $\ell^p_\searrow$ of masses $\mu(m)$ listed in decreasing order and define, for $\nu$ and $\nu'$ in $\calN_p$.
$$L_{p,GHP}(\nu,\nu')=L_{GHP}(\nu,\nu')\lor \|\sizes(\nu)-\sizes(\nu')\|_p\;.$$
\end{defi}
Again we shall abuse language, saying that $\bfX=(X,d,\mu)$ is in $\calN_p$ when $\nu_{\bfX}\in\calN_p$ and write $L_{p,GHP}(\bfX,\bfX')$ for $L_{p,GHP}(\nu_\bfX,\nu_{\bfX'})$. We let the reader check that $(\calN_p,L_{p,GHP})$ is a complete separable metric  space. 

It is easy to see that if $\bfX=(X,d,\mu)$ belongs to $\calN_1$, then almost surely, for every $t\geq 0$, $\Coal_{t,\delta}(X,d,\mu)$ is in $\calN_1$. We even have the Feller property on $\calN_1$, which will be proved in section~\ref{sec:FellerN1}. A consequence of the Feller property of the multiplicative coalescent in $\ell^2$  is that if $\bfX=(X,d,\mu)$ belongs to $\calN_2$, then almost surely for every $t\geq 0$ $\sum_{m\in\comp(\Coal_{\delta}(\bfX,t))}\mu(m)^2<\infty$. However, one cannot guarantee that components stay totally bounded, and thus  that  $\Coal_{\delta}(\bfX,t)$ or even $\Coal_{0}(\bfX,t)$ belongs to $\calN_2$. One will thus have to restrict to a subclass of $\calN_2$, which will fortunately contain $\Glambda$ with probability one.
\begin{defi}
\label{defi:calS}
We define $\calS$ to be the class of m.s-m.s $\bfX=(X,d,\mu)$ in $\calN_2$ such that
\begin{equation}
\label{eq:supdiam} \forall t\geq 0,\;\supdiam(\Coal_0(\bfX_{\leq \eta},t))\xrightarrow[\eta\rightarrow 0]{\PP}0
\end{equation}
\end{defi}
It will be shown in Lemmas~\ref{lemm:calSdansN} and \ref{lemm:calSstable} that if $\bfX\in\calS$, then almost surely, for any $t\geq 0$, $\Coal_0(\bfX,t)\in\calS$. Of course, I suspect that $\calS$ has a more intrinsic definition, at least when the components are $\RR$-graphs, and that there is a convenient topology which turns it into a Polish space, however I could not prove this for the moment. Let us mention that there are elements in $\calN_2\setminus \calS$, see Remark~\ref{rem:N2moinsS}.

Let us finish this section by a description of the coalescence at the level of components. When $\bfX$ and $\calP^+$ are as in Definition~\ref{defi:coal}, one may associate to them a process of multigraphs with vertices $\comp(X)$ which we denote by $\MG(X,t)$. It is defined as follows: there is an edge in $\MG(X,t)$ between $m$ and $m'$ if there is a point $(a,b,s)$ of $\calP$ with $s\leq t$, $a\in m$ and $b\in m'$. If $x:=\sizes(\bfX)$, this multigraph is of course closely related to the multigraph $\MG(x,t)$ defined in section~\ref{subsec:multiplicative}.

When $A$ is a measurable subset of $X$, let $\bfA:=(A,d|_{A\times A},\mu|_A)$. There is an obvious coupling between $(\Coal_\delta(\bfX,t))_{t\geq 0}$ and $(\Coal_\delta(\bfA,t))_{t\geq 0}$: just take the restriction of the poisson random set $\calP$ to $A^2\times \RR$. We shall call it the \emph{obvious coupling}. We shall use several times  the following easy fact.
\begin{lemm}
  \label{lemm:obviouscouplingforest} Suppose that $A$ is a union of components of $X$. Under the obvious coupling, if  $\MG(X,t)$ is a forest, then for every $s\leq t$ and every $x,y$ in $A$, one has the following:
  \begin{center}
    if the distance between $x$ and $y$ in $(\Coal_\delta(\bfA,s))$ is finite, \\
    then it is equal to the distance between $x$ and $y$ in $(\Coal_\delta(\bfX,s))$.\end{center}
\end{lemm}

\subsection{Length spaces and $\RR$-graphs}
\label{subsec:Rgraphs}
We refer to \cite{AdBrGoMi2017} for background on the definitions and statements in this section. Let us however recall the definition of a \emph{length space} and a \emph{geodesic space} for semi-metric spaces (cf. \cite{BuragoBuragoIvanov} for general background on length spaces).

$(X,d)$ is a length space if and only if for any $x$ and $y$ in $X$, the distance between $x$ and $y$ is the infimum of the lengths of paths $\gamma$ between $x$ and $y$. The \emph{length} of a path $\gamma$ from $[a,b]$ to $X$ being defined as:
\[\sup \sum_{i=0}^{r-1}d(\gamma(t_i),\gamma(t_{i+1}))\]
where the supremum is over all $a\leq t_0<t_1\leq \ldots \leq t_r=b$. We say that a length space $(X,d)$ is \emph{geodesic} if for any $x$ and $y$, there is a path from $x$ to $y$ with length $d(x,y)$. 
\begin{defi}
\label{defi:Rgraph}  An $\RR$-tree is a totally bounded geodesic and acyclic finite metric space. An $\RR$-graph is a totally bounded geodesic finite metric space $(G,d)$ such that there exists $R>0$ such that for any $x\in G$, $(B_R(x),d|_{B_R(x)})$ is an $\RR$-tree, where $B_R(x)$ is the  ball of radius $R$ and center $x$.
  
  For a semi-metric space $(X,d)$, we shall say that it is a \emph{semi-metric $\RR$-graph} if the quotient metric space $(X/d,d)$ is an $\RR$-graph.
\end{defi}
\begin{rem}
\begin{enumerate}[(i)]
\item  The definition above differs slightly from \cite[Definition~2.2]{AdBrGoMi2017}, where an $\RR$-graph $(X,d)$ is defined as a compact geodesic metric space such that for any $x\in G$, there exists $\eps=\eps(x)>0$ such that $(B_\eps(x),d|_{B_\eps(x)})$ is an $\RR$-tree, where $B_\eps(x)$ is the  ball of radius $\eps$ and center $x$. When $(X,d)$ is compact, the two definitions agree: one direction is obvious, whereas the other follows from the arguments at the beginning of \cite[section~6.1]{AdBrGoMi2017}. One advantage of working with precompact spaces instead of compact ones is that one may avoid having to take the completion in order to recover an $\RR$-graph after fragmentation (notice that after fragmentation, the space is not complete anymore).
  \item A semi-metric space $(X,d)$ is a length space (resp. a geodesic space) if and only if its quotient metric space $(X/d,d)$ is a length space (resp. a geodesic space). Thus, a semi-metric $\RR$-graph is notably a geodesic semi-metric space and thus a length semi-metric space.
\end{enumerate}
\end{rem}

The \emph{degree} $d_G(x)$ of a point $x$ in an $\RR$-graph $(G,d)$ is the number of connected components of $B_{R}(x)\setminus\{x\}$, where $R$ is any positive real number such that $(B_R(x),d|_{B_R(x)})$ is an $\RR$-tree. A \emph{branchpoint} $x$ is a point with $deg_G(x)\geq 3$. A \emph{leaf} $x$ is a point with degree one. We denote by $\leaves(G)$ the set of leaves of $G$. An $\RR$-tree or an $\RR$-graph is said to be \emph{finite} if it is compact and has a finite number of leaves.

An $\RR$-graph $(G,d)$, and more generally a length space, is naturally equipped with a \emph{length measure}, which assigns notably its length to the image of a simple path. Formally, it is defined as the $1$-dimensional Hausdorff measure on $(G,d)$ (see \cite[sections 1.7 and 2.6]{BuragoBuragoIvanov}). We shall denote it by $\ell_G$, and when $G$ is an $\RR$-graph, it is a $\sigma$-finite diffuse measure. If $(X,d)$ is a semi-metric length space, the length-measure of $X/d$ can be naturally carried over $X$, since Borel sets on $X/d$ are in bijections with Borel sets on $X$. We shall call this measure the length-measure of $X$ and denote it by $\ell_X$. For a semi-metric space $(X,d)$, we shall say that a Borel measure $\ell$ is \emph{diffuse} if for any $x\in X$, $\ell(\{y\in X: d(x,y)=0\})=0$ (or equivalently that its image on $X/d$ is diffuse). When $X$ is a semi-metric length space, $\ell_X$ is thus a diffuse measure. 

The structure of an $\RR$-graph is explained thoroughly\footnote{Although our definition differs slightly, the proof of \cite[Proposition~6.2]{AdBrGoMi2017} can be adapted straightforwardly.} in \cite[section~6.2]{AdBrGoMi2017}. The \emph{core} of $(G,d)$, denoted by $\core(G)$ is the union of all simple arcs with both endpoints in embedded cycles of $G$. It is also the maximal compact subset of $G$ having only points of degree at least $2$ (cf. \cite[Corollary~2.5]{AdBrGoMi2017}, where one needs to replace ``closed'' by ``compact'' in our precompact setting). The core of a tree is empty, that of a unicyclic graph is a cycle. When $G$ is neither a tree nor unicyclic, there is a finite connected multigraph $\ker(G)=(k(G),e(G))$ called the \emph{kernel} of $G$ such that the core of $G$ may be obtained from $\ker(G)$ by gluing along each edge $e$ an isometric copy of the interval $[0,l(e)]$, for some $l(e)>0$. The \emph{surplus} of $G$ is defined as $0$ when $G$ is a tree, $1$ when $G$ is unicyclic, and otherwise as:
$$\surplus(G)=|e(G)|-|k(G)|+1\;,$$
which is then at least two. If $(X,d)$ is a semi-metric $\RR$-graph, we shall define its surplus as the surplus of $(X/d,d)$.

Using the existence of the core, one gets the following equivalent definition of an $\RR$-graph, where an $\RR$-graph is obtained as a ``tree with shortcuts'', to employ the expression of \cite{BhamidiBroutinSenWang2014arXiv}. A sketch of proof is given in appendix~\ref{sec:shortcut}.
\begin{lemm}
\label{lemm:shortcut}
A metric space $(X,d)$ is an $\RR$-graph if and only if there exists an $\RR$-tree $(T,d)$ and a finite set $A\subset T^2$ such that $(X,d)$ is isomorphic to the quotient metric space obtained from $\Coal_0((T,d),A)$.
\end{lemm}
Now, let us introduce a quantity that will be useful to control the diameters of components during dynamical percolation, notably because it is monotone under fragmentation (contrarily to the diameter). In a semi-metric space $(X,d)$, we say that a path is injective if its projection on $X/d$ is injective. Then, for a semi-metric length space $X$ with length measure $\ell$, define
$$\suplength(X):=\sup\{\ell(\gamma):\gamma \text{ is a rectifiable injective path in }X\}\;.$$
Notice that 
$$\suplength(X)=\sup_{m\in\comp(X)}\suplength(m)\;.$$
Now, we can define the various spaces on which we shall study fragmentation and dynamical percolation.
\begin{defi}
\label{defi:Ngraph} 
Define $\calS^{length}$ as the class of length semi-metric spaces $\bfX$ in $\calN_2$  whose length measure is $\sigma$-finite and such that:
\begin{equation}
\label{eq:suplength} \forall t\geq 0,\;\suplength(\Coal_0(\bfX_{\leq \eta},t))\xrightarrow[\eta\rightarrow 0]{\PP}0\;.
\end{equation}

Let $\calM^{graph}$ denote the set of equivalence classes of $\RR$-graphs under $d_{GHP}$. Define $\calN^{graph}$ (resp. $\calN_p^{graph}$) from $\calM^{graph}$ in the same way that $\calN$ (resp. $\calN_p$) was defined from $\calM$.

Define $\calS^{graph}$ as the class of semi-metric spaces $\bfX$ in $\calN_2^{graph}$  such that \eqref{eq:suplength} holds.

If $\bfX=(X,d,\mu)$ and $\bfX'=(X',d',\mu')$ are measured semi-metric $\RR$-graphs, define 
$$d_{GHP}^{surplus}(\bfX,\bfX'):=d_{GHP}(\bfX,\bfX')\lor |\surplus(X)-\surplus(X')|\;.$$
Finally, define $L_{GHP}^{surplus}$ and $L_{p,GHP}^{surplus}$ in the same way that $L_{GHP}$ and $L_{p,GHP}$ were defined, but replacing $d_{GHP}$ by $d_{GHP}^{surplus}$.
\end{defi}
Notice that $\calS^{graph}$ is a subclass of $\calS^{length}$. A rigorous definition of $\calM^{graph}$ as a set is given in Appendix~\ref{sec:calM}. Thanks to Lemma~\ref{lemm:shortcut}, it is clear that if $\bfX\in\calN^{graph}$ and $\calP\subset X^2$ is finite, then for any $\delta\geq 0$, $\Coal_{\delta}(\bfX,\calP)$ still belongs to $\calN^{graph}$.


Additional notation concerning $\RR$-graphs will be introduced when needed, in section~\ref{subsec:notationsgraphs}.

\subsection{Cutting, fragmentation and dynamical percolation}
\label{subsec:frag}
\begin{defi}
\label{defi:cut}
Suppose that $\bfX=(X,d,\mu)$ is an m.s-m.s  which is a length space and $\calP^-$ is a subset of $X$. Let $\calP^{-,d}$ denote
\[\{x\in X\,:\,\exists y\in \calP^-,\;d(x,y)=0\}\;.\]
Then, the \emph{cut of $\bfX$ along $\calP^-$}, denoted by $\Frag(\bfX,\calP^-)$ is the m.s-m.s $(X\setminus \calP^{-,d},d^{\Frag}_{\calP^-}, \mu|_{X\setminus \calP^{-,d}})$ where
$$d^{\Frag}_{\calP^-}(x,y):=\inf_\gamma\{\ell_X(\gamma)\}$$
and the infimum is over all paths $\gamma$ from $x$ to $y$ disjoint from $\calP^{-,d}$.
\end{defi}
\begin{rem}
\label{rem:frag}
\begin{enumerate}[(i)]
\item $\Frag(\bfX,\emptyset)$ is the same as $\bfX$ precisely because the components of  $(X,d)$ are length spaces. Furthermore, $\Frag(\bfX,\calP^-)$ is still a length space, cf. Lemma~\ref{lemm:commutFrag}.
\item Notice that if $(X,d)$ is complete, $\Frag((X,d,\mu),\calP^-)$ is generally not complete anymore, but it is at zero $L_{GHP}$-distance from its completion.
\item It is easy to see that
  \[\Frag(X/d,\calP^-/d)/d^{\Frag}_{\calP^-}=\Frag(X,\calP^-)/d^{\Frag}_{\calP^-}\]
  Thus, one may without harm identify a semi-metric space with its metric quotient before or after fragmentation (in fact the collection of the metric quotient of its components).
\item It is easy to see that if $(X,d)$ is a length semi-metric space, $\delta\geq 0$ and $\tilde\calR$ is a multiset of elements of $X^2$, then $\Coal_\delta(X,\tilde \calR)$ is still a length semi-metric space, see for instance \cite[p. 62--63]{BuragoBuragoIvanov}. Thus, cutting $\Coal_\delta(X,\tilde \calR)$ through Definition~\ref{defi:cut} is well defined.
\item One could have defined the cut a bit differently in order to keep the base set unchanged: one could have defined $\calP^{-,d}$ to be a new component of $\bfX$, defining distance on it via an intrinsic formula. In the sequel, $\calP^{-,d}$ will have $\mu$-measure zero, so these two definitions lead to measured semi-metric spaces which are at $L_{GHP}$-distance zero, and when coalescence subsequently occurs, it ignores $\calP^{-,d}$.
\end{enumerate}
\end{rem}

\begin{defi}
\label{defi:frag}
Let $\bfX=(X,d,\mu)$ be an m.s-m.s which is a length space. Let $\ell$ be a diffuse $\sigma$-finite Borel measure on $X$. Let $\calP^-$ be a Poisson random set on $X\times \RR^+$ of intensity measure $\ell\otimes\leb_{\RR^+}$. The \emphdef{fragmentation process started from $\bfX$}, denoted by $(\Frag(\bfX,t))_{t\geq 0}$, is the random process of m.s-m.s $(\Frag(\bfX,\calP_t^-))_{t\geq 0}$. When $\bfX\in\calN^{graph}$, we shall always take $\ell$ to be $\ell_X$, the length-measure on $\bfX$. 
\end{defi}
\begin{rem}
  \label{rem:frag2}
\begin{enumerate}[(i)]
\item A similar fragmentation on the CRT is considered in \cite{AldousPitman98a}.
\item Since $\ell$ is a diffuse measure, almost surely $\mu(\calP^-_t)=0$ for any $t\geq 0$. Thus we shall abuse notation and consider that $\Frag(\bfX,\calP_t^-)$ is still equipped with $\mu$, instead of $\mu|_{X\setminus \calP^-_t}$.
\item Notice that this process inherits the strong Markov property from the strong Markov property of the Poisson process, and the fact that for $A,B\subset X$, $\Frag(\bfX,A\cup B)=\Frag(\Frag(\bfX,A),B)$, cf. Lemma~\ref{lemm:commutFrag}.
\end{enumerate}
\end{rem}

Now, one wants to define dynamical percolation on measured length spaces by performing independently and simultaneously coalescence and fragmentation. One needs to be a bit careful here: when $(X,d)$ is a geodesic space, $A\subset X^2$ and $B\subset X$, even if $B\cap\{x\in X:\exists y\in X,\; (x,y)\text{ or }(y,x)\in A\}=\emptyset$, one cannot guarantee that 
$\Coal_0(\Frag(X,B),A)$ is the same as $\Frag(\Coal_0(X,A),B)$. Indeed, let $X=[0,1]$ with the usual metric, let $B=\{\frac{3}{2^n},\; n\geq 2\}$ and $A=\{(\frac{1}{2^{n+1}},\frac{1}{2^{n}}),\;n\geq 0\}$. Then, there are two components in $\Coal_0(\Frag(X,B),A)$: $\{0\}$ and $]0,1]\setminus B$, whereas there is only one component in $\Frag(\Coal_0(X,A),B)$: $[0,1]\setminus B$. However, it will be shown in Lemma~\ref{lemm:calSdansN} that if  $\bfX\in\calS^{graph}$, $\calP^+$ is as in Definition~\ref{defi:coal},  $\calP^-$  as in Definition~\ref{defi:frag} then almost surely, 
\begin{equation}
\label{eq:commutation}
\forall t\geq 0,\;\Coal_0(\Frag(\bfX,\calP_t^-),\calP_t^+)=\Frag(\Coal_0(\bfX,\calP_t^+),\calP_t^-)\;.
\end{equation}
This will rely on Lemma~\ref{lemm:commutdeter}, proved in Appendix~\ref{sec:commutcoalfrag}. Now, let us define dynamical percolation.
\begin{defi}
\label{defi:dynperc}
Let $\bfX=(X,d,\mu)$ be an m.s-m.s which is a length space. Let $\ell$ be a diffuse $\sigma$-finite Borel measure on $X$. Let $\calP^-$ be a Poisson random set on $X\times \RR^+$ of intensity measure $\ell\otimes\leb_{\RR^+}$ and $\calP^+$ be a Poisson random set on $X^2\times \RR^+$ of intensity measure $\frac{1}{2}\mu^{2}\times \leb_{\RR^+}$. The \emphdef{dynamical percolation process started from $\bfX$}, denoted by $(\CoalFrag(\bfX,t))_{t\geq 0}$, is the stochastic process $(\Coal_0(\Frag(\bfX,\calP_t^-),\calP_t^+))_{t\geq 0}$.
\end{defi}
Property~\eqref{eq:commutation} (when it holds !) shows that $(\CoalFrag(\bfX,t))_{t\geq 0}$ inherits the strong Markov Property from that of the Poisson process.

\subsection{The scaling limit of critical Erd\H{o}s-Rényi random graphs}
The scaling limit of critical Erd\H{o}s-Rényi random graphs was obtained in \cite[Theorem~24]{AdBrGolimit}, for the Gromov-Hausdorff topology, and the result is extended to Gromov-Hausdorff-Prokhorov topology in \cite[Theorem~4.1]{AdBrGoMi2017}.  Let $\overline{\calG}_{n,\lambda}$ denote the element of $\calN_2^{graph}$ obtained by replacing each edge of  $\calG(n,p(\lambda,n))$ by an isometric copy of a segment of length $n^{-1/3}$ (notably, the distance is the graph distance divided by $n^{1/3}$) and choosing as measure the counting measure on vertices divided by $n^{2/3}$.  \cite[Theorem~4.1]{AdBrGoMi2017} and \cite[Corollary~2]{Aldous97Gnp} easily imply the following.
\begin{theo}[\cite{AdBrGolimit},\cite{AdBrGoMi2017}]
\label{theo:cvGnp}
Let $\lambda\in\RR$ and $p(\lambda,n)=\frac{1}{n}+\frac{\lambda}{n^{4/3}}$. There is a random element $\Glambda$ of $\calN_2^{graph}$ such that
$$\overline{\calG}_{n,\lambda}\xrightarrow[n\rightarrow \infty]{(d)}\Glambda\;,$$
where the convergence in distribution is with respect to the $L_{2,GHP}^{surplus}$-topology.
\end{theo}
We refer to \cite{AdBrGolimit} for the precise definition of the limit $\Glambda$ and to \cite{AdBrGoprop} for various properties of $\Glambda$.

\section{Main results}
\label{sec:mainresults}
We shall distinguish between two types of results: general ones, and those emphasized in the introduction, which are applications (usually not trivial ones) of the general results to Erd\H{o}s-Rényi random graphs.

\subsection{General results: Feller and almost Feller properties}

In the course of proving the results for Erd\H{o}s-Rényi random graphs, I tried to obtain more general results, such that one could apply the same technology to other sequences of random graphs, for instance those belonging to the basin of attraction of $\Glambda$ (see \cite{BhamidiBroutinSenWang2014arXiv} for this notion and section~\ref{sec:perspectives} in the present paper for a more detailed discussion). This is reflected in what I call below Feller or almost Feller properties for coalescence, fragmentation, and dynamical percolation. Recall that one says that a Markov process has \emph{the Feller property}\footnote{Not to be confused with being a Feller process !} if the distribution at a fixed time $t>0$ is a continuous function of the distribution at time $0$, where continuity is with respect to weak convergence of probability measures. The almost Feller properties below are variations on the Feller property, some of them weaker than a true Feller property in the sense that I need to add some condition in order to ensure convergence, but also a bit stronger in the sense that I added to the results the convergence of the whole process in the sense of the Skorokhod topology.

\begin{theo}[Almost Feller property for coalescence]
\label{theo:AlmostFellercoal}
Let $\bfX^{n}=(X^{n},d^{n},\mu^{n})$, $n\geq 0$ be a sequence of random variables in $\calS$ and $(\delta^{n})_{n\geq 0}$ a sequence of non-negative real numbers. Suppose that: 
\begin{enumerate}[(a)]
\item $(\bfX^{n})$ converges in distribution (for $L_{2,GHP}$) to $\bfX^{\infty}=(X^{\infty},d^{\infty},\mu^{\infty})$ as $n$ goes to infinity
\item $\delta^n\xrightarrow[n\rightarrow\infty]{}0$
\item For any $\alpha>0$ and any $T>0$,
  \begin{equation}
    \label{eq:supdiamunif}
    \limsup_{n\in\NN} \PP(\supdiam(\Coal_{\delta^{n}}(\bfX^{n}_{\leq \eps},T))>\alpha)\xrightarrow[\eps\rightarrow 0]{} 0
  \end{equation}
\end{enumerate}
Then, 
\begin{enumerate}[(i)]
  \item  $(\Coal_{0}(\bfX^{\infty},t))_{t\geq 0}$ is strong Markov with càdlàg trajectories in $\calS$,
  \item $(\Coal_{\delta^{n}}(\bfX^{n},t))_{t\geq 0}$ converges in distribution to $(\Coal_{0}(\bfX^{\infty},t))_{t\geq 0}$ (for $L_{2,GHP}$), 
  \item if $t^n\xrightarrow[n\rightarrow\infty]{}t$, $\Coal_{\delta^{n}}(\bfX^{n},t^n)$ converges in distribution to $\Coal_{0}(\bfX^{\infty},t)$ (for $L_{2,GHP}$).
\end{enumerate}
\end{theo}
Let us make two comments. The first is that there is a full Feller property on $\calN_1$, cf. Proposition~\ref{prop:FellerN1}. The second is that in the case of Erd\H{o}s-Rényi random graphs, condition \eqref{eq:supdiamunif} will be handled through a general technical lemma, Lemma~\ref{lem:supdiamunif}.

\begin{theo}[Feller property for fragmentation]
\label{theo:FellerGraphs}
Let $(\bfG^{n})_{n\geq 0}$ be a sequence of random variables in $\calN_2^{graph}$ converging in distribution to $\bfG$ in the $L_{2,GHP}^{surplus}$ metric. Then,
\begin{enumerate}[(i)]
\item  $(\Frag(\bfG,t))_{t\geq 0}$ is strong Markov with càdlàg trajectories (for $L_{2,GHP}^{surplus}$) in $\calN_2^{graph}$,
\item $(\Frag(\bfG^{n},t))_{t\geq 0}$ converges in distribution to $(\Frag(\bfG,t))_{t\geq 0}$ (for $L_{2,GHP}^{surplus}$),
\item if $t^n\xrightarrow[n\rightarrow\infty]{}t$, then $\Frag(\bfG^{n},t^n)$ converges in distribution to $\Frag(\bfG,t)$ (for $L_{2,GHP}^{surplus}$).
\end{enumerate}
\end{theo}

\begin{theo}[Almost Feller property for dynamical percolation]
\label{theo:almostFellerCoalFrag}
Let $(\bfX^{n})_{n\geq 0}$ be a sequence of random variables in $\calS^{graph}$ converging in distribution to $\bfX^{(\infty)}$ in the $L_{2,GHP}^{surplus}$ metric. Suppose also that for any $\alpha>0$ and any $T\geq 0$,
\begin{equation}
\label{eq:suplengthunif}
\lim_{\eps\rightarrow 0}\limsup_{n\rightarrow +\infty}\PP(\suplength(\Coal_0(\bfX^{n}_{\leq \eps},T))>\alpha)=0\;.
\end{equation}
Then,
\begin{enumerate}[(i)]
\item  $(\CoalFrag(\bfX^{(\infty)},t))_{t\geq 0}$ is strong Markov with càdlàg trajectories (for $L_{2,GHP}$) in $\calS^{length}$.
\item $(\CoalFrag(\bfX^{n},t))_{t\geq 0}$ converges in distribution to $(\CoalFrag(\bfX^{(\infty)},t))_{t\geq 0}$ (for $L_{2,GHP}$),
\item if $t^n\xrightarrow[n\rightarrow\infty]{}t$, then $\CoalFrag(\bfX^{n},t^n)$ converges in distribution to $\CoalFrag(\bfX^{(\infty)},t)$ for $L_{2,GHP}$.
\end{enumerate}
\end{theo}
A caveat is in order here: if $\bfX$ belongs to $\calS^{graph}$, $\Coal_0(\bfX,t)$ does not necessarily belong to $\calS^{graph}$  (but it does belong to $\calS^{length}$) since a component with an infinite surplus might form. In order to stay with components which are real graphs, one needs, and it is sufficient, the total mass of the components with positive surplus to be finite, cf. \cite{BhamidiBudhirajaWang2014}. See section~\ref{sec:perspectives} for more details. Consequently, $\CoalFrag$ does not define a Markov \emph{semigroup} on $\calS^{graph}$. 

In the same vein, notice that in Theorem~\ref{theo:almostFellerCoalFrag}, the initial convergence is in $L_{2,GHP}^{surplus}$ and the conclusion is in $L_{2,GHP}$. This is unavoidable, for the same reason as above: convergence in $L_{2,GHP}^{surplus}$ does not prevent the sequence $\bfX^{n}$ of having an infinite number of components with positive surplus whose total mass diverges when $n$ goes to infinity. These components can at positive time be glued to large components, augmenting their surplus indefinitely. See section~\ref{sec:perspectives} for more details. 

Finally, we shall prove a structural lemma for the multiplicative coalescent, Lemma~\ref{lemm:structureAldous}. It is at the base of Theorems~\ref{theo:AlmostFellercoal} and~\ref{theo:almostFellerCoalFrag}. Since its statement is too technical, we do not state it here, but I see it as one of the main results of the article.

\subsection{Main results for Erd\H{o}s-Rényi random graphs}

The main results for Erd\H{o}s-Rényi random graphs are the following.
\begin{theo}
  \label{theo:existencecoalfrag}
  $(\Coal_{0}(\Glambda,t))_{t\geq 0}$, $(\Frag(\Glambda,t))_{t\geq 0}$  and $(\CoalFrag(\Glambda,t))_{t\geq 0}$ are strong Markov processes with càdlàg trajectories (for $L_{2,GHP}$) in $\calS^{graph}$.
\end{theo}

\begin{theo}
\label{theo:cvcoalGnp}
Let $(\bfG^{n,\lambda,+}(t),\;t\geq 0)$ be the discrete coalescence process of intensity $n^{-4/3}$, started at $\calG(n,p(\lambda,n))$, equipped with the graph distance multiplied by $n^{-1/3}$ and the counting measure on vertices multiplied by $n^{-2/3}$. Then, the sequence of processes $(\bfG^{n,\lambda,+}(t),\;t\geq 0)$ converges to $(\Coal_0(\Glambda,t),\; t\geq 0)$ for $L_{2,GHP}$ as $n$ goes to infinity.
\end{theo}
\begin{theo}
\label{theo:cvfragGnp}
Let $(\bfG^{n,\lambda,-}(t),\;t\geq 0)$ be the discrete fragmentation process of intensity $n^{-1/3}$ started at $\calG(n,p(\lambda,n))$, equipped with the graph distance multiplied by $n^{-1/3}$ and the counting measure on vertices multiplied by $n^{-2/3}$. Then, the sequence of processes $(\bfG^{n,\lambda,-}(t),\;t\geq 0)$ converges to $(\Frag(\Glambda,t),\;t\geq 0)$ for $L_{2,GHP}^{surplus}$ as $n$ goes to infinity.
\end{theo}

\begin{theo}
\label{theo:cvpercodynGnp}
Let $(\bfG^{n,\lambda}(t),\;t\geq 0)$ be the dynamical percolation processes of parameter $p(\lambda,n)$ and intensity $n^{-1/3}$ started with $\calG(n,p(\lambda,n))$, equipped with the graph distance multiplied by $n^{-1/3}$ and the counting measure on vertices multiplied by $n^{-2/3}$. Then, the sequence of processes $(\bfG^{n,\lambda}(t),\;t\geq 0)$ converges to $(\CoalFrag(\Glambda,t),\;t\geq 0)$ for $L_{2,GHP}$ as $n$ goes to infinity.
\end{theo}
The rate $n^{-4/3}$ for discrete coalescence compared to $n^{-1/3}$ for discrete fragmentation and dynamical percolation process might be disturbing. This is due to the fact that when one performs dynamical percolation, one resamples the state of the edges. Thus, a specific edge appears at rate $n^{-1/3}p$, which is essentially $n^{-4/3}$, and disappears at rate $n^{-1/3}(1-p)$, which is essentially $n^{-1/3}$.

The last result we shall mention is the important fact that on $(\calG_\lambda)_{\lambda\in\RR}$, fragmentation is the time-reversal of coalescence:
\begin{prop}
\label{prop:retournement}
For any $\lambda\in\RR$ and $s\in\RR^+$, $(\calG_\lambda,\Coal_0(\calG_\lambda,s))$ and $(\Frag(\calG_{\lambda+s},s),\calG_{\lambda+s}))$ have the same distribution.
\end{prop}

\subsection{Description of the rest of the article}

Section~\ref{sec:preliminaries} contains some preliminary results: lemmas from \cite{Aldous97Gnp} and technical results about the distance $L_{GHP}$.

Section~\ref{sec:coal} is devoted to the proofs of the main results for coalescence: Theorem~\ref{theo:AlmostFellercoal} (the almost Feller property) and Theorem~\ref{theo:cvcoalGnp}. Probably the most important work lies inside Lemma~\ref{lemm:structureAldous}, which is a statement about the structure of the (multi)graph $W(x,t)$ in Aldous' multiplicative coalescent. It allows notably to reduce the proof of the Feller property from $\calN_2$ to $\calN_1$, where it is much easier to prove.

Section~\ref{sec:frag} is devoted to the main results for fragmentation: Theorem~\ref{theo:FellerGraphs} (the Feller property) and Theorem~\ref{theo:cvfragGnp}. We shall also show that for $\Glambda$, coalescence is the time-reversal of fragmentation, which is Proposition~\ref{prop:retournement}.

Section~\ref{sec:dynperc} is devoted to the proofs of the main results for dynamical percolation, Theorem~\ref{theo:almostFellerCoalFrag} (the almost Feller property) and Theorem~\ref{theo:cvpercodynGnp}. We also prove Theorem~\ref{theo:existencecoalfrag} there, i.e the fact that the coalescence, fragmentation and dynamical percolation on $\Glambda$ define processes in $\calS^{graph}$.

We finish the article by some perspectives in section~\ref{sec:perspectives}, and some technical tools are gathered in the appendix.

\section{Preliminary results and tools}
\label{sec:preliminaries}
\subsection{The multiplicative coalescent}
\label{subsec:multiplicative}
The main tool to analyze our coalescent and fragmentation processes will be a refinement of Aldous' work~\cite{Aldous97Gnp} on the multiplicative coalescent. In this section, we recall what we will use of his work.

Let $(N_{i,j})_{i,j\in \NN^*}$ be independent Poisson point processes on the real line with intensity $1$. Denote by $T_{i,j,n}$ the $n$-th jump-time of $N_{i,j}$. For $x\in\ell^2_+$, let $\MG(x,t)$ denote the weighted multigraph (with loops) with vertex set $\NN$, edge set $\cup_{n\in\NN}\{\{i,j\}\in\binom{\NN}{2}\mbox{ s.t. }T_{i,j,n}\mbox{ or }T_{j,i,n}\leq \frac{t}{2}x_ix_j\}$ and weight $x_i$ on vertex $i$. If one forgets loops and transforms any multiple edge into a single edge, $\MG(x,t)$ becomes $\calW(x,t)$, the nonuniform random graph of~\cite[section~1.4]{Aldous97Gnp}. Indeed, the graph $\calW(x,t)$ has set of vertices $\NN^*$ and each pair $\{i,j\}$ is an edge with probability $1-\exp(-tx_ix_j)$, independently for distinct pairs. The size, or mass, of a connected component of those (multi-)graphs is defined to be the sum of weights $x_i$ for $i$ in this component. Denoting by $X(x,t)$  the sequence of sizes, listed in decreasing order, of the connected components of $\calW(x,t)$, Aldous proved in \cite[Proposition~5]{Aldous97Gnp}, that $\{X(x,t)\,:\,t\geq 0\}$ defines a Markov process on $\ell^2_\searrow$ which possesses the Feller property. In words, the dynamics is as follows: two distinct connected components $c$ and $c'$ with sizes $m$ and $m'$ merge at a rate proportional to $m m'$, the product of their sizes, whence the term {\it multiplicative coalescent}.

Following Aldous, we denote by $S(x,t)$ the sum of squares of the sizes of the components of $\MG(x,t)$. We shall use later the following lemmas.
\begin{lemm}[\cite{Aldous97Gnp}, Lemma 20]
\label{lemm:Aldous20}
For $x$ in $l^2_\searrow$, 
$$\PP(S(x,t)>s)\leq \frac{tsS(x,0)}{s-S(x,0)},\quad s>S(x,0)\;.$$
\end{lemm}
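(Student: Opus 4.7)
The plan is to combine Markov's inequality with a first-moment estimate for $S(x,t)$ obtained by a union bound over open self-avoiding paths in $\MG(x,t)$. First, since every coalescence of components of masses $m_A$ and $m_B$ increases $\sum_k m_k^2$ by $2m_Am_B>0$, the process $(S(x,u))_{u\geq 0}$ is non-decreasing. On the event $\{S(x,t)>s\}$ we therefore have $S(x,t) - S(x,0) \geq s - S(x,0)$, and Markov's inequality applied to the non-negative variable $S(x,t)-S(x,0)$ yields
$$
\PP(S(x,t)>s)\;\leq\;\frac{\EE[S(x,t)]-S(x,0)}{s-S(x,0)},
$$
reducing the lemma to a first-moment bound.

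Next, I would exploit the identity
$$S(x,u) \;=\; \sum_{v,w\in\NN^*} x_v x_w \,\mathbf{1}_{\{v\leftrightarrow w\text{ in }\MG(x,u)\}},$$
which follows from the definition of $S$ and the fact that the component structure of $\MG(x,u)$ coincides with that of $\calW(x,u)$. Isolating the diagonal $v=w$ (which contributes exactly $S(x,0)$), this gives $\EE[S(x,t)]-S(x,0)=\sum_{v\neq w}x_v x_w\,\PP(v\leftrightarrow w)$. For $v\neq w$, connectivity in $\MG(x,t)$ implies the existence of an open self-avoiding path from $v$ to $w$; since each edge $\{u,u'\}$ is present in $\MG(x,t)$ with probability $1-e^{-tx_u x_{u'}}\leq tx_u x_{u'}$, a union bound over self-avoiding paths of length $k\geq 1$ — dropping self-avoidance and using $\sum_u x_u^2=S(x,0)$ at each of the $k-1$ intermediate vertices — gives
$$\PP(v\leftrightarrow w)\;\leq\;\sum_{k\geq 1}t^k x_v x_w\,S(x,0)^{k-1}\;=\;\frac{tx_v x_w}{1-tS(x,0)}$$
whenever $tS(x,0)<1$. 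Substituting back and using $\sum_{v,w}x_v^2x_w^2\leq S(x,0)^2$ yields
$$
\EE[S(x,t)]-S(x,0)\;\leq\;\frac{tS(x,0)^2}{1-tS(x,0)}.
$$

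To recover the precise form in the statement, I would finally note that in the only regime where the claimed bound is nontrivial, namely $tsS(x,0)<s-S(x,0)$ (equivalently $S(x,0)/(1-tS(x,0))<s$), the first-moment estimate above is at most $tsS(x,0)$, so combining with Markov yields $\PP(S(x,t)>s)\leq tsS(x,0)/(s-S(x,0))$; outside this regime the right-hand side of the lemma is already $\geq 1$ and the inequality is trivial. The main obstacle, in my view, is the path-counting step: one must justify the union bound over the countable set of self-avoiding paths in an infinite-vertex setting and verify that the summations exchange freely — this should follow from Tonelli applied to non-negative summands together with $x\in\ell^2_+$.
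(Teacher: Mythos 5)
Your proof is correct, but it is a genuinely different argument from the one in Aldous' paper (the present paper states this lemma purely as a citation to \cite{Aldous97Gnp} and does not reprove it). Aldous works with the reciprocal $1/S$: a merger of two components of masses $a$ and $b$, occurring at rate $ab$, decreases $1/S$ by $\frac{2ab}{S(S+2ab)}\leq \frac{2ab}{S^2}$, so summing over pairs and using $\sum_{i<j}2m_i^2m_j^2\leq S^2$ shows that $1/S(x,\cdot)$ has drift bounded below by $-1$, i.e.\ $1/S(x,t)+t$ is a submartingale; hence $\EE[\,1/S(x,0)-1/S(x,t)\,]\leq t$, and applying Markov to the nonnegative variable $1/S(x,0)-1/S(x,t)$, which is at least $1/S(x,0)-1/s$ on the event $\{S(x,t)>s\}$, gives the stated bound in a single step with no case split. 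You instead apply Markov to $S(x,t)-S(x,0)$ and control $\EE[S(x,t)]$ by a union bound over self-avoiding paths in $\MG(x,t)$. Both first-moment inputs are sharp enough, but they differ in spirit: Aldous' drift computation lives entirely at the level of the $\ell^2_\searrow$-valued Markov process and never invokes the Poissonian graph realization, whereas yours is combinatorial, is tied to the construction via $\MG(x,t)$, and requires the regime discussion at the end that Aldous' argument sidesteps. The two technical points you flag (Tonelli and the countable union bound over self-avoiding walks) are indeed the only things to check and they go through as you say; it is also worth making explicit that the geometric-series condition $tS(x,0)<1$ is automatic in the only nontrivial regime, since $tsS(x,0)<s-S(x,0)<s$ forces it.
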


\begin{lemm}[\cite{Aldous97Gnp}, Lemma 23]
\label{lemm:Aldous23}
Let $(z_i,\;1\leq i\leq n)$ be strictly positive vertex weights, and let $1\leq m<n$. Consider the bipartite random graph $\calB$ on vertices $\{1,2,\ldots,m\}\cup\{m+1,\ldots,n\}$ defined by: for each pair $(i,j)$ with $1\leq i\leq m<j\leq n$, the edge $\{i,j\}$ is present with probability $1-\exp(-tz_iz_j)$, independently for different pairs. Write $\alpha_1=\sum_{i=1}^mz_i^2$, $\alpha_2=\sum_{i=m+1}^nz_i^2$. Let $(Z_i)$ be the sizes of the components of $\calB$. Then,
$$\eps\PP(\sum_iZ_i^2 \geq \alpha_1+\eps)\leq (1+t(\alpha_1+\eps))^2\alpha_2,\quad \eps>0\;.$$
\end{lemm}
\begin{rem} As noticed in \cite[p. 842]{Aldous97Gnp}, Lemma~\ref{lemm:Aldous23} extends to $z\in\ell^2$. Notice that in \cite[Lemma 23]{Aldous97Gnp}, the upper bound was stated as $(2t(\alpha_1+\eps)+(t(\alpha_1+\eps))^2)\alpha_2$, but this cannot be true, as can be seen by taking $t=0$. In fact a term $z_{m+1}^2$ is missing in the right-hand side of line $-3$ in \cite[p. 841]{Aldous97Gnp}, which impacts all subsequent inequalities. Correcting this slight mistake leads to the bound above. 
\end{rem}

In \cite{Aldous97Gnp}, this lemma is used in conjunction with the following one
\begin{lemm}[\cite{Aldous97Gnp}, Lemma 17]
\label{lemm:Aldous17}
Let  $\tilde G$ be a graph with vertex weights $(\tilde x_i)$. Let $G$ be a subgraph of $\tilde G$ (that is, they have the same vertex-sets and each edge of $G$ is an edge of $\tilde G$) with vertex weights $x_i\leq \tilde x_i$. Let $\tilde a$ and $a$ be the decreasing orderings of the component sizes of $\tilde G$ and $G$. Then
$$\|\tilde a-a\|_2\leq \sum_{i}\tilde a_i^2-\sum_{i}a_i^2$$
provided $\sum_{i}a_i^2< \infty$.
\end{lemm}

Finally, the following lemma will be useful to prove convergence in the sense of Skorokhod: it will allow us to control the $\ell^2$ distance between $X(x,t')$ and $X(x_{\leq \eps},t')$ for $t'\leq t$ by the $\ell^2$-distance at time $t$. 
\begin{lemm}
\label{lemm:pourSkorL2}
Let $G=(V,E)$ be a multigraph whose vertices have weights $(x_i)_{i\in V}$. If $W\subset V$ and $E'\subset E$, let $\comp(W,E')$ denote the set of connected components of the graph $(W,E'\cap \binom{W}{2})$ and define
$$S(W,E'):=\sum_{m\in\comp(W,E')}\left(\sum_{i\in m}x_i\right)^2\;.$$
Now, let $W\subset V$ be such that no point of $V\setminus W$ belongs to a cycle in $(V,E)$. Then, for any $E'\subset E$
$$S(V,E)-S(W,E)\geq S(V,E')-S(W,E')\;,$$
provided $S(W,E)<\infty$.
\end{lemm}
\begin{proof}
For $i$ and $j$ in $V$ and $E'\subset E$, we denote by ``$i\sim j\in E'$'' the fact that $i$ and $j$ are distinct and connected by a path in $(V,E')$ and by ``$i\sim j\not\in E'$'' the negation of the previous statement, i.e that $i=j$ or $i$ and $j$ are not connected by a path in $(V,E')$. The hypothesis on $W$ implies that for any $E'\subset E$, 
\begin{equation}
\label{eq:E'interne}
\forall i,j\in W,\;\left(\begin{array}{c}i \sim j\in E' \\ \text{and} \\ i\sim j\not \in E' \cap \binom{W}{2}\end{array}\right)\Rightarrow \left(\begin{array}{c}i \sim j\in E \\ \text{and} \\ i\sim j\not \in E \cap \binom{W}{2}\end{array}\right)\;.
\end{equation}
Indeed, suppose that $i$ and $j$ satisfy the left hand side of the implication above. Since $E'\subset E$, we have $i\sim j \in E$. From the hypothesis on $i$ and $j$, there is a simple path $\gamma$ from $i$ to $j$ in $E'$ with a point $z\in \gamma \cap (V\setminus W)$. Denote by $i'$ the point on $\gamma$ just before $z$ and by $j'$ the point on $\gamma$ just after $z$. Now, suppose that we had $i\sim j \in E \cap \binom{W}{2}$. We would have a path $\gamma'$ in $(W,E\cap \binom{W}{2})$ from $i$ to $j$. Concatenating the portion of $\gamma$ from $i$ to $i'$, the portion of $\gamma$ from $j$ to $j'$ and the path $\gamma'$, we see that there is a path from $i'$ to $j'$ in $(V,E)$ which avoids $z$. From this path, we can extract a simple path in $(V,E)$ connecting $i'$ to $j'$, still avoiding $z$. Now, $z$ is a neighbour of $i'$ and $j'$ in $(V,E)$, and this gives a cycle going through $z$. Since $z$ is a point of $V\setminus W$, this contradicts the hypothesis on $W$. Thus, $i\sim j\not \in E \cap \binom{W}{2}$ holds and \eqref{eq:E'interne} is true.

Now,
\begin{eqnarray*}
  &&S(V,E')= \sum_{i\in V}x_i^2+\sum_{\substack{i,j\in V\\ i\sim  j\in E'}}x_ix_j\\
&=& S(W,E')+\sum_{i\in V\setminus W}x_i^2+\sum_{\substack{i,j\in V\\ i\sim j\in E'}}x_ix_j-\sum_{\substack{i,j\in W\\ i\sim j\in E'\cap \binom{W}{2}}}x_ix_j\\
&=& S(W,E')+\sum_{i\in V\setminus W}x_i^2+\sum_{\substack{i,j\in V\\ i\sim j\in E'\text{ and }i\sim j\not \in E'\cap \binom{W}{2}}}x_ix_j\\
\end{eqnarray*}
Now, \eqref{eq:E'interne} shows that the last sum on the right of the last equation is increasing in $E'\subset E$, and this shows the result.
\end{proof}

\subsection{Tools to handle $L_{GHP}$}

The following proposition is analogous to similar results concerning vague convergence of locally finite measures (see \cite[section~4]{KallenbergRandomMeasures}). With this proposition at hand, it is easy to show, for instance, that if $\nu$ is made of a countable number of components with strictly distinct positive masses, then $\nu_n$ converges to $\nu$ is equivalent to the fact that for every $k$, the $k$-th largest component of $\nu_n$ converges, for $d_{GHP}$, to the $k$-th largest component of $\nu$.  
\begin{prop}
\label{prop:Npolish}
$(\calN,L_{GHP})$ is a complete separable metric space, and if $\nu_n$, $n\geq 0$ and $\nu$ are elements of $\calN$, $(\nu_n)_{n\geq 0}$ converges to $\nu$ if and only if for every $\eps>0$ such that $\nu(\{[(X,d,\mu)]\in \calM\;:\;\mu(X)=\eps\})=0$, $\delta_{LP}(\nu_{>\eps}^{n},\nu_{>\eps})$ goes to zero as $n$ goes to infinity.
\end{prop}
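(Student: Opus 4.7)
The plan is to adapt the standard treatment of vague convergence of boundedly finite measures on a Polish space (compare section~4 of \cite{KallenbergRandomMeasures}) to the present setting, where the exhaustion $\calM=\bigcup_{k\geq 1}\calM_{>1/k}$ plays the role of exhaustion by relatively compact sets. Before tackling the four standard items (metric axioms, convergence criterion, separability, completeness), I would record two preliminary observations. First, the total-mass map $(X,d,\mu)\mapsto\mu(X)$ is $1$-Lipschitz on $(\calM,d_{GHP})$: for any coupling $\pi$ admissible in Definition~\ref{defi:GHPMiermont}, $|\mu(X)-\mu'(X')|=|(\mu-\pi_1)(X)-(\mu'-\pi_2)(X')|\leq D(\pi;\mu,\mu')$, since $\pi_1(X)=\pi_2(X')=\pi(X\times X')$. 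In particular each $f_k$ is continuous on $\calM$. Second, $f_k$ depends only on the total mass, equals $1$ on $\calM_{>1/k}$ and vanishes outside $\calM_{>1/(k+1)}$.

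The metric axioms for $L_{GHP}$ are then routine: symmetry and the triangle inequality are inherited termwise from $\rho_{LP}$, and finiteness holds since each summand is bounded by $2^{-k}$. Separation uses that $L_{GHP}(\nu,\nu')=0$ forces $f_k\nu=f_k\nu'$ for every $k$, hence $\nu$ and $\nu'$ agree on $\calM_{>1/k}$ for every $k$; the fact that elements of $\calN$ carry no mass-$0$ atom then gives $\nu=\nu'$.

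For the convergence criterion, if $L_{GHP}(\nu_n,\nu)\to 0$ then $\rho_{LP}(f_k\nu_n,f_k\nu)\to 0$ for every $k$, so $f_k\nu_n\to f_k\nu$ weakly. Given $\eps>0$ not equal to the mass of any atom of $\nu$, choose $k$ with $1/k<\eps$: on $\calM_{>\eps}$ one has $f_k\equiv 1$, and the boundary $\{\mu(X)=\eps\}$ is $f_k\nu$-null by the non-atomicity assumption on $\eps$, so the Portmanteau theorem applied to the continuity set $\calM_{>\eps}$ yields $\rho_{LP}(\nu_n^{>\eps},\nu^{>\eps})\to 0$. Conversely, assuming this convergence for every non-atomic $\eps$, pick any non-atomic $\eps<1/(k+1)$: then $f_k\nu_n=f_k\cdot\nu_n^{>\eps}$ converges weakly to $f_k\cdot\nu^{>\eps}=f_k\nu$ by continuity and boundedness of $f_k$, and dominated convergence in the defining series of $L_{GHP}$ gives $L_{GHP}(\nu_n,\nu)\to 0$.

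Separability is immediate: if $D\subset\calM$ is countable dense, finite counting measures supported on $D$ approximate any $\nu\in\calN$ by first truncating to $\nu_{>\eps}$, which has only finitely many atoms, and then replacing each atom by a nearby point of $D$. The main obstacle is completeness. Given a Cauchy sequence $(\nu_n)$ in $L_{GHP}$, each $(f_k\nu_n)_n$ is Cauchy in $\rho_{LP}$ on the space of finite Borel measures on $(\calM,d_{GHP})$, which is complete since $\calM$ is Polish, and therefore converges weakly to a finite measure $\mu_k$. The delicate step is to show that $\mu_k=f_k\nu$ for a single $\nu\in\calN$. Tightness of the Cauchy sequence bounds uniformly in $n$ the number of atoms of $\nu_n$ lying in $\calM_{>\eta}$ for every $\eta>0$; since $f_k\equiv 1$ on $\calM_{>1/k}$, the restriction of $\mu_k$ to $\calM_{>1/k}$ is a weak limit of integer-valued discrete measures with uniformly bounded atom counts, hence itself a counting measure. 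The compatibility $f_k\equiv f_\ell\equiv 1$ on $\calM_{>1/k}$ for $\ell\geq k$ implies that the restrictions of $\mu_k$ and $\mu_\ell$ to $\calM_{>1/k}$ agree, which allows one to glue them into a single counting measure $\nu$ on $\bigcup_k\calM_{>1/k}$. By construction $\nu$ has no mass-$0$ atom, so $\nu\in\calN$ and $\mu_k=f_k\nu$; the convergence criterion just established then gives $\nu_n\to\nu$ in $L_{GHP}$.
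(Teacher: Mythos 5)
Your argument follows the same route as the paper: push the Cauchy/convergence questions down to the finite measures $f_k\nu$, use completeness of $\rho_{LP}$, and glue the limits via the compatibility on overlapping mass-windows. You are in fact more thorough than the paper in two places: you prove both directions of the ``if and only if'' (the paper only spells out the forward implication), and you explicitly verify that the glued limit $\nu$ is a counting measure, which the paper asserts without comment. One caveat, though. In the completeness step you write that ``the restriction of $\mu_k$ to $\calM_{>1/k}$ is a weak limit of integer-valued discrete measures'' and that the restrictions of $\mu_k$ and $\mu_\ell$ to $\calM_{>1/k}$ agree. Neither of these follows directly from weak convergence: restriction to a Borel set does not commute with weak limits unless the set is a continuity set for the limit, and $\{\mu(X)=1/k\}$ may well carry $\mu_k$-mass. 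This is precisely why the paper compares $\nu_k$ and $\nu_l$ only on the strictly smaller window $\calM_{>1/(k-1)}$, which leaves a buffer strip on which one can place a continuous cutoff function of the total mass (or, equivalently, pick a threshold $\eta\in(1/k,1/(k-1))$ that is not an atom of the total-mass distribution of $\mu_k$). You use exactly this continuity-set device in your proof of the convergence criterion, so the fix is at your disposal; as written, though, the completeness step glosses over it. With that patched, the argument is correct.
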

\begin{proof}
The fact that $L_{GHP}$ is a metric is left to the reader. Let $D$ be a countable set dense in $\{[(X,d,\mu)]\in \calM : \mu(X)\not=0\}$. Let $\calD:=\{\sum_{k=1}^m\delta_{\bfX_{k}}:m\in\NN,\;\bfX_1,\ldots,\bfX_m\in D\}$. It is easy to show that $\calD$ is dense in $(\calN,L_{GHP})$. This shows separability.

Now, suppose that $\nu^{n}$ is a Cauchy sequence for $L_{GHP}$. Then, for any $k\geq 1$, $f_k\nu^{n}$ is a Cauchy sequence of finite measures for $\delta_{LP}$. From the completeness of the Lévy-Prokhorov distance on finite measures on a Polish space we get that for each $k$, there is some measure $\nu_k$ such that
$$\delta_{LP}(f_k\nu^{n},\nu_k)\xrightarrow[n\rightarrow \infty]{}0\;.$$
Notice that if $2\leq k\leq l$, $\nu_k=\nu_l$ on $\calM_{>\frac{1}{k-1}}$. Define
$$\nu=\sup_{k\geq 0}\II_{\calM_{>\frac{1}{k+1}}}\nu_{k+2}$$
so that for any $k\geq 1$, $f_k\nu =\nu_k$. Then, $\nu$ is an element of $\calN$ and 
$$L_{GHP}(\nu^{n},\nu)\xrightarrow[n\rightarrow \infty]{}0\;,$$
showing the completeness of $(\calN,L_{GHP})$.

Finally, suppose that $L_{GHP}(\nu^{n},\nu)$ goes to zero as $n$ goes to infinity and let $\eps>0$ be such that $\nu(\{[(X,d,\mu)]\in \calM\;:\;\mu(X)=\eps\})=0$. Then, for any $\alpha>0$, let $k$ be such that
$$\frac{1}{k}\leq \eps$$
let $N$ be such that 
$$\forall n\geq N,\;\forall A\in\calB(\calM),\;f_k\nu(A)\leq f_k\nu^{n}(A^\alpha)+\alpha\text{ and }f_k\nu^{n}(A)\leq f_k\nu(A^\alpha)+\alpha\;. $$
Then, for $n\geq N$ and $B\in\calB(\calM_{>\eps})$,
\begin{eqnarray*}
\nu_{>\eps}(B)&\leq &\nu(B\cap \calM_{>\eps+\alpha})+\nu(\calM_{>\eps}\setminus\calM_{>\eps+\alpha})\\
&=&f_k\nu(B\cap \calM_{>\eps+\alpha})+\nu(\calM_{>\eps}\setminus\calM_{>\eps+\alpha})\\
&\leq &f_k\nu^{n}((B\cap\calM_{>\eps+\alpha})^\alpha)+\alpha+\nu(\calM_{>\eps}\setminus\calM_{>\eps+\alpha})\\
&= &\nu^{n}_{>\eps}((B\cap\calM_{>\eps+\alpha})^\alpha)+\alpha+\nu(\calM_{>\eps}\setminus\calM_{>\eps+\alpha})\\
&\leq & \nu^{n}_{>\eps}(B^\alpha)+\alpha+\nu(\calM_{>\eps}\setminus\calM_{>\eps+\alpha})
\end{eqnarray*}
where we used the fact that $(B\cap\calM_{>\eps+\alpha})^\alpha\subset \calM_{>\eps}$ and $f_k$ equals $1$ on $\calM_{>\eps}$. Also, for $n\geq N$ and $B\in\calB(\calM_{>\eps})$,
\begin{eqnarray*}
\nu^{n}_{>\eps}(B)& = &f_k\nu^{n}(B)\\
&\leq & f_k\nu(B^\alpha)+\alpha\\
&\leq & \nu(B^\alpha)+\alpha\\
&\leq & \nu(B^\alpha\cap\calM_{>\eps})+\nu(\calM_{>\eps-\alpha}\setminus\calM_{>\eps})+\alpha\\
&=&\nu_{>\eps}(B^\alpha)+\nu(\calM_{>\eps-\alpha}\setminus\calM_{>\eps})+\alpha\\
\end{eqnarray*}
To finish the proof, note that since $\nu(\{[(X,d,\mu)]\in \calM : \mu(X)=\eps\})=0$, then 
$$\nu(\calM_{>\eps}\setminus\calM_{>\eps+\alpha})+\nu(\calM_{>\eps-\alpha}\setminus\calM_{>\eps})\xrightarrow[\alpha\rightarrow 0]{}0\;.$$
\end{proof}

We shall always use the following lemmas to bound $L_{GHP}$ from above.
\begin{lemm}
\label{lemm:Ldeuxappli}
Let $\bfX=(X,d,\mu)$ and $\bfX'=(X',d',\mu')$ belong to $\calN$ and fix $\eps\in]0,\frac{1}{2}]$. Suppose there exists two injective maps
$$\sigma:\comp(\bfX_{>\eps})\rightarrow \comp(\bfX')\mbox{ and }\sigma':\comp(\bfX'_{>\eps})\rightarrow \comp(\bfX)\;$$
such that:
$$\forall m\in \comp(\bfX_{>\eps}),\; d_{GHP}(m,\sigma(m))\leq \alpha$$
and
$$\forall m'\in \comp(\bfX'_{>\eps}),\;d_{GHP}(m',\sigma'(m'))\leq \alpha\;.$$
Then,
$$L_{GHP}(\nu_{\bfX},\nu_{\bfX'})\leq 8\alpha\#\comp(\bfX_{>\eps-\alpha})+16\eps\;,$$
and if $\eps>\alpha$, for $p>0$,
$$L_{GHP}(\nu_{\bfX},\nu_{\bfX'})\leq 8\alpha\frac{\sum_{m\in\comp(\bfX)}\mu(m)^p}{(\eps-\alpha)^p}+16\eps\;,$$
\end{lemm}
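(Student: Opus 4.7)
The plan is to unpack the defining series
\[
L_{GHP}(\nu_{\bfX},\nu_{\bfX'})=\sum_{k\ge 1}2^{-k}\bigl(1\land \rho_{LP}(f_k\nu_{\bfX},f_k\nu_{\bfX'})\bigr)
\]
and bound each Lévy--Prokhorov distance individually, then resum. The two injections $\sigma$ and $\sigma'$ yield a near-pairing of the large components: because $d_{GHP}([m],[\sigma(m)])\le\alpha$ and the definition of $d_{GHP}$ controls total mass (the marginal-defect term $D$ gives $|\mu(m)-\mu(\sigma(m))|\le\alpha$), injectivity ensures the pairings do not overlap. For each $k$, I would use this to build a partial coupling $\pi_k$ on $(\calM\times\calM,d_{GHP})$ supported in the diagonal $\alpha$-strip, charging mass $f_k([m])\wedge f_k([\sigma(m)])$ at $([m],[\sigma(m)])$ for $m\in\comp(\bfX_{>\eps})$, with a symmetric contribution from $\sigma'$ for large components of $\bfX'$ unmatched by $\sigma$.

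Writing $N:=\#\comp(\bfX_{>\eps-\alpha})$, the LP comparison splits into a diagonal $\alpha$-error, a matched-pair defect controlled by $|f_k([m])-f_k([\sigma(m)])|\le k(k+1)\alpha$ (the Lipschitz constant of the piecewise-linear $f_k$ on its ramp $[1/(k+1),1/k]$) summed over at most $N$ pairs, and a residual $R_k$ coming from components with mass in $(1/(k+1),\eps]$, which are seen by $f_k$ but lie outside the domains of $\sigma,\sigma'$. Resumming against $2^{-k}$ and using the identity $\sum_{k\ge 1}2^{-k}k(k+1)=8$ would yield the dominant contribution $\alpha(1+8N)$ announced in the lemma. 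The residual term $\sum_k 2^{-k}R_k$ should be bounded by $16\eps$, exploiting that $R_k$ vanishes whenever $1/(k+1)\ge\eps$, so that the surviving $k$ satisfy $k>1/\eps-1$ and contribute geometrically decaying weights that can be balanced against the cap $\mu(m)\le\eps$ on a small component's mass. The second displayed bound is then immediate from the first via Markov's inequality
\[
\#\comp(\bfX_{>\eps-\alpha})\;\le\;(\eps-\alpha)^{-p}\sum_{m\in\comp(\bfX)}\mu(m)^p,
\]
which is the reason for requiring $\eps>\alpha$.

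The main obstacle I anticipate is the bookkeeping for the residual $R_k$ and pinning down the constant $16$: the number of components with mass in $(1/(k+1),\eps]$ is not uniformly bounded in $k$, so one cannot simply count them. The fix is to either exploit a symmetric cancellation with the corresponding small components of $\bfX'$ (they occur on both sides of the LP inequality), or to rely on the super-exponential decay $\sum_{k\ge \lceil 1/\eps\rceil}2^{-k}\le 2^{1-1/\eps}$, which is dominated by $16\eps$ uniformly in $\eps$, together with the symmetrisation factor of the Lévy--Prokhorov distance. Tracking these factors of two through the shift from $\eps$ to $\eps-\alpha$ in the component count is what ultimately yields the stated constants.
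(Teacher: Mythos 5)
Your proposal is correct and matches the paper's proof in all essentials: split the series at $k\approx 1/\eps$, for the small range (i.e.\ $1/(k+1)\ge\eps$) bound $\rho_{LP}(f_k\nu_{\bfX},f_k\nu_{\bfX'})$ via the $k(k+1)$-Lipschitz constant of $f_k$ and the pairings given by $\sigma,\sigma'$ (noting that mass control from $d_{GHP}\le\alpha$ forces $\sigma'$ to land in $\comp(\bfX_{>\eps-\alpha})$), resum using $\sum_{k\ge1}2^{-k}k(k+1)=8$, bound the complementary tail $\sum_{k\ge 1/\eps-1}2^{-k}$ by $16\eps$ using the cap $1\land\rho_{LP}\le 1$, and deduce the second inequality by Markov. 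The obstacle you flag concerning $R_k$ does not in fact arise: $R_k$ only lives on the range $k>1/\eps-1$, and there one bounds the entire capped term by $1$ rather than dissecting it — exactly your second proposed fix (the geometric tail), which is what the paper does, so no counting of sub-$\eps$ components is ever required.
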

\begin{proof}
Consider any $\eps_0\geq\eps$. For a component $m$ in $\comp(\bfX_{>\eps_0})$, the difference between the masses $\mu(m)$ and $\mu'(\sigma(m))$ is at most $\alpha$, and the same holds between $m'$ and $\sigma'(m')$ when $m'\in\comp(\bfX'_{>\eps_0})$. Thus $\sigma'$ sends $\comp(\bfX'_{>\eps_0})$  into $\comp(\bfX_{>\eps_0-\alpha})$ and 
$$\#\{m\in \comp(\bfX'_{>\eps_0})\}\leq \#\{m\in \comp(\bfX_{>\eps_0-\alpha})\}\;.$$
Now, let $k\geq 1$ be such that
$$\frac{1}{k+1}\geq \eps\;.$$
Let $B\in\calB(\calM_{>\frac{1}{k+1}})$. Then, for any $m$ in $\comp(\bfX_{>\frac{1}{k+1}})\cap B$, $\sigma(m)$ belongs to $\comp(\bfX')\cap B^\alpha$. Then, notice that $f_k$ is $k(k+1)$-Lipschitz.
\begin{eqnarray*}
f_k\nu_{\bfX}(B)&=&\sum_{m\in\comp(\bfX_{>\frac{1}{k+1}})\cap B}f_k(m)\\
&\leq & \sum_{m\in\comp(\bfX_{>\frac{1}{k+1}})\cap B}f_k(\sigma(m))+\alpha k(k+1)\#\comp(\bfX_{>\frac{1}{k+1}})\\
&\leq & \sum_{\substack{m'\in\comp(\bfX')\\ m'\in B^\alpha}}f_k(m')+\alpha k(k+1)\#\comp(\bfX_{>\frac{1}{k+1}})\\
&=&f_k\nu_{\bfX'}(B^{\alpha})+\alpha k(k+1)\#\comp(\bfX_{>\frac{1}{k+1}})
\end{eqnarray*}
and symmetrically
$$f_k\nu_{\bfX'}(B)\leq f_k\nu_{\bfX}(B^{\alpha})+\alpha k(k+1)\#\comp(\bfX'_{>\frac{1}{k+1}})$$
Thus, for any $k\geq 1$ such that $\frac{1}{k+1}\geq \eps$,
\begin{eqnarray*}
\delta_{LP}(f_k\nu_{\bfX},f_k\nu_{\bfX'})&\leq &\alpha k(k+1)\#\comp(\bfX_{>\frac{1}{k+1}})\lor\#\comp(\bfX'_{>\frac{1}{k+1}})\\
&\leq &\alpha k(k+1)\#\comp(\bfX_{>\eps-\alpha})
\end{eqnarray*}
Thus,
\begin{eqnarray*}
  &&L_{GHP}(\nu_{\bfX},\nu_{\bfX'})\\
  &\leq & \alpha\#\comp(\bfX_{>\eps-\alpha})\sum_{k< \frac{1}{\eps}-1}2^{-k}k(k+1)+\sum_{k\geq \frac{1}{\eps}-1}2^{-k}\\
&\leq & 8\alpha\#\comp(\bfX_{>\eps-\alpha})+16\eps
\end{eqnarray*}

\end{proof}

If $\bfX$ and $\bfX'$ are two m.s-m.s with a finite number of finite components, one may measure their distance with $d_{GHP}$ (using Definition~\ref{defi:GHPMiermont}), with $L_{GHP}$ (using Definition~\ref{defi:dGHPcountableproduct}) or with 
$$1\land\inf_{\sigma}\sup_{m\in\comp(\bfX)}d_{GHP}(m,\sigma(m))=1\land \delta_{LP}(\nu_{\bfX},\nu_{\bfX'})$$
where the infimum is over bijections $\sigma$ between $\comp(\bfX)$ and $\comp(\bfX')$. Those three distances do not necessarily coincide, and the following lemma clarifies the links between them.
\begin{lemm}
\label{lemm:3GHP}
Let $\bfX=(X,d,\mu)$ and $\bfX'=(X',d',\mu')$ be two m.s-m.s in $\calN$ with a finite number of components. 
\begin{enumerate}[(i)]
\item If $d_{GHP}(\bfX,\bfX')<\infty$, then there is a bijection $\sigma$ from $\comp(\bfX)$ to $\comp(\bfX')$ such that:
$$\forall m\in\comp(\bfX),d_{GHP}(m,\sigma(m))\leq 2d_{GHP}(\bfX,\bfX')\;,$$
and thus,
$$L_{GHP}(\bfX,\bfX')\leq 16d_{GHP}(\bfX,\bfX')\#\comp(\bfX)\;.$$
\item If there exists a bijection $\sigma$ from $\comp(\bfX)$ to $\comp(\bfX')$ such that:
$$\sup_{m\in \comp(\bfX)}d_{GHP}(m,\sigma(m))<\infty\;,$$
then,
$$d_{GHP}(\bfX,\bfX')\leq \sup_{m\in \comp(\bfX)}d_{GHP}(m,\sigma(m))\#\comp(\bfX)\;,$$
and 
$$L_{GHP}(\bfX,\bfX')\leq 8 \sup_{m\in \comp(\bfX)}d_{GHP}(m,\sigma(m)) \#\comp(\bfX)\;.$$
\end{enumerate}
\end{lemm}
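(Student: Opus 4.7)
My plan for part (i) is to leverage Lemma~\ref{lemm:MiermontHoscheit} to reduce to a single ambient semi-metric. Fix $\eta>0$ and pick a semi-metric $\delta$ on $X\sqcup X'$ extending $d$ and $d'$ with $\delta_H(X,X')\vee \delta_{LP}(\mu,\mu')\leq \beta:=2d_{GHP}(\bfX,\bfX')+\eta$. The key observation is that the component structure is preserved under this embedding: if $m\in\comp(\bfX)$ and $x,y\in m$, then any $\beta$-mates $x',y'\in X'$ of $x$ and $y$ (provided by the Hausdorff bound) satisfy $d'(x',y')\leq d(x,y)+2\beta<\infty$, so $x'$ and $y'$ share a component of $\bfX'$. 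This produces a well-defined map $\sigma:\comp(\bfX)\to \comp(\bfX')$, and by symmetry an inverse, hence a bijection. Restricting the argument to the pair $(m,\sigma(m))$ gives $\delta_H(m,\sigma(m))\leq \beta$ and, since any $B\subset m$ has $B^\beta\cap X'\subset \sigma(m)$, also $\delta_{LP}(\mu|_m,\mu'|_{\sigma(m)})\leq \beta$. Applying the other direction of Lemma~\ref{lemm:MiermontHoscheit} yields $d_{GHP}(m,\sigma(m))\leq \beta$, and letting $\eta\to 0$ gives the claimed $\leq 2d_{GHP}(\bfX,\bfX')$. The $L_{GHP}$ bound then follows from Lemma~\ref{lemm:Ldeuxappli} with $\alpha=2d_{GHP}(\bfX,\bfX')$ and $\eps$ chosen smaller than the minimum component mass of $\bfX$ and $\bfX'$; this forces $\comp(\bfX_{>\eps-\alpha})=\comp(\bfX)$, and $\eps\downarrow 0$ absorbs the $16\eps$ residue.

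For part (ii), set $\alpha:=\sup_m d_{GHP}(m,\sigma(m))$ and, for each $m\in\comp(\bfX)$, pick a correspondence $\calR_m\subset m\times \sigma(m)$ and a coupling $\pi_m$ achieving $d_{GHP}(m,\sigma(m))$ up to additive slack $\eta/\#\comp(\bfX)$. Assemble $\calR:=\bigsqcup_m \calR_m$ and $\pi:=\sum_m \pi_m$. Distinct components of $\bfX$ (resp.\ of $\bfX'$, via the bijection $\sigma$) are at $d$-distance (resp.\ $d'$-distance) $+\infty$, so the distortion inequalities between pairs of points of $\calR$ sitting in different blocks become the vacuous $\infty\leq \infty+\eps$; hence $\dis(\calR)=\max_m\dis(\calR_m)\leq 2\alpha+O(\eta)$. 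The defect $\pi(\calR^c)=\sum_m\pi_m(\calR_m^c)$ and the variation gap $D(\pi;\mu,\mu')\leq \sum_m D(\pi_m;\mu|_m,\mu'|_{\sigma(m)})$ decompose as sums over the $\#\comp(\bfX)$ blocks, with each summand at most $d_{GHP}(m,\sigma(m))+O(\eta/\#\comp(\bfX))$. Taking the maximum of the three quantities and letting $\eta\to 0$ gives $d_{GHP}(\bfX,\bfX')\leq \alpha\#\comp(\bfX)$, and Lemma~\ref{lemm:Ldeuxappli} supplies the $L_{GHP}$ bound exactly as in (i).

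The only delicate point is the componentwise bijection in (i). A direct attempt that restricts a near-optimal global witness $(\calR,\pi)$ for $d_{GHP}(\bfX,\bfX')$ to each candidate pair of blocks leaks mass through the marginals of $\pi$ and seems to degrade the sharp factor $2$ to something like $4$. Passing through the common ambient semi-metric provided by Lemma~\ref{lemm:MiermontHoscheit} sidesteps the splitting entirely: once both spaces live inside one $\delta$, the fact that finite-distance clusters stay finite-distance clusters forces the correspondence between components to be canonical, and the pairwise Hausdorff and Prokhorov bounds become restrictions of global ones. The remainder of both parts is bookkeeping that I would organise so that the slack parameter $\eta$ and, in (i), the auxiliary $\eps$ from Lemma~\ref{lemm:Ldeuxappli} can be sent to zero independently.
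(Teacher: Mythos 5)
Your proof of part (ii) is essentially the paper's argument: assemble $\calR=\bigcup_m\calR_m$ and $\pi=\sum_m\pi_m$, observe that the distortion is governed by the worst block because cross-block distances are $+\infty$ on both sides (so the distortion inequalities are vacuous there), and note that $\pi(\calR^c)$ and $D(\pi;\mu,\mu')$ decompose as sums over the $\#\comp(\bfX)$ blocks. Nothing to add there.

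For part (i) you take a genuinely different route, and it is worth comparing. The paper picks a near-optimal global witness $(\calR,\pi)$ with $D(\pi;\mu,\mu')\lor\tfrac{1}{2}\dis(\calR)\lor\pi(\calR^c)\leq\eps$, reads the component bijection off the fact that a finite-distortion correspondence maps infinite distances to infinite distances, and then restricts $\calR$ and $\pi$ to each $m\times\sigma(m)$; the distortion and defect terms restrict with no loss, while the marginal defects $\|\pi|_1-\mu|_m\|$ and $\|\pi|_2-\mu'|_{\sigma(m)}\|$ each pick up an extra $\pi(\calR^c)$ of leaked mass. You instead pass through Lemma~\ref{lemm:MiermontHoscheit}: embed both spaces in a common ambient semi-metric $\delta$ on $X\cup X'$ with $\delta_H\lor\delta_{LP}\leq 2d_{GHP}(\bfX,\bfX')+\eta$, observe that each $\delta$-finite-distance class contains exactly one component of each side (giving $\sigma$ for free, since $\delta$ extends $d$ and $d'$), restrict $\delta$ to $m\cup\sigma(m)$ — which preserves both the Hausdorff and Prokhorov bounds because the $\beta$-enlargement of a subset of $m$ can only meet $\sigma(m)$ inside $X'$ — and convert back with the right-hand inequality $d_{GHP}\leq\tilde d_{GHP}$. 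This route buys a very clean bookkeeping of the constant: the factor $2$ is exactly the constant in Lemma~\ref{lemm:MiermontHoscheit} and nothing else enters. Your caution about the direct restriction is in fact well taken — tracking the total-variation marginal defects through the restriction honestly gives $D(\pi|_{m\times\sigma(m)};\mu|_m,\mu'|_{\sigma(m)})$ up to roughly twice the claimed bound, which the paper's one-line leap from the pointwise bound to $d_{GHP}(m,\sigma(m))\leq 2\eps$ does not quite spell out — so your detour through Lemma~\ref{lemm:MiermontHoscheit} recovers the stated constant more transparently. The trade-off is the added dependency on Lemma~\ref{lemm:MiermontHoscheit} and hence on Strassen's theorem, where the paper's restriction argument is entirely self-contained. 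Your application of Lemma~\ref{lemm:Ldeuxappli} at the end (take $\eps$ below the smallest component mass so that $\comp(\bfX_{>\eps-\alpha})=\comp(\bfX)$, then send $\eps\downarrow 0$) is the same as the paper's.
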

\begin{proof}
{\it Proof of (i).} Suppose that $d_{GHP}(\bfX,\bfX')<\eps<\infty$. Let $\calR\in C(X,X')$ and $\pi\in M(X,X')$ be such that
$$D(\pi;\mu,\mu')\lor \frac{1}{2}\dis(\calR)\lor \pi(\calR^c)\leq \eps\;.$$
Since $\calR$ has finite distortion,
  $$\forall (x,x'),(y,y')\in\calR, d(x,y)=+\infty\Leftrightarrow d'(x',y')=+\infty$$
which shows that each component $m$ of $\bfX$ (resp. $\bfX'$) is in correspondence through $\calR$ with exactly one component $\sigma(m)$ of $\bfX'$ (resp. $\bfX$). $\sigma$ is thus a bijection, and $\calR\cap m\times \sigma(m)\in C(m,\sigma(m))$ and has distortion at most $2\eps$. Furthermore, 
$$\pi|_{m\times\sigma(m)}((\calR\cap m\times \sigma(m))^c)=\pi(\calR^c\cap m\times \sigma(m))\leq \eps\;.$$
Finally, for any $A\in\calB(m)$,
\begin{eqnarray*}
|\pi|_{m\times\sigma(m)}(A\times \sigma(m))-\mu|_m(A)|&\leq & |\pi(A\times X')-\mu(A)|
  +\pi(A\times \sigma(m)^c)\\
&\leq & \eps +\pi(\calR^c)\\
&\leq & 2\eps
\end{eqnarray*}
and similarly, for any $A'\in\calB(\sigma(m))$, 
$$|\pi|_{m\times\sigma(m)}(m\times A')-\mu'|_{\sigma(m)}(A')|\leq 2\eps\;.$$
Thus, 
$$\forall m\in\comp(\bfX),d_{GHP}(m,\sigma(m))\leq 2\eps\;,$$
and the consequence on $L_{GHP}(\bfX,\bfX')$ comes from Lemma~\ref{lemm:Ldeuxappli} applied for any $\eps>0$ small enough, and letting $\eps$ go to zero. 

{\it Proof of (ii).} Suppose that there exists a bijection $\sigma$ from $\comp(\bfX)$ to $\comp(\bfX')$ such that:
$$\sup_{m\in \comp(\bfX)}d_{GHP}(m,\sigma(m))\leq \eps <\infty\;.$$
Then, for any $m$, let $\calR_m\in C(m,\sigma(m))$ and $\pi_m\in M(m,\sigma(m))$ be such that
$$D(\pi_m;\mu|_m,\mu|_{\sigma(m)}')\lor \frac{1}{2}\dis(\calR_m)\lor \pi_m(\calR_m^c)\leq \eps\;.$$
Let $\pi=\sum_{m\in\comp(\bfX)}\pi_m$ and $\calR=\cup_{m\in\comp(\bfX)}\calR_m$. Then, $\calR$ is a correspondence between $X$ and $X'$,
$$\frac{1}{2}\dis(\calR)\leq \frac{1}{2}\sup_{m}\dis(\calR_m)\leq \eps\;,$$
$$\pi(\calR^c)=\sum_{m}\pi_m(\calR_m^c)\leq\#\comp(\bfX)\eps\;.$$
Furthermore, for any $A\in\calB(X)$,
\begin{eqnarray*}
|\pi(A\times X')-\mu(A)|&\leq & \sum_{m\in\comp(\bfX)}|\pi((A\cap m)\times X')-\mu(A\cap m)|\\
&=& \sum_{m\in\comp(\bfX)}|\pi((A\cap m)\times \sigma(m))-\mu(A\cap m)|\\
&\leq & \#\comp(\bfX)\eps
\end{eqnarray*}
and symetrically, for any $A'\in\calB(X')$,
$$|\pi(X\times A')-\mu'(A')|\leq \#\comp(\bfX)\eps$$
Thus
$$D(\pi;\mu,\mu')\leq \#\comp(\bfX)\eps\;,$$
and we get
$$d_{GHP}(\bfX,\bfX')\leq \#\comp(\bfX)\eps\;.$$
The statement on $L_{GHP}(\bfX,\bfX')$ follows from the hypothesis and Lemma~\ref{lemm:Ldeuxappli} applied for any $\eps>0$ small enough, and letting $\eps$ go to zero. 
\end{proof}
Let us end this section with two remarks. First, notice that $L_{GHP}$ makes sense even between counting measures whose atoms are semi-metric spaces. Notice also that $\delta_{LP}(\nu_{\bfX},0)$ is at most the number of connected components of $\bfX$. Thus
$$L_{GHP}(\nu_{\bfX},0)\leq 2^{2-(\sup_{m\in\comp(\bfX)}\mu(m))^{-1}}\;.$$
One sees thus that if $\bfX^{n}$ is a sequence of m.s-m.s such that
$$\sup_{m\in\comp(\bfX^{n})}\mu(m)\xrightarrow[n\rightarrow +\infty]{}0$$
and whatever the diameters of the components are, then $\nu_{\bfX^{n}}$ converges to zero for $L_{GHP}$, which can be seen as an empty collection of measured metric spaces. Notably, $\supdiam$ is not continuous with respect to the $L_{GHP}$-distance.
\section{Proofs of the main results for coalescence}
\label{sec:coal}
\subsection{The Coalescent on $\calN_1$}
\label{sec:FellerN1}

On $\calN_1$, coalescence behaves very gently since there is a finite number of coalescence events in any finite time interval. Notably, for $\bfX\in\calN_1$, $(\Coal_\delta(\bfX,t))_{t\geq 0}$ is clearly c\`adl\`ag. The aim of this section is to prove Proposition~\ref{prop:FellerN1}, which is essentially a Feller property, together with a variant, Proposition~\ref{prop:FellerN1surplus}.
\begin{lemm}
\label{lemm:AdBrGoMigluingGHP}
Let $\eps\in]0;1[$, $\bfX=(X,d,\mu)$ and $\bfX'=(X',d',\mu')$ be two m.s-m.s with a finite number of finite components.

      Suppose that $P=\{(x_i , y_i),\;1 \leq i \leq  k\}$ are pairs of points in $X$ and $P'=\{(x'_i , y'_i),\; 1 \leq i \leq  k\}$ are pairs of points in $X'$. Suppose that there exists $\pi\in M(X,X')$ and $\calR\in C(X,X')$ such that:
$$D(\pi;\mu,\mu')\lor \pi(\calR^c)\lor \frac{1}{2}\dis(\calR)\leq \eps$$
and that for any $i\leq k$, $(x_i , x'_i)\in\calR$ and $(y_i , y'_i)\in\calR$. Then, for any $\delta,\delta'>0$,
$$d_{GHP}(\Coal_{\delta}(\bfX,P),\Coal_{\delta'}(\bfX',P'))\leq (2\eps+|\delta-\delta'|)(k+1)$$
\end{lemm}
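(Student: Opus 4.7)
The plan is to construct an explicit correspondence $\tilde{\calR}$ and coupling $\tilde\pi$ between the glued spaces $\Coal_\delta(\bfX,P)$ and $\Coal_{\delta'}(\bfX',P')$ that extend the data $(\calR,\pi)$ provided by the hypothesis, then bound separately the three quantities appearing in Definition~\ref{defi:GHPMiermont}. Concretely, I keep $\calR$ on $X\times X'$ and, for each $i\leq k$, pair the glued interval $I_i$ (of length $\delta$ joining $x_i$ to $y_i$) with the glued interval $I'_i$ (of length $\delta'$ joining $x'_i$ to $y'_i$) by the unique orientation-preserving bijection, so that the point at distance $s\delta$ from $x_i$ on $I_i$ corresponds to the point at distance $s\delta'$ from $x'_i$ on $I'_i$. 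Since gluing does not alter the underlying measures, I take $\tilde\pi=\pi$ via the natural inclusions $X\hookrightarrow \Coal_\delta(\bfX,P)$ and $X'\hookrightarrow \Coal_{\delta'}(\bfX',P')$. The intervals carry no mass, so $\|\tilde\pi_1-\mu\|=\|\pi_1-\mu\|\leq \eps$, $\|\tilde\pi_2-\mu'\|\leq\eps$, and $\tilde\pi(\tilde\calR^c)=\pi(\calR^c)\leq\eps$; in particular $D(\tilde\pi;\mu,\mu')\lor \tilde\pi(\tilde\calR^c)\leq 2\eps$, which is well below the target.

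The main work is bounding $\dis(\tilde\calR)$. Fix $(a,b),(c,e)\in\tilde\calR$; I first reduce to the case $a,c\in X$, $b,e\in X'$. If $a$ lies in the interior of $I_i$ at distance $s\delta$ from $x_i$, then $d_\delta(a,x_i)=s\delta$ and $d_\delta(a,y_i)=(1-s)\delta$, while the partner $b$ satisfies the analogous equalities with $\delta'$. Replacing $a$ by whichever endpoint of $I_i$ minimises the sum, and $b$ by the corresponding endpoint, costs at most $|\delta-\delta'|$ in each of $d_\delta(a,c)$ and $d_{\delta'}(b,e)$; the same applies for $c,e$.

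For $a,c\in X$, I invoke Remark~\ref{rem:deltagluing}~(ii): for any $\eta>0$ there is a sequence $p_0=a, q_1, p_1, \ldots, p_{k'-1}, q_{k'}=c$ with $(q_i,p_{i+1})\in P$ such that
$$d_{P,\delta}(a,c)\geq (k'-1)\delta+\sum_{i=1}^{k'}d(p_i,q_i)-\eta.$$
A standard shortcut-removal argument shows that one may take $k'\leq k+1$: if the same pair of $P$ were used at two different indices, one could delete the intermediate loop and strictly decrease the cost. Using the hypothesis, each pair $(q_i,p_{i+1})=(x_{\sigma(i)},y_{\sigma(i)})$ of $P$ has a partner pair $(x'_{\sigma(i)},y'_{\sigma(i)})$ of $P'$ whose endpoints are $\calR$-matched to $q_i$ and $p_{i+1}$. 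I set $p'_1=b$, $q'_{k'}=e$, and for the intermediate $p_i,q_i$ I choose $p'_i,q'_i$ so that $(p_i,p'_i),(q_i,q'_i)\in\calR$ and $(q'_i,p'_{i+1})$ is exactly the matching pair of $P'$. The distortion bound $\dis(\calR)\leq 2\eps$ then gives $d'(p'_i,q'_i)\leq d(p_i,q_i)+2\eps$, so
$$d_{P',\delta'}(b,e)\leq (k'-1)\delta'+\sum_{i=1}^{k'}\bigl(d(p_i,q_i)+2\eps\bigr)\leq d_{P,\delta}(a,c)+(k'-1)|\delta-\delta'|+2k'\eps+\eta.$$
Letting $\eta\to 0$, using $k'\leq k+1$, and adding the at most $2|\delta-\delta'|$ of slack coming from endpoints inside intervals, yields
$$d_{P',\delta'}(b,e)\leq d_{P,\delta}(a,c)+2(k+1)\eps+(k+1)|\delta-\delta'|,$$
and the reverse inequality is obtained by exchanging the roles. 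Hence $\tfrac12\dis(\tilde\calR)\leq (2\eps+|\delta-\delta'|)(k+1)$, and combining with the earlier bounds on $\tilde\pi$ gives the claim.

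The only delicate point is the bookkeeping in the distortion step: justifying that near-minimising sequences can be taken to use each pair at most once (so $k'\leq k+1$), and matching each intermediate pair in $X$ to the canonical corresponding pair in $X'$ through $\calR$ without loss. Everything else reduces to straightforward triangle-inequality estimates once the correspondence between the two collections of glued intervals has been set up proportionally.
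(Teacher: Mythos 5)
Your proof is correct and takes the approach that the paper's one-line citation to \cite{AdBrGolimit} and \cite{AdBrGoMi2013arXiv} points to: extend $\calR$ to $\tilde\calR$ by matching the glued intervals proportionally, keep $\tilde\pi=\pi$ (so the $D$ and $\pi(\calR^c)$ terms are unchanged), and bound the distortion via the path-decomposition formula of Remark~\ref{rem:deltagluing}~(ii) together with shortcut removal to cap the number of segments at $k'\leq k+1$. Two small imprecisions are worth flagging, though neither affects the conclusion: the identity $d_\delta(a,x_i)=s\delta$ is not exact in general (the path through $y_i$ could be shorter), but the relation you actually use, $d_\delta(a,c)=\min\bigl(s\delta+d_\delta(x_i,c),(1-s)\delta+d_\delta(y_i,c)\bigr)$, is correct; and the arithmetic in the displayed distortion bound should read $(k+2)|\delta-\delta'|$ rather than $(k+1)|\delta-\delta'|$ (the $(k'-1)\le k$ shortcuts contribute $k|\delta-\delta'|$ and the two interior endpoints contribute another $2|\delta-\delta'|$). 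Since you only need $\tfrac12\dis(\tilde\calR)\le (2\eps+|\delta-\delta'|)(k+1)$ and $\tfrac12\bigl(2(k+1)\eps+(k+2)|\delta-\delta'|\bigr)=(k+1)\eps+\tfrac{k+2}{2}|\delta-\delta'|\le (k+1)(2\eps+|\delta-\delta'|)$, the stated bound still holds.
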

\begin{proof}
This is essentially\footnote{\cite[Lemma~4.2]{AdBrGoMi2017} is stated for trees and for $\delta=0$.} Lemma~21 in \cite{AdBrGolimit} and Lemma~4.2 in \cite{AdBrGoMi2017}, thus we leave the details to the reader.
\end{proof}

\begin{lemm}
\label{lemm:couplageN1}
Let $\eps\in]0;1[$ and $\delta>0$, $\bfX=(X,d,\mu)$ and $\bfX'=(X',d',\mu')$ be two m.s-m.s with a finite number of finite components. If there exists $\pi\in M(X,X')$ and $\calR\in C(X,X')$ such that:
$$D(\pi;\mu,\mu')\lor \pi(\calR^c)\lor \frac{1}{2}\dis(\calR)\leq \eps$$
then, one may couple two Poisson random sets: $\calP$ of intensity $\frac{1}{2}\mu^{\otimes 2}\otimes\leb_{[0,T]}$ and $\calP'$ of intensity $\frac{1}{2}(\mu')^{\otimes 2}\otimes\leb_{[0,T]}$, such that with probability larger than $1-T\eps(10+8\mu(X)+8\mu'(X'))-\sqrt{2\eps+|\delta-\delta'|}$, for any $t\leq T$,
$$d_{GHP}(\Coal_{\delta}(\bfX,\calP_t),\Coal_{\delta'}(\bfX',\calP'_t))\leq (T\mu(X)^2+1)\sqrt{2\eps+|\delta-\delta'|}\;.$$
\end{lemm}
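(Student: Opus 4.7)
\medskip

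\noindent\textbf{Plan of proof.}

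The plan is to couple $\calP$ and $\calP'$ by a single driving Poisson process, using $\pi$ to pair atoms, and then apply Lemma~\ref{lemm:AdBrGoMigluingGHP} pointwise in time on the event that all paired atoms are ``good''.

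First I would build the coupling. Since $\pi$ does not have the right marginals, I enlarge $X$ and $X'$ by cemetery points $\dagger,\dagger'$. I extract a sub-measure $\hat\pi\leq \pi$ with $\hat\pi_1\leq \mu$ and $\hat\pi_2\leq \mu'$ by removing at most $\|\pi_1-\mu\|_+ + \|\pi_2-\mu'\|_+ \leq 2\eps$ units of mass, then extend $\hat\pi$ to a measure $\tilde\pi$ on $(X\cup\{\dagger\})\times(X'\cup\{\dagger'\})$ satisfying $\tilde\pi_1=\mu+c\delta_\dagger$ and $\tilde\pi_2=\mu'+c'\delta_{\dagger'}$ with $c,c'\leq 3\eps$. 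Let $\tilde\calP$ be a Poisson random set on $\big((X\cup\{\dagger\})\times(X'\cup\{\dagger'\})\big)^2\times[0,T]$ with intensity $\tfrac12 \tilde\pi^{\otimes 2}\otimes\leb$. Project to $X^2\times[0,T]$ by keeping only atoms $((a,a'),(b,b'),t)$ with $a,b\in X$; the resulting point process has intensity $\tfrac12\mu^{\otimes 2}\otimes\leb$ and defines $\calP$. Define $\calP'$ symmetrically. I call a paired atom \emph{good} if $a,b\in X$, $a',b'\in X'$, and $(a,a'),(b,b')\in\calR$.

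Second, I bound the probability of at least one bad atom on $[0,T]$. Each failure mode ($a=\dagger$, $b=\dagger$, $a'=\dagger'$, $b'=\dagger'$, $(a,a')\notin \calR$, $(b,b')\notin\calR$) contributes an expected count at most of the form $\tfrac12 T\cdot 2\tilde\pi(\text{bad})\cdot \tilde\pi(X\cup\{\dagger\}\times X'\cup\{\dagger'\})$. Using $\tilde\pi(\calR^c)\leq \pi(\calR^c)+2\eps\leq 3\eps$, $\tilde\pi(\{\dagger\}\times\cdot)\leq 3\eps$, $\tilde\pi(\cdot\times\{\dagger'\})\leq 3\eps$, and $\tilde\pi(\text{total})\leq \mu(X)+\mu'(X')+ O(\eps)$, the total expected number of bad atoms is bounded by $T\eps(10+8\mu(X)+8\mu'(X'))$ (after consolidating the six failure modes and the elementary numerical constants). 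A union bound gives that this is also a bound on the probability that some bad atom occurs.

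Third, on the event $G$ that all atoms up to time $T$ are good, Lemma~\ref{lemm:AdBrGoMigluingGHP} applies uniformly in $t\leq T$: if $k$ is the total number of (good) atoms in $[0,T]$, then for every $t\leq T$,
\[
d_{GHP}\bigl(\Coal_{\delta}(\bfX,\calP_t),\Coal_{\delta'}(\bfX',\calP'_t)\bigr) \leq (2\eps+|\delta-\delta'|)(k+1).
\]
To turn this into the stated bound I choose the threshold $k_0 := \lfloor (T\mu(X)^2+1)/\sqrt{2\eps+|\delta-\delta'|}\rfloor$ and observe that on $G\cap\{k\leq k_0\}$ the right-hand side is at most $(T\mu(X)^2+1)\sqrt{2\eps+|\delta-\delta'|}$. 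Since $\mathbb{E}[k]\leq \tfrac12 T\mu(X)^2 + O(T\eps\mu(X))$, Markov's inequality gives $\PP(k>k_0)\leq \sqrt{2\eps+|\delta-\delta'|}$ (up to the constants already absorbed in the first error term). Combining the two failure probabilities with a union bound yields the claim.

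The main obstacle is the first step: constructing $\tilde\pi$ cleanly so that the paired Poisson process simultaneously (i) projects to the correct marginal intensities $\tfrac12\mu^{\otimes 2}$ and $\tfrac12(\mu')^{\otimes 2}$, and (ii) pairs atoms tightly enough along $\calR$ that the expected bad-atom count is $O(T\eps(\mu(X)+\mu'(X')))$. The Jordan decomposition of $\pi_1-\mu$ and $\pi_2-\mu'$ does the bookkeeping, but one must be careful that removing mass from $\pi$ to enforce $\hat\pi_1\leq\mu$ and $\hat\pi_2\leq\mu'$ does not inflate $\pi(\calR^c)$ beyond $O(\eps)$, which is guaranteed since we only subtract a non-negative measure of total mass $\leq 2\eps$.
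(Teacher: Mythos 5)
Your proof is correct and the overall skeleton — pair atoms along $\calR$ via a coupling built from $\pi$, restrict to the good event, apply Lemma~\ref{lemm:AdBrGoMigluingGHP} uniformly in $t$, then truncate the number of atoms by Markov — is the same as the paper's. Where you genuinely diverge is the coupling construction itself. The paper invokes the coupling characterization of total variation together with the gluing lemma (Villani) to manufacture three Poisson processes $P$, $\tilde P$, $P'$ distributed according to $\mu^{\otimes 2}$, $\pi^{\otimes 2}$, $(\mu')^{\otimes 2}$ respectively, with $\PP(P\neq\text{proj}(\tilde P))$ controlled by the TV distance $\|\calP(\mu^{\otimes 2}\times\leb)-\calP(\pi_1^{\otimes 2}\times\leb)\|$, which is then bounded by a chain of inequalities involving $\|\mu^{\otimes 2}-\pi_1^{\otimes 2}\|\leq(\mu(X)+\pi_1(X))\|\mu-\pi_1\|$. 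You instead modify $\pi$ into a measure $\tilde\pi$ on $(X\cup\{\dagger\})\times(X'\cup\{\dagger'\})$ with exact marginals $\mu+c\delta_\dagger$ and $\mu'+c'\delta_{\dagger'}$, run a single Poisson process of intensity $\tfrac12\tilde\pi^{\otimes 2}\otimes\leb$ on the paired space, and obtain $\calP$ and $\calP'$ as two projections. This is more constructive, avoids the TV-gluing black box, and makes the bad-atom bookkeeping a transparent first-moment estimate: $\tilde\pi^{\otimes 2}(\text{bad})\leq\tilde\pi(\text{total})^2-\hat\pi(\calR)^2\leq(\mu(X)+3\eps)^2-(\mu(X)-4\eps)^2$, giving an expected bad count $\lesssim 7T\eps\,\mu(X)$, which is actually sharper than the paper's $T\eps(10+8\mu(X)+8\mu'(X'))$. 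Two small remarks: (a) the removal to go from $\pi$ to $\hat\pi$ costs at most $\|(\pi_1-\mu)_+\|+\|(\pi_2-\mu')_+\|\leq D(\pi;\mu,\mu')\leq\eps$, so your ``$\leq 2\eps$'' is a loose but harmless overestimate; (b) the cemetery part of $\tilde\pi$ is supported off $X\times X'$, so $\tilde\pi(\calR^c)\leq\hat\pi(\calR^c)+c+c'$, not $\pi(\calR^c)+2\eps$ as you wrote — but since you ultimately bound $\tilde\pi^{\otimes 2}(\text{bad})$ by the complement-of-a-square argument rather than summing the six failure modes, this does not affect the conclusion. Both your and the paper's final Markov step silently absorb an additive $(2\eps+|\delta-\delta'|)$ error into the target $(T\mu(X)^2+1)\sqrt{2\eps+|\delta-\delta'|}$, which requires $2\eps+|\delta-\delta'|\leq 1$; this is implicit in both arguments and is harmless in the intended applications.
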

\begin{proof}
Let $\calP(\mu)$ denote the distribution of a Poisson random set of intensity measure $\mu$.  Using the coupling characterization of total variation distance and the gluing lemma  (cf \cite[p.~23]{Villani09}), one may construct three Poisson random sets on the same probability space, $P$, $\tilde P$ and $P'$ such that:
\begin{enumerate}[(i)]
\item $P=(X_i,Y_i,t_i)_{i=1,\ldots,N}$ has distribution $\calP(\frac{1}{2}\mu^{\otimes 2}\times\leb_{[0,T]})$,
\item $P'=(X'_i,Y'_i,t'_i)_{i=1,\ldots,N'}$ has distribution $\calP(\frac{1}{2}(\mu')^{\otimes 2}\times\leb_{[0,T]})$,
\item $\tilde P=(\tilde X_i,\tilde X_i',\tilde Y_i,\tilde Y_i',\tilde t_i)_{i=1,\ldots, \tilde N}$ has distribution $\calP(\frac{1}{2}\pi^{\otimes 2}\times\leb_{[0,T]})$
\end{enumerate}
and furthermore:
\begin{eqnarray*}
&&\PP[(X_i,Y_i,t_i)_{i=1,\ldots,N}\not=(\tilde X_i,\tilde Y_i,\tilde t_i)_{i=1,\ldots, \tilde N}]\\
&\leq& \|\calP(\frac{1}{2}\mu^{\otimes 2}\times\leb_{[0,T]})- \calP(\frac{1}{2}\pi_1^{\otimes 2}\times\leb_{[0,T]})\|
\end{eqnarray*}
and

\begin{eqnarray*}
&&\PP[(X'_i,Y'_i,t'_i)_{i=1,\ldots,N'}\not=(\tilde X'_i,\tilde Y'_i,\tilde t_i)_{i=1,\ldots, \tilde N}]\\
&\leq&  \|\calP(\frac{1}{2}(\mu')^{\otimes 2}\times\leb_{[0,T]})- \calP(\frac{1}{2}\pi_2^{\otimes 2}\times\leb_{[0,T]})\|\;.
\end{eqnarray*}
Now, for any $T>0$, using Lemma~\ref{lemm:dVTPoiss} in the appendix,
\begin{eqnarray*}
&&\|\calP(\frac{1}{2}\mu^{\otimes 2}\times\leb_{[0,T]})- \calP(\frac{1}{2}\pi_1^{\otimes 2}\times\leb_{[0,T]})\|\\
&\leq & 2\|\frac{1}{2}\mu^{\otimes 2}\times\leb_{[0,T]}-\frac{1}{2}\pi_1^{\otimes 2}\times\leb_{[0,T]}\|\\
&= & T\|\mu^{\otimes 2}-\pi_1^{\otimes 2}\|\\
&\leq & 2T(\mu(X)+\pi_1(X))\|\mu-\pi_1\|\\
&\leq &2T\eps(2\mu(X)+\eps)
\end{eqnarray*}
by hypothesis. Similarly,
$$\|\calP(\frac{1}{2}(\mu')^{\otimes 2}\times\leb_{[0,T]})- \calP(\frac{1}{2}\pi_2^{\otimes 2}\times\leb_{[0,T]})\|\leq 2T\eps(2\mu'(X')+\eps)\;.$$
Furthermore, $(\tilde X_i,\tilde X'_i,\tilde t_i)_{i=1,\ldots,\tilde N}$ and $(\tilde Y_i,\tilde Y'_i,\tilde t_i)_{i=1,\ldots,\tilde N}$ both have distribution $\calP(\frac{1}{2}\pi\otimes\leb_{[0,T]})$. Thus,
$$\PP(\exists i\leq \tilde N,\;(\tilde X_i,\tilde X'_i)\not\in\calR)\leq \frac{1}{2}T\pi(\calR^c)\leq T\eps$$
and 
$$\PP(\exists i\leq \tilde N,\;(\tilde Y_i,\tilde Y'_i)\not\in\calR)\leq T\eps\;.$$
Let $\calE$ be the event that $N=N'$ and for any $i$, $(\tilde X_i,\tilde X'_i)\in\calR$ and $(\tilde Y_i,\tilde Y'_i)\in\calR$. Altogether, we get that $\calE$ has probability at least $1-T\eps(10+8\mu(X)+8\mu'(X'))$.

Since the distortion of $\calR$ is at most $2\eps$, we get using Lemma~\ref{lemm:AdBrGoMigluingGHP} that on the event $\calE$, for any $t\leq T$
$$d_{GHP}(\Coal_\delta(\bfX,\calP_t),\Coal_{\delta'}(\bfX',\calP'_t))\leq (N+1)(2\eps+|\delta-\delta'|)$$
Since $N$ has distribution $\calP(\frac{1}{2}\mu(X)^2T)$, Markov's inequality implies
$$\PP\left(N\geq \frac{T\mu(X)^2}{\sqrt{2\eps+|\delta-\delta'|}}\right)\leq \sqrt{2\eps+|\delta-\delta'|}$$
this gives the result.
\end{proof}

In the proposition below, recall from section~\ref{subsec:gennot} that convergence of processes uses the Skorokhod topology (here for the metric space $(\calN_1,L_{1,GHP})$), which we shall always prove using Lemma~\ref{lemm:compactcv}.
\begin{prop}
\label{prop:FellerN1}
Let $\bfX^{n}=(X^{n},d^{n},\mu^{n})$, $n\geq 0$ be a sequence of elements in $\calN_1$ and $(\delta^{n})_{n\geq 0}$ a sequence of non-negative real numbers. Suppose that: 
\begin{enumerate}[(a)]
\item $(\bfX^{n})_{n\geq 0}$ converges (for $L_{1,GHP}$) to $\bfX^{\infty}=(X^{\infty},d^{\infty},\mu^{\infty})$ as $n$ goes to infinity
\item $\delta^n\xrightarrow[n\rightarrow\infty]{}\delta^\infty$
\end{enumerate}
Then,
\begin{enumerate}[(i)]
\item $(\Coal_{\delta^{n}}(\bfX^{n},t))_{t\geq 0}$ converges in distribution  (for $L_{1,GHP}$) to $(\Coal_{\delta^{\infty}}(\bfX^{\infty},t))_{t\geq 0}$,
\item if $t^n\xrightarrow[n\rightarrow\infty]{}t$, $\Coal_{\delta^{n}}(\bfX^{n},t^n)$ converges in distribution  (for $L_{1,GHP}$) to $\Coal_{\delta^{\infty}}(\bfX^{\infty},t)$.
\end{enumerate}
\end{prop}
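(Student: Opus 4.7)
The plan is to exhibit, for each $T>0$ and for $n$ large, a coupling of the Poisson random sets driving the two coalescences under which
$$\sup_{t\in[0,T]}L_{1,GHP}\bigl(\Coal_{\delta^n}(\bfX^n,t),\Coal_{\delta^\infty}(\bfX^\infty,t)\bigr)\xrightarrow[n\to\infty]{\PP}0.$$
By the remark in Section~\ref{subsec:gennot} this forces Lévy--Prokhorov convergence of the laws on $D([0,\infty),\calN_1)$ equipped with the topology of compact convergence, giving (i). Part (ii) then follows from (i) using that almost every sample path of $\Coal_{\delta^\infty}(\bfX^\infty,\cdot)$ is continuous at the specific time $t$ (its jump times form a countable set), so uniform closeness on any $[0,T]$ with $T>\limsup t^n$ transfers to closeness of the values at $t^n$ and $t$.

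The coupling is built in two pieces, large and small. Fix $\alpha>0$ and pick a continuity point $\eta>0$ of the mass distribution of $\bfX^\infty$ with $\mu^\infty(X^\infty_{\leq\eta})<\alpha$. Proposition~\ref{prop:Npolish} applied to the $L_{GHP}$-part of the convergence gives $\rho_{LP}(\nu_{\bfX^n_{>\eta}},\nu_{\bfX^\infty_{>\eta}})\to 0$, and since $\nu_{\bfX^\infty_{>\eta}}$ is a finite sum of atoms, this produces (for $n$ large) a bijection $\sigma_n:\comp(\bfX^n_{>\eta})\to\comp(\bfX^\infty_{>\eta})$ with $\max_m d_{GHP}(m,\sigma_n(m))\to 0$; the $\ell^1$-part of the convergence additionally makes the matched component masses $\ell^1$-close and controls $\mu^n(X^n_{\leq\eta})$. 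Gluing the per-component correspondences and couplings (in the spirit of the proof of Lemma~\ref{lemm:3GHP}(ii)) yields data satisfying the hypothesis of Lemma~\ref{lemm:couplageN1} with parameter $\eps_n\to 0$ between $\bfX^n_{>\eta}$ and $\bfX^\infty_{>\eta}$. That lemma supplies coupled Poisson processes $\calP^{n,>}$, $\calP^{\infty,>}$ on the large parts and an event $\calE^{large}_n$ of probability $1-o_n(1)$ on which $\sup_{t\leq T}d_{GHP}\bigl(\Coal_{\delta^n}(\bfX^n_{>\eta},\calP^{n,>}_t),\Coal_{\delta^\infty}(\bfX^\infty_{>\eta},\calP^{\infty,>}_t)\bigr)\to 0$. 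Complete the full Poisson processes on $(X^n)^2\times\RR^+$ and $(X^\infty)^2\times\RR^+$ by appending \emph{independent} Poissonian layers on the remaining ``small-involving'' regions; the event $\calE^{small}_n$ that neither layer fires in $[0,T]$ has probability at least $1-2T\mu^\infty(X^\infty)\alpha-o_n(1)$.

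On $\calE^{large}_n\cap\calE^{small}_n$ every coalescence event involves only large components, so throughout $[0,T]$ the small components on both sides remain frozen at their initial state and the large parts coincide with those constructed by Lemma~\ref{lemm:couplageN1}; in particular the large/small mass split at level $\eta$ is preserved in time, and $\sigma_n$ lifts to a component-level bijection at every time. The $L_{GHP}$-part of $L_{1,GHP}$ is then bounded via Lemma~\ref{lemm:Ldeuxappli} applied at threshold $\eta$ with the lifted bijection, yielding $L_{GHP}\leq o_n(1)(1+8\#\comp(\bfX^\infty_{>\eta}))+16\eta$. The $\ell^1$-part decomposes, thanks to the preserved large/small separation, into an error on the large sorted sequence (of order $o_n(1)$, since the couplings conserve mass along coalescences and respect $\sigma_n$) and an error on the untouched small sorted sequences (of order $o_n(1)$, from the initial $\ell^1$-convergence of sizes together with the fact that $\eta$ is a continuity point, which makes the split of the sorted sequence stable under limits). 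Letting $n\to\infty$ and then $\alpha\to 0$ forces both errors to $0$, proving (i). The main obstacle is precisely this last invocation of Lemma~\ref{lemm:Ldeuxappli}: the number of large components must stay bounded uniformly along the process, which is why the truncation at $\eta$ is essential and why the coupling must be designed so that the large/small partition is preserved on the good event.
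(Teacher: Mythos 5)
Your proposal matches the paper's proof almost step for step: both truncate at a mass threshold chosen to be a continuity point with small total mass below, pass from $\rho_{LP}$-closeness of the component counting measures (via Proposition~\ref{prop:Npolish}) to a per-component bijection and then to a single correspondence/coupling measure using Lemma~\ref{lemm:3GHP}(ii), feed these into Lemma~\ref{lemm:couplageN1} to couple the Poisson drivers on the large parts, control the probability that any Poisson point touches the small parts, combine via Lemma~\ref{lemm:Ldeuxappli} for the $L_{GHP}$ part, use preservation of the component matching and of the small/large split to bound the $\ell^1$ part by the initial $\ell^1$ gap, and finish (ii) by almost-sure continuity of the limit at the deterministic time $t$. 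The exposition differs only cosmetically (you name the independent Poisson layers on the small-involving regions explicitly, whereas the paper leaves that completion implicit), so this is the paper's argument and it is correct.
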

\begin{proof}

Let us fix $\eps\in]0,1[$. Let $\eps_0\in]0,\eps/2[$ be such that $\eps_0\not\in\sizes(\bfX^{\infty})$ and 
$$\mu(X^\infty_{\leq \eps_0})\leq \eps\;.$$
Proposition~\ref{prop:Npolish} shows that $\delta_{LP}(\bfX^n_{>\eps_0},\bfX^\infty_{>\eps_0})$ goes to zero as $n$ goes to infinity. Let $\tilde\eps$ be a positive real number to be chosen later, depending only on $\eps$, $\eps_0$, $\mu^\infty(X^\infty)$ and $T$. Let $n$ be large enough so that
$$\delta_{LP}(\bfX^n_{>\eps_0},\bfX^\infty_{>\eps_0})\leq \tilde\eps $$
$$|\delta^n-\delta^\infty|\leq \tilde\eps $$
and
$$\|\sizes(\bfX^n)-\sizes(\bfX^\infty)\|_1\leq \eps\;.$$
Let $k:=\#\comp(X^\infty_{>\eps_0})$ and notice that
$$k\leq \frac{\mu^\infty(X^\infty)}{\eps_0}\;.$$
Lemma~\ref{lemm:3GHP} shows that
$$d_{GHP}(\bfX^n_{>\eps_0},\bfX^\infty_{>\eps_0})\leq \tilde\eps \frac{\mu^\infty(X^\infty)}{\eps_0}\;.$$
Notice also that:
$$\mu^n(X^n_{\leq \eps_0})\leq \mu^\infty(X^\infty_{\leq \eps_0})+\|\sizes(\bfX^n)-\sizes(\bfX^\infty)\|_1\leq 2\eps\;,$$
and that
$$\mu^n(X^n)\leq \mu^\infty(X^\infty)+\|\sizes(\bfX^n)-\sizes(\bfX^\infty)\|_1\leq \mu^\infty(X^\infty)+\eps\;,$$
Thus, using Lemma~\ref{lemm:couplageN1}, one may couple the coalescence on $X^n_{>\eps_0}$ and $X^{\infty}_{>\eps_0}$ in such a way that there is an event $A$ satisfying the following. On $A$, for any $t\leq T$,
\[d_{GHP}(\Coal_{\delta^n}(\bfX^n_{>\eps_0},t),\Coal_{\delta^{\infty}}(\bfX^\infty_{>\eps_0},t)) \leq \alpha(\tilde\eps,\eps_0,T,\mu^\infty(X^\infty))\]
with
\[\alpha(\tilde\eps,\eps_0,T,\mu^\infty(X^\infty)):=(T\mu^\infty(X^{\infty})^2+1)\sqrt{2\tilde\eps\frac{\mu^\infty(X^\infty)}{\eps_0}+\tilde\eps}\]
and furthermore,
\[\PP(A^c) \leq \beta(\tilde\eps,\eps_0,T,\mu^\infty(X^\infty))\]
with
\[\beta(\tilde\eps,\eps_0,T,\mu^\infty(X^\infty)):=T\tilde\eps\frac{\mu^\infty(X^\infty)}{\eps_0}(10+16\mu^\infty(X^\infty))+2\eps+2\sqrt{2\tilde\eps\frac{\mu^\infty(X^\infty)}{\eps_0}+\tilde\eps}\;.\]
Using Lemma~\ref{lemm:3GHP}, we obtain that on $A$, for any $t\leq T$,
$$ L_{GHP}(\Coal_{\delta^n}(\bfX^n_{>\eps_0},t),\Coal_{\delta}(\bfX^\infty_{>\eps_0},t))\leq 16\frac{\mu^\infty(X^\infty)}{\eps_0}\alpha (\tilde\eps,\eps_0,T,\mu^\infty(X^\infty))\;.$$

Now, independently from this coupling, let us couple two independent Poisson random set on $(X^n)^2\setminus(X^n_{>\eps_0})^2\times[0,T]$ and $(X^\infty)^2\setminus(X^\infty_{>\eps_0})^2\times [0,T]$ (with intensities given by the restrictions of $\mu^n$ and $\mu^\infty$). Let $B$ denote the event that these Poisson random sets are both empty. Then,
\begin{eqnarray*}
  \PP(B^c)&\leq &1-e^{-\frac{T}{2}(\mu^n(X^n)^2-\mu^n(X^n_{>\eps_0})^2)}+1-e^{-\frac{T}{2}(\mu^\infty(X^\infty)^2-\mu^\infty(X^\infty_{>\eps_0})^2)}\;,\\
  &\leq & T\mu^n(X^n)\mu^n(X^n_{\leq\eps_0})+T\mu^\infty(X^\infty)\mu^\infty(X^\infty_{\leq\eps_0})\;,\\
  &\leq &6T\eps(\mu^\infty(X^\infty)+\eps)=:\gamma(\eps,\mu^\infty(X^\infty),T)\;.
\end{eqnarray*}
On $A\cap B$, we obtain that for any $t\leq T$,
$$L_{GHP}(\Coal_{\delta^n}(\bfX^n,t),\Coal_{\delta}(\bfX^\infty,t))\leq 2(1+8 \frac{\mu^\infty(X^\infty)}{\eps_0})\alpha (\tilde\eps,\eps_0,T,\mu^\infty(X^\infty))\;.$$
Furthermore,
$$\PP((A\cap B)^c)\leq \beta(\tilde\eps,\eps_0,T,\mu^\infty(X^\infty))+\gamma(\eps,\mu^\infty(X^\infty),T)\;.$$
Thus, one may choose $\tilde\eps$ as a function of $T$, $\eps$, $\eps_0$ and $\mu^\infty(X^\infty)$ such that with probability at least $1-C\eps$,
$$ L_{GHP}(\Coal_{\delta^n}(\bfX^n,t),\Coal_{\delta}(\bfX^\infty,t))\leq \eps $$
for some finite constant $C$ depending only on $\mu(X^\infty)$.

Furthermore, since the multigraphs $\MG(X^\infty,t)$ and $\MG(X^n,t)$ are the same for any $t\leq T$ in this coupling,
\begin{eqnarray*}
&&\|\sizes(\Coal_{\delta^{n}}(X^n,t))-\sizes(\Coal_{\delta}(X^\infty,t))\|_1\\
&\leq&\|\sizes(\Coal_{\delta^{n}}(X^n_{>\eps_0},t))-\sizes(\Coal_{\delta}(X^\infty_{>\eps_0},t))\|_1\\
&& +\|\sizes(\Coal_{\delta^{n}}(X^n_{\leq \eps_0},t))-\sizes(\Coal_{\delta}(X^\infty_{\leq \eps_0},t))\|_1\\
&\leq & \|\sizes(X^n_{>\eps_0},t)-\sizes(X^\infty_{>\eps_0},t)\|_1+\mu(X^n_{\leq \eps_0})+ \mu(X^\infty_{\leq \eps_0})\\
&\leq& 4\eps\;.
\end{eqnarray*}
This shows $(i)$. To obtain $(ii)$, notice that for any $s$ and $\eta>0$,
$$\PP(\exists t\in[s,s+\eta]:\Coal_{\delta^{\infty}}(\bfX^\infty,t)\not=\Coal_{\delta^{\infty}}(\bfX^\infty,s))\leq \mu(X^\infty)^2\eta\;.$$
Thus, $(ii)$ is a simple consequence of $(i)$.
\end{proof}
We shall need the following variation of Proposition~\ref{prop:FellerN1} when studying simultaneous coalescence and fragmentation in section~\ref{sec:dynperc}.
\begin{prop}
\label{prop:FellerN1surplus}
Let $\bfX^{n}=(X^{n},d^{n},\mu^{n})$, $n\in\NNbar$ be a sequence of random variables in $\calN_1^{graph}$  and $(\delta^{n})_{n\geq 0}$ a sequence of non-negative real numbers. Suppose that: 
\begin{enumerate}[(a)]
\item $(\bfX^{n})_{n\geq 0}$ converges in distribution for $L_{1,GHP}^{surplus}$ to $\bfX^{\infty}$ as $n$ goes to infinity,
\item $\delta^n\xrightarrow[n\rightarrow\infty]{}\delta$.
\end{enumerate}
Then, 
\begin{enumerate}[(i)]
\item $(\Coal_{\delta^{n}}(\bfX^{n},t))_{t\geq 0}$ converges in distribution  (for the Skorokhod topology associated to $L_{1,GHP}^{surplus}$) to $(\Coal_{\delta}(\bfX^{\infty},t))_{t\geq 0}$,
\item if $t^n\xrightarrow[n\rightarrow\infty]{}t$, $\Coal_{\delta^{n}}(\bfX^{n},t^n)$ converges in distribution to $\Coal_{\delta}(\bfX^{\infty},t)$ for $L_{1,GHP}^{surplus}$.
\end{enumerate}
\end{prop}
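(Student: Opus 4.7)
The plan is to upgrade Proposition~\ref{prop:FellerN1} by tracking the surplus through the coalescence dynamics. First, $(\calN_1^{graph}, L_{1,GHP}^{surplus})$ is a complete separable metric space by the same argument as Proposition~\ref{prop:Npolish} (replacing $d_{GHP}$ by $d_{GHP}^{surplus}$ throughout, and using that $\surplus$ takes integer values so the extension to counting measures is immediate). Hence by the Skorokhod representation theorem we may assume $\bfX^n \to \bfX^\infty$ almost surely for $L_{1,GHP}^{surplus}$, and realize the coalescence dynamics of $\bfX^n$ and $\bfX^\infty$ on a common probability space via the coupling of Lemma~\ref{lemm:couplageN1}.

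Fix $\eps,T>0$ and pick $\eps_0\in(0,\eps/2)$ with $\PP(\eps_0\in\sizes(\bfX^\infty))=0$; such an $\eps_0$ exists since only countably many values are atoms of the masses of the components. On the a.s.\ event $\{\eps_0\notin\sizes(\bfX^\infty)\}$, the analogue of Proposition~\ref{prop:Npolish} for $L_{1,GHP}^{surplus}$ gives $\rho_{LP}^{surplus}(\bfX^n_{>\eps_0},\bfX^\infty_{>\eps_0})\to 0$. For $n$ large, extract a bijection $\sigma_n:\comp(\bfX^n_{>\eps_0})\to \comp(\bfX^\infty_{>\eps_0})$ with $d_{GHP}^{surplus}(m,\sigma_n(m))<1/2$; since surpluses are non-negative integers, $\surplus(m)=\surplus(\sigma_n(m))$ for every paired pair. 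Choose the correspondence $\calR$ in Lemma~\ref{lemm:couplageN1} to be the union over $m\in\comp(\bfX^n_{>\eps_0})$ of per-component correspondences from $m$ to $\sigma_n(m)$, as supplied by the construction in Lemma~\ref{lemm:3GHP}(ii); then $\calR$ is compatible with $\sigma_n$ in the sense that $(x,x')\in\calR$ with $x\in m\in\comp(\bfX^n_{>\eps_0})$ forces $x'\in\sigma_n(m)$.

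On the good event of Lemma~\ref{lemm:couplageN1}, intersected with the event that no Poisson point of either process lands in the small-mass part $X^n_{\leq\eps_0}\cup X^\infty_{\leq\eps_0}$ during $[0,T]$ (of probability $\geq 1-2\eps T$, exactly as in the proof of Proposition~\ref{prop:FellerN1}), the Poisson points $(x_i,y_i,t_i)$ of $\calP^n$ and $(x'_i,y'_i,t_i)$ of $\calP^\infty$ are in bijection with $x_i\in m\Leftrightarrow x'_i\in\sigma_n(m)$ and likewise for $y_i$. The surplus of a graph evolves as follows under the addition of an edge: it is additive when the edge joins two distinct components, and increases by exactly $1$ when the edge is added inside a single component. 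Since the sequence of merges and intra-component edges is therefore identical for $\bfX^n$ and $\bfX^\infty$ on this event, the surplus of each large component of $\Coal_{\delta^n}(\bfX^n,t)$ matches that of its image in $\Coal_{\delta^\infty}(\bfX^\infty,t)$, uniformly in $t\in[0,T]$. Combined with the $L_{1,GHP}$-bound already established in Proposition~\ref{prop:FellerN1}, this yields (i); statement (ii) then follows from (i) by the same right-continuity argument given at the end of the proof of Proposition~\ref{prop:FellerN1}.

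The main obstacle is ensuring that the correspondence $\calR$ driving the Poisson coupling genuinely respects the component-pairing $\sigma_n$: only then does the \emph{combinatorial} history of merges and intra-component shortcuts on $\bfX^n$ agree exactly, rather than merely up to small metric perturbation, with that of $\bfX^\infty$. Lemma~\ref{lemm:3GHP}(ii) is precisely the tool that delivers this compatibility, which then makes the integer-valued surplus data propagate through the coupling without error.
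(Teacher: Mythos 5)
Your proof is correct and follows essentially the same route as the paper: reuse the coupling of Lemma~\ref{lemm:couplageN1} with a correspondence that respects the component pairing (so that the multigraphs $\MG(X^n,\cdot)$ and $\MG(X^\infty,\cdot)$ agree on the good event and no Poisson point lands in the small-mass part), which together with the integrality of the surplus and the matching of surpluses on large components (from $d_{GHP}^{surplus}<1/2$) forces the surpluses to evolve identically. The paper's own proof is a terse pointer to exactly this observation, while you have filled in the details explicitly; there is no genuine difference in strategy.
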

\begin{dem}
Notice first that when $\bfX$ belongs to $\calN_1^{graph}$, then with probability one, $\Coal_{\delta}(\bfX^{n},t)$ is in $\calN_1^{graph}$ for any $t\geq 0$. Indeed, since $X$ has finite mass, there is with probability one a finite number of points in the Poisson process $\calP_t^+$ on $X^2$ for any $t\geq 0$. 

Now, the proof is esentially the same as the one of Proposition~\ref{prop:FellerN1}, except that since  $(\bfX^{n})_{n\geq 0}$ converges to $\bfX^{\infty}$ for $L_{1,GHP}^{surplus}$, one may use $d_{GHP}^{surplus}$ instead of $d_{GHP}$. The fact that the multigraphs $\MG(X^n,s)$ and $\MG(X^\infty,s)$ are the same for any $s\leq T$, and that in the coupling no point of the Poisson processes touches $\bfX^{n}_{\leq \eps_0}$ or $\bfX^{\infty}_{\leq \eps_0}$ implies that for each component of  $\Coal_{\delta^n}(X^n_{\geq \eps_0},s)$, its surplus is the same as the surplus of the corresponding component in $\Coal_{\delta}(X^\infty_{\geq \eps_0},s)$.
\end{dem}

\subsection{Structural result for Aldous' multiplicative coalescent}

Recall the definition of the multigraph $\MG(x,t)$ for $x\in\ell^2$ in section~\ref{subsec:multiplicative}. We shall use notations analogous to those in Definition~\ref{defi:calN}. For instance, for $x\in\ell^2_+$, $x_{\leq \eps}$ denotes the element in $\ell^2_+$ defined by:
$$\forall i\in\NN,\;x_{\leq \eps}(i)=x(i)\II_{x(i)\leq \eps}\;.$$
Also, for $i\in\NN$, $x\setminus \{i\}$ denotes the element in $\ell^2$ defined by:
$$\forall j\in\NN,\;(x\setminus\{i\})(j)=x(j)\II_{j\not=i}\;.$$

Notice that at time $0$, the components of $\MG(x,0)$ are the singletons $\{i\}$ for $i\in\NN$. Let us fix some $\eps>0$ and say that a component of $\MG(x,t)$ is significant if it has weight larger than $\eps$, i.e the sum of the weights $x_i$ of its vertices $i$ is larger than $\eps$. In Lemma~\ref{lemm:structureAldous} below and its corollary, we shall derive three scales (at time $0$), namely, Large, Medium and Small such that with high probability (as $\eps$ goes to zero), every significant component of $\MG(x,t)$ is made of a \emph{heart}\footnote{This term is not standard, but as will be apparent in the subsequent proofs, the heart of a significant component of $\MG(x,t)$ is uniquely defined when $x$, $t$ and the weight of a small component are defined. This notion has nothing to do with the core of a graph.} made of Large or Medium components of $\MG(x,0)$ to which are attached \emph{hanging trees} of small or medium components of $\MG(x,0)$ such that the component of the trees attached to the heart are small components (see Figure~\ref{fig:structure}) and the mass contained in the hanging trees is at most medium. Furthermore, these scales depend on $x$, $\eps$ and $t$ through the functions $\alpha\mapsto \|x_{\leq \alpha}\|_2$ and $K\mapsto \PP(S(x,t)\geq K)$. This picture will be fundamental in the proof of the Feller properties of the coalescent and dynamical percolation processes. Indeed, it will imply that small components of $\bfX\in\calN_2$ have small influence on the geometry of  the large components of $\Coal(\bfX,t)$. This will allow us to approximate $\Coal(\bfX,t)$ by a truncated version, $\Coal(\bfX_{\geq \eps},t)$ for $\eps$ small enough, and to reduce the Feller property on $\calN_2$ to the Feller property on $\calN_1$, where a finite number of identifications occur on every finite interval of time.
\begin{lemm}
\label{lemm:structureAldous}
Let $x\in\ell^2(\NN)$, $T\geq 0$ and $0<\eps<1$. Suppose that:
\begin{enumerate}[(i)]
\item\label{hyp:structure1} $K\geq 1$ is such that:
$$\PP(S(x,T)\geq K)\leq \frac{\eps}{100}\;,$$
\item\label{hyp:structure2} $\eps_1\in (0,\eps)$ is such that:
$$S(x_{\leq \eps_1},0)\leq \frac{\eps^2}{100(1+T+KT^2)}\;,$$
\item\label{hyp:structure3} $\eps_2\in (0,\eps_1)$ is such that:
  $$S(x_{\leq \eps_2},0)\leq \frac{2\eps_1^2\eps^2}{100(1+T(K+2))^2}\;.$$
  \end{enumerate}
  Then with probability larger than $1-\eps$, the following holds for any $t\leq T$,
\begin{enumerate}[(a)]
\item\label{structure1} every component of $\MG(x,t)$ of size larger than $\eps$ contains a component of $\MG(x,0)$ of size larger than $\eps_1$.
\item\label{structureforest} no component of $\MG(x_{\leq \eps_1},0)$ is contained in a cycle in $\MG(x,t)$.
\item\label{structureoneedge} for each component $m$ of $\MG(x_{\leq \eps_1},t)$ and each component $m'$ of $\MG(x_{> \eps_1},t)$, there is at most one edge between $m$ and $m'$ in $\MG(x,t)$.
\item\label{structurel2} $S(x,t)-S(x_{>\eps_2},t)\leq 2\eps_1^2$.
\item\label{structurecouronne} for any component $\{i\}$ of $\MG(x,0)$ of size larger than $\eps_1$, the difference between the sizes of the component containing $i$ in $\MG(x,t)$ and the one containing $i$ in $\MG(x_{>\eps_2},t)$ is less than $\eps_1$.
\end{enumerate}
\end{lemm}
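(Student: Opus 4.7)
The plan is to set up a few high-probability events, handle each claim via a first-moment computation or an application of Lemmas~\ref{lemm:Aldous20} and~\ref{lemm:Aldous23}, and reduce every ``for all $t\leq T$'' statement to $t=T$ either by a direct monotonicity argument (for claims~(\ref{structure1}), (\ref{structureforest}), (\ref{structureoneedge})) or via Lemma~\ref{lemm:pourSkorL2} (for claim~(\ref{structurel2})). From~(\ref{hyp:structure1}) and the monotonicity of $S(x,\cdot)$, $\|x\|_2^2 = S(x,0)\leq S(x,T) < K$ on an event of probability at least $1-\eps/100$. Two applications of Lemma~\ref{lemm:Aldous20} with $s=2S(x_{\leq\eps_1},0)$ and $s=2S(x_{\leq\eps_2},0)$ give $S(x_{\leq\eps_1},T)\leq 2S(x_{\leq\eps_1},0)$ and $S(x_{\leq\eps_2},T)\leq 2S(x_{\leq\eps_2},0)$ outside a further event of probability at most $2T\bigl(S(x_{\leq\eps_1},0)+S(x_{\leq\eps_2},0)\bigr) \leq \eps/20$ by~(\ref{hyp:structure2}) and~(\ref{hyp:structure3}). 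Call $\calG$ the intersection of these three events.

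Claim~(\ref{structure1}): a component of $\MG(x,t)$ of mass $>\eps$ containing no vertex of weight $>\eps_1$ coincides with a component of $\MG(x_{\leq\eps_1},t)$, whose mass is at most $\sqrt{S(x_{\leq\eps_1},T)} \leq \sqrt{2S(x_{\leq\eps_1},0)} < \eps$ on $\calG$, a contradiction. Claim~(\ref{structureforest}): the expected number of loops, multi-edges and simple cycles of length $k\geq 3$ in $\MG(x_{\leq\eps_1},T)$ is bounded, using the Poisson edge rates $Tx_ix_j$, by $\sum_{k\geq 1}(TS(x_{\leq\eps_1},0))^k/(2k)$, which is at most $\eps/50$ by~(\ref{hyp:structure2}); the forest property at $t\leq T$ follows because $\MG(x_{\leq\eps_1},t)$ is a subgraph of $\MG(x_{\leq\eps_1},T)$.

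For claim~(\ref{structureoneedge}), observe that the coalescents $\MG(x_{\leq\eps_1},\cdot)$ and $\MG(x_{>\eps_1},\cdot)$ are independent because they are built from disjoint Poisson processes. Conditioning on their states at time $T$ with component masses $(y_k)$ and $(z_l)$, the number of edges of $\MG(x,T)$ between a given $y_k$ and $z_l$ is Poisson with parameter $Ty_kz_l$, and these variables are mutually independent. The probability that some pair receives at least two edges is thus at most $\tfrac{1}{2}\sum_{k,l}(Ty_kz_l)^2 = \tfrac{T^2}{2}S(x_{\leq\eps_1},T)S(x_{>\eps_1},T)$, which on $\calG$ is at most $2T^2 K S(x_{\leq\eps_1},0) \leq \eps^2/50$. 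A time-$t$ failure is automatically a time-$T$ failure by monotonicity of both coalescents. Running the same computation with $\eps_2$ in place of $\eps_1$, and using~(\ref{hyp:structure3}), yields the analogous ``at most one edge'' property between components of $\MG(x_{\leq\eps_2},T)$ and $\MG(x_{>\eps_2},T)$, which will supply the hypothesis of Lemma~\ref{lemm:pourSkorL2} in the next step.

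The hardest step is~(\ref{structurel2}) and its consequence~(\ref{structurecouronne}), both because $S(x,t)-S(x_{>\eps_2},t)$ is not manifestly monotone in $t$ and because one must recognise the correct bipartite structure to apply Lemma~\ref{lemm:Aldous23}. I introduce the bipartite super-graph $G^*$ whose nodes are the components of $\MG(x_{>\eps_2},T)$ and $\MG(x_{\leq\eps_2},T)$ with weights $(z_l)$ and $(w_k)$: these two coalescents are independent, the edges of $G^*$ between $z_l$ and $w_k$ are Bernoulli with parameter $1-\exp(-Tz_lw_k)$ and mutually independent, and the components of $G^*$ are in bijection with those of $\MG(x,T)$ with the same total masses. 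Conditioning on the two coalescents and applying Lemma~\ref{lemm:Aldous23} (in its $\ell^2$-extension) with $\alpha_1=\sum_l z_l^2 \leq K$, $\alpha_2=\sum_k w_k^2 \leq 2S(x_{\leq\eps_2},0)$ and $\eps'=2\eps_1^2$, hypothesis~(\ref{hyp:structure3}) gives
$$\PP\bigl(S(x,T)-\textstyle\sum_l z_l^2 \geq 2\eps_1^2 \,\bigm|\, \calG\bigr) \leq \frac{(1+T(K+2))^2\cdot 2S(x_{\leq\eps_2},0)}{2\eps_1^2} \leq \frac{\eps^2}{50}.$$
Since $\sum_l z_l^2 = S(x_{>\eps_2},T) - S(x_{\leq\eps_2},0)$, this establishes~(\ref{structurel2}) at $t=T$; the uniform-in-$t$ version follows by applying Lemma~\ref{lemm:pourSkorL2} with $V=\NN$, $W=\{i:x_i>\eps_2\}$, $E$ the edges of $\MG(x,T)$ and $E'\subset E$ the edges of $\MG(x,t)$, whose applicability is exactly the $\eps_2$-version of~(\ref{structureoneedge}). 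Claim~(\ref{structurecouronne}) is then purely algebraic: if $z_l>\eps_1$ and $Y$ denotes the mass of the $G^*$-component $C_0$ containing $z_l$, then $(Y-z_l)^2 \geq \bigl(\sum_{l'\in C_0,\,l'\neq l} z_{l'}\bigr)^2 \geq \sum_{l'\in C_0,\,l'\neq l} z_{l'}^2$, which rearranges into $Y^2 - \sum_{l'\in C_0} z_{l'}^2 \geq 2z_l(Y-z_l)$; the left-hand side is at most $\sum_C Y_C^2 - \sum_l z_l^2 \leq 2\eps_1^2$, so $Y-z_l \leq \eps_1^2/z_l < \eps_1$. A final union bound over the $O(1)$ failure events gives total probability at most $\eps$.
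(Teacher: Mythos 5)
Your proof is correct and covers all five claims; it is close in spirit to the paper's proof (both rest on Lemma~\ref{lemm:Aldous20}, Lemma~\ref{lemm:Aldous23} and the monotonicity from Lemma~\ref{lemm:pourSkorL2}), but it diverges in two places that are worth noting. For claim~(\ref{structureforest}) you bound the expected number of loops, multi-edges and $k$-cycles in $\MG(x_{\leq\eps_1},T)$ directly by $\sum_{k\geq1}(TS(x_{\leq\eps_1},0))^k/(2k)$, whereas the paper conditions on $\MG(x\setminus\{i\},t)$ for each small vertex $i$ and uses $\PP(A_i\mid\cdot)\leq t^2x_i^2 S(x\setminus\{i\},t)$; both are first-moment arguments and both work. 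More substantively, for~(\ref{structurel2})--(\ref{structurecouronne}) you explicitly establish the ``$\eps_2$-version'' of~(\ref{structureoneedge}) (at most one edge between a component of $\MG(x_{\leq\eps_2},T)$ and a component of $\MG(x_{>\eps_2},T)$) before invoking Lemma~\ref{lemm:pourSkorL2} with $W=\{i:x_i>\eps_2\}$. This is the right thing to do: the hypothesis of Lemma~\ref{lemm:pourSkorL2} with this $W$ concerns $\MG(x_{>\eps_2},T)$ and $\MG(x_{\leq\eps_2},T)$, not $\MG(x_{>\eps_1},T)$ and $\MG(x_{\leq\eps_1},T)$, and item~(\ref{structureoneedge}) as stated does not imply it (a small component $m_2$ may have two edges to a component of $\MG(x_{>\eps_2},T)$ meeting it once in a Medium part and once in a Large part, without violating~(\ref{structureoneedge})). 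The paper cites~(\ref{structureoneedge}) at this point, so your proof is more careful here and your version is the one that should be retained.

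Two small remarks. First, the identity you write, $\sum_l z_l^2 = S(x_{>\eps_2},T) - S(x_{\leq\eps_2},0)$, is a slip: with your definitions $\sum_l z_l^2$ is exactly $S(x_{>\eps_2},T)$; fortunately the subsequent inequality $S(x,T)-S(x_{>\eps_2},T)\leq 2\eps_1^2$ is what your Lemma~\ref{lemm:Aldous23} estimate actually delivers, so the conclusion stands. Second, your algebraic derivation of~(\ref{structurecouronne}) is written only at $t=T$ (with $G^*$ built from $\MG(x_{>\eps_2},T)$ and $\MG(x_{\leq\eps_2},T)$); you should say explicitly that the same inequality $Y-z_l\leq\eps_1^2/z_l$ is pure algebra and applies at each fixed $t\leq T$ once~(\ref{structurel2}) is available at that $t$, which you have already secured via Lemma~\ref{lemm:pourSkorL2}. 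With those two cosmetic repairs the proof is complete and correct.
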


We may now state a precise version of the structural decomposition sketched just before Lemma~\ref{lemm:structureAldous}. We state it uniformly on a convergent sequence in $\ell^2_\searrow$ because it will be convenient to prove the almost Feller property on $\calS$, Theorem~\ref{theo:AlmostFellercoal}.

When $\eps_2\leq \eps_1\leq \eps$ are three positive real numbers, we shall say that a component is \emph{significant} if it has size larger than $\eps$, \emph{Large} if it has size larger than $\eps_1$, \emph{Medium} for a size in $(\eps_2,\eps_1]$ and \emph{Small} for a size not larger than $\eps_2$.
\begin{coro}
  \label{coro:picture_structure}Let $x^n$ be a sequence in $\ell^2_\searrow$ converging to $x^\infty$ in $\ell^2$. Then, for any $\eps>0$, and any $T>0$ there exists $\eps_1$ and $\eps_2$ such that for any $n\in\NNbar$, with probability at least $1-\eps$ the following holds for any $t\in[0,T]$:
  \begin{enumerate}[(a)]
  \item\label{coro:structureheart} every significant component of $\MG(x^n,t)$ is made of a connected \emph{heart} made of Large or Medium components of $\MG(x^n,0)$ to which are attached \emph{hanging trees} (each one attached by a single edge to the heart) of Small or Medium components of $\MG(x^n,0)$ such that the components of the trees attached to the heart are Small components and the mass contained in the hanging trees is less than $\eps_1$,
  \item\label{coro:nocycle} no Medium or Small component of $\MG(x^n,0)$ belongs to a cycle in $\MG(x^n,t)$,
  \item\label{coro:S} $S(x^n,t)-S(x^n_{>\eps_2},t)\leq 2\eps_1^2$,
  \end{enumerate}
\end{coro}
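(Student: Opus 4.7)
The plan is to apply Lemma~\ref{lemm:structureAldous} separately to each $x^n$ with parameters $K$, $\eps_1$, $\eps_2$ chosen \emph{uniformly in $n$}, so that the hypotheses (i)--(iii) of that lemma hold for every $n \in \NNbar$; the conclusions (a)--(e) then translate into the picture (a)--(c) of the corollary.

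The uniform choice of $\eps_1, \eps_2$ follows from the equi-integrability
$$\sup_{n \in \NNbar} S(x^n_{\leq \alpha}, 0) \xrightarrow[\alpha \rightarrow 0]{} 0\,,$$
a straightforward consequence of the $\ell^2$-convergence $x^n \to x^\infty$ via dominated convergence on the limit together with an inequality of the form $S(x^n_{\leq \alpha}, 0) \leq 2 S(x^\infty_{\leq 2\alpha}, 0) + 2\|x^n - x^\infty\|_2^2$, valid for $n$ large (the finitely many small $n$ are handled individually). For the uniform $K$ of (i), I would appeal to the Feller property of the multiplicative coalescent on $\ell^2_\searrow$ (Proposition~5 of \cite{Aldous97Gnp}): $x^n \to x^\infty$ in $\ell^2_\searrow$ forces $X(x^n, T) \to X(x^\infty, T)$ in distribution, whence $(S(x^n, T))_{n \in \NNbar}$ is tight and one may pick $K$ with $\sup_n \PP(S(x^n, T) \geq K) \leq \eps/100$. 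Lemma~\ref{lemm:structureAldous} applied to each $x^n$ with these common parameters then produces an event of probability at least $1 - \eps$ on which its conclusions (a)--(e) hold for every $t \in [0, T]$.

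It remains to rephrase these in the corollary's language. Clause (c) of the corollary is exactly (d) of the lemma. For (a), I would declare the heart of a significant component to be the Large initial vertices together with the Medium--Small trees that bridge distinct $\MG(x^n_{>\eps_1}, t)$-components, and the hanging trees to be the remaining Medium--Small trees; each is attached to the heart by a single edge thanks to (c) of the lemma, while (e) of the lemma bounds the mass accreted by each Large vertex through Small vertices by $\eps_1$ and forces the initial component at the attachment point to be Small. Clause (b) of the corollary then follows from this decomposition: the forest property (b) of the lemma rules out cycles among purely Medium--Small vertices, whereas any Medium or Small vertex lying on a cycle of $\MG(x^n, t)$ would, by construction, be a bridging vertex absorbed into the heart rather than a ``free'' Medium or Small component hanging off it.

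The delicate step is the uniform choice of $K$: Lemma~\ref{lemm:Aldous20} by itself does not produce a bound on $\PP(S(x^n, T) \geq K)$ that decays as $K \to \infty$ when $T \|x^n\|_2^2$ is not small, so Aldous' Feller property of the multiplicative coalescent is genuinely needed to get uniform tightness across the sequence.
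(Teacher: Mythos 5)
Your setup — applying Lemma~\ref{lemm:structureAldous} with parameters $K,\eps_1,\eps_2$ chosen uniformly in $n$, using equi-square-integrability of the $x^n$ for (ii)--(iii) and the Feller property of the multiplicative coalescent to get the tightness needed for (i) — is exactly the paper's approach and is correct. The genuine gap is in how you read off the structural picture from conclusions (a)--(e) of the lemma, because your definition of the ``heart'' is not the one the argument actually needs. You take the heart to be the Large initial components together with the Medium--Small trees that bridge distinct $\MG(x^n_{>\eps_1},t)$-components. With that definition a Medium component $M$ that hangs off a Large one $L$ by a single edge, without bridging to any other Large cluster, is left in a hanging tree and becomes its attachment point. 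That violates the corollary's requirement that the attachment components be Small, and your appeal to (e) does not repair it: (e) controls $\mu(\text{component in }\MG(x,t)) - \mu(\text{component in }\MG(x_{>\eps_2},t))$, and since $x_M>\eps_2$, such an $M$ is counted on \emph{both} sides, so (e) places no constraint on it at all. The paper avoids this by defining the heart $\sigma(m)$ to be the component of $\MG(x_{>\eps_2},t)$ containing any Large vertex of $m$; with this choice the heart is automatically made of Large and Medium pieces only, the ``two Large vertices connected in $\MG(x,t)$ iff connected in $\MG(x_{>\eps_2},t)$'' consequence of (e) shows $\sigma(m)$ is well-defined and contains every Large vertex of $m$, and the boundary of $\sigma(m)$ is Small \emph{by definition} (a Medium component directly adjacent to $\sigma(m)$ would itself lie in the same $\MG(x_{>\eps_2},t)$-component, hence inside $\sigma(m)$).

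Your derivation of clause~(b) inherits this problem and is also logically backwards: you conclude that a Medium or Small vertex on a cycle ``would be a bridging vertex absorbed into the heart,'' but that is precisely what the corollary forbids — clause~(b) asserts there is \emph{no} such vertex on \emph{any} cycle of $\MG(x,t)$, not that such vertices are reclassified. In the paper the absence of cycles through Medium/Small vertices is extracted jointly with the single-edge attachment of the hanging trees from conclusions (b) and (c) of the lemma (a cycle through a Medium/Small vertex either lies entirely in $\MG(x_{\leq\eps_1},t)$, contradicting the forest property, or exits a $\MG(x_{\leq\eps_1},t)$-component through at least two edges to the same $\MG(x_{>\eps_1},t)$-component, contradicting the one-edge bound). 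To fix your write-up you should switch to the paper's $\eps_2$-level definition of the heart and redo the boundary and cycle arguments accordingly; the uniform-parameter reduction you set up in the first half is fine as is.
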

\begin{figure}[!htbp]
\begin{center}
\psfrag{L1}{size $\geq \eps_1$}
\psfrag{M1}{size $\in [\eps_2,\eps_1[$}
\psfrag{S1}{size $< \eps_2$} 
\psfrag{L}{$L$}
\psfrag{M}{$\scriptstyle M$}
\psfrag{S}{$\scriptscriptstyle S$} 
\psfrag{H}{$Heart$}
\includegraphics[width=8cm]{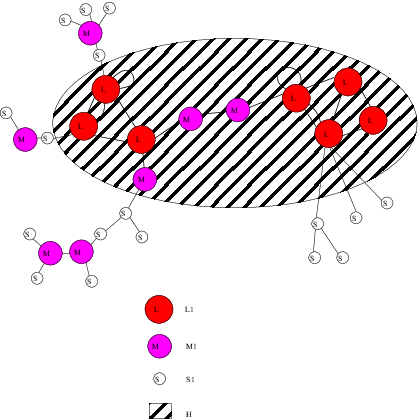}
\caption{The structure of a significant component}
\label{fig:structure}
\end{center}
\end{figure}
The proof of Lemma~\ref{lemm:structureAldous} relies essentially on Aldous' analysis of the multiplicative coalescent. 

\begin{proof}(of Lemma~\ref{lemm:structureAldous})

If for some $t\leq T$ there exists a significant component of $\MG(x,t)$ which does not contain any large component of $x$, then $S(x_{\leq \eps_1},t)>\eps^2$ and thus $S(x_{\leq \eps_1},T)>\eps^2$, since $S(x,\cdot)$ is nondecreasing. Thus Lemma~\ref{lemm:Aldous20} shows that the probability of (\ref{structure1}) is larger than $1-\eps/4$, as soon as hypothesis~(\ref{hyp:structure2}) of Lemma~\ref{lemm:structureAldous} holds.

Let $\{i\}$ be a component of $\MG(x,0)$ and define the event
\begin{center}
$A_i=\{$there exist at least two edges of $\MG(x,T)$ connecting $i$ to the same component of $\MG(x\setminus\{i\},T)\}$.
\end{center}
Due to the properties of the Poisson process defining $\MG(x,T)$, one sees that conditionnally on $\MG(x\setminus\{i\},T)$, the number of edges of $\MG(x,T)$ connecting a  fixed component $m$ of $\MG(x\setminus\{i\},T)$  to $i$  is a Poisson random variable with parameter $\sum_{j\in m}Tx_ix_j$. Thus,
\begin{eqnarray*}
\PP(A_i|\MG(x\setminus\{i\},T))&\leq &\sum_{m\;\mathrm{ c.c. of}\;\MG(x\setminus\{i\},T)}(Tx_i\sum_{j\in m}x_j)^2\;,\\
&=&T^2x_i^2S(x\setminus\{i\},T)\;,
\end{eqnarray*}
thus,
\begin{eqnarray}
\nonumber \PP(A_i\cap\{S(x,T)\leq K\})&\leq &\EE[\PP(A_i|\MG(x\setminus\{i\},T))\II_{S(x\setminus\{i\},T)\leq K}]\;,\\
\label{eq:forest}&\leq &KT^2x_i^2\;.
\end{eqnarray}
We obtain thus:
$$\PP(\cup_{i\in\NN\mathrm{ s.t. }\;x_i\leq \eps_1}A_i)\leq KT^2S(x_{\leq \eps_1},0)+\PP(S(x,T)>K)\;,$$
which shows that the probability of (\ref{structureforest}) is at least $1-\eps/4$ as soon as hypotheses~(\ref{hyp:structure1}) and~(\ref{hyp:structure2}) of Lemma~\ref{lemm:structureAldous} hold.

The proof of (\ref{structureoneedge}) is similar. Let 
\begin{center}
$B_T:=\{$there exist at least two edges of $\MG(x,T)$ connecting a component $m$ of $\MG(x_{\leq \eps_1},T)$ to a component $m'$ of $\MG(x_{>\eps_1},T)$\}.
\end{center}
Then,
\begin{eqnarray*}
&&\PP(B_T|\MG(x_{>\eps_1},T),\MG(x_{\leq\eps_1},T))\\
&\leq &\sum_{m\;\mathrm{ c.c. of}\;\MG(x_{\leq\eps_1},T)}\sum_{m'\;\mathrm{ c.c. of}\;\MG(x_{>\eps_1},T)}(\sum_{i\in m}x_i\sum_{j\in m'}x_jT)^2\;,\\
&=&T^2S(x_{\leq\eps_1},T)S(x_{>\eps_1},T)\;.
\end{eqnarray*}
Thus,
\begin{eqnarray}
\nonumber \PP(B_T)\leq \frac{\eps}{5}+\PP(S(x,T)\geq K)+\PP(S(x_{\leq\eps_1},T)\geq \frac{\eps}{5KT^2})\;,
\end{eqnarray}
which shows using Lemma~\ref{lemm:Aldous20} that the probability of (\ref{structureoneedge}) is at least $1-\eps/4$ as soon as hypotheses~(\ref{hyp:structure1}) and~(\ref{hyp:structure2}) of Lemma~\ref{lemm:structureAldous} hold (notice that $B_t\subset B_T$ if $t\leq T$).

Now, let $Y$ be the supremum, over Large components $\{i\}$ of $\MG(x,0)$, of the difference between the sizes of the component containing $i$ in $\MG(x,t)$ and the one containing $i$ in $\MG(x_{>\eps_2},t)$. 
Notice that if $Y\geq \alpha$, then $S(x,t)\geq S(x_{> \eps_2},t)+2\eps_1\alpha$, which implies $S(x,T)\geq S(x_{> \eps_2},T)+2\eps_1\alpha$ when \eqref{structureforest} holds, thanks to Lemma~\ref{lemm:pourSkorL2}. Thus points~(\ref{structurecouronne}) and (\ref{structurel2}) will be proved if we show that  $S(x,T)> S(x_{> \eps_2},T)+2\eps_1^2$ with probability at most $\eps/4$.

Define the event $$C=\{S(x,T)> S(x_{> \eps_2},T)+2\eps_1^2\}\;.$$ Lemma~\ref{lemm:Aldous23} shows that:
\begin{eqnarray*}
&&\PP(C\mbox{ and }S(x,T)\leq K\mbox{ and }S(x_{\leq \eps_2},T)\leq \beta\})\\
&\leq& (1+T(K+2\eps_1^2))^2\frac{\beta}{2\eps_1^2}\;.
\end{eqnarray*}
Thus,
$$\PP(C)\leq (1+T(K+2))^2\frac{\beta}{2\eps_1^2}+\PP(S(x_{\leq \eps_2},T)> \beta)+\PP(S(x,T)>K)\;,$$
which is less than $\eps/4$ if we take
\[\beta=\frac{2\eps_1^2\eps}{100(1+T(K+2))^2}\]
and if hypotheses~(\ref{hyp:structure1}), (\ref{hyp:structure2}) and (\ref{hyp:structure3}) of Lemma~\ref{lemm:structureAldous} hold.
\end{proof}

\begin{proof}(of Corollary~\ref{coro:picture_structure})
  Since $x^n$ converges to $x^{\infty}$ in $\ell^2$,
  $$\sup_{n\in\NN}\|x^n_{\leq \eps}\|_2\xrightarrow[\eps\rightarrow 0]{}0\;.$$
  Also, the Feller property of the multiplicative coalescent in $\ell^2$, cf. \cite{Aldous97Gnp} implies that the distributions of the sizes of $\MG(x^n,T)$ for $n\in \NNbar$ form a compact family of probability measures on $\ell^2$. Thus,
  \begin{equation}
    \label{eq:l2compact}
    \sup_{n\in \NNbar}\PP(S(x^n,T)\geq K)\xrightarrow[K\rightarrow +\infty]{}0\;.
  \end{equation}
  This shows that for any $T$ and $\eps$, one may find $K$, $\eps_1$ and $\eps_2$ such that the three hypotheses of Lemma~\ref{lemm:structureAldous} hold for $x^n$ uniformly  over $n\in\NNbar$. 

  Now suppose that $x$, $t$, $\eps$, $\eps_1$ and $\eps_2$ are such that (\ref{structure1}), (\ref{structureforest}), (\ref{structurecouronne}) and (\ref{structureoneedge}) of Lemma~\ref{lemm:structureAldous} hold. Let $m$ be a significant component of $\MG(x,t)$. It contains a large component $\{i\}$ of $\MG(x,0)$ by point (\ref{structure1}). Let $\sigma(m)$ denote the component of $\MG(x_{>\eps_2},t)$ containing $\{i\}$. Point (\ref{structurecouronne}) implies that two large components of $\MG(x,0)$ are connected in $\MG(x,t)$ if and only if they are connected in $\MG(x_{>\eps_2},t)$, that is only through Large or Medium components.  This shows  that $\sigma(m)$ does not depend on the choice of the large component $\{i\}$ included in $m$ and that there cannot be any large component in $m\setminus \sigma(m)$. Let us define the heart of $m$ as $\sigma(m)$. It is made of Large or Medium components, and $m\setminus \sigma(m)$ is a graph of medium or small components. Now if a medium component in $m\setminus \sigma(m)$ was directly connected to some component of $\sigma(m)$, it would be connected to $m$ in $\MG(x_{>\eps_2},t)$, and thus would belong to $\sigma(m)$. Thus, the exterior boundary of $\sigma(m)$ in $m$ is made of small components. Point (\ref{structureforest}) shows that no Medium or Small component of $\MG(x,0)$ belongs to a cycle in $\MG(x,t)$, which implies notably that $m\setminus \sigma(m)$ is a forest, and point (\ref{structureoneedge}) shows that each tree of this forest is attached by a single edge to $\sigma(m)$.
\end{proof}

A useful by-product of the proof of Lemma~\ref{lemm:structureAldous} and Corollary~\ref{coro:picture_structure} is the following simple lemma.
\begin{lemm}
  \label{lemm:forest} For $x\in\ell^2$, $\eps>0$ and $T>0$ let $A(x,\eps,T)$ be the event that for any $t\leq T$:
  \begin{itemize}
  \item $\MG(x_{\leq \eps},t)$ is a forest and
  \item there is at most one edge betweeen any connected component of $\MG(x_{\leq \eps},t)$ and any component of $\MG(x_{>\eps},t)$.
  \end{itemize}
  Suppose that $x^n$ converges to $x^\infty$ in $\ell^2_\searrow$ as $n$ goes to infinity. Then, for any $T>0$
  $$\inf_{n\in\NNbar}\PP(A(x^n,\eps,T))\xrightarrow[\eps\rightarrow 0]{}1\;.$$
\end{lemm}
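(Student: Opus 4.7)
The statement is a uniform-in-$n$ version of points (\ref{structureforest}) and (\ref{structureoneedge}) of Lemma \ref{lemm:structureAldous}, so the plan is to revisit those arguments and verify that the bounds derived there depend on $x$ only through the two quantities $S(x_{\leq\eps},0)$ and $\PP(S(x,T)\geq K)$, both of which can be controlled uniformly in $n$.

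Concretely, starting from the conditional estimate $\PP(A_i\mid\MG(x\setminus\{i\},t))\leq T^2 x_i^2 S(x\setminus\{i\},T)$ used to derive \eqref{eq:forest}, a union bound over components $\{i\}$ with $x_i\leq\eps$ and a truncation at $\{S(x,T)\leq K\}$ yield the forest estimate
\begin{equation*}
\PP(\MG(x_{\leq\eps},t)\text{ is not a forest for some }t\leq T)\leq KT^2 S(x_{\leq\eps},0) + \PP(S(x,T)>K).
\end{equation*}
Applied to the event of two edges between a small and a large component, the same method combined with Lemma \ref{lemm:Aldous20} (which bounds $\EE[S(x_{\leq\eps},T)]$ by a function of $S(x_{\leq\eps},0)$ tending to zero as $S(x_{\leq\eps},0)\to 0$) produces an analogous estimate. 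Altogether, there exist $\phi(K,T)$ and a function $\psi$ with $\psi(s)\to 0$ as $s\to 0$ such that, for every $x\in\ell^2_\searrow$,
\begin{equation*}
\PP(A(x,\eps,T)^c)\leq \phi(K,T)\,\psi(S(x_{\leq\eps},0)) + 2\,\PP(S(x,T)>K).
\end{equation*}

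The uniformity over $n\in\NNbar$ then follows from two observations already used in the proof of Corollary \ref{coro:picture_structure}. On the one hand, $x^n\to x^\infty$ in $\ell^2$ immediately gives $\sup_{n\in\NNbar}\|x^n_{\leq\eps}\|_2\to 0$ as $\eps\to 0$, hence $\sup_n S(x^n_{\leq\eps},0)\to 0$. On the other hand, the Feller property of Aldous' multiplicative coalescent on $\ell^2_\searrow$ (Proposition~5 of \cite{Aldous97Gnp}) yields the uniform tail bound $\sup_{n\in\NNbar}\PP(S(x^n,T)\geq K)\to 0$ as $K\to\infty$; this is exactly \eqref{eq:l2compact}. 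Given $\eta>0$, one first picks $K$ large enough that $\sup_n 2\,\PP(S(x^n,T)>K)\leq \eta/2$, then picks $\eps$ small enough that $\phi(K,T)\,\psi(\sup_n S(x^n_{\leq\eps},0))\leq \eta/2$. The only mildly delicate point is to keep the quantitative dependence on $K$ in the bound for $\EE[S(x_{\leq\eps},T)]$ explicit when applying Lemma \ref{lemm:Aldous20}, but this is entirely routine and no new probabilistic ingredient is needed beyond what already appears in the proof of Lemma \ref{lemm:structureAldous}.
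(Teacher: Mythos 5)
Your argument reproduces the paper's intended proof, which is left implicit (the paper states Lemma~\ref{lemm:forest} as ``a useful by-product of the proof of Lemma~\ref{lemm:structureAldous} and Corollary~\ref{coro:picture_structure}''): you extract from the proofs of points~(\ref{structureforest}) and~(\ref{structureoneedge}) that $\PP(A(x,\eps,T)^c)$ is controlled by $S(x_{\leq\eps},0)$ and $\PP(S(x,T)>K)$ alone, then invoke the uniformity coming from $\sup_n\|x^n_{\leq\eps}\|_2\to 0$ and the $\ell^2$-tightness \eqref{eq:l2compact}. One small imprecision: Lemma~\ref{lemm:Aldous20} is a tail bound $\PP(S(x,t)>s)\leq tsS(x,0)/(s-S(x,0))$, not a bound on $\EE[S(x_{\leq\eps},T)]$ (which need not be finite); the way it enters is by choosing a threshold $s=s(\eps)$ with $S(x_{\leq\eps},0)\ll s(\eps)\ll 1$ so that both $T^2K\,s(\eps)$ and the tail probability $\PP(S(x_{\leq\eps},T)>s(\eps))$ vanish as $\eps\to 0$. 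This is exactly what the paper does, so the difference is cosmetic and the overall proof is sound.
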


\begin{rem}
\label{rem:N2moinsS}
Let us give an example of an m.s-m.s which is in $\calN_2$ but not in $\calS$. Let $I_i$, $i\geq 1$ be disjoint copies of the interval $[0,1]$, with its usual metric, and equip $I_i$ with the measure $\frac{1}{i}(\delta_0+\delta_1)$. Then, $\bfX\in\calN_2\setminus \calS$. In fact, thanks to Lemma~\ref{lemm:structureAldous}, for any $\eps>0$ and $t>0$ every component of $\Coal_0((\bfX)_{>\eps},t)$ is unbounded since it contains a forest of an infinite number (since the sizes are not in $\ell^1$) of components of diameter 1.
\end{rem}

\subsection{The Coalescent on $\calS$}
\label{subsec:coalS}

The aim of this section is to prove Theorem~\ref{theo:AlmostFellercoal}. We shall first prove two lemmas. 
\begin{lemm}
\label{lemm:calSdansN}
Let $\bfX$ be an m.s-m.s. 
\begin{enumerate}[(i)]
\item If $\bfX\in\calS$ then, almost surely, for any $t\geq 0$, $\Coal_0(\bfX,t)$ belongs to $\calN_2$.
\item If $\bfX\in\calN_2$ and is a length space, then, almost surely,  for any $t\geq 0$, $\Coal_0(\bfX,t)$ is a length space and the commutation relation \eqref{eq:commutation} holds.
\item If the components of $\bfX$ are $\RR$-graphs then, almost surely, for any $t\geq 0$, the components of $\Frag(\bfX,t)$ are $\RR$-graphs. Consequently, if $\bfX$ belongs to $\calS^{graph}$, then, almost surely, for any $t\geq 0$, $\Frag(\bfX,t)$ belongs to $\calS^{graph}$.
\end{enumerate}
\end{lemm}
\begin{proof}
\begin{enumerate}[(i)]
\item Suppose that $\bfX$ belongs to $\calS$ and let us show that with probability one, $\Coal_0(\bfX,t)$ has totally bounded components for any $t\geq 0$. Let $\alpha>0$ and $\eps\in]0,1[$ be fixed and let
$$B(\eps):=\{\supdiam(\Coal_0(\bfX_{\leq \eps},T))\leq  \alpha\}\;.$$
Since $\bfX$ satisfies \eqref{eq:supdiam}, 
$$\PP(B(\eps)^c)\xrightarrow[\eps\rightarrow 0]{}0$$
Now, we perform coalescence and use the obvious coupling between $\Coal_0(\bfX,t)$ and $\Coal_0(\bfX_{>\eps_2},t)$. Recall the multigraph $\MG(X,t)$ introduced in section~\ref{subsec:coal}. We let $S(X,t)$ denote the sum of the squares of the masses of the components in $\Coal_0(\bfX,t)$ (or $\MG(X,t)$). Let $\eps_1$ and $\eps_2$ be positive numbers to be chosen soon and let $A(\eps)$ be the event that for any $t\leq T$:
\begin{enumerate}[(a)]
\item every significant component of $\MG(X,t)$ is made of a connected \emph{heart} made of Large or Medium components of $\MG(X,0)$ to which are attached \emph{hanging trees} (each one attached by a single edge to the heart) of Small or Medium components of $\MG(X,0)$ such that the components of the trees attached to the heart are Small components and the mass contained in the hanging trees is less than $\eps_1$,
\item no Medium or Small component of $\MG(X,0)$ belongs to a cycle in $\MG(X,t)$,
\item $S(X,t)-S(X_{>\eps_2},t)\leq 2\eps_1^2$.
\end{enumerate}
Then, Corollary~\ref{coro:picture_structure} (with $x^n=x^\infty=\sizes(X)$) shows that one can choose $\eps_1$ and $\eps_2$ (as functions of $\eps$, $T$ and $\sizes(\bfX)$) such that:
$$\PP(A(\eps)^c)\xrightarrow[\eps\rightarrow 0]{}0\;.$$
Then, on $A(\eps)\cap B(\eps)$, we have that for any $t\in[0,T]$, any component of size larger than $\eps$ of $\Coal_0(\bfX,t)$ can be covered with a finite number of balls of radius $2\alpha$. Indeed, if $m$ is such a component, one may first cover the heart with a finite number of balls of radius $\alpha$ since the heart of $m$ is composed of a finite number of totally bounded components of $\bfX$ glued together, and then if we increase the radius to $2\alpha$, those balls will cover the whole component $m$ because we are on $B(\eps)$. Making $\eps$ go to zero, we see that with probability one, for any $t\in[0,T]$ every component of $\Coal_0(\bfX,t)$ can be covered with a finite number of balls of radius $2\alpha$. Then, letting $\alpha$ go to zero, we see that with probability one, for any $t\in[0,T]$ every component of $\Coal_0(\bfX,t)$ is totally bounded, so $\Coal_0(\bfX,t)\in\calN_2$. 

\item The fact that $\Coal_0(\bfX,t)$ is a length space is an immediate consequence of Remark~\ref{rem:frag} $(iv)$. If $\bfX=(X,d,\mu)\in\calN_2$, using the same notation as above, one can still guarantee that:
$$\PP(A(\eps)^c)\xrightarrow[\eps\rightarrow 0]{}0\;.$$
On $A(\eps)$, for any $x$ and $y$ in a component of $\Coal_0(\bfX,t)$ of mass larger than $\eps$ (i.e a significant component), there is only a finite number of simple paths from $x$ to $y$, and every such simple path takes a finite number of shortcuts of the Poisson process $\calP_t^+$. Letting $\eps$ go to zero, this holds almost surely for any component of $\Coal_0(\bfX,\calP_t^+)$. Furthermore, since $\ell_{X}$ is diffuse and $\calP^-$ and $\calP^+$ are independent, almost surely one has, for any $t$, and using the notation of Lemma~\ref{lemm:commutdeter}
$$\calP_t^{-,d}\cap\{x\in X:\exists y\in X,\; (x,y)\text{ or }(y,x)\in \calP_t^+\}=\emptyset\;.$$
Thus, Lemma~\ref{lemm:commutdeter} shows that $(ii)$ holds.
\item Using Lemma~\ref{lemm:shortcut}, $X$ is isometric to $\Coal_0(X',A)$ where $X'$ is an m.s-m.s whose components are real trees, and $A\subset \bigcup_{m\in\comp(X')}m^2$ is finite on any $m^2$. Again, since  $\ell_{X}$ is diffuse, almost surely, for any $t$, and using the notation of Lemma~\ref{lemm:commutdeter}
$$\calP_t^{-,d}\cap\{x\in X:\exists y\in X,\; (x,y)\text{ or }(y,x)\in A\}=\emptyset\;.$$
Thus, Lemma~\ref{lemm:commutdeter} shows that 
$$\Frag(X,\calP_t^-)=\Frag(\Coal_0(X',A),\calP_t^-)=\Coal_0(\Frag(X',\calP_t^-),A)\;.$$
The components of $\Frag(X',\calP_t^-)$ are $\RR$-trees, thus the components of $\Coal_0(\Frag(X',\calP_t^-),A)$ are $\RR$-graphs. The last part of $(iii)$ follows from the fact that $\calS^{graph}$ is clearly stable by fragmentation.
\end{enumerate}
\end{proof}
\begin{rem}
  \label{rem:coalScoalfragS}
\begin{enumerate}[(i)]
\item If $\bfX\in\calN$ and $\calP$ is as in Definition~\ref{defi:cut}, it may happen that $\Frag(\bfX,\calP)$ has a component of mass zero. In this case, $\Frag(\bfX,\calP)$ does not belong to $\calN$, stricly speaking. However, $\Frag(\bfX,\calP)$ is at zero $L_{GHP}$-distance from an element of $\calN$, which is $\Frag(\bfX,\calP)|_{\cup_{\eps>0}\calM_{>\eps}}$. In fact, we could have defined $\calN$ as the quotient of the set of counting measures on $\calM$ with respect to the equivalence relation defined by being at zero $L_{GHP}$-distance. This space is isometric to $\calN$ modulo the addition of components of null masses. Then  $\Frag(\bfX,\calP)$ would have always belonged to $\calN$. But I feel that it would have obscured the definition of $\calN$. In the sequel, we shall keep in mind that components of null masses are neglected.
\item It is apparent from the proof of point {\it (ii)} above that when $\bfX$ belongs to $\calS^{graph}$, then $\Coal_0(\bfX,t)$ has components which are precompact $\RR$-trees with a countable number of identifications. Thus, $\Coal_0(\bfX,t)$ is in $\calS^{graph}$ if and only if the numbers of identifications on any component is finite. One consequence of this is that if $\Coal_0(\bfX,t)$ is in $\calS^{graph}$ for some $t\geq 0$, then $\Coal_0(\bfX,s)$ and $\CoalFrag(\bfX,s)$ are in $\calS^{graph}$ for every $s\in[0,t]$.
  \end{enumerate}
\end{rem}

\begin{lemm}
  \label{lemm:calSstable}
  Let $\bfX^{n}=(X^{n},d^{n},\mu^{n})$, $n\geq 0$ be a sequence of random variables in $\calS$ and $(\delta^{n})_{n\geq 0}$ be a sequence of non-negative real numbers. Suppose that: 
\begin{enumerate}[(i)]
\item $(\bfX^{n})$ converges in distribution for $L_{2,GHP}$ to $\bfX^{\infty}=(X^{\infty},d^{\infty},\mu^{\infty})$ as $n$ goes to infinity,
\item $\delta^n\xrightarrow[n\rightarrow\infty]{}0$,
\item For any $\alpha>0$ and any $T>0$, $$\limsup_{n\in\NN} \PP(\supdiam(\Coal_{\delta^{n}}(\bfX^{n}_{\leq \eps},T))>\alpha)\xrightarrow[\eps\rightarrow 0]{} 0\;.$$
\end{enumerate}
Then, with probability 1, $\Coal_0(\bfX^{\infty},t)$ belongs to $\calS$ for any $t\geq 0$.
\end{lemm}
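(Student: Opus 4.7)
The plan is to prove this in two stages: first establish that $\bfX^{\infty}\in\calS$, then deduce $\Coal_0(\bfX^{\infty},t)\in\calS$ for every $t\geq 0$ using the structural decomposition of Corollary~\ref{coro:picture_structure}.

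For the first stage, by Skorokhod's representation applied to the $L_{2,GHP}$-convergent sequence, we may assume $\bfX^{n}\to\bfX^{\infty}$ almost surely, which in particular places $\bfX^{\infty}$ in $\calN_2$. To verify the small-coalescence property of Definition~\ref{defi:calS} for $\bfX^{\infty}$, fix $T,\alpha,\beta>0$ and use hypothesis $(iii)$ to pick $\eps_0>0$ and $N$ with $\PP(\supdiam(\Coal_{\delta^{n}}(\bfX^{n}_{\leq\eps_0},T))>\alpha/3)<\beta/3$ for $n\geq N$. For any continuity points $0<\eps'<\eta<\eps_0$ of $\sizes(\bfX^{\infty})$, the truncation $\bfX^{n}_{\in(\eps',\eta]}$ lies in $\calN_1$ (only finitely many components by $\ell^2$-summability) and converges in $L_{1,GHP}$ to $\bfX^{\infty}_{\in(\eps',\eta]}$. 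Proposition~\ref{prop:FellerN1} together with $\delta^{n}\to 0$ then produces a coupling for which $\Coal_{\delta^{n}}(\bfX^{n}_{\in(\eps',\eta]},T)$ and $\Coal_0(\bfX^{\infty}_{\in(\eps',\eta]},T)$ are close in $L_{1,GHP}$; on $\calN_1$ the functional $\supdiam$ is then controlled. Combining with the monotonicity $\supdiam(\Coal_{\delta^{n}}(\bfX^{n}_{\in(\eps',\eta]},T))\leq\supdiam(\Coal_{\delta^{n}}(\bfX^{n}_{\leq\eps_0},T))$ and letting $\eps'\to 0$ (using Corollary~\ref{coro:picture_structure} uniformly on the sequence to ensure that even smaller components contribute a negligible diameter, again courtesy of $(iii)$), yields $\PP(\supdiam(\Coal_0(\bfX^{\infty}_{\leq\eta},T))>\alpha)<\beta$, as required.

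For the second stage, fix $t\geq 0$ and set $\bfY:=\Coal_0(\bfX^{\infty},t)$. Lemma~\ref{lemm:calSdansN}$(i)$ applied to $\bfX^{\infty}\in\calS$ already gives $\bfY\in\calN_2$ almost surely. It remains to show that for every $s\geq 0$ and $\alpha>0$, $\PP(\supdiam(\Coal_0(\bfY_{\leq\eta},s))>\alpha)\to 0$ as $\eta\to 0$. By the strong Markov property and the obvious coupling of Poisson processes, realize $\Coal_0(\bfY_{\leq\eta},s)$ jointly with $\Coal_0(\bfX^{\infty},t+s)$. Apply Corollary~\ref{coro:picture_structure} at time $t+s$ with small $\eps$, producing scales $\eps_1>\eps_2>0$, and Lemma~\ref{lemm:forest} at threshold $\eps_2$, to obtain a high-probability event on which: every component of $\bfY$ of mass $\leq\eta$ (for $\eta<\eps_2$) consists entirely of Small ($\leq\eps_2$) components of $\bfX^{\infty}$; and $\MG(\bfX^{\infty}_{\leq\eps_2},t+s)$ is a forest. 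Let $\bfA$ be the union of Small components of $\bfX^{\infty}$ lying inside mass-$\leq\eta$ components of $\bfY$: this is a union of components of $\bfX^{\infty}_{\leq\eps_2}$, and under the obvious coupling $\Coal_0(\bfY_{\leq\eta},s)$ coincides with $\Coal_0(\bfA,t+s)$ (both incorporate the same Poisson edges among components of $\bfA$ up to time $t+s$). By Lemma~\ref{lemm:obviouscouplingforest} applied on the forest event, distances in $\Coal_0(\bfA,t+s)$ agree with those in $\Coal_0(\bfX^{\infty}_{\leq\eps_2},t+s)$, so
$$\supdiam(\Coal_0(\bfY_{\leq\eta},s))\leq\supdiam(\Coal_0(\bfX^{\infty}_{\leq\eps_2},t+s)),$$
and the right-hand side vanishes in probability as $\eps_2\to 0$ by the first stage applied at time $t+s$, since we have just shown $\bfX^{\infty}\in\calS$.

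The main obstacle is the first stage. The ambient metric $L_{2,GHP}$ sees small components only through the $\ell^2$-sequence of masses, and $\supdiam$ is not even continuous for $L_{GHP}$; hence transferring the uniform diameter control from the sequence to the limit cannot be done directly. One must pass through the intermediate truncation to $\calN_1$, where Proposition~\ref{prop:FellerN1} applies and diameters behave continuously, and then carefully take $\eps'\to 0$: including finer small components can a priori enlarge the coalesced diameter, and this must be controlled uniformly in $n$, which is exactly the content of hypothesis $(iii)$ combined with Corollary~\ref{coro:picture_structure}. The second stage, by contrast, is essentially an algebraic manipulation of the obvious coupling once the structural picture at time $t+s$ is in place.
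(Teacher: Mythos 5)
Your proof follows essentially the same route as the paper's: a two-stage argument where one first establishes that $\bfX^\infty$ satisfies the defining property \eqref{eq:supdiam} of $\calS$ (via the $\calN_1$-truncations and Proposition~\ref{prop:FellerN1}), and then transfers this to $\Coal_0(\bfX^\infty,t)$ using the obvious coupling together with Lemma~\ref{lemm:obviouscouplingforest} on the event that the relevant small multigraph is a forest. The paper merely presents these two stages in the opposite order (reduction first, then the $\calN_1$ limiting argument), and replaces your Skorokhod step with a Prokhorov-compactness argument for the $\ell^2$ sizes; these are cosmetic differences.

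The one point you should tighten is the ``monotonicity'' you invoke in stage one:
$$\supdiam(\Coal_{\delta^{n}}(\bfX^{n}_{\in(\eps',\eta]},T))\leq\supdiam(\Coal_{\delta^{n}}(\bfX^{n}_{\leq\eps_0},T))\,.$$
Under the obvious coupling, passing from $A\subset B$ (unions of components) to $\Coal(\bfA,t)\subset\Coal(\bfB,t)$ gives set inclusion of components, but the \emph{restricted} metric on $A$ coming from $\Coal(\bfA,t)$ is larger than that coming from $\Coal(\bfB,t)$, since the latter may admit shorter paths through $B\setminus A$. So the diameter of a component can go \emph{up} when you throw components away — a three-interval cycle with one edge removed turns a round-trip of length $3/2$ into a path of length $2$. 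The inequality you want is therefore false in general; it holds precisely on the event that $\MG(\bfX^{n}_{\leq\eps_0},T)$ is a forest, which is exactly what Lemma~\ref{lemm:obviouscouplingforest} provides, and what the paper invokes at this point. You do appeal to Corollary~\ref{coro:picture_structure} in the adjacent parenthetical, but the way you phrase it (``smaller components contribute a negligible diameter'') suggests you are using it for the $\eps'\to 0$ limit rather than as the hypothesis that validates the comparison itself. Making the forest event explicit where the monotonicity is first claimed (for each fixed $n$, before any limits) closes the gap; with that fixed, the argument matches the paper's.
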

\begin{proof}
First, the Feller property of the multiplicative coalescent, \cite[Proposition~5]{Aldous97Gnp},  shows that $\sizes(\Coal_{\delta^{n}}(\bfX^{n},T))$ converges in distribution (in $\ell_\searrow^2$) to $\sizes(\Coal_{0}(\bfX^{\infty},T))$. Together with Skorokhod's representation theorem and Lemma~\ref{lemm:forest}, this implies that:
\begin{equation}
  \label{eq:Xinftyforest}
  \PP[\MG(\bfX^{\infty}_{\leq \eps},T)\text{ is not a forest }]\xrightarrow[\eps \rightarrow 0]{}0\;.
\end{equation}
Notice that under the obvious coupling, when $\MG(\bfX^{\infty}_{\leq \eps},t+s)$ is a forest, Lemma \ref{lemm:obviouscouplingforest} implies that:
$$\supdiam(\Coal_0(\Coal_0(\bfX^{\infty},t)_{\leq \eps},s))\leq \supdiam(\Coal_0(\bfX^{\infty}_{\leq \eps},t+s))\;.$$
Thus, thanks to Lemmas \ref{lemm:forest} and \ref{lemm:calSdansN} it is enough to show that with probability one, $\bfX^{\infty}$ satisfies \eqref{eq:supdiam} for any $t\geq 0$.

Let $P_{\bfX^{\infty}}$ be the distribution of $\bfX^{\infty}$.  Then for $P_{\bfX^{\infty}}$-almost every $\bfX$ and every $t\in[0,T]$ and $\alpha>0$,
\begin{eqnarray*}
  &&\limsup_{\eps\rightarrow 0}\PP[\supdiam(\Coal_0(\bfX_{\leq \eps},t))>\alpha]\\
  &=&\limsup_{\eps\rightarrow 0}\PP[\supdiam(\Coal_0(\bfX_{\leq \eps},t))>\alpha\text{ and }\MG(\bfX_{\leq \eps},T)\text{ is a forest}]\\
  &\leq&\limsup_{\eps\rightarrow 0}\PP[\supdiam(\Coal_0(\bfX_{\leq \eps},T))>\alpha\text{ and }\MG(\bfX_{\leq \eps},T)\text{ is a forest}]  
\end{eqnarray*}
Thus,
\begin{eqnarray*}
  &&P_{\bfX^{\infty}}\{\bfX\in\calN_2\;:\sup_{\substack{t\leq T\\ \alpha >0}}\limsup_{\eps\rightarrow 0}\PP[\supdiam(\Coal_0(\bfX_{\leq \eps},t))>\alpha]>0\}\\
  &=&\sup_{\alpha>0}P_{\bfX^{\infty}}\{\bfX\;:\;\limsup_{\eps\rightarrow 0}\PP[\supdiam(\Coal_0(\bfX_{\leq \eps},T))>\alpha]>0\}\\
  &=&\sup_{\alpha>0}\sup_{\eta>0}\lim_{\eps\rightarrow 0}P_{\bfX^{\infty}}\{\bfX:\PP[\exists\eps'\in ]0,\eps], \supdiam(\Coal_0(\bfX_{\leq \eps'},T))>\alpha]>\eta\}\\
      &\leq &\sup_{\alpha>0}\sup_{\eta>0}\lim_{\eps\rightarrow 0}\frac{1}{\eta}\PP[\exists\eps'\in ]0, \eps]\;:\supdiam(\Coal_0(\bfX^{\infty}_{\leq \eps'},T))>\alpha]\;.
\end{eqnarray*}
Thus, using \eqref{eq:Xinftyforest}, it is sufficient to prove that for any $T\geq 0$, $\alpha>0$ and $\tilde\eps>0$,
\begin{equation}
\label{eq:supdiamlimite}
\PP\left[\begin{array}{c}\sup_{\eps'\in ]0, \eps]}\supdiam(\Coal_0(\bfX^{\infty}_{\leq \eps'},T))>\alpha \\ \text{ and  } \\ \MG(x^{\infty}_{\leq \tilde \eps},T)\text{ is a forest}\end{array}\right]\xrightarrow[\eps\rightarrow 0]{}0\;.
\end{equation}

Let $x^{\infty}:=\sizes(\bfX^{\infty})$. Notice first that there exists a decreasing sequence of positive numbers $(\eps_p)_{p\geq 0}$  going to zero and such that:
$$\forall p\in \NN,\;\PP[\eps_p\in x^{\infty}]=0\;.$$
Fix $\tilde \eps\geq \eps$ and choose the sequence so that $\eps_0\leq \tilde \eps$. Then, using the obvious coupling and Lemma~\ref{lemm:obviouscouplingforest}, 
\begin{eqnarray*}
&&\PP\left[\begin{array}{c}\sup_{\eps'\in ]0, \eps]}\supdiam(\Coal_0(\bfX^{\infty}_{\leq \eps'},T))>\alpha \\ \text{ and  } \\ \MG(x^{\infty}_{\leq \tilde \eps},T)\text{ is a forest}\end{array}\right]\\
&\leq & \PP[\supdiam(\Coal_0(\bfX^{\infty}_{\leq \eps},T))>\alpha\text{ and  }\MG(x^{\infty}_{\leq \tilde \eps},T)\text{ is a forest}]
\end{eqnarray*}
 Then, we have:
\begin{eqnarray*}
&&  \lim_{\eps\rightarrow 0}\PP(\supdiam(\Coal_{0}(\bfX^{\infty}_{\leq \eps},T))>\alpha\mbox{ and  }\MG(x^{\infty}_{\leq \tilde\eps},T)\mbox{ is a forest})\\
  &=&\lim_{m\rightarrow \infty}\PP(\supdiam(\Coal_{0}(\bfX^{\infty}_{\leq \eps_m},T))>\alpha\;.
  \end{eqnarray*}
Furthermore, define $\bfX_{m,p}:=(\bfX_{\leq \eps_m})_{>\eps_p}$ for $m\leq p$. Then, 
$$ \PP(\supdiam(\Coal_{0}(\bfX^{\infty}_{\leq \eps_m},T))>\alpha)
=\lim_{p\rightarrow \infty}\PP(\supdiam(\Coal_{0}(\bfX^{\infty}_{m,p},T))>\alpha)$$
Now, Proposition~\ref{prop:FellerN1} implies that $(\Coal_{\delta^{n}}(\bfX^{n}_{m,p},T)$ converges in distribution to $(\Coal_{0}(\bfX^{\infty}_{m,p},T)$ for any $m\leq p$.  Since we are dealing here with finite collections of m.s-m.s with positive masses, this entails that for any $m\leq p$,
\begin{eqnarray*}
&&\PP(\supdiam(\Coal_{0}(\bfX^{\infty}_{m,p},T))>\alpha)\\
&\leq & \limsup_{n\rightarrow\infty}\PP(\supdiam(\Coal_{\delta^{n}}(\bfX^{n}_{m,p},T))>\alpha)\\
&\leq &\limsup_{n\rightarrow\infty}\PP(\supdiam(\Coal_{\delta^{n}}(\bfX^{n}_{m,p},T))>\alpha\mbox{ and  }\MG(\bfX^{n}_{\leq \tilde\eps},T)\mbox{ is a forest})\\
&&+\limsup_{n\rightarrow\infty}\PP(\MG(\bfX^{n}_{\leq \eps},T)\mbox{ is not a forest}))\\
&\leq & \limsup_{n\rightarrow\infty}\PP(\supdiam(\Coal_{\delta^{n}}(\bfX^{n}_{\leq \eps_m},T))>\alpha\mbox{ and  }\MG(\bfX^{n}_{\leq \tilde\eps},T)\mbox{ is a forest})\\
&&+\limsup_{n\rightarrow\infty}\PP(\MG(\bfX^{n}_{\leq \tilde\eps},T)\mbox{ is not a forest}))\\
&\leq & \limsup_{n\rightarrow\infty}\PP(\supdiam(\Coal_{\delta^{n}}(\bfX^{n}_{\eps_m},T))>\alpha)\\
&&+\limsup_{n\rightarrow\infty}\PP(\MG(\bfX^{n}_{\leq \tilde\eps},T)\mbox{ is not a forest}))
\end{eqnarray*}
Then, using Lemma \ref{lemm:forest} and the hypothesis on $\supdiam(\Coal_{\delta^{n}}(\bfX^{n}_{\eps},T))$ one sees that the right-hand side above goes to zero when we make $m$ and then $\tilde\eps$ go to zero. This ends the proof of \eqref{eq:supdiamlimite}.
\end{proof}

We are now in position to prove  Theorem~\ref{theo:AlmostFellercoal}.

\begin{proof}{\it (of Theorem~\ref{theo:AlmostFellercoal})}

The fact that the trajectories stay in $\calS$ has been proved in Lemma~\ref{lemm:calSstable}. The strong Markov property follows from the commutation property for coalescence, cf. Remark~\ref{rem:deltagluing} $(ii)$. So to prove $(i)$, it remains to prove that the trajectories are càdlàg (almost surely). We shall in fact prove this in the course of proving point $(ii)$.

Let $t^n\xrightarrow[n\rightarrow\infty]{}t$ and let $T=\sup_n t^n$. Let us fix $\eps\in]0,1[$ and let $(x^n)_{n\in\NN\cup\{\infty\}}:=(\sizes(\bfX^{n}))_{n\in\NN\cup\{\infty\}}$. We know that $x^n$ converges in distribution to $x^\infty$. Using Skorokhod's representation theorem, Corollary~\ref{coro:picture_structure} and \eqref{eq:l2compact}, we obtain that there exists $K(\eps)\in]0,+\infty[$, $\eps_1\in ]0,\eps[$ and  $\eps_2\in]0,\eps_1[$ such that for every $n\in\NNbar$, with probability larger than $1-\eps$ the event $\calA_n$ holds, where $\calA_n$ is the event that points \eqref{coro:structureheart}, \eqref{coro:nocycle} and \eqref{coro:S}  of Corollary~\ref{coro:picture_structure} hold for any $t\in[0,T]$ and $S(x^n,T)\leq K(\eps)$.

Let $\delta^\infty:=0$. On this event $\calA_n$, the Gromov-Hausdorff-Prokhorov distance\footnote{In fact, here we could talk simply of Hausdorff-Prokhorov distance since there is a trivial embedding of one measured semi-metric space into the other.} between a significant component  of $\Coal_{\delta^n}(\bfX^n,t)$ (at any time $t\leq T$) and its heart is at most $\alpha:=\delta^n+\supdiam(\Coal_{\delta^n}(\bfX^n_{\leq \eps_1},T))+\eps_1$. Let $\sigma$ be the function from $\comp(\Coal_{\delta^n}(\bfX^n,t)_{>\eps+\alpha})$ to $\comp(\Coal_{\delta^n}(\bfX^n_{>\eps_2},t))$ which maps a component to its heart, and let $\sigma'$ denote the function from $\comp(\Coal_{\delta^n}(\bfX^n_{>\eps_2},t))_{>\eps+\alpha}$ to $\comp(\Coal_{\delta^n}(\bfX^n,t))$ which maps a component to the (unique, on $\calA_n$) component of $\comp(\Coal_{\delta^n}(\bfX^n,t))$ which contains it. These functions satisfy the hypotheses of Lemma~\ref{lemm:Ldeuxappli}, with $\eps$ replaced by $\eps+\alpha$. This shows that on $\calA_n$, we have for every time $t\leq T$ and every $\eps'_2\leq \eps_2$:
\begin{eqnarray*}
&&L_{GHP}(\Coal_{\delta^n}(\bfX^n,t),\Coal_{\delta^n}(\bfX^n_{> \eps'_2},t))\\
&\leq &8\alpha\frac{S(x^n,t)}{\eps^2}+16(\eps+\alpha)\;,\\
&\leq & 17(\delta^n+\supdiam(\Coal_{\delta^n}(\bfX^n_{\leq \eps_1},T))+\eps_1)\frac{8K(\eps)}{\eps^2}+16\eps\;.
\end{eqnarray*} 
Now, recall from Lemma~\ref{lemm:calSstable} that
$$\PP(\supdiam(\Coal_{\delta^{\infty}}(\bfX^{\infty}_{\leq \eps},T))>\alpha)\xrightarrow[\eps\rightarrow 0]{} 0\;.$$ 
Thus, using the hypothesis on $\supdiam(\Coal_{\delta^{\infty}}(\bfX^{\infty}_{\leq \eps},T))$, one may choose $\eps_1$ small enough (and thus $\eps_2$ small enough) to get that for every $n$ large enough (possibly infinite), with probability larger than $1-2\eps$, we have for every time $t\leq T$ and every $\eps'_2\leq \eps_2$:
$$L_{GHP}(\Coal_{\delta^n}(\bfX^n,t),\Coal_{\delta^n}(\bfX^n_{> \eps'_2},t))\leq 40 \eps\;.$$
Furthermore, since \eqref{coro:S}  of Corollary~\ref{coro:picture_structure} holds on $\calA_n$,
\begin{eqnarray*}
&&\|\sizes(\Coal_{\delta^{n}}(\bfX^{n},t))-\sizes(\Coal_{\delta^{n}}(\bfX^{n}_{> \eps_2},t))\|_2^2\\ &\leq & S(x^{n},t)-S(x^{n}_{>\eps_2},t)\leq \eps\;,
\end{eqnarray*}
where the first inequality comes from Lemma~\ref{lemm:Aldous17}. This shows that
\footnote{At this stage, since we did not proved yet the càdlàg property, it is not guaranteed that the event in the probability is measurable, due to the uncountable supremum. This is why we use the outer measure $\PP^*$. However, once the càdlàg property is proved, one may remove the star.}
\begin{equation}
\label{eq:finiteapprox}
\lim_{\eps_2\rightarrow 0}\lim_{N\rightarrow\infty}\sup_{\substack{n\geq N\\ n\in\NNbar}}\PP^{*}(\sup_{t\leq T}L_{2,GHP}(\Coal_{\delta^{n}}(\bfX^{n},t),\Coal_{\delta^{n}}(\bfX^{n}_{> \eps_2},t))>\eps)=0\;.
\end{equation}

Let us prove that the trajectories of $\Coal(\bfX^\infty,\cdot)$ are almost surely càdlàg if  $\bfX^\infty$ belongs to $\calS$. Let $\bfY^n:=\bfX^\infty_{>\frac{1}{n}}$. Notice that $(\Coal_0(\bfY^n,t))_{t\geq 0}$ is càdlàg: it is right-continuous and piece-wise constant, with a finite number of jumps. Then, equation~\eqref{eq:finiteapprox} applied with $\bfX^n=\bfX^\infty$ shows that the hypotheses of Lemma~\ref{lemm:cvcadlag} are satisfied with $\omega^n=\Coal(\bfY^n,\cdot)$ and $\omega^\infty=\Coal(\bfX^\infty,\cdot)$. This shows that the trajectories of $\Coal(\bfX^\infty,\cdot)$ are almost surely càdlàg, thus finishing to prove point $(i)$ of the theorem.

Now, let $(\alpha_p)_{p\geq 0}$ be a decreasing sequence of positive numbers going to zero such that:
$$\forall p\in \NN,\;\PP[\alpha_p\in x^{\infty}]=0\;.$$
For any $p$, $\bfX^{n}_{> \alpha_p}$ converges to $\bfX^{\infty}_{>\alpha_p}$ in distribution for $L_{1,GHP}$. Proposition~\ref{prop:FellerN1} implies that $(\Coal_{\delta^{n}}(\bfX^{n}_{> \alpha_p},t))_{t\leq T}$ converges to $(\Coal_{0}(\bfX^{\infty}_{> \alpha_p},t))_{t\leq T}$ for the Skorokhod topology  associated to $L_{2,GHP}$.  Together with \eqref{eq:finiteapprox}, Lemma~\ref{lemm:compactcv} and inequality~\eqref{eq:dSkdc}, this proves $(ii)$.

Furthermore, Proposition~\ref{prop:FellerN1} implies that $\Coal_{\delta^{n}}(\bfX^{n}_{> \alpha_p},t^{n})$ converges in distribution to $\Coal_{0}(\bfX^{\infty}_{>\alpha_p},t)$ as $n$ goes to infinity for $L_{1,GHP}$, and thus for $L_{2,GHP}$. Together with \eqref{eq:finiteapprox} we obtain that $\Coal_{\delta^{n}}(\bfX^{n},t^{n})$ converges to $\Coal_{0}(\bfX^{\infty},t)$ in distribution for $L_{2,GHP}$. This proves $(iii)$. 
\end{proof}
\begin{rem}
Notice that if $\delta_n>0$ and $\bfX^n\in\calN_2\setminus\calN_1$, $\Coal_{\delta_n}(\bfX^n,t)$ is not in $\calN_2$ for $t>0$ since the components are not totally bounded. Thus, the terms ``convergence in distribution for $L_{2,GHP}$''in Theorem~\ref{theo:AlmostFellercoal} should be understood in a larger space, where components are allowed not to be totally bounded. 

However, we do not insist on this because when $\delta^n>0$, we shall always use Theorem~\ref{theo:AlmostFellercoal}  with $\bfX^n\in\calN_1$ for any $n\in\NN$, in which case $\Coal_{\delta_n}(\bfX^n,t)$ is in $\calN_1$ for any $t$.
\end{rem}

Finally, we finish this section by exhibiting a sufficient condition for \eqref{eq:supdiamunif} which will be useful in section~\ref{subsec:CoalER}. In words, it says that when the diameter of a component of $\Coal_{\delta_n}(\bfX^n,T)$ goes to zero when its size goes to zero, uniformly in $n$, then one may restrict in \eqref{eq:supdiamunif} to components which are attached to a significant component by at most one edge of the multigraph $\MG(\bfX^n,T)$. In Lemma~\ref{lem:supdiamunif}, we consider the obvious coupling between $\Coal_{\delta_n}(\bfX^n,T)$, $\Coal_{\delta_n}(\bfX^n_{\leq \eps_1},T)$, $\MG(\bfX^n_{>\eps_1},T)$ and $\MG(\bfX^n,T)$.

\begin{lemm}
  \label{lem:supdiamunif}
Let $\bfX^{n}=(X^{n},d^{n},\mu^{n})$, $n\geq 0$ be a sequence of random variables in $\calN_2$ and $(\delta^{n})_{n\geq 0}$ a sequence of non-negative real numbers.   For $T$, $\eps$ and $\eps_1>0$, let $\calC_n(\eps,\eps_1,T)$ denote the set of components $m$ of $\Coal_{\delta_n}(\bfX^n_{\leq \eps_1},T)$ which are included in a component of $\Coal_{\delta_n}(\bfX^n,T)$ of size at least $\eps$ and such that there is exactly one edge between $m$ and $\MG(\bfX^n_{>\eps_1},T)$ in $\MG(\bfX^n,T)$. Let $d_n(\eps,\eps_1,T)$ denote the supremum of the diameters of elements of $\calC_n(\eps,\eps_1,T)$. Suppose that: 
\begin{enumerate}[(i)]
\item $(\sizes(\bfX^{n}))_{n\in\NN}$ converges in distribution, in $\ell^2$ to a random variable $x^{\infty}$ as $n$ goes to infinity.
\item for any $\alpha>0$
  \[\limsup_{\eps\rightarrow 0}\limsup_{n\rightarrow \infty}\PP(\supdiam(\Coal_{\delta_n}(\bfX^n,T)_{<\eps})>\alpha )=0\]
\item  for any $\alpha>0$ and $\eps>0$,
  \[\limsup_{\eps_1\rightarrow 0}\limsup_{n\rightarrow \infty}\PP(d_n(\eps,\eps_1,T)>\alpha )=0\]
\end{enumerate}
Then,
   \[\limsup_{n\in\NN} \PP(\supdiam(\Coal_{\delta^{n}}(\bfX^{n}_{\leq \eps},T))>\alpha)\xrightarrow[\eps\rightarrow 0]{} 0\;.\]
\end{lemm}
\begin{proof}
  Let $A_n(\eps,\eps_1)$ denote the event that there is a component of $\Coal_{\delta_n}(\bfX^n_{\leq \eps_1},T)$ which is included in a component of $\Coal_{\delta_n}(\bfX^n,T)$ of size at least $\eps$ and such that there is more than one edge between $m$ and $\MG(\bfX^n_{>\eps_1},T)$ in $\MG(\bfX^n,T)$. Using the Skorokhod representation theorem, Corollary~\ref{coro:picture_structure} and \eqref{eq:l2compact}, one sees that for any $\eps>0$, 
  \[\limsup_{n\rightarrow \infty}\PP(A_n(\eps,\eps_1))\xrightarrow[\eps_1\rightarrow 0]{}0\;.\]
  Recall also Lemma~\ref{lemm:forest}. Letting $B_n(\eps_1)$ denote the event that there is more than one edge between a connected component of $\Coal_{\delta^{n}}(\bfX^n_{\leq \eps_1},T)$ and $\Coal_{\delta^{n}}(\bfX^n_{>\eps_1},T)$, Lemma~\ref{lemm:forest} shows that
  \[\limsup_{n\rightarrow \infty}\PP(B_n(\eps_1))\xrightarrow[\eps_1\rightarrow 0]{}0\;.\]
  On $B_n(\eps_1)^c$, the diameter of a component of $\Coal_{\delta^{n}}(\bfX^n_{\leq \eps_1},T)$ is at most the diameter of the component of $\Coal_{\delta^{n}}(\bfX^n,T)$ which contains it. Thus, for any $\eps_1$ and $\eps$, 
  \begin{eqnarray*}
    &&\PP(\supdiam(\Coal_{\delta^{n}}(\bfX^{n}_{\leq \eps_1},T))>\alpha)\\
    &\leq & \PP(d_n(\eps,\eps_1,T)>\alpha)+\PP(\supdiam(\Coal_{\delta_n}(\bfX^n,T)_{<\eps})>\alpha )\\
    &&+\PP(A_n(\eps,\eps_1))+\PP(B_n(\eps_1))\;,
  \end{eqnarray*}
  this gives the result.
\end{proof}

\subsection{Convergence of the coalescent on Erd\H{o}s-Rényi random graphs}
\label{subsec:CoalER}
In this section we prove Theorem~\ref{theo:cvcoalGnp}. Recall that $\overline{\calG}_{n,\lambda}$ is the element of $\calN_2^{graph}$ obtained from  $\calG(n,p(\lambda,n))$ by assigning to each edge a length $n^{-1/3}$ and to each vertex a mass $n^{-2/3}$. We know by Theorem~\ref{theo:cvGnp} that $\overline{\calG}_{n,\lambda}$ converges in distribution (for $L_{2,GHP}$) to $\Glambda$. In view of Theorem~\ref{theo:AlmostFellercoal}, it is sufficient to prove that for any $T$ and $\alpha>0$: 
\begin{equation}
  \label{eq:supdiamcalG_n}
  \limsup_{n\rightarrow \infty} \PP(\supdiam(\Coal_{n^{-1/3}}((\overline{\calG}_{n,\lambda})_{\leq \eps},T))>\alpha)\xrightarrow[\eps\rightarrow 0]{} 0\;.
  \end{equation}
To this end, we shall use Lemma~\ref{lem:supdiamunif}. The notion of depth-first exploration process on a finite graph $G=(V,E)$, as defined in \cite[sections~1 and 2]{AdBrGolimit}, will be useful. This depth-first exploration process defines an order $\sigma$ on $V$ (a bijection from $V$ to $\{0,\ldots,n-1\}$ with $n=|V|$), a height process $h$ (from $\{1,\ldots,|V|\}$ to $\NN$) and, for each connected component $C$, a rooted tree $(\rho_C,\calT_C)$ such that $\rho_C$ is the first vertex visited in $C$ and $\calT_C$ spans $C$. Furthermore, denoting by $d_{\calT_C}$ the graph metric on $\calT_C$, one has, for any connected component $C$ of $G$:
\[\forall i\in C,\;h(\sigma(i))=d_{\calT_C}(\rho_C,i)\;.\]
We shall need the following lemma.
\begin{lemm}
  \label{lemm:DFSdiam}
  Let $G$ be a finite graph with vertex set $V$. Let $h$ be the height process associated to the depth-first exploration process on $G$ and denote by $\sigma$ the order induced by the depth-first exploration on $V$. Let $I$ denote a subset of $V$ such that the subgraph induced by $I$ in $G$ is connected and such that, denoting by $C$ the connected component of $G$ containing $I$, either $C=I$ or there is exactly one edge connecting $I$ to $C\setminus I$. Then, 
  \[\diam(I)\leq 2+3\max_{i,j\in C\,:\,|\sigma(i)-\sigma(j)|\leq |I|}|h(\sigma(i))-h(\sigma(j))|\;,\]
  where the diameter is computed either for the distance $d_{\calT_C}$ or for the graph distance on $G$.
\end{lemm}
\begin{dem}
  If $C=I$, the result is trivial, so let us suppose that $I\neq C$ and let $x$ denote the unique vertex of $I$ connected to $C\setminus I$. We consider two cases.
  \begin{enumerate}
  \item Suppose that $\rho_C\in C\setminus I$. In this case, the vertices of $I$ are explored consecutively, i.e $\sigma(I)$ is an interval of length $|I|$, and the first vertex explored in $I$ is $x$. Thus, for any $z\in I$,
\[d_{\calT_C}(z,\rho_C)=d_{\calT_C}(z,x)+d_{\calT_C}(x,\rho_C)\;.\]
Thus, if $z$ and $z'$ denote two vertices of $I$,
\begin{eqnarray*}
  d_{\calT_C}(z,z')&\leq &d_{\calT_C}(z,x)+d_{\calT_C}(z',x)\;,\\
  &=&d_{\calT_C}(z,\rho_C)-d_{\calT_C}(x,\rho_C)+d_{\calT_C}(z',\rho_C)-d_{\calT_C}(x,\rho_C)\;,\\
  &=&h(\sigma(z))-h(\sigma(x))+h(\sigma(z'))-h(\sigma(x))\;,\\
  &\leq &2\max_{i,j\in C\,:\,|\sigma(i)-\sigma(j)|\leq |I|}|h(\sigma(i))-h(\sigma(j))|
\end{eqnarray*}
since $\sigma(I)$ is an interval of length $|I|$.
\item Suppose that $\rho_C\in I$. Then, the vertices of $C\setminus I$  are explored consecutively. Let $J$ denote the subset of $I$ composed of the vertices of $I$ which are explored before those of $C\setminus I$. $J$ contains $\rho_C$ and $x$ (which might be the same vertex). Notice that the vertices of $J$ are explored consecutively (i.e $\sigma(J)$ is an interval of length $|J|$), and those of $I\setminus J$ also (i.e $\sigma(I\setminus J)$ is an interval of length $|I|-|J|$). Notice also that $J$ forms a subtree of $\calT_C$. When $z$ and $z'$ belong to $C$, we shall denote by $[z,z']$ the unique path from $z$ to $z'$ in $\calT_C$ and by $T(z)$ the subtree above $z$, i.e the set of vertices $u$ such that $z\in[\rho_C,u]$. Notice that if $z\in I\setminus J$, then $T(z)$ is included in $I\setminus J$, and that if $u\in T(z)$,
  \begin{equation}
    \label{eq:subtree}
  d_{\calT_C}(z,u)=h(\sigma(u))-h(\sigma(z))\;.
\end{equation}
  Now, let $z$ and $z'$ belong to $I$ and consider the following cases.
  \begin{enumerate}[(a)]
  \item If $z$ and $z'$ belong to $J$, then
\begin{eqnarray*}
  d_{\calT_C}(z,z')&\leq &d_{\calT_C}(z,\rho_C)+d_{\calT_C}(z',\rho_C)\;,\\
  &=&h(\sigma(z))-h(\sigma(\rho_C))+h(\sigma(z'))-h(\sigma(\rho_C))\;,\\
  &\leq & 2\max_{i,j\in C\,:\,|\sigma(i)-\sigma(j)|\leq |J|}|h(\sigma(i))-h(\sigma(j))|
\end{eqnarray*}
since $\rho_C$ belongs to $J$.
\item If $z\in J$ and $z'\in I\setminus J$, let $y$ denote the vertex of $[\rho_C,z']\cap J$ closest (for $d_{\calT_C}$) to $z'$ and $y'$  the vertex of $[\rho_C,z']\cap I\setminus J$ closest to $\rho_C$. Since $J$ is explored consecutively and $\calT_C$ is the depth-search tree, $y$ and $y'$ are neighbours in $\calT_C$, $[\rho_c,y]$ is included in $J$ and $z'\in T(y')$. We already proved in point $(a)$ above that 
\[d_{\calT_C}(z,y)\leq 2\max_{i,j\in C\,:\,|\sigma(i)-\sigma(j)|\leq |J|}|h(\sigma(i))-h(\sigma(j))|\;.\]
  Thus, using \eqref{eq:subtree}:
\begin{eqnarray*}
  d_{\calT_C}(z,z')& \leq &d_{\calT_C}(z,y)+d_{\calT_C}(y,y')+d_{\calT_C}(y',z')\;,\\
  &\leq &2\max_{i,j\in C\,:\,|\sigma(i)-\sigma(j)|\leq |J|}|h(\sigma(i))-h(\sigma(j))|\\
  &&+1+h(\sigma(z'))-h(\sigma(y'))\;,\\
  &\leq &2\max_{i,j\in C\,:\,|\sigma(i)-\sigma(j)|\leq |J|}|h(\sigma(i))-h(\sigma(j))|\\
  &&+1+\max_{i,j\in C\,:\,|\sigma(i)-\sigma(j)|\leq |I|-|J|}|h(\sigma(i))-h(\sigma(j))|\;.
\end{eqnarray*}
\item If $z$ and $z'$ belong to $I\setminus J$, the arguments are similar: one finds $y$ and $y'$ in $I\setminus J$, $t$ and $t'$ in $J$ such that $z\in T(y)$, $z'\in T(y')$, $t$ is a neighbour of $y$ and $t'$ is a neighbour of $y'$. Notice that we already proved that
\[d_{\calT_C}(t,t')\leq 2\max_{i,j\in C\,:\,|\sigma(i)-\sigma(j)|\leq |J|}|h(\sigma(i))-h(\sigma(j))|\;.\]
  Then, using \eqref{eq:subtree}
\begin{eqnarray*}
  d_{\calT_C}(z,z')& \leq &d_{\calT_C}(z,y)+d_{\calT_C}(y,t)+d_{\calT_C}(t,t')\\
  &&+d_{\calT_C}(t',y')+d_{\calT_C}(y',z')\;,\\
  &\leq &h(\sigma(z))-h(\sigma(y))+1+d_{\calT_C}(t,t')+1\\
  &&+h(\sigma(z'))-h(\sigma(y'))\;,\\
  &\leq & 2+3\max_{i,j\in C\,:\,|\sigma(i)-\sigma(j)|\leq |I|}|h(\sigma(i))-h(\sigma(j))|\;.\end{eqnarray*}
  \end{enumerate}
  \end{enumerate}
  
\end{dem}



Let us denote by $h_{n,\lambda}$ the height process associated to the depth-first exploration process on $\overline{\calG}_{n,\lambda}$, and let $\overline h_{n,\lambda}$ be its rescaled version:
$$\overline{h}_{n,\lambda}(x):=\frac{1}{n^{1/3}}h_{n,\lambda}(xn^{2/3})\;.$$
Now let us consider the depth-first exploration process of $\Coal_{n^{-1/3}}(\overline{\calG}_{n,\lambda},T)$. When $\calP_T^+$ has intensity $\gamma$, $N^+(\calG(n,p),\calP_T^+)$ is equal in distribution to $\calG(n,p')$ with:
$$p'=p+(1-p)(1-e^{-\gamma T})$$
When $\gamma=n^{-4/3}$, $p'=p(\lambda'_n,n)$ with 
$$\lambda'_n\xrightarrow[n\rightarrow\infty]{}\lambda+T$$
and $\Coal_{n^{-1/3}}(\overline{\calG}_{n,\lambda},T)$ is equal, in distribution, to $\overline{\calG}_{n,\lambda'_n}$. The difference between $\lambda'_n$ and $\lambda_n+T$ is unimportant for us (for instance using Lemma~\ref{lemm:forest}, $\supdiam(\Coal_{n^{-1/3}}((\overline{\calG}_{n,\lambda})_{\leq \eps},T))$ is essentially nondecreasing in $T$), so we shall continue as if $\lambda'_n=\lambda+T$. Under this approximation, the rescaled version of the height process associated to the exploration of $\Coal_{n^{-1/3}}(\overline{\calG}_{n,\lambda},T)$ has the same distribution as $\overline{h}^{n,\lambda+T}$, and we shall adopt the same notation to keep things simple.

Now, we shall use Lemma~\ref{lem:supdiamunif} to prove \eqref{eq:supdiamcalG_n}. Let us use the notations of Lemma~\ref{lem:supdiamunif}, with $\bfX^n=\overline{\calG}_{n,\lambda}$ and $\delta^n=n^{-1/3}$. Hypothesis $(i)$ of Lemma~\ref{lem:supdiamunif} is satisfied, so we only need to prove hypotheses $(ii)$ and $(iii)$. Lemma~\ref{lemm:DFSdiam} ensures that
$$d_n(\eps,\eps_1)\leq  \frac{2}{n^{1/3}}+3\sup_{\substack{x,y\in\RR^+\\|x-y|\leq \eps_1}}|\overline{h}^{n,\lambda+T}(x)-\overline{h}^{n,\lambda+T}(y)|$$
where the supremum is restricted to pairs $(x,y)$ such that $x$ and $y$ belong to a same excursion of $\overline{h}^{n,\lambda+T}$ above zero, of length at least $\eps$. Thus, we see that hypotheses $(ii)$ and $(iii)$ of Lemma~\ref{lem:supdiamunif} will be established if we can prove that for any $\alpha>0$,
\begin{equation}
\label{eq:oscillation}
\limsup_{n\rightarrow\infty}\PP\left(\sup_{\substack{x,y\in\RR^+\\|x-y|\leq \eps_1}}|\overline{h}^{n,\lambda+T}(x)-\overline{h}^{n,\lambda+T}(y)|>\alpha\right)\xrightarrow[\eps_1\rightarrow 0]{} 0\;,
\end{equation}
with the supremum restricted to pairs $(x,y)$ such that $x$ and $y$ belong to the same excursion of $\overline{h}^{n,\lambda+T}$ above zero.

Let $B^\lambda$ be a Brownian motion with quadratic drift, defined by $B^\lambda_t=B_t+\lambda t-\frac{t^2}{2}$ with $B$ a standard Brownian motion. Let $W^\lambda$ be $B^\lambda$ reflected above its current minimum:
$$W^\lambda_t:=B^\lambda_t-\min_{0\leq s\leq t}B^\lambda_s\;.$$ 
Then \eqref{eq:oscillation} is a consequence of the fact that $\overline{h}^{n,\lambda+T}$ converges in distribution to  $2W^\lambda$ for the sup norm on $\RR^+$. It seems however that this convergence is not written in the literature, so in order to use only available sources, one may rely on the work done in \cite{AdBrGolimit} as follows. One may  separate the analysis of the supremum on the $N$ largest excursions and on the others. Let $B_{N,n}(\eps)$ be the event that the maximal height of the $i$-th largest component in $\overline{\calG}_{n,\lambda+T}$ exceeds $\eps$ for some $i> N$. The equation p.402 below equation (24) in \cite{AdBrGolimit} shows that for any $\eps>0$,
\begin{equation}
\label{eq:maxheight}
\lim_{N\rightarrow\infty}\limsup_{n\rightarrow\infty}\PP(B_{N,n}(\eps))= 0\;.
\end{equation}
Then, for a fixed $N$, one may argue as in the proof of \cite[Theorem~24, p.398]{AdBrGolimit}: conditionally on the sizes, the rescaled height processes associated to those components are independent and each one converges in distribution (for the uniform topology) to a continuous excursion (a tilted Brownian excursion). Together with the convergence of the sizes and Skorokhod's representation theorem, this proves that the $N$ largest excursions of $\overline{h}^{n,\lambda+T}$ converges as a vector in $C([0,+\infty[)^N$ to a random vector of continuous functions with bounded support. This implies that \eqref{eq:oscillation} holds when the supremum is restricted to pairs $(x,y)$ such that $x$ and $y$ belong to one of the $N$ largest components of $\overline{h}^{n,\lambda+T}$. Together with \eqref{eq:maxheight}, this shows \eqref{eq:oscillation} and this ends the proof of Theorem~\ref{theo:cvcoalGnp}.


\section{Proofs of the results for fragmentation}
\label{sec:frag}

The main goal of this section is to prove the Feller property for fragmentation on $\calN_2^{graph}$, Theorem~\ref{theo:FellerGraphs} and to apply it to prove Theorem~\ref{theo:cvfragGnp}. It is very close to the work performed in \cite{AdBrGoMi2017}, which proves a continuity result for a fragmentation restricted to the core of a graph (and stopped when you get a tree). The main difference is that we want in addition to perform fragmentation on the tree part of the graphs. Another technical difference will be detailed at the beginning of section \ref{subsec:FellerGraphs}. Unfortunately, those differences force us to make substantial modifications to the arguments of \cite{AdBrGoMi2017}.
 
\subsection{Notation}
\label{subsec:notationsgraphs}
We need to introduce a few more definitions to deal with fragmentation of $\RR$-graphs. For more details, we refer to \cite{AdBrGoMi2017}.

A \emph{multigraph with edge-lengths} is a triple $(V,E,(\ell(e))_{e\in E})$ where $(V,E)$ is a finite connected multigraph and for every $e\in E$, $\ell(e)$ is a strictly positive number. One may associate to such a multigraph with edge-lengths a compact $\RR$-graph with a finite number of leaves by performing on $V$ (seen as a metric space as the disjoint union of its elements) the $\ell(e)$-gluing along $e$ for each edge $e\in E$. 

\emph{Until the end of the article, we shall say that an $\RR$-graph is finite if it is compact and has a finite number of leaves.} Equivalently, it can be associated to a multigraph with edge-lengths as above. This terminology, applied to trees, comes from \cite{EvansPitmanWinter06}.

Let $G$ be an $\RR$-graph. When there is only one geodesic between $x$ and $y$ in $G$, we denote by $[x,y]$ its image. Recall the notion of the core of $G$ defined in section~\ref{subsec:Rgraphs}. If $S$ is a closed connected subset of $G$ containing $\core(G)$, then for any $x\in G$, there is a unique shortest path $\gamma_x$ going from $x$ to $S$. We denote by $p_S(x)$ the unique point belonging to $\gamma_x\cap S$. When $G$ is not a tree and $S=\core(G)$, we let $\alpha_G(x):=p_S(x)$. 

For any $\eta>0$ and any $\RR$-graph $G$ which is not a tree, let
$$R_\eta(G):=\core(G)\cup\{x\in G\mbox{ s.t. }\exists y:\;x\in [y,\alpha_G(y)] \text{ and }d(y,x)\geq \eta\}\;.$$
When $(T,\rho)$ is a rooted $\RR$-tree, we let $R_\eta(T):=R_\eta(T,\rho)$ be defined as above, with $\alpha_G(y)$ replaced by the root $\rho$ and $\core(G)$ replaced by $\{\rho\}$. Thus the definition of $R_\eta(G)$ extends the definition of $R_\eta(T)$ for a rooted $\RR$-tree $(T,\rho)$ in \cite{EvansPitmanWinter06}. Notably, \cite[Lemma~2.6~(i)]{EvansPitmanWinter06} shows that for any $\eta>0$, $R_\eta(G)$ is a finite $\RR$-graph. For a (non-rooted) meaured tree $(T,\mu)$, with $\mu$ a positive finite measure, we let $R_\eta(T)$ denote $R_\eta(T,\rho)$ where $\rho$ is a random root, sampled according to $\frac{\mu(\cdot)}{\mu(T)}$. Finally, if $G$ belongs to $ \calN_2^{graph}$, we let
\[R_\eta(G):=\bigcup_{m\in\comp(G)}R_\eta(m)\;,\]
where the random roots of components which are trees are sampled independently.

The \emph{$\eps$-enlargement} of a correspondence $\calR\in C(X,X')$ is defined as:
$$\calR^\eps:=\{(x,x')\in X\times X':\exists (y,y')\in\calR, d(x,y)\lor d(x',y')\leq \eps\}\;.$$
It is a correspondence containing $\calR$ with distortion at most $\dis(\calR)+4\eps$.

If $\calR\in C(X,X')$, two Borel subsets $A\subset X$ and $B\subset X'$ are said to be in correspondence through $\calR$ if $\calR\cap (A\times B)\in C(A,B)$. 

Let $\eps>0$. If $X$ and $X'$ are $\RR$-graphs with surplus at least $2$, an $\eps$-overlay is a correspondence $\calR\in C(X,X')$ with distortion less than $\eps$ and such that there exists a multigraph isomorphism $\chi$ between the kernels $\ker(X)$ and $\ker(X')$ satisfying:
\begin{enumerate}
\item $\forall v\in k(X), (v,\chi(v))\in\calR$,
\item For every $e\in e(X)$, $e$ and $\chi(e)$ are in correspondence through $\calR$ and $|\ell_X(e)-\ell_{X'}(\chi(e))|\leq \eps$.
\end{enumerate}
If $X$ and $X'$ have surplus one, an $\eps$-overlay is a correspondence with distortion less than $\eps$ such that the unique cycles of $X$ and $X'$ are in correspondence and the difference of their lengths is at most $\eps$. If $X$ and $X'$ are trees, an $\eps$-overlay is simply a correspondence with distortion less than $\eps$.

We let $\calN_2^{tree}$ be the set of elements $\bfX\in\calN_2^{graph}$ whose components are trees.

\subsection{Reduction to finite $\RR$-graphs}
The following lemmas will be useful to reduce the proof of the Feller property to finite $\RR$-graphs, notably to adapt the arguments of \cite{EvansPitmanWinter06}.
\begin{lemm}
\label{lemm:approxReta}
Let $\eta\in (0,1]$ and $T>0$. Let $\bfG$ belong to $\calN_2^{graph}$. Let $S$ be a closed connected subset of $\bfG$ such that $R_\eta(\bfG)\subset S\subset \bfG$. Suppose that for each component $H$ of $\bfG$, $S\cap H$ is a connected $\RR$-graph. Let $\bfS:=(S,d|_{S\times S},p_S\sharp\mu)$. Then, with probability at least $1-T\eta^{1/7}$, for any $t\in[0,T]$, under the obvious coupling,
$$L_{2,GHP}^{surplus}(\Frag(\bfG,t),\Frag(\bfS,t))\leq 34\eta^{1/7}(1+\sum_{H\in\comp(\bfG)}\mu(H)^2)^2\;.$$
\end{lemm}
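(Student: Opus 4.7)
The plan is to exploit the structural fact that $\bfG\setminus S$ (and a fortiori $\bfG\setminus R_\eta(\bfG)$) decomposes as a disjoint union of ``twigs'' -- connected subtrees of $\bfG$, each attached to $S$ at a single base point and contained in a ball of radius $\eta$ around that base, hence of diameter at most $2\eta$ and carrying no cycle. Indeed, if $x\notin R_\eta(\bfG)$ then $x\in[y,\alpha_G(y)]$ for some leaf $y$ with $d(x,y)<\eta$; a short case split on whether $d(y,\alpha_G(y))\geq \eta$ shows $d(x,R_\eta(\bfG))\leq \eta$, and connectedness of $S$ inside each component of $\bfG$ forces the twig to meet $S$ only at $p_S(x)$. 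Under the obvious coupling, the cuts falling in $S$ are shared between $\bfG$ and $\bfS$, while cuts on twigs only affect $\bfG$, chopping off ``debris'' pieces that stay inside twigs.

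For each $C_S\in\comp(\Frag(\bfS,t))$, let $\tilde C_G$ be the unique component of $\Frag(\bfG,t)$ containing $C_S$ (namely $C_S$ plus the still-attached fragments of its twigs); this defines an injection whose complement in $\comp(\Frag(\bfG,t))$ consists of debris pieces. For the matched pair, I use the correspondence $\calR=\{(x,p_S(x)):x\in\tilde C_G\}$ and the transport $\pi=(\mathrm{id},p_S)\sharp(\mu|_{\tilde C_G})$: since $x\in\tilde C_G$ forbids a cut on the arc $[x,p_S(x)]$ of length $\leq\eta$, and since paths going through a twig cannot shorten distances between points of $C_S$ (twigs are dead ends), one verifies $\mathrm{dis}(\calR)\leq 2\eta$ and the marginal discrepancy equals $m_{C_S}^{\mathrm{twig}}:=\mu(\text{debris whose base lies in }C_S)$. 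As twigs are trees, $\mathrm{surplus}(\tilde C_G)=\mathrm{surplus}(C_S)$, yielding $d_{GHP}^{\mathrm{surplus}}(\tilde C_G,C_S)\leq \eta\lor m_{C_S}^{\mathrm{twig}}$.

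The last step is a probabilistic bound on the debris. Writing $X_H$ for the total debris mass inside $H\in\comp(\bfG)$, any $x\in H\setminus S$ lies in debris iff some cut lands on $[x,p_S(x)]$, so $\PP(x\in\text{debris at time }T)\leq T\eta$, giving $\EE[X_H(T)]\leq T\eta\,\mu(H)$ and, using $X_H\leq\mu(H)$, $\EE[X_H(T)^2]\leq T\eta\,\mu(H)^2$. Summing and applying Markov, with $Z:=\sum_H\mu(H)^2$, $\PP(\sum_H X_H(T)^2>\eta^{6/7}Z)\leq T\eta^{1/7}$. Since $X_H(t)$ is nondecreasing in $t$, on the complementary event one has for every $t\leq T$ the simultaneous bounds $\max_H X_H(t)\leq \eta^{3/7}\sqrt{Z}$ and $\sum_H\mu(H)X_H(t)\leq \eta^{3/7}Z$. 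A continuous analogue of Lemma~\ref{lemm:Aldous17} (viewing $\Frag(\bfG,t)$ as obtained from the intermediate fragmentation with cuts only in $S$ by adding cuts on twigs) yields $\|a-b\|_2^2\leq \sum b^2-\sum a^2$, where $a,b$ are the sorted masses of $\Frag(\bfG,t),\Frag(\bfS,t)$; a direct expansion bounds this by $3\sum_H\mu(H)X_H(t)\leq 3\eta^{3/7}Z$. For the $L_{GHP}^{\mathrm{surplus}}$ piece I apply Lemma~\ref{lemm:Ldeuxappli} with $\eps=\eta^{1/7}$, $\alpha=\eta+\max_{C_S} m_{C_S}^{\mathrm{twig}}$ and $p=2$ (its proof works verbatim with $d_{GHP}^{\mathrm{surplus}}$ here since matched pairs share their surplus and debris carries none), giving $L_{GHP}^{\mathrm{surplus}}\leq O(\eta^{1/7}(1+Z)^{3/2})$ in the regime $\alpha\leq\eps/2$; the opposite regime $\alpha>\eps/2$ is handled by the trivial a priori bound $L_{2,GHP}^{\mathrm{surplus}}\leq\max(1,2\sqrt{Z})$, which is dominated by $34\eta^{1/7}(1+Z)^2$ precisely when $\eps/2<\alpha$. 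The main technical hurdle is justifying the Aldous-type $\ell^2$ inequality in this continuous setting and carefully tracking constants down to the prefactor $34$; the structural analysis and the Markov step are, by contrast, routine.
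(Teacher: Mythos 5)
Your proof follows essentially the same structure as the paper's: the twig decomposition of $\bfG\setminus S$ (with each twig contained in a ball of radius $\eta$ around its base and meeting $S$ only there), the correspondence $\{(x,p_S(x))\}$ together with $\pi=(\mathrm{id}\otimes p_S)\sharp\mu$, the Fubini-and-Markov estimate $\PP(\sum_H X_H(T)^2>\eta^{6/7}Z)\leq T\eta^{1/7}$ (where $Z=\sum_H\mu(H)^2$), monotonicity in $t$ of the debris, and a final application of Lemma~\ref{lemm:Ldeuxappli} in which surplus comes for free because twigs are trees. Two details differ, both making your version a bit heavier without changing the substance. For the $\ell^2$ bound, the paper pairs each component $m$ of $\Frag(\bfG,t)$ meeting $S$ with $m\cap S$ (mass difference $\mu(p_S^{-1}(m\cap S)\setminus m)$), pairs debris components with $0$, and uses the elementary $\ell^2$-contraction property of decreasing rearrangement, giving directly $\|\cdot\|_2^2\leq 2\sum_H X_H(t)^2\leq 2\eta^{6/7}Z$; the detour through an Aldous-type inequality is unnecessary, so the step you flag as the main technical hurdle is in fact avoided (and in any case no ``continuous analogue'' of Lemma~\ref{lemm:Aldous17} is needed: the components of $\Frag(\bfG,t)$ form a countable set on which the discrete lemma applies verbatim). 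For the $L_{GHP}^{\mathrm{surplus}}$ step, the paper takes $\alpha=\eta^{3/7}\sqrt{1+Z}$ and $\eps=\alpha^{1/3}+\alpha$ in Lemma~\ref{lemm:Ldeuxappli}, which removes the need for your regime split and yields $17\alpha+\alpha^{1/3}(16+8Z)\leq 33\,\eta^{1/7}(1+Z)^2$, cleanly under $34$; with your choice $\eps=\eta^{1/7}$, $\alpha=\eta+\max_{C_S}m^{\mathrm{twig}}_{C_S}$, the rough constant count in the regime $\alpha\leq\eps/2$ already exceeds $34$, so as written the constant does not quite close, even though the argument is otherwise sound.
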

\begin{proof}
Let $\calP$ be a poisson random set of intensity measure $\ell_G\otimes\leb_\RR^+$ on $G\times \RR^+$ and let us use it to perform the fragmentation on $S$ and $G$. Define:
$$G^\eta_t:=\{x\in G\setminus S\mbox{ s.t. }\exists y\in \calP_t\cap (G\setminus S) \cap [x,\alpha_G(x)]\}\;.$$
Notice that a component $m$ of $\Frag(\bfS,t)$ is endowed with the distance $d|_{m\times m}$ and the measure $(p_S\sharp\mu)|_m$, while a component $m$ of $\Frag(\bfG,t)$ is endowed with the distance $d|_{m\times m}$ and the measure $\mu|_{m}$.

If $m$ is a component of $\Frag(\bfG,\calP_t)$ such that $m\cap S=\emptyset$ then $m\subset G^\eta_t$. Notably, if $t\in[0,T]$, $H\in\comp(\bfG)$, $m$ is a component of $\Frag(\bfG,\calP_t)$ included in $H$ and $\mu(m)>\mu(G^\eta_T\cap H)$, then $m$ must intersect $S$. Furthermore, if $m\cap S\not=\emptyset$ then $m\cap S$ is a component of $\Frag(\bfS,\calP_t)$. 

Let $H\in\comp(\bfG)$. For any component $m$ of $\Frag(H,\calP_t)$ such that $m\cap S\not=\emptyset$, we claim that
\begin{equation}
  \label{eq:dGHPsigmaGmiss}
  d_{GHP}(m,m\cap S)\leq \eta\lor \mu(G^\eta_T\cap H)
\end{equation}
Indeed, let $\calR:=\{(x,p_S(x))\;:\;x\in m\}$, which has distortion at most $2\eta$, and define $\pi:=((Id,p_S)\sharp\mu)|_{m\times m\cap S}$. Then, $\pi(\calR^c)=0$ and
\begin{eqnarray*}
  D(\pi;\mu|_{m},(p_S\sharp\mu)|_m)&=&\sup_{A\in\calB(m\cap S)}\mu(p_S^{-1}(A)\setminus m)\\
  &=&\mu(p_S^{-1}(m)\setminus m)\\
  &\leq & \mu(G^\eta_t\cap H)\\
&\leq & \mu(G^\eta_T\cap H)\;.
\end{eqnarray*}
This shows \eqref{eq:dGHPsigmaGmiss}. Furthermore,
\begin{eqnarray}
&&\nonumber \|\sizes(\Frag(\bfG,t))-\sizes(\Frag(\bfS,t))\|_2^2\\
&\leq &\sum_{\substack{m\in\Frag(\bfG,t)\\m\cap S\not=\emptyset}}\mu(p_S^{-1}(m)\setminus m)^2+\sum_{\substack{m\in\Frag(\bfG,t)\\m\cap S=\emptyset}}\mu(m)^2\\
\nonumber  &\leq &2\sum_{H\in\comp(\bfG)}\mu(G^\eta_t\cap H)^2\\
\label{eq:L2reductionFrag} &\leq &2\sum_{H\in\comp(\bfG)}\mu(G^\eta_T\cap H)^2\;.
\end{eqnarray}
Using Fubini's theorem,
$$\EE[\mu(G^\eta_T\cap H)^2]\leq\mu(H)^2(1-e^{-\eta T})\leq \mu(H)^2\eta T\;.$$
Thus,
$$\PP\left(\sum_{H\in\comp(\bfG)}\mu(G^\eta_T\cap H)^2\geq \eta^{6/7}\sum_{H\in\comp(\bfG)}\mu(H)^2\right)\leq T\eta^{1/7}\;.$$
Now, let us place ourselves on the event 
$$\calE:=\{\sum_{H\in\comp(\bfG)}\mu(G^\eta_T\cap H)^2< \eta^{6/7}\sum_{H\in\comp(\bfG)}\mu(H)^2\}$$
and define $\alpha:=\eta^{3/7}\sqrt{1+\sum_{H\in\comp(\bfG)}\mu(H)^2}$.  Notice that on $\calE$, we have for any $H\in\comp(\bfG)$:
$$\mu(G_T^\eta\cap H)\leq \sqrt{\sum_{H\in\comp(\bfG)}\mu(G^\eta_T\cap H)^2}\leq\alpha\;.$$
Let $\sigma$ assign to each component of $\Frag(\bfS,t)$ the component of $\Frag(\bfG,t)$ which contains it, and let $\sigma'$ assign to a component $m$ of $\comp((\Frag(\bfG,t))_{>\alpha^{1/3}+\alpha})$ the component $m\cap S$ of $\comp(\Frag(\bfS,t))$. From \eqref{eq:dGHPsigmaGmiss} we deduce that for any component $m$ of $\Frag(\bfS,t)$,
$$d_{GHP}(m,\sigma(m))\leq \alpha$$
and notice that $m$ and $\sigma(m)$ have the same surplus. Also, for any component $m'$ of $\Frag(\bfG,t)$,
$$d_{GHP}(m',\sigma'(m'))\leq \alpha\;,$$
and  $m'$ and $\sigma'(m')$ have the same surplus. According to Lemma~\ref{lemm:Ldeuxappli} this shows that on the event $\calE$:
\begin{eqnarray*}
&& L_{GHP}^{surplus}(\Frag(\bfG,t),\Frag(\bfS,t)\\
&\leq& 8\alpha\frac{\sum_{m\in\comp(\Frag(\bfG,t))}\mu(m)^2}{\alpha^{2/3}}+16(\alpha^{1/3}+\alpha)\\&\leq& 16\alpha+\alpha^{1/3}\left(16+8\sum_{H\in\comp(\bfG)}\mu(H)^2\right)\;.
\end{eqnarray*}
And, thanks to \eqref{eq:L2reductionFrag},
$$\|\sizes(\Frag(\bfG,t))-\sizes(\Frag(\bfS,t)\|_2^2\leq \eta^{6/7}\sum_{H\in\comp(\bfG)}\mu(H)^2$$
which shows the result. 
\end{proof}
We shall need a slight variation of the preceding lemma at time zero for rooted trees. When $(\bfX,\rho)$ and $(\bfX',\rho')$ are two rooted m.s-m.s, rooted respectively at $\rho$ and $\rho'$, we define:
$$d_{GHP}^{root}((\bfX,\rho), (\bfX',\rho')) =\inf_{\substack{\pi\in M(X,X')\\ \calR\in C_{\rho,\rho'}(X,X')}}\{D(\pi;\mu,\mu')\lor \frac{1}{2}\dis(\calR)\lor \pi(\calR^c)\}$$
where $C_{\rho,\rho'}(X,X')$ is the set of correspondences between $\bfX$ and $\bfX'$ which contain $(\rho,\rho')$.

Using natural correspondences and couplings, one may see that $R_\eta(T)$ approximates nicely a rooted tree.
\begin{lemm}
  \label{lemm:approxrootedtree}
  Let $\bfT=(T,d,\mu)$ be a measured real tree and $\rho\in T$ a root. Let $\bfR_\eta(T)$ be the measured real tree $R_\eta(T)$ equipped with the measure $p_{R_\eta(T)}\sharp\mu$. Then,
  \[d_{GHP}^{root}((\bfT,\rho), (\bfR_\eta(T),\rho))\leq \eta\;.\]
\end{lemm}
\begin{proof}
  Let $p:=p_{R_\eta(T)}$. Take
  \[\calR:=\{(x,p(x))\in T\times R_\eta(T)\,:\,x\in T\}\]
  which is a correspondence containing $(\rho,\rho)$ of distortion at most $2\eta$. Then take
  \[\pi:=(Id,p)\sharp\mu\]
  i.e $\pi(C)=\mu(\{x\in T\,:\,(x,p(x))\in C\})$,   which verifies
  \[D(\pi;\mu,p\sharp\mu)=0\;.\]
  and $\pi(\calR^c)=0$.
\end{proof}
\subsection{The Feller property for trees}
\label{subsec:FellerTrees}

If $x$ and $y$ belong to a rooted tree $(T,\rho)$, we denote by $[x,y]$ the unique geodesic between $x$ and $y$, and we say that $x\leq y$ if $x$ belongs to $[\rho,y]$. The subtree above $x$ is then defined as
$$\{y\in T\,:\,x\leq y\}\;.$$

The following lemma is a slight extension of \cite[Lemma~6.3]{EvansPitmanWinter06} designed to take measures into account.
\begin{lemm}
\label{lemm:EvansPitmanWinterbis} Let $\bfT=(T,d,\mu)$ be a measured real tree, $\rho\in T$ and $\eps> 0$. There exists $\eta>0$ (depending only on $\eps$), and $\delta > 0$  (depending on $\bfT$, $\rho$ and $\eps$) such that if $\bfT'=(T',d',\mu')$ is a measured  $\RR$-tree rooted at $\rho$ and $d_{GHP}^{root}((\bfT,\rho), (\bfT',\rho')) < \delta$, then there exist finite subtrees $S\subset R_\eta(T)$ and $S'\subset T'$ such that $\rho\in S$, $\rho'\in S'$ and:
\begin{enumerate}[(i)]
\item $\delta_H(S,T) < \eps$ and $\delta_H(S', T') < \eps$,
\item there is a bijective measurable map $\psi: S \rightarrow S'$ that preserves length measure and has distortion at most $\eps$,
\item $\psi(\rho)=\rho'$,
\item the length measure of the set of points $a\in S$ such that $\{b \in S \;:\; \psi(a) \leq b\}\not= \psi(\{b \in S \;:\; a \leq b\})$ (that is, the set of points $a$ such that the subtree above $\psi(a)$ is not the image under $\psi$ of the subtree above $a$) is less than $\eps$.
\item \label{itemnouveau} there is a correspondence $\calR\in C(S,S')$ and a measure $\pi\in M(S,S')$ such that:
  \begin{enumerate}[(a)]
  \item $\forall x\in S$ $(x,\psi(x))\in\calR$
  \item $\pi(\calR^c)\leq \eps$
  \item $D(\pi;p_S\sharp\mu,p_{S'}\sharp\mu')\leq \eps$
  \item $\dis(\calR)\leq 2\eps$.
  \end{enumerate}
\end{enumerate}
\end{lemm}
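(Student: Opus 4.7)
The first four items are precisely the conclusion of Lemma~6.3 in \cite{EvansPitmanWinter06}, so the plan is to invoke that lemma with a parameter $\eps_0 \in (0,\eps/2)$ to be fixed at the end and extract $S \subset T$, $S' \subset T'$ and $\psi$ satisfying (i)--(iv); the new work lies entirely in producing the correspondence $\calR$ and the coupling $\pi$ demanded by (v).

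From $d_{GHP}^{root}((\bfT,\rho), (\bfT',\rho')) < \delta$, pick $\pi_0 \in M(T,T')$ and $\calR_0 \in C(T,T')$ with $(\rho,\rho') \in \calR_0$ and
\[
D(\pi_0;\mu,\mu') \vee \pi_0(\calR_0^c) \vee \tfrac{1}{2}\dis(\calR_0) \leq \delta.
\]
I would then set
\[
\pi := (p_S, p_{S'})\sharp\pi_0,\qquad \calR := \{(y,y') \in S\times S' : d'(y',\psi(y)) \leq \eps/2\}.
\]
Item (v)(a) is immediate since $(y,\psi(y))\in\calR$ by definition; item (v)(d) follows from $\dis(\psi) \leq \eps_0 < \eps$ in (ii), which yields $\dis(\calR) \leq \eps_0 + \eps \leq 2\eps$; and item (v)(c) comes for free from the fact that pushforward by a measurable map is a contraction for total variation, so $D(\pi; p_S\sharp\mu, p_{S'}\sharp\mu') \leq D(\pi_0;\mu,\mu') \leq \delta \leq \eps$.

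The crux is item (v)(b), which reduces to
\[
\pi_0\bigl\{(x,x') \in T\times T' : d'(p_{S'}(x'),\psi(p_S(x))) > \eps/2\bigr\} \leq \eps.
\]
Up to $\pi_0$-mass $\delta$ one may restrict to $(x,x') \in \calR_0$. For such pairs the plan is to exploit the identity $d(\rho,x) = d(\rho,p_S(x)) + d(x,S)$, valid in the real tree $T$ (and analogously in $T'$), together with root-preservation $(\rho,\rho') \in \calR_0$ and distortion $\dis(\calR_0) \leq 2\delta$, to deduce $|d(\rho,p_S(x)) - d'(\rho',p_{S'}(x'))|$ is small modulo a control of $|d(x,S) - d'(x',S')|$; the latter is in turn small because $\psi$ identifies $S$ and $S'$ up to $\eps_0$. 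Since $\psi$ is root-preserving and $\eps_0$-close to an isometry, $\psi(p_S(x))$ lies on a geodesic emanating from $\rho'$ at nearly the same distance as $p_{S'}(x')$; provided the subtree-compatibility guaranteed by (iv) holds at $p_S(x)$, both points lie on the same geodesic out of $\rho'$ and must be close. The bad set of $y \in S$ failing (iv) has length-measure less than $\eps_0$, and I would use a Fubini-type argument trading length-measure against $\mu$ (finite because $\bfT$ is a finite m.s-m.s) to convert it into a $\mu$-small, hence $\pi_0$-small, exceptional set.

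The main obstacle is precisely this geometric estimate: one must propagate the quasi-isometry $\calR_0$ through the projections $p_S, p_{S'}$ while simultaneously reconciling it with the bijection $\psi$. It leans essentially on uniqueness of geodesics in trees and on the root-preservation of both $\calR_0$ and $\psi$. Once this is in place, choosing $\eps_0$ small as a function of $\eps$ and $\bfT$, and then $\delta$ small as a function of $(\eps_0,\bfT)$, yields all the bounds.
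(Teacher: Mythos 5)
Your high-level skeleton is right: invoke the construction from \cite{EvansPitmanWinter06} for (i)--(iv), set $\pi := (p_S,p_{S'})\sharp\pi_0$, and add a correspondence. Your choice $\calR := \{(y,y') : d'(y',\psi(y))\leq \eps/2\}$ is a valid correspondence (it differs from the paper's, which is an $O(\delta)$-enlargement of $\calR_0$ restricted to $S\times S'$, but either works), and your verifications of (v)(a), (v)(c), (v)(d) are correct.

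The gap is in (v)(b), and it is real. Two problems. First, the ``Fubini-type argument trading length-measure against $\mu$'' cannot work: $\ell_S(\mathrm{Bad})\leq\eps_0$ gives no control whatsoever on $\mu(p_S^{-1}(\mathrm{Bad}))$ or on $\pi_0\{(x,x'):p_S(x)\in\mathrm{Bad}\}$, because $p_S\sharp\mu$ need not be absolutely continuous with respect to $\ell_S$ (it can have atoms, and indeed typically does at leaves of $S$). So you cannot upgrade a length-measure-null bad set to a $\pi_0$-null one. Second, even outside $\mathrm{Bad}$, the geometric step is not established: knowing $\psi$ has small distortion and $\psi(\rho)=\rho'$ only tells you $\psi(p_S(x))$ and $p_{S'}(x')$ are nearly equidistant from $\rho'$; in a tree this does not make them close unless you can also show they lie on a common geodesic out of $\rho'$, and item (iv) (which concerns the subtree order internal to $S$) does not by itself give you that for the projection of an external point $x'$.

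The way the paper closes this is not by combining the conclusions (i)--(iv) with extra tree geometry, but by reopening the proof of Lemma~6.2 of \cite{EvansPitmanWinter06}. There one has a map $f:T\to T'$ with $(x,f(x))\in\calR_0$, its ``snapping'' $\overline f$ onto the spanned subtree $T''$ with $d'(\overline f(x),f(x))\leq 4\delta$, and then the explicit inductive construction of $\psi$; the paper then proves the new quantitative estimate $d'(\overline f(x),\psi(x))\leq 56\delta$ for $x\in S$. From this, for $(x,x')\in\calR_0$ one gets directly that $\psi(p_S(x))$ is within $O(\delta)$ of $f(p_S(x))$, hence within $O(\delta)$ of $x'$ (since $d(p_S(x),x)\leq d_H(S,T)=O(\delta)$ and $\calR_0$ has distortion $O(\delta)$), hence within $O(\delta)$ of $p_{S'}(x')$. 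This yields $\pi(\calR^c)\leq\pi_0(\calR_0^c)\leq\delta$ with no appeal to item (iv) and no Fubini argument. The missing idea in your sketch is precisely this uniform $O(\delta)$-closeness of $\psi$ to the correspondence map $f$, which is a statement about the \emph{construction} of $\psi$ and not recoverable from properties (i)--(iv) alone.
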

\begin{proof}
Notice that in \cite[Lemma~6.3]{EvansPitmanWinter06}, $\bfT$ and $\bfT'$ are supposed to be finite trees, but we shall soon be back to this case.
  
Suppose that $\eta>0$ and $d_{GHP}^{root}((\bfT,\rho), (\bfT',\rho')) < \delta$ ($\delta$ and $\eta$ will be chosen small enough later). Define, to lighten notation:
\[\bfT_\eta:=R_\eta(T)\text{ and }\bfT'_\eta:=R_{\eta}(T')\;.\]
Using Lemma~\ref{lemm:approxrootedtree},
\[d_{GHP}^{root}((\bfT_\eta,\rho), (\bfT'_\eta,\rho'))<\delta+2\eta=:\tilde\delta\;.\]
Then, there exists a correspondence $\calR_0\in C(T_\eta,T'_\eta)$ and a measure $\pi_0\in M(T,T')$ such that:
  \begin{enumerate}[(a)]
  \item $(\rho,\rho')\in\calR_0$
  \item $\pi_0(\calR_0^c)\leq \tilde\delta$
  \item $D(\pi_0;p_{T_\eta}\mu,p_{T'_\eta}\mu')\leq \tilde\delta$
  \item $\dis(\calR)\leq 2\tilde\delta$.
  \end{enumerate}
  
Now, we perform the proof of \cite[Lemma~6.3]{EvansPitmanWinter06} and we shall use their notation. We introduce a function $f$ from $T_\eta$ to $T_\eta'$. First, let $f(\rho):=\rho'$ and then for each $x\in T_\eta$, one chooses $f(x)\in T_\eta'$ such that $(x,f(x))\in \calR_0$ (notice that this can be done in a measurable way). Then, letting $x_1,\ldots,x_n$ to be the leaves of $\bfT_\eta$ one defines $x'_i=f(x_i)$ and let $T''$ be the subtree of $T_\eta'$ spanned by $\rho',x'_1,\ldots,x'_n$. Finally, $\overline f(x)$  is defined to be the closest point to $f(x)$ on $T''$. Notice that $x'_i=\overline f(x_i)$. \cite[Lemma~6.3]{EvansPitmanWinter06} shows that $T''$ has leaves $x'_1,\ldots,x'_n$ (and root $\rho'=\overline f(\rho)$), that $\delta_H(T_\eta,T'')<3\tilde\delta$ and that the function $\overline f$ from $T_\eta$ to $T''$ has distortion at most $8\tilde\delta$. It is easy to see that
\begin{equation}
\label{eq:fbarf}
\forall x\in T,\;d'(\overline f(x),f(x))\leq 4\tilde\delta\;.
\end{equation}

In the proof of \cite[Lemma~6.3]{EvansPitmanWinter06}, they then take $y_1\in[\rho,x_1]$ and $y'_1\in[\rho',x'_1]$ such that $d(\rho,y_1)=d'(\rho',y'_1)=d(\rho,x_1)\land d'(\rho',x'_1)$ and define $\psi$ from $S_1:=[\rho,y_1]$ to $S'_1:=[\rho',y'_1]$ in the obvious way. The proof then proceeds inductively, defining $z_{k+1}$ (resp. $z'_{k+1}$) as the closest point to $x_{k+1}$ on $S_k$ (resp. to $x'_{k+1}$ on $S'_k$), letting $y_{k+1}\in ]z_{k+1},x_{k+1}]$ and $y'_{k+1}\in ]z'_{k+1},x'_{k+1}]$ be such that
$$ d(z_{k+1},y_{k+1})=d'(z'_{k+1},y'_{k+1})=d(z_{k+1},x_{k+1})\land d'(z'_{k+1},x'_{k+1})$$
defining $\psi$ from $]z_{k+1},y_{k+1}]$ to $]z'_{k+1},y'_{k+1}]$ in the obvious way and gluing $]z_{k+1},y_{k+1}]$ to $S_k$ to get $S_{k+1}$ (resp. $]z'_{k+1},y'_{k+1}]$ to $S'_k$ to get $S'_{k+1}$). Finally, let $S:=S_n$ and $S':=S'_n$. They prove then that:
$$\dis(\psi)<280\tilde\delta,\;\delta_H(S,T_\eta)<56\tilde\delta\;\quad\text{and}\quad\delta_H(S',T_\eta')<58\tilde\delta\;.$$
They also prove in \cite[p.113]{EvansPitmanWinter06} that the length measure of the set of points mentioned in $(iv)$ is at most $224\tilde\delta n$ where $n$ is the number of leaves of $R_\eta(T)$. Notice that $n$ depends only on $T$ and $\eta$, let us call it $n(\eta,T)$. This shows that $\psi$, $S$ and $S'$ satisfy $(i)-(iv)$ above if $\delta$ and $\eta$ are chosen small enough: first fix a positive $\eta\leq\eps/2000$, and then, choose $\delta<\frac{\eps}{2000n(\eta,T)}$.

Also, it is shown in  \cite[inequality (6.28)]{EvansPitmanWinter06}) that for any $k$, $d(x_k,y_k)\lor d'(x'_k,y'_k)\leq 12\tilde\delta$.

Now, let us show that:
\begin{equation}
\label{eq:fbarpsi}
  \forall x\in S,\;d'(\overline f(x),\psi(x))\leq 56\tilde\delta.
\end{equation}
Let $x\in]z_k,y_k]$, then $d'(\psi(x),y'_k)=d(x,y_k)$ (recall that $\psi(y_k)=y'_k$). Then, $|d(x,y_k)-d(x,x_k)|\leq 12\tilde\delta$ and $|d'(\psi(x),y'_k)-d'(\psi(x),x'_k)|\leq 12\tilde\delta$. Since $\overline f$ has distortion at most $8\tilde\delta$, $|d(x,x_k)-d'(\overline f(x),x'_k)|\leq 8\tilde\delta$. We get
$$|d'(\psi(x),x'_k)-d'(\overline f(x),x'_k)|\leq 32\tilde\delta\;.$$
Let $z$ be the closest point to $\overline f(x)$ on $[\rho',x'_k]$. Then,
\begin{eqnarray*}
d'(\overline f(x),z)&=&\frac{1}{2}[d'(\overline f(x),\rho')+d'(\overline f(x),x'_k)-d'(\rho',x'_k)]\\
&\leq&\frac{3}{2}\dis(\overline{f})+\frac{1}{2}[d(x,\rho)+d(x,x_k)-d(\rho,x_k)]\\
&\leq &12\tilde\delta\;,
\end{eqnarray*}
since $x\in[\rho,x_k]$. Finally, since $\psi(x)\in[\rho',x'_k]$,
\begin{eqnarray*}
d'(\overline f(x),\psi(x))&=&d'(\overline f(x),z)+d'(z,\psi(x))\\
&=&d'(\overline f(x),z)+ |d'(x'_k,\psi(x))-d'(x'_k,z)|\\
&\leq & 2d'(\overline f(x),z)+ |d'(x'_k,\psi(x))-d'(x'_k,\overline f(x))|\\
&\leq & 24\tilde\delta+32\tilde\delta\;.
\end{eqnarray*}
This shows \eqref{eq:fbarpsi}. 

Now, let $\calR$ be defined by:
$$\calR:=\left\{(x,x')\in S\times S'\;:\;\exists (y,y')\in \calR_0,\;\left(\begin{array}{c}d(x,y)\leq 100\tilde\delta\\ and \\ d'(x',y')\leq 100\tilde\delta\end{array}\right)\right\}\;,$$
and define $\pi:=(p_S\otimes p_{S'})\sharp\pi_0$. It remains to prove point $(v)$. First, recall that $(x,f(x))\in\calR_0$ for any $x\in T_\eta$. Thus $(v)(a)$ is satisfied thanks to \eqref{eq:fbarpsi} and \eqref{eq:fbarf}. This shows also that $\calR$ is a correspondence on $S\times S'$. 

Then, 
$$\dis(\calR)\leq \dis(\calR_0)+400\tilde\delta$$
which is less than $\eps$ and shows $(v)(d)$ if $\tilde\delta$ is chosen small enough. Since $\tilde\delta_H(S,T_\eta)\lor \tilde\delta_H(S',T_\eta')<58\tilde\delta$, we see that for any $x\in T_\eta$ and $x'\in T_\eta'$,
$$d(x,p_S(x))<58\tilde\delta\text{ and }d(x',p_{S'}(x'))<58\tilde\delta\;.$$
Thus, if $(x,x')\in\calR_0$, then $(p_S(x),p_S(x'))\in\calR$ and this gives
$$\pi(\calR^c)\leq \pi_0(\calR_0^c)\leq \tilde\delta\;,$$
which shows  $(v)(b)$ if $\tilde\delta$ is chosen small enough. Finally, since $\pi=(p_S\otimes p_{S'})\sharp\pi_0$ one sees that
$$D(\pi;p_S\sharp\mu,p_{S'}\sharp\mu')\leq D(\pi_0,\mu,\mu')<\tilde\delta\;.$$
This ends the proof.

\end{proof}
Now, let us prove the Feller property for trees.
\begin{prop}
\label{prop:FellerTrees}
Let $(\bfX^{n})_{n\geq 0}$ be a sequence in $\calN_2^{tree}$ converging to $\bfX$ (in the $L_{2,GHP}$ metric). Then
\begin{enumerate}[(i)]
\item  $(\Frag(\bfX,t))_{t\geq 0}$ is strong Markov with càdlàg trajectories (for $L_{2,GHP}$) in $\calN_2^{tree}$,
\item $(\Frag(\bfX^{n},t))_{t\geq 0}$ converges in distribution to $(\Frag(\bfX,t))_{t\geq 0}$ (for the Skorokhod topology associated to $L_{2,GHP}$),
\item if $t^n\xrightarrow[n\rightarrow\infty]{}t$, then $\Frag(\bfX^{n},t^n)$ converges in distribution to $\Frag(\bfX,t)$ (for $L_{2,GHP}$).
\end{enumerate}
\end{prop}
\begin{proof}
First, we argue that one may without loss of generality suppose that $\bfX^{n}$ and $\bfX$ contain a single component. Indeed, fix $\eps>0$. Since $\sizes(\bfX^{n})$ converges to $\sizes(\bfX)$ in $\ell^2$, one may choose $\eps'\not\in\sizes(\bfX)$ such that:
$$\|\sizes(\bfX_{\leq \eps'})\|_2^2\lor\sup_{n\in\NN}\|\sizes(\bfX^{n}_{\leq \eps'})\|_2^2\leq \eps\;.$$
Then, since $\bfX^{n}_{>\eps'}$ converges to $\bfX_{>\eps'}$ as $n$ goes to infinity, they have the same number of components for $n$ large enough. Call this number $K$. One may list them as follows: let $T_i^{n}$ (resp. $T_i$), $i=1,\ldots,K$ be the components of $\bfX^{n}$ (resp. of $\bfX$) such that for any $i$, $T_i^{n}$ converges to $T^{n}$. Fix $t>0$. Then, for any coupling between $(\Frag(\bfX,s))_{s\in[0,t]}$ and $(\Frag(\bfX^{n},s))_{s\in[0,t]}$, one has:
\begin{eqnarray*}
&&\|\sizes(\Frag(\bfX,s))-\sizes(\Frag(\bfX^{n},s'))\|_2^2\\
&\leq & \sum_{i=1}^K\|\sizes(\Frag(T_i,s))-\sizes(\Frag(T_i^{n},s'))\|_2^2\\
&& +\|\sizes(\bfX_{\leq \eps'})\|_2^2+\|\sizes(\bfX^{n}_{\leq \eps'})\|_2^2
\end{eqnarray*}
and 
$$L_{GHP}(\Frag(\bfX,s),\Frag(\bfX^{n},s'))\leq\sum_{i=1}^KL_{GHP}(\Frag(T_i,s),\Frag(T_i^{n},s'))+16\eps\;.$$
This shows that to prove $(i)$ and $(iii)$, one may suppose that $\bfX^{n}$ has a single component. Also, to prove $(ii)$, it is sufficient to prove that for any fixed $i$ and $n$, one may find a coupling such that  
$$\sup_{s\in[0,t]}L_{GHP}(\Frag(T_i^{n},s),\Frag(T_i,s))\xrightarrow[n\rightarrow\infty]{\PP}0\;.$$
In the sequel, we suppose that $\bfX^{n}=:\bfT^{n}$ (resp. $\bfX=:\bfT$) contains a single component.

Let us firt prove $(i)$. The strong Markov property was already noticed, see Remark~\ref{rem:frag2}, so let us prove that the trajectories are almost surely càdlàg. Then, for any $\eta>0$, $\Frag(R_{\eta}(\bfX),\cdot)$ clearly has càdlàg trajectories: it is right-continuous and piecewise constant with a finite number of jumps, since $R_{\eta}(\bfX)$ has finite length. Then, Lemma~\ref{lemm:approxReta} and Lemma~\ref{lemm:cvcadlag}  show that $\Frag(\bfX,\cdot)$ also has càdlàg trajectories.

Now, let us prove $(ii)$.  Let us fix $\eps>0$. We know (cf. \cite[Proposition~2.1]{AdBrGoMi2017}) that we can take $\rho$ (resp. $\rho^{n}$) a root in $T$ (resp. $T^{n}$) such that $d_{GHP}^{root}((\bfT,\rho), (\bfT^{n},\rho^{n}))$ goes to zero as $n$ goes to infinity: it can be done by taking the roots sampled from the respective measures (normalized to be probability measures). For $n$ large enough, $d_{GHP}^{root}((\bfT,\rho), (\bfT^{n},\rho^{n}))$ is small enough so that one may apply Lemma~\ref{lemm:EvansPitmanWinterbis}.  

Let us call $(\bfT',\rho')= (\bfT^{n},\rho^{n})$ for such a large $n$, in order to lighten the notation. Notice that one may suppose that $\mu'(T')\leq \mu(T)+\eps$. Let $\delta$, $\eta$, $S$, $S'$, $\psi$, $\calR$ and $\pi$ be as in Lemma~\ref{lemm:EvansPitmanWinterbis}. Define
$$\bfS:=(S,d|_{S\times S},p_S\sharp\mu)$$
and
$$\bfS':=(S',d|_{S'\times S'},p_{S'}\sharp\mu')\;.$$

Let $t>0$. Lemma~\ref{lemm:approxReta} ensures that with probability at least $1-2t\eps^{1/7}$, for any $s\in[0,t]$,
$$L_{2,GHP}(\Frag(\bfT,s),\Frag(\bfS,s))\leq 7\eps^{1/7}(1+\mu(T))^4\;,$$
and 
\begin{equation}
\label{eq:approxTprim}  L_{2,GHP}(\Frag(\bfT',s),\Frag(\bfS',s))\leq 7\eps^{1/7}(1+\mu'(T'))^4\leq  7\eps^{1/7}(1+\mu(T)+\eps)^4\;.
\end{equation}

For any $z\in S$ (resp. $z'\in S'$) we let $S_z$ (resp. $S'_{z'}$) be the subtree above $z$ (resp. above $z'$):
$$S_z:=\{x\in S\;:\;z\in[\rho,x]\}\;.$$
Let us define
$$Bad:=\{a\in S:S_{\psi(a)}\not=\psi(S_a)\}$$
so that Lemma~\ref{lemm:EvansPitmanWinterbis} ensures that $\ell_S(Bad)\leq\eps$.

Now, let $\calP$ be a Poisson random set of intensity $\ell_S\otimes\leb_{\RR^+}$ on $S\times \RR^+$. Then, for any $s$, $\psi(\calP_s)$ is a  Poisson random set of intensity $s\ell_S$ on $S$ (since $\psi$ is a measure-preserving bijection), and we want to show that for any $s\leq t$, the fragmentation of $\bfS$ along $\calP_s$, $\Frag(\bfS,\calP_s)$, and that of $\bfS'$ along $\psi(\calP_s)$, $\Frag(\bfS',\psi(\calP_s))$, are close in $L_{GHP}$-distance with large probability.

Notice first that $\Frag(\bfS,\calP_s)$ and $\Frag(\bfS',\psi(\calP_s))$ have the same number of components on $\calP_t\cap Bad=\emptyset$. If $m$ is a component of $\Frag(\bfS,\calP_s)$, it can be written as $S_{z_s}\setminus \bigcup_{i=1}^k{S_{z_{i,s}}}$ for some points $z_s,z_{1,s},\ldots z_{k,s}$ in $\calP_s\cup\{\rho\}$ (we identify $S_z\setminus\{z\}$ with $S_z$ since it is at zero $d_{GHP}$-distance). If $\calP_t \cap Bad=\emptyset$, then for any $s\leq t$, $\psi(m)=\psi(S_{z_s})\setminus \bigcup_{i=1}^k{S_{\psi(z_{i,s})}}$ and this is a component of $\Frag(\bfS',\psi(\calP_s))$.

Thus, let us place ourselves on the event $\calE_1:=\{\calP_t\cap Bad =\emptyset\}$ and define $\sigma$ (which depends on $s$) to be the bijection from $\Frag(\bfS,\calP_s)$ to $\Frag(\bfS',\psi(\calP_s))$ which maps a component $m$ to $\psi(m)$. Since $\calR$ contains the pairs $(x,\psi(x))$ for $x\in S$, $\calR|_{m\times\psi(m)}$ is a correspondence between $m$ and $\psi(m)$ with distortion at most $\eps$. Furthermore $\pi|_{m\times \psi(m)}$ is a measure on $m\times\psi(m)$ which satisfies
$$\pi|_{m\times \psi(m)}(\calR|_{m\times\psi(m)}^c)\leq\pi(\calR^c)\leq \eps\;.$$
It remains to bound $D(\pi|_{m\times \psi(m)};(p_S\sharp\mu)|_m,(p_{S'}\sharp\mu')|_{\psi(m)})$ from above. For any Borel subset $A$ of $m$,
\begin{eqnarray*}
&&|\pi|_{m\times \psi(m)}(A\times \psi(m))-p_S\sharp\mu(A)|\\
&\leq&|\pi(A\times S')-p_S\sharp\mu(A)|+\pi(A\times S')-\pi(A\times\psi(m))\\
&\leq&|\pi(A\times S')-p_S\sharp\mu(A)|+ \pi(\{(x,x')\in S\times S':x\in m,\;x'\not\in\psi(m)\})\;.
\end{eqnarray*}
A symmetric inequality holds for $A'$ a Borel subset of $\psi(m)$, and we get:
$$D(\pi|_{m\times \psi(m)};\mu|_m,\mu'|_{\psi(m)})\leq D(\pi;p_S\sharp\mu,p_{S'}\sharp\mu')+\pi(m\times\psi(m)^c)+\pi(m^c\times\psi(m))\;.$$
Now, notice that for any $x\in S$,
$$x\in m\text{ and }x'\not\in\psi(m)\Rightarrow [\psi(x),x']\cap\psi(\calP_t)\not=\emptyset$$
and
$$x\not\in m\text{ and }x'\in\psi(m)\Rightarrow [\psi(x),x']\cap\psi(\calP_t)\not=\emptyset\;,$$
where $[\psi(x),x']$ is the geodesic between $\psi(x)$ and $x'$. Thus,
$$\pi(m\times\psi(m)^c)+\pi(m^c\times\psi(m))\leq \pi\{(x,x')\in S\times S':[\psi(x),x']\cap\psi(\calP_t)\not=\emptyset\}\;.$$
Let us denote by $\calE_2$ the event 
$$\calE_2:=\left\{\pi\{(x,x')\in S\times S':[\psi(x),x']\cap\psi(\calP_t)\not=\emptyset\}\leq \sqrt{\eps}\right\}\;.$$
On $\calE_1\cap\calE_2$, we get, for any $s\leq t$ and any component $m$ of $\Frag(\bfS,\calP_s)$:
$$D(\pi|_{m\times \psi(m)};p_S\sharp\mu|_m,p_S\sharp\mu'|_{\psi(m)})\leq \eps+\sqrt{\eps}\;.$$
Furthermore,
\begin{eqnarray*}
&&\|\sizes(\Frag(\bfS,\calP_s))-\sizes(\Frag(\bfS',\psi(\calP_s)))\|_2^2\\
&\leq &\sum_{m\in\comp(\Frag(\bfS,\calP_s))}(p_S\sharp\mu(m)-p_{S'}\sharp\mu'(\psi(m)))^2\\
&\leq&\sup_{m\in\comp(\Frag(\bfS,\calP_s))}|p_S\sharp\mu(m)-p_{S'}\sharp\mu'(\psi(m))|\\
&&\times \sum_{m\in\comp(\Frag(\bfS,\calP_s))}p_S\sharp\mu(m)+p_{S'}\sharp\mu'(\psi(m))\\
&\leq&\sup_{m\in\comp(\Frag(\bfS,\calP_s))}D(\pi|_{m\times \psi(m)};p_S\sharp\mu|_m,p_S\sharp\mu'|_{\psi(m)})(\mu(T)+\mu'(T'))\\
&\leq & (\eps+\sqrt{\eps})(2\mu(T)+\eps)
\end{eqnarray*}
Thus, on $\calE_1\cap\calE_2$, we obtain, for any $s\leq t$:
$$L_{2,GHP}(\Frag(\bfS,\calP_s),\Frag(\bfS',\psi(\calP_s)))\leq (\eps+\sqrt{\eps})\lor \sqrt{(\eps+\sqrt{\eps})(2\mu(T)+\eps)}\;.$$

It remains to bound from above the probability of $(\calE_1\cap\calE_2)^c$. Since $Bad$ has length measure at most $\eps$,
$$\PP(\calE_1^c)\leq t\eps\;.$$
Notice that since $\calR$ contains $(x,\psi(x))$ for any $x\in S$ and has distortion less than $2\eps$,
$$\pi\{(x,x')\in S\times S':d'(\psi(x),x')>2\eps\}\leq \pi(\calR^c)<\eps$$
Then, using Fubini's theorem,
\begin{eqnarray*}
&&\EE[\pi\{(x,x')\in S\times S':[\psi(x),x']\cap\psi(\calP_t)\not=\emptyset\}]\\
&\leq&\eps+\EE[\pi\{(x,x')\in S\times S':[\psi(x),x']\cap\psi(\calP_t)\not=\emptyset\text{ and }d'(\psi(x),x')\leq 2\eps\}]\\
&=&\eps+\int_{S\times S'} \PP([\psi(x),x']\cap\psi(\calP_t)\not=\emptyset)\II_{ d'(\psi(x),x')\leq 2\eps}\;d\pi(x,x')\\
&\leq &2t\eps\pi(S\times S')\\
&\leq &2t\eps(\mu(T)+\eps)\;.
\end{eqnarray*}
Thus, by Markov's inequality,
$$\PP(\calE_2^c)\leq 2t\sqrt{\eps}(\mu(T)+\eps)\;,$$
which ends the proof of point $(ii)$ of the theorem (through Lemma~\ref{lemm:compactcv} and inequality~\eqref{eq:dSkdc}).

Finally, let us prove $(iii)$. Suppose that $t^{n}$ converges to $t$ as $n$ goes to infinity and let $\tilde t:=\sup_nt^{n}+1$. Again, it is sufficient to suppose that $\bfX^{n}$ and $\bfX$ have only one component, so let us suppose that $(\bfT,\rho)$ and $(\bfT^{n},\rho^{n})$, $n\geq 0$ are rooted trees such that $d_{GHP}^{root}((\bfT,\rho), (\bfT^{n},\rho^{n}))$ goes to zero as $n$ goes to infinity. Now recall inequality \eqref{eq:approxTprim} above: for any $\eps>0$,  for $n$ large enough, we found finite subtrees $S^{n}\subset T^{n}$ such that, with probability at least $1-2T\eps^{1/7}$, for any $s\in[0,\tilde t]$,
$$L_{2,GHP}(\Frag(\bfT^{n},s),\Frag(\bfS^{n},s))\leq 7\eps^{1/7}(1+\mu(T)+\eps)^4\;,$$
and furthermore, $\ell_{T^{n}}(S^{n})\leq \ell_T(R_\eta(T))<\infty$, since $S^{(n)}$ has the same length measure as a subset of $R_\eta(T)$, $\eta$ depending only on $\eps$ (cf. Lemma~\ref{lemm:EvansPitmanWinterbis}). Then,
$$\PP(\Frag(\bfS^{n},t^{n})\not=\Frag(\bfS^{n},t))\leq 1-e^{-|t-t^{n}|\ell_T(R_\eta(T))}\;.$$
For $n$ large enough, this is less than $\eps$. Then, we have that with probability at least $1-2\tilde t\eps^{1/7}-\eps$,
$$L_{2,GHP}(\Frag(\bfT^{n},t^{n}),\Frag(\bfT^{n},t))\leq 7\eps^{1/7}(1+\mu(T)+\eps)^4\;.$$
All in all, we proved that $L_{2,GHP}(\Frag(\bfT^{n},t^{n}),\Frag(\bfT^{n},t))$ converges in probability to zero when $n$ goes to infinity. But using point $(i)$, we know that $\Frag(\bfT^{n},t)$  converges to $\Frag(\bfT,t)$ as $n$ goes to infinity. This ends the proof of $(iii)$.
\end{proof}

\subsection{The Feller property for graphs, Theorem~\ref{theo:FellerGraphs}}
\label{subsec:FellerGraphs}
We now want to prove Theorem~\ref{theo:FellerGraphs}, which is the analog of Proposition~\ref{prop:FellerTrees} for graphs. However, this cannot be true without strengthening the metric $L_{GHP}$. For instance, consider the situation depicted in Figure~\ref{fig:cexFellergraph}. There, $G_n$ converges to $G$ for $d_{GHP}$, but the probability that $a$ is separated from $b$ in $G_n$ when fragmentation occurs (until a fixed time $t>0$) is exactly $1-(1-(1-e^{-t/n})^{2})^n$, which is asymptotically $0$, whereas the probability that this event occurs in $G$ is strictly positive. However, if we impose that the surplus of $G_n$ converges to the surplus of $G$, such a situation cannot happen anymore, and one may recover the Feller property. 

\begin{figure}[!htbp]
\begin{center}
\psfrag{a}{$a$}
\psfrag{b}{$b$}
\psfrag{cd}{$\cdots$}
\psfrag{mun}{$\mu_n=\frac{1}{2}(\delta_a+\delta_b)$}
\psfrag{mu}{$\mu=\frac{1}{2}(\delta_a+\delta_b)$}
\psfrag{d}[]{$diam(G_n)=diam(G)=1$}
\psfrag{Gn}{$G_n:$}
\psfrag{G}{$G:$}
\psfrag{i}[]{length $\frac{1}{n}$}
\includegraphics[width=12cm]{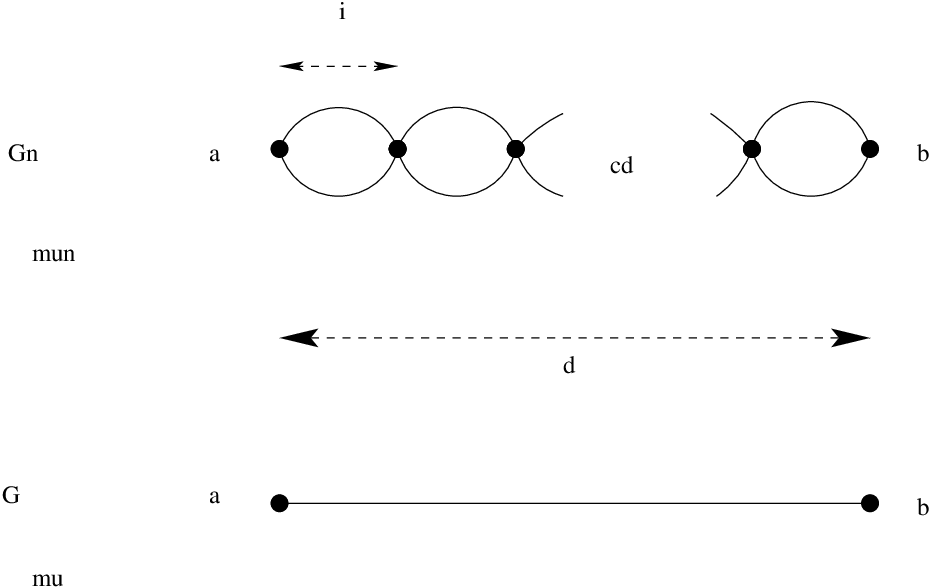}
\caption{$G_n$ is composed of $n$ graphs in series each one made of $2$ intervals of length $1/n$ in parallel. $G_n$ converges to $G$ for $d_{GHP}$ when $n$ goes to infinity, but $\Frag(G_n,t)$ will not converge to $\Frag(G,t)$ for $t>0$.}
\label{fig:cexFellergraph}
\end{center}
\end{figure}

Let us notice that this problem was treated a bit differently in \cite{AdBrGoMi2017}: they recover continuity (in probability) of fragmentation by imposing that $G_n$ and $G$ live on some common subspace $\calA_r$ for some  $r>0$, where $\calA_r$ contains the graphs which have surplus and total length of the core bounded from above by $1/r$ and minimal edge length of the core bounded from below by $r$ (see section~6.4 in \cite{AdBrGoMi2017} for a precise statement). When one wants to have Feller-type properties, this seems to us less natural than imposing convergence of the surplus. In fact, the work below shows that if $G_n$ converges to $G$ in the Gromov-Hausdorff topology while having the same surplus for $n$ large enough, then there is some $r>0$ such that for $n$ large enough, $G_n$ and $G$ belong to $\calA_r$. The converse statement is also true and is a consequence of \cite[Proposition~6.5]{AdBrGoMi2017}.

To prove Theorem~\ref{theo:FellerGraphs}, we first notice that the proof of section~\ref{subsec:FellerTrees} extends to the case where one replace trees by graphs having the same core.

\begin{lemm}
\label{lemm:FellerRootCore}
Let $\bfG=(G,d,\mu)$ be a  measured $\RR$-graph which is not a tree. Let $(\bfG^{n})_{n\geq 0}$ be a sequence of measured $\RR$-graphs such that for each $n$, there is a correspondence $\calR^{n}\in C(G,G^{n})$, a measure $\pi^{n}\in M(G,G^{n})$ and a homeomorphism $\psi^{n}:\core(G)\rightarrow \core (G^{n})$ such that:
\begin{itemize}
\item $\psi^{n}$ preserves the length-measure,
\item $\forall x\in \core(G^{n})\;(x,\psi^{n}(x))\in \calR^{n}$,
\item $\dis(\calR^{n})\lor\pi^{n}((\calR^{n})^c)\lor D(\pi^{n},\mu,\mu^{n})\xrightarrow[n\rightarrow\infty]{}0$.
\end{itemize}
Then, the sequence of processes $\Frag(\bfG^{n},\cdot)$ converges in distribution to $\Frag(\bfG,\cdot)$ for $L_{2,GHP}^{surplus}$.
\end{lemm}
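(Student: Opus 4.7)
The plan is to adapt the argument of Proposition~\ref{prop:FellerTrees} by exploiting the fact that the cores of $\bfG$ and $\bfG^{(n)}$ are identified via the length-measure preserving homeomorphism $\psi^{(n)}$ (which, being continuous and length-measure preserving on a $1$-dimensional geodesic object, is in fact an isometry on every embedded arc). Since $\sum_{n}\mu^{(n)}(G^{(n)}) < +\infty$ and $\mu^{(n)}(G^{(n)}) \to \mu(G)$ (by $D(\pi^{(n)};\mu,\mu^{(n)}) \to 0$), the analogue of the reduction to a single component in the tree case is automatic.

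The first step is to fix $\eta > 0$ and apply Lemma~\ref{lemm:approxReta} to $S := R_\eta(G)$ and $S^{(n)} := R_\eta(G^{(n)})$. Each of these is a finite $\RR$-graph consisting of the core plus finitely many finite hanging subtrees, and Lemma~\ref{lemm:approxReta} reduces the problem, up to an error of order $O(\eta^{1/7}(1+\mu(G)^{2})^{2})$ uniformly in $n$, to comparing $\Frag(S, \cdot)$ and $\Frag(S^{(n)}, \cdot)$ endowed with the pushforward measures $p_{S}\sharp\mu$ and $p_{S^{(n)}}\sharp\mu^{(n)}$.

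The second step, which is where the work lies, is to build for $n$ large a length-measure preserving bijection $\tilde\psi^{(n)} \colon S \to S^{(n)}$ that extends $\psi^{(n)}|_{\core(G)}$, has distortion tending to $0$, and has an exceptional ``bad'' set (points whose subtree above is not respected) of arbitrarily small length measure. To do this, decompose $S \setminus \core(G)$ as the finite disjoint union of its hanging finite subtrees, each rooted at a point $x$ of $\core(G)$; do the same for $S^{(n)}$. Using $\calR^{(n)}$ and $\pi^{(n)}$ together with $\psi^{(n)}$, show that the union of hanging trees of $S^{(n)}$ rooted at $\psi^{(n)}(x)$ is $d_{GHP}^{root}$-close to the union of hanging trees of $S$ rooted at $x$, with an error uniform over $x \in \core(G)$. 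Now apply Lemma~\ref{lemm:EvansPitmanWinterbis} to each such pair of rooted finite trees to produce the desired length-measure preserving, low-distortion bijection; glue these with $\psi^{(n)}$ to obtain $\tilde\psi^{(n)}$ and the associated correspondence/coupling $(\calR, \pi)$ on $(S,S^{(n)})$.

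The third step copies the endgame of Proposition~\ref{prop:FellerTrees}. Sample a Poisson random set $\calP$ of intensity $\ell_{S}\otimes \leb_{\RR^+}$ on $S \times \RR^+$ and perform fragmentation on $S^{(n)}$ along $\tilde\psi^{(n)}(\calP)$, which has the correct intensity since $\tilde\psi^{(n)}$ preserves length measure. On the event that $\calP_t$ avoids the bad set (which has probability $\geq 1 - t \times (\text{bad set length})$), each component of $\Frag(S, \calP_s)$ for $s \leq t$ is sent by $\tilde\psi^{(n)}$ to a component of $\Frag(S^{(n)}, \tilde\psi^{(n)}(\calP_s))$; the bounds on $d_{GHP}$ between paired components and the $\ell^2$-bound on the mass difference come verbatim from the tree argument, using the correspondence-and-measure pair derived from $\calR^{(n)}, \pi^{(n)}$. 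Equality of surplus between paired components is the graph-specific new ingredient: since $\tilde\psi^{(n)}|_{\core(G)} = \psi^{(n)}$ is an isometry that identifies cycles of $\core(G)$ with cycles of $\core(G^{(n)})$, and since the cycles of any component of $\Frag(S,\calP_s)$ are determined by which Poisson points fall on $\core(G)$, applying $\tilde\psi^{(n)}$ pushes this data bijectively and yields the same surplus on the other side.

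The main obstacle is the second step, namely the construction of $\tilde\psi^{(n)}$: one has to turn the abstract correspondence $\calR^{(n)}$ together with $\psi^{(n)}|_{\core(G)}$ into a rooted-GHP closeness statement for the collection of hanging trees above each core point, uniformly in the root, so that Lemma~\ref{lemm:EvansPitmanWinterbis} can be applied and glued along $\core(G)$. Once this is done, everything else follows the tree proof and a surplus bookkeeping argument on the common core.
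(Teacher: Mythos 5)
Your plan is the same one the paper uses: the paper's proof is a one-sentence remark that the argument of Proposition~\ref{prop:FellerTrees} carries over ``replacing roots by cores and using $\psi^{(n)}$ to map fragmentation on $\core(G^{(n)})$ to fragmentation on $\core(G)$,'' and your three steps (reduce to $R_\eta$ via Lemma~\ref{lemm:approxReta}, extend $\psi^{(n)}$ to a length-preserving low-distortion bijection of the hanging forests à la Lemma~\ref{lemm:EvansPitmanWinterbis}, then run the coupling from the tree proof and read off equality of surplus from the common core) are precisely that extension, with the technical work you correctly flag (matching the hanging trees across the core) being the content of ``replacing roots by cores'' in the Evans--Pitman--Winter construction.
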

\begin{proof}
It is a straightforward extension of the arguments of section~\ref{subsec:FellerTrees}, replacing roots by cores and using $\psi^{n}$ to map fragmentation on $\core(G^{n})$ to fragmentation on $\core(G)$. 
\end{proof}
To prepare the proof of  Theorem~\ref{theo:FellerGraphs} we shall need a series of lemmas, but before, let us explain the idea of the proof of the theorem.  If $\bfG^{n}$ is close enough to $\bfG$, Lemma~\ref{lemm:overlay} below shows that their cores are homomorphic multigraphs with edges having almost the same length. One may then shorten some edges of the core of $G$ and other edges of the core of $\bfG^{n}$ in such a way that the two cores become homeomorphic as metric spaces with a length measure. Lemma ~\ref{lemm:shortening} shows that one does not lose too much doing this. Finally, Lemma~\ref{lemm:FellerRootCore} then shows that the fragmentations on the two graphs are close to each other.

\begin{lemm}
\label{lem:gammaab}
Let $(G,d)$ and $(G',d')$ be $\RR$-graphs and $\calR\in C(G,G')$. Let $(a,a')\in\calR$, $(b,b')\in\calR$ and $(c,c')\in\calR$. Suppose that $a$ belongs to a geodesic between $b$ and $c$. Let $\gamma_{a',b'}$ (resp. $\gamma_{a',c'}$) be a geodesic from $a'$ to $b'$ (resp. from $a'$ to $c'$). Then,
$$\forall a''\in \gamma_{a',b'}\cap \gamma_{a',c'},\;d'(a'',a')\leq 3\dis(\calR)\;.$$
\end{lemm}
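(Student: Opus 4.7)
The plan is to exploit the definition of distortion on the three specified pairs together with the geodesic hypothesis to force $a'$ and $a''$ to be close in $G'$. Set $\varepsilon:=\dis(\calR)$. From $(a,a'),(b,b'),(c,c')\in\calR$ we get
\begin{equation*}
|d(a,b)-d'(a',b')|\le\varepsilon,\quad |d(a,c)-d'(a',c')|\le\varepsilon,\quad |d(b,c)-d'(b',c')|\le\varepsilon.
\end{equation*}

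Since $a$ lies on a geodesic between $b$ and $c$, $d(a,b)+d(a,c)=d(b,c)$, and combining with the three distortion inequalities above yields
\begin{equation*}
d'(a',b')+d'(a',c')\le d(a,b)+d(a,c)+2\varepsilon=d(b,c)+2\varepsilon\le d'(b',c')+3\varepsilon.
\end{equation*}

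Now I would use the defining property of $a''$: since $a''\in\gamma_{a',b'}\cap\gamma_{a',c'}$, additivity along the geodesics gives $d'(a',b')=d'(a',a'')+d'(a'',b')$ and $d'(a',c')=d'(a',a'')+d'(a'',c')$. Summing these two identities and using the triangle inequality $d'(b',c')\le d'(b',a'')+d'(a'',c')$, I obtain
\begin{equation*}
2d'(a',a'')+d'(b',c')\le 2d'(a',a'')+d'(a'',b')+d'(a'',c')=d'(a',b')+d'(a',c')\le d'(b',c')+3\varepsilon,
\end{equation*}
so that $d'(a',a'')\le \tfrac{3}{2}\varepsilon\le 3\dis(\calR)$, which is the claim (in fact with a slightly better constant).

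There is no real obstacle here: it is a short chain of triangle inequalities, the only slightly non-trivial step being the observation that being on a geodesic in $G$ forces an approximate additivity of $d'$ along the image pair in $G'$, which is precisely what the distortion bound provides on the three vertices $a,b,c$.
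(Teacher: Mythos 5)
Your proof is correct and follows essentially the same chain of inequalities as the paper's proof (additivity along the two geodesics through $a''$, the triangle inequality $d'(b',c')\le d'(a'',b')+d'(a'',c')$, and the three distortion bounds combined with $d(a,b)+d(a,c)=d(b,c)$). In fact your version is slightly more careful: the paper writes $d'(a',a'')=d'(a',b')+d'(a',c')-d'(a'',b')-d'(a'',c')$, but summing the two geodesic identities actually produces $2d'(a',a'')$ on the left-hand side, as you have it; so the true bound is $\tfrac{3}{2}\dis(\calR)$ rather than $3\dis(\calR)$, which you correctly observe.
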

\begin{proof}
Let $ a''\in \gamma_{a',b'}\cap \gamma_{a',c'}$. Then,
\begin{eqnarray*}
d'(a',a'') &=& d'(a',b')+d'(a',c')-d'(a'',b')-d'(a'',c')\\
&\leq & d'(a',b')+d'(a',c')-d'(b',c')\\
&\leq & d(a,b)+d(a,c)-d(b,c)+3\dis(\calR)\\
&=& 3\dis(\calR)
\end{eqnarray*}
where we used the triangle inequality in the second step and the fact that $a$ belongs to a geodesic between $b$ and $c$ in the last step.
\end{proof}

The following should be compared to \cite[Proposition~5.6]{AdBrGoMi2017}.

\begin{lemm}
\label{lemm:overlay}
Let $G$ be an $\RR$-graph and $\eps>0$. There exists $\delta$ depending on $\eps$ and $G$ such that if $G'$ is an $\RR$-graph with the same surplus as $G$ and if $\calR_0\in C(G,G')$ is such that $\dis(\calR_0)<\delta$, then there exists an $\eps$-overlay $\calR\in C(G,G')$ containing $\calR_0$.
\end{lemm}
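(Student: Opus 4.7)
The plan is to split on the value of the surplus $p := \surplus(G) = \surplus(G')$, handle the trivial case $p=0$ directly, and reduce the cases $p=1$ and $p \geq 2$ to a combinatorial identification of the kernel (or unique cycle) of $G'$ with that of $G$ using the fact that $\calR_0$ has very small distortion.

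Case $p=0$: just take $\calR := \calR_0$; the overlay condition is simply $\dis(\calR) < \eps$, so $\delta \leq \eps$ suffices.

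Case $p \geq 2$ (the main case): Let $L$ be the minimum length of an edge of $\ker(G)$ and let $\eta := \tfrac{1}{100}(L \wedge \eps)$. First I would show that if $\delta$ is small enough then, for each vertex $v \in k(G)$, any $v' \in G'$ with $(v,v') \in \calR_0$ lies within $100\delta$ of a \emph{uniquely determined} vertex $\chi(v) \in k(G')$, and the resulting map $\chi : k(G) \to k(G')$ is a bijection. The argument is topological: pick three points $b_1,b_2,b_3$ in three distinct edges of $\ker(G)$ incident to $v$, chosen at distance between $\eta$ and $2\eta$ of $v$; pick $b_i' \in G'$ with $(b_i,b_i') \in \calR_0$. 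Because $v$ is on a geodesic between any two $b_i, b_j$, Lemma~\ref{lem:gammaab} forces any point that lies on geodesics from $v'$ to all three $b_i'$ to be within $3\delta$ of $v'$; hence $v'$ has at least three \emph{essentially disjoint} thin geodesics leaving it, which forces a branchpoint of $\core(G')$ within $O(\delta)$. That this branchpoint is a kernel vertex (degree $\geq 3$) and that $\chi$ is injective follows from picking $\delta \ll L$. Since $\chi$ injects $k(G) \hookrightarrow k(G')$, and surpluses agree (so $|e(G)| - |k(G)| = |e(G')| - |k(G')|$), an Euler-characteristic/edge-counting argument using the same construction applied to edges gives that $\chi$ is a bijection and extends to a multigraph isomorphism (still denoted $\chi$) on the kernels: for each edge $e \in e(G)$ joining $u$ and $v$, pick an interior point and use the three-point trick above on endpoints to locate an edge $\chi(e)$ of $\ker(G')$ joining $\chi(u)$ and $\chi(v)$ such that interior points of $e$ are paired by $\calR_0$ with points within $O(\delta)$ of the interior of $\chi(e)$. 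The length estimate $|\ell(e) - \ell(\chi(e))| \leq \eps$ then follows from $\dis(\calR_0) < \delta$ applied to the endpoints (up to an $O(\delta)$ correction for replacing $v'$ by $\chi(v)$). Finally, set
\[
\calR := \calR_0 \cup \{(v, \chi(v)) : v \in k(G)\} \cup \bigcup_{e \in e(G)} \calR_e,
\]
where $\calR_e \subset e \times \chi(e)$ is a correspondence between $e$ and $\chi(e)$ with distortion $O(\delta)$ built from the arc-length parametrisations; an $O(\delta)$-enlargement (in the sense of the $\eps$-enlargement of a correspondence) then makes $\calR$ a correspondence on all of $G \times G'$ containing $\calR_0$, with distortion bounded by $\dis(\calR_0) + O(\delta) < \eps$ provided $\delta$ was chosen small enough in terms of $\eps$, $G$ and $L$.

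Case $p=1$: the same strategy is much simpler. Here $\core(G)$ is a cycle of some length $\ell_G$; pick any three points on the cycle far apart, use the three-point trick to locate a cycle $c'$ in $G'$ (which must be $\core(G')$ since $\surplus(G')=1$), parametrise both cycles by arc length to build an edge correspondence with length discrepancy $O(\delta)$, and again take the union with $\calR_0$ and an $O(\delta)$-enlargement.

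The main obstacle is the topological rigidity step in the case $p \geq 2$: showing that under a small-distortion correspondence the branchpoints of the kernel of $G$ are in bijection, via $\calR_0$ up to $O(\delta)$, with those of $G'$, and that the incidences match. Everything afterwards (choosing the specific $\calR_e$, checking length estimates, enlarging to cover $G \times G'$) is a routine distortion bookkeeping once this combinatorial skeleton is pinned down, and closely parallels Proposition~5.6 in \cite{AdBrGoMi2013arXiv}.
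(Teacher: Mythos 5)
Your overall strategy mirrors the paper's: use the correspondence and Lemma~\ref{lem:gammaab} to locate branchpoints, build an isomorphism $\chi$ between kernels, verify the edge-length estimates, and finish by an $O(\delta)$-enlargement. However, the hinge of your plan --- the claim that the branchpoint of $G'$ found near $v'$ via the three-geodesic argument is a vertex of $\ker(G')$ (rather than merely a degree-$\geq 3$ point of $G'$, possibly sitting in a hanging subtree outside $\core(G')$), and that $\chi$ is therefore well-defined into $k(G')$ "because $\delta \ll L$" --- is exactly the subtle point, and it is not justified as stated. A priori nothing in the local three-point trick prevents $v'$ from being near a branchpoint of a pendant tree of $G'$, and the pairwise distances among the $b_i'$ do not rule this out. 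The paper sidesteps this with a \emph{global} argument: it first subdivides each edge of $\core(G)$ into five pieces so that every new edge is the unique geodesic between its endpoints (a device your sketch omits, but one that makes the geodesic choices in $G'$ well-controlled), then builds from these geodesics a connected subgraph $S'$ of $G'$ with pairwise disjoint edge interiors, shows that the branchpoints $z_i^b$ are all distinct when $\dis(\calR_0)<\eta/7$, hence that $S'$ has the same surplus as $\core(G)$ and therefore as $G'$, and only then concludes $S' \supset \core(G')$ and $S''=\core(G')$. That surplus comparison is what forces the located branchpoints to be genuine kernel vertices of $G'$; your Euler-characteristic/edge-counting step is gesturing at the same idea, but it is invoked \emph{after} you have already assumed $\chi$ lands in $k(G')$, so as written the argument is circular at precisely the spot you flag as "the main obstacle."

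To repair your sketch along the lines of the paper: do not try to recognise each image branchpoint as a kernel vertex in isolation. Instead build, simultaneously for all vertices and for interior subdivision points of edges, a connected subgraph $S'$ of $G'$ from the chosen geodesics, prove the resulting branch vertices are pairwise distinct (so that $S'$ has surplus $\geq \surplus(G)$), and then use $\surplus(G')=\surplus(G)$ together with the minimality of the core to conclude $\core(G')\subset S'$ and ultimately that the degree-$\geq 3$ vertices of $S'$ are exactly $k(G')$. With that global step in place, the remaining bookkeeping (lengths within $\eps$, the enlarged correspondence, inclusion of $\calR_0$) is as you describe and matches the paper.
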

\begin{proof}
If $G$ has surplus $0$, there is nothing to prove. In the sequel, we suppose that $G$ has surplus at least 2, the easier proof for unicyclic $G$ is left to the reader. Furthermore, to lighten notation and make the argument clearer, we shall suppose that the vertices of $\ker(G)$ are of degree $3$, leaving the adaptation to the general case to the reader.

Let $\eta:=\min_{e\in e(G)}\ell(e)$. Notice that one may view $\core(G)$ (and $\core(G')$) as a multigraph with edge-lengths, and we shall adopt this point of view in this proof. However, not all the edges of this graph correspond to geodesics in $G$. Divide each edge of $\core(G)$ into five pieces of equal length, introducing thus four new vertices of degree 2 for each edge (all degrees will be relative to the core). The new graph obtained satisfies the following:
\begin{enumerate}[(i)]
\item all the edges remain of length larger than $\eta/5$,
\item every edge $e$ is the unique  geodesic between its two endpoints, and for any path $\gamma$ between these endpoints which does not contain $e$, $\ell(\gamma)-\ell(e)>\eta/5$,
\item for every three vertices $a$, $b$, $c$ such that $b\sim a$ and $a\sim c$, $a$ belongs to a geodesic between $a$ and $c$.
\end{enumerate}
Let us call $\tilde\core(G)$ this new graph (it is indeed a graph, not merely a multigraph), which has the same surplus as $G$, and write $x_1,\ldots,x_n$ for its vertices, which are of degree $2$ or $3$.

Let $G'$ be an $\RR$-graph with the same surplus as $G$ and $\calR_0\in C(G,G')$. Let $x'_1,\ldots,x'_n$ be elements of $G'$ such that $(x_i,x'_i)\in\calR_0$. Now, we shall build a subgraph of $G'$ by mapping recursively edges adjacent to a given vertex in $\tilde\core(G)$ to a geodesic in $G'$. Suppose for instance that $x_1$ has degree $3$ (the argument is analogous for vertices with degree 2). Let $x_i$, $x_j$ and $x_k$ be its neighbours in $\tilde\core(G)$, with $i<j<k$. Choose a geodesic $\gamma_{x'_1,x'_i}$ between $x'_1$ and $x'_i$, then choose a geodesic $\gamma$ between $x'_1$ and $x'_j$, and let $z_1^1$ be the point of $\gamma_{x'_1,x'_i}\cap\gamma$ which is the furthest from $x'_1$ (see Figure~\ref{fig:buildoverlay}). Let us call $\gamma_{z_1^1,x'_j}$ the subpath of $\gamma$ from $z_1^1$ to $x'_j$. Notice that the path using $\gamma_{x'_1,x'_i}$ from $x'_1$ to $z_1^1$ and $\gamma$ from $z_1^1$ to $x'_j$ is a geodesic. Finally choose a geodesic $\gamma$ between $x'_1$ and $x'_k$ and let $z_1^2$ be the point of $(\gamma_{x'_1,x'_i}\cup\gamma_{x'_1,x'_j})\cap\gamma$ which is the furthest from $x'_1$. Let us call $\gamma_{z_1^2,x'_k}$ the subpath of $\gamma$ from $z_1^2$ to $x'_k$. Let $S'_1:=\gamma_{x'_1,x'_i}\cup\gamma_{z_i^1,x'_j}\cup\gamma_{z_1^2,x'_k}$. Define $x''_1$ to be the point between $z_1^1$ and $z_1^2$  which is the furthest from $x'_1$. If $x_1$ is of degree $2$, there is only one point $z_1^1$ defined and $x''_1$ is this one.

\begin{figure}[!htbp]
\begin{center}
\psfrag{x1}{$x_1$}
\psfrag{xi}{$x_i$}
\psfrag{xj}{$x_j$}
\psfrag{xk}{$x_k$}
\psfrag{xp1}{$x'_1$}
\psfrag{xpi}{$x'_i$}
\psfrag{xpj}{$x'_j$}
\psfrag{xpk}{$x'_k$}
\psfrag{g1}{$\gamma_{x'_1,x'_i}$}
\psfrag{g2}{$\gamma_{z_1^1,x'_j}$}
\psfrag{g3}{$\gamma_{z_1^2,x'_k}$}
\psfrag{z11}{$\scriptstyle z_1^1$}
\psfrag{z12}{$\scriptstyle z_1^2=x''_1$}
\includegraphics[width=12cm]{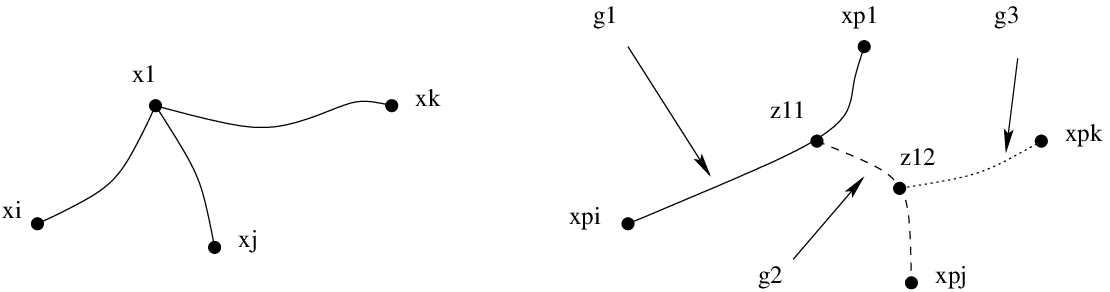}
\caption{One maps $\core(G)$ to $\core(G')$ by first mapping the neighborhood of each vertex of $\core(G)$ to a subset of $G'$. Here $x''_1$ is a vertex of $\ker(G')$.}
\label{fig:buildoverlay}
\end{center}
\end{figure}

Then, we proceed similarly for $r=2,\ldots,n$: we inspect the neighbours of $x_r$. Notice that we do not need to choose a new geodesic between $x'_r$ and a neighbour $x'_j$ for $j<r$, we just keep the one already built. Doing this, we obtain $S'_r$ the union of the geodesics chosen going from $x'_r$ to the points associated to the neighbours of $x_r$, we get two points $z_r^1$ and $z_r^2$ if $x_r$ is of degree 3 and only one point $z_r^1$ if $x_r$ is of degree 1. We define $x''_r$ to be the one between $z_r^1$ and $z_r^2$  which is the furthest from $x'_r$. 

Finally, let $S'=\cup_{i=1}^n S'_i$, with all the vertices $z_i^b$ and $x'_i$  for $1\leq i\leq n$. This is a graph with edge-lengths (notice that the edges have pairwise disjoint interiors). Some edge-lengths might be zero. Thanks to point $(iii)$ above and Lemma~\ref{lem:gammaab}, we know that:
$$d'(z_i^1,x_i')\leq 3\dis(\calR_0)\;,$$
and when $x'_i$ is of degree $3$,
$$d'(z_i^2,x_i')\leq 3\dis(\calR_0)\;.$$
Thus, for any $b,b'\in\{1,2\}$ and any $i\not=j$,
\begin{eqnarray*}
d'(z_i^b,z_j^{b'})&\geq &d'(x'_i,x'_j)-6\dis(\calR_0)\\
&\geq & \frac{\eta}{5} -7\dis(\calR_0)\;.
\end{eqnarray*}
Thus, if $\dis(\calR_0)<\eta/35$, two points $z_i^b$ and $z_j^{b'}$ are always distinct. This shows that $S'$ has the same surplus as $\core(G)$. Since $G'$ has the same surplus as $G$, we deduce that $S'$ contains $\core(G')$. Let $S''$ be the subgraph of $S'$ spanned by $x''_1,\ldots x''_n$, in the sense that we forget the vertices $z_i^1$ when $x_i$ is of degree $3$, and we remove the semi-open path going from $x'_i$ to $z_i^1$. Notice that $S''$ has positive edge-lengths and its edges have pairwise disjoint interiors. $S''$ has the same surplus as $S'$, so it contains again $\core(G')$. But all the vertices in $S''$ have degree $2$ or $3$, so $S''=\core(G')$ as a set.

Now, consider $S''$ as a graph with edge-lengths and with vertices $x''_i$, $i=1,\ldots,r$. Let $\chi_0$ be the map from $\tilde\core(G)$ to $S''$ which maps $x_i$ to $x''_i$. We shall see that it is a graph isomorphism if $\dis(\calR_0)$ is small enough. Indeed, from the inequalities above, we get that for any $i$ and $j$,
\[|d'(x''_i,x''_j)-d(x_i,x_j)|\leq 7\dis(\calR_0)\;\;.\]
Now, for any edge $e=(x_i,x_j)$ and any $k$ distinct from $i$ and $j$,
\begin{eqnarray*}
  d'(x''_i,x''_k)+d'(x''_k,x''_j)
  &\geq &d(x_i,x_k)+d(x_k,x_j)-14\dis(\calR_0)\\
  &\geq &d(x_i,x_j)+\frac{\eta}{5}-14\dis(\calR_0)\\
  &\geq &d'(x''_i,x''_j)+\frac{\eta}{5}-21\dis(\calR_0)\;,
\end{eqnarray*}
where we used point $(ii)$ above in the last inequality. Thus, if $\dis(\calR_0)<\frac{\eta}{105}$, $x''_i$ and $x''_j$ are neighbours in $S''$ as soon as $x_i$ and $x_j$ are neighbours in $\tilde\core(G)$. Furthermore, from the construction of $S''$, one sees that the number of edges in $S''$ is at most the number of edges of $\tilde\core(G)$. Thus, $\chi_0$ is a graph isomorphism and we deduce from the last inequality that for any edge $e$, $\chi_0(e)$ is the unique geodesic between its endpoints. Furthermore, let $x''_i$ and $x''_j$ be neighbours in $S''$. If $\dis(\calR_0)<\frac{\eta}{210}$, we see from the last inequality that every path $\gamma$ from $x''_i$ to $x''_j$ which does not contain $[x''_i,x''_j]$ satisfies:
\begin{equation}
\label{eq:geodsec}
\ell'(\gamma)> \ell'([x''_i,x''_j])+\frac{\eta}{10}\;.
\end{equation}


Let us define $\calR'$ by adding to $\calR_0$ the pairs $(x_i,x''_i)$ for $i=1,\ldots,r$. Then, $\dis(\calR')\leq 7\dis(\calR_0)$. Let $\calR$ be the $3\dis(\calR_0)$-enlargement of $\calR'$. It has distortion at most $19\dis(\calR_0)$. Let $x$ belong to an edge $[x_i,x_j]$ of $\tilde\core(G)$ and let $x'$ be such that $(x,x')\in\calR_0$. Let $\gamma_{x',x''_i}$ (resp. $\gamma_{x',x''_j}$) be a geodesic between $x'$ and $x'_i$ (resp. between $x'$ and $x''_j$). Then, let $\gamma$ be the path from $x''$ to $x''_j$ obtained by concatenating $\gamma_{x',x''_i}$ and $\gamma_{x',x''_j}$. We have
\begin{eqnarray*}
\ell'(\gamma)&\leq&d'(x''_i,x')+d'(x',x''_j)\;,\\
&\leq &d(x_i,x)+d(x,x_j)+2\dis(\calR')\;,\\
&=& d(x_i,x_j)+2\dis(\calR')\;,\\
&\leq & \ell'([x''_i,x''_j])+3\dis(\calR')\;.
\end{eqnarray*}
Thus, if $21\dis(\calR_0)<\frac{\eta}{10}$, $3\dis(\calR')< \frac{\eta}{10}$ and we deduce from \eqref{eq:geodsec} that  $\gamma$ contains $[x''_i,x''_j]$. Thus, defining $x''$ to be the furthest point from $x'$ on $\gamma_{x',x''_i}\cap\gamma_{x',x''_j}$, we see that $x''$ belongs to the geodesic $[x''_i,x''_j]$. Lemma~\ref{lem:gammaab} ensures that $d'(x',x'')\leq 3\dis(\calR')$. Thus, $(x,x'')\in\calR$. Similarly, one shows that for every $x''$ in $[x''_i,x''_j]$ there is an $x$ in $[x_i,x_j]$ such that $x\in\calR$. We have shown that for each edge $e$ of $\tilde\core(G)$, $e$ and $\chi_0(e)$ are in correspondence via $\calR$.
 Now, notice that the multigraph with edge-lengths $S''$ obtained by keeping only vertices of degree $3$ is $\core(G')$ seen as a multigraph with edge-lengths. The isomorphism $\chi_0$ induces an isomorphism $\chi$ between $\core(G)$ and $\core(G')$ (by restricting $\chi_0$ to vertices of degree $3$), and we have (since every edge of $\core(G)$ was divided into five parts):
$$|\ell(e)-\ell'(\chi(e))|\leq 30\dis(\calR_0)\;.$$
Furthermore, the same correspondence $\calR$ as before is suitable to have that for each edge $e$ of $\core(G)$, $e$ and $\chi(e)$ are in correspondence via $\calR$.

This ends the proof by taking $\dis(\calR_0)< \delta$ for $\delta$ small enough, namely less than $\frac{\eps}{40}\land\frac{\eta}{210}$.
\end{proof}

\begin{lemm}
\label{lemm:corresalpha}
Let $(G,d)$ and $(G',d')$ be $\RR$-graphs and $\calR\in C(G,G')$. Suppose that $\core(G)$ and $\core(G')$ are in correspondence through $\calR$. Let $(v,v')$ and $(x,x')\in\calR$ with $v\in\core(G)$ and $v'\in\core(G')$. Then,
$$d(\alpha_G(x),v)\leq d'(\alpha_{G'}(x'),v')+5\dis(\calR)\;.$$
\end{lemm}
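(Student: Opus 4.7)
The plan is to exploit the ``tree with shortcuts'' structure (Lemma~\ref{lemm:shortcut}) which guarantees that for every $x\in G$ and every $w\in\core(G)$, any path from $x$ to $w$ must pass through $\alpha_G(x)$, and in particular every geodesic does. This yields the key additive identity
\[
d(x,v)=d(x,\alpha_G(x))+d(\alpha_G(x),v),
\]
since $v\in\core(G)$; and symmetrically
\[
d'(x',v')=d'(x',\alpha_{G'}(x'))+d'(\alpha_{G'}(x'),v')
\]
using $v'\in\core(G')$. So the quantity $d(\alpha_G(x),v)$ can be written as $d(x,v)-d(x,\alpha_G(x))$, reducing the problem to comparing these two distances with their primed counterparts via $\calR$.

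The first comparison is immediate: $|d(x,v)-d'(x',v')|\le\dis(\calR)$ since $(x,x')$ and $(v,v')$ both lie in $\calR$. For the second, I would use the hypothesis that $\core(G)$ and $\core(G')$ are in correspondance through $\calR$: pick $y\in\core(G)$ with $(y,\alpha_{G'}(x'))\in\calR$ so that
\[
d(x,\alpha_G(x))\le d(x,y)\le d'(x',\alpha_{G'}(x'))+\dis(\calR),
\]
where the first inequality uses the defining minimality of $\alpha_G(x)$ among core points. The reverse inequality $d'(x',\alpha_{G'}(x'))\le d(x,\alpha_G(x))+\dis(\calR)$ follows symmetrically by choosing $z'\in\core(G')$ with $(\alpha_G(x),z')\in\calR$ and using minimality of $\alpha_{G'}(x')$.

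Putting the pieces together,
\[
d(\alpha_G(x),v)=d(x,v)-d(x,\alpha_G(x))\le d'(x',v')+\dis(\calR)-\bigl(d'(x',\alpha_{G'}(x'))-\dis(\calR)\bigr)=d'(\alpha_{G'}(x'),v')+2\dis(\calR),
\]
which in fact gives the stated bound (with a better constant than the $5$ stated, reflecting a loose estimate in the statement). The only real subtlety is justifying the additive identity for $d(x,v)$, i.e.\ that $\alpha_G(x)$ lies on a geodesic from $x$ to $v$; this is where one needs Lemma~\ref{lemm:shortcut} to present $G$ as a quotient of a tree by identifying finitely many point pairs, so that the portion of $G$ hanging off $\alpha_G(x)$ and containing $x$ is tree-like and cannot be bypassed when heading into the core.
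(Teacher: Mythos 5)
Your proof is correct and gives a genuinely cleaner argument than the paper's. The paper's proof splits into two cases according to whether $d(y,v)\geq d(\alpha_G(x),v)$ (where $y\in\core(G)$ corresponds to $\alpha_{G'}(x')$), and in the harder case it derives $d'(\alpha_{G'}(x'),y')\leq 3\dis(\calR)$, then $d(y,\alpha_G(x))\leq 4\dis(\calR)$, and closes with a triangle inequality to reach the stated constant $5$. You instead exploit the additive identity on \emph{both} sides — $d(x,v)=d(x,\alpha_G(x))+d(\alpha_G(x),v)$ and the primed analogue — and then bound the two pieces of the difference $d(x,v)-d(x,\alpha_G(x))$ separately, using only minimality of $\alpha_G(x)$, $\alpha_{G'}(x')$ on the core and the fact that the core-restricted correspondance lets you pick core partners. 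This yields the sharper bound $d(\alpha_G(x),v)\leq d'(\alpha_{G'}(x'),v')+2\dis(\calR)$ with no case distinction. Both proofs ultimately rest on the same geometric fact (geodesics from $x$ to any core point pass through $\alpha_G(x)$), which the paper's argument also uses implicitly; your justification via the tree-with-shortcuts picture is adequate. The only cosmetic remark is that your first inequality $d(x,\alpha_G(x))\le d'(x',\alpha_{G'}(x'))+\dis(\calR)$ is not actually needed in the final combination — only the reverse one is — so the argument can be tightened slightly, but this is harmless.
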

\begin{proof}
Since $\core(G)$ and $\core(G')$ are in correspondence through $\calR$, one may find $y\in\core(G)$ and $y'\in\core(G')$ such that:
$$(y,\alpha_{G'}(x'))\in\calR\text{ and }(\alpha_G(x),y')\in\calR\;.$$
Let us distinguish two cases.
\begin{itemize}
\item $d(y,v)\geq d(\alpha_G(x),v)$. Then, 
$$d(\alpha_G(x),v)\leq d(y,v)\leq d'(\alpha_{G'}(x'),v')+\dis(\calR)$$
and the result follows.
\item $d(y,v)< d(\alpha_G(x),v)$. 
Then,
\begin{eqnarray*}
d(x,v) &=& d(x,\alpha_G(x))+d(\alpha_G(x),v)\\
&\geq & d(x,\alpha_G(x))+d(y,v)\\
&\geq & d'(x',y')+d'(\alpha_{G'}(x'),v')-2\dis(\calR)\\
&=& d'(x',\alpha_{G'}(x'))+d'(\alpha_{G'}(x'),y')+d'(\alpha_{G'}(x'),v')-2\dis(\calR)\\
&= & d'(x',v')+d'(\alpha_{G'}(x'),y')-2\dis(\calR)\\
&\geq & d(x,v)+d'(\alpha_{G'}(x'),y')-3\dis(\calR)\;.
\end{eqnarray*}
Thus, 
$$d'(\alpha_{G'}(x'),y')\leq 3\dis(\calR)$$
which  implies:
$$d(y,\alpha_{G}(x))\leq 4\dis(\calR)$$
Finally,
\begin{eqnarray*}
d(\alpha_G(x),v)&\leq&d(\alpha_G(x),y)+d(y,v)\\
&\leq & 4\dis(\calR)+d(y,v)\\
&\leq & 5\dis(\calR)+d'(\alpha_{G'}(x'),v')
\end{eqnarray*}
\end{itemize}
\end{proof}

Let us introduce some notation for the following lemmas (see Figure~\ref{fig:shortening}).

\begin{figure}[!htbp]
\begin{center}
\psfrag{u}[]{$\scriptstyle u$}
\psfrag{v}[]{$\scriptstyle v$}
\psfrag{vb}[]{$\scriptstyle v-be$}
\psfrag{va}[]{$\scriptstyle v-ae$}
\psfrag{G}[]{$\scriptstyle \bfG$}
\psfrag{Ge}[]{$\scriptstyle \bfG^{(e,a,b)}$}
\psfrag{dist}[]{$\scriptstyle glued$}
\includegraphics[width=8cm]{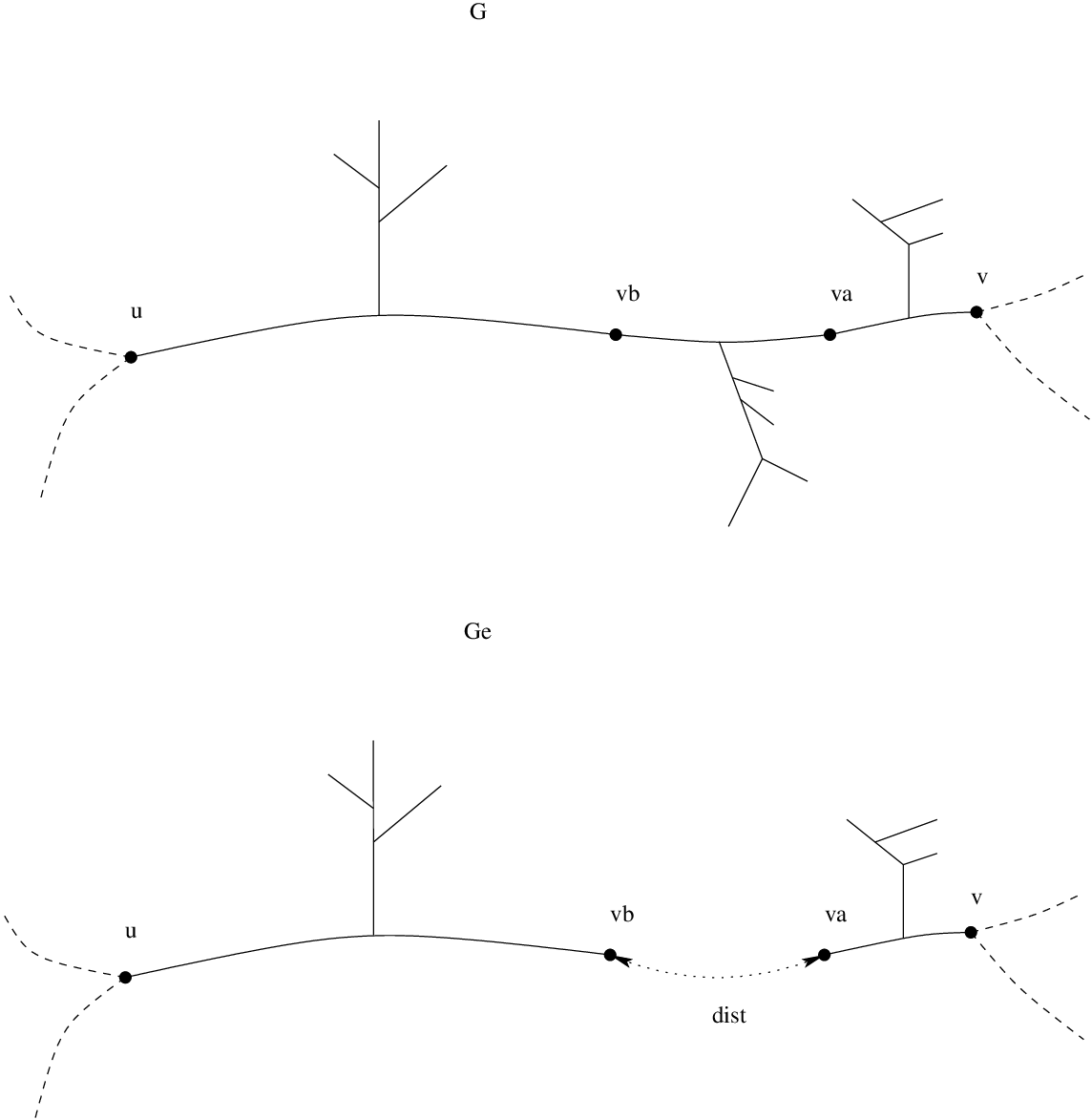}
\caption{$\bfG^{(e,a,b)}$ is \emph{the $(a,b)$-shortening of $G$ along $e=(u,v)$}.}
\label{fig:shortening}
\end{center}
\end{figure}

\begin{defi}
\label{defi:shortening}
For any graph $G$, for each oriented edge $e=(u,v)\in\ker(G)$ and each $\eta\in[0,\ell(e)]$, we denote by $v-\eta e$ the point at distance $\eta$ from $v$ on the edge $(u,v)$, on $\core(G)$. For $ a<b$ in $[0,\ell(e)]$, let $]v-b e,v-a e[$ be the open oriented arc between $v-b e$ and $v-ae $ in $(u,v)$. 

We define $\bfG^{(e,a,b)}$ \emph{the $(a,b)$-shortening along $e$} as the measured $\RR$-graph $(H,d_H,\mu_H)$ obtained from $G$ as follows:
\begin{itemize}
\item $H=G\setminus \alpha_{G}^{-1}(]v-b e,v-a e[)$,
\item $d_H$ is obtained from $(H,d|_{H\times H})$ by gluing it along the equivalence relation generated by $\{(v-b e,v-a e)\}$ (thus $d_H(v-be,v-ae)=0$),
\item $\mu_H$ is the restriction of $\mu$ on $H$.
\end{itemize}
\end{defi}
Notice that $\bfG^{(e,a,b)}$ has the same surplus as $G$.

\begin{lemm}
\label{lem:gammaG}
Let $G$ be an $\RR$-graph, and define:
$$\gamma_G(\eta):=\sup_{e=(u,v)\in\ker(G)}\diam(\alpha_G^{-1}(]v-\eta e,v[))\;.$$
Then, 
$$\gamma_G(\eta)\xrightarrow[\eta\rightarrow 0]{}0\;.$$
\end{lemm}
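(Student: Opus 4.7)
The plan is to fix an arbitrary $\eps>0$ and show that $\gamma_G(\eta)\leq \eta+2\eps$ for all sufficiently small $\eta$, from which $\gamma_G(\eta)\to 0$ follows by letting $\eps\to 0$. The crucial structural ingredient, stated just before the lemma, is that $R_\eps(G)$ is a finite graph.

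For every $x\in\core(G)$ set $T_x:=\alpha_G^{-1}(\{x\})$ and let $h(x):=\sup_{y\in T_x}d(y,x)$ be its height. Define the set of attachment points of ``tall'' hanging trees
$$S_\eps:=\{x\in\core(G):\;h(x)\geq\eps\}.$$
The first step is to argue that $S_\eps$ is finite. Indeed, $R_\eps(G)\setminus\core(G)$ decomposes into one connected piece attached to $\core(G)$ at each element of $S_\eps$ (namely the pruned tree $T_x\cap R_\eps(G)$, which reduces to $\{x\}$ exactly when $x\notin S_\eps$); since $R_\eps(G)$ is a finite graph, only finitely many such pieces can occur and hence $|S_\eps|<\infty$.

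Next, since $\ker(G)$ has finitely many oriented edges $e=(u,v)$ and $S_\eps$ is finite, I can choose $\eta_0=\eta_0(\eps,G)>0$ small enough that for every such $e$ the half-open arc $(v-\eta_0 e,v]\subset\core(G)$ meets $S_\eps$ only possibly at $v$. Then for any $\eta<\eta_0\wedge\eps$ and any oriented edge $e=(u,v)$, every $x\in\,]v-\eta e,v[$ satisfies $x\notin S_\eps$, so $h(x)<\eps$; equivalently, every point in the fiber $T_x$ lies within distance $\eps$ of $x$.

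Finally, take $y,y'\in\alpha_G^{-1}(]v-\eta e,v[)$ and set $x=\alpha_G(y)$, $x'=\alpha_G(y')$. Both $x,x'$ lie on an arc of length $\eta$, so $d(x,x')\leq \eta$, and the triangle inequality yields
$$d(y,y')\leq d(y,x)+d(x,x')+d(x',y')<\eps+\eta+\eps.$$
Taking the supremum over $y,y'$ and then over $e$ gives $\gamma_G(\eta)\leq \eta+2\eps$ for all $\eta<\eta_0$, which proves the claim. The only delicate step is the finiteness of $S_\eps$, and that is essentially handed to us by the cited finiteness of $R_\eps(G)$; the remainder is triangle inequality and bookkeeping over the finitely many edges of $\ker(G)$.
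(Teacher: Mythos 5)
Your argument is correct, and it takes a genuinely different route from the paper's. The paper proves the lemma by contradiction: assuming $\gamma_G(\eta)\downarrow\gamma>0$, it extracts a sequence $(z_n)$ with $\alpha_G(z_n)\to v$ while $d(z_n,\alpha_G(z_n))$ stays bounded below, and shows the $z_n$ form an infinite $\gamma/4$-separated set, contradicting the total boundedness of $G$. Your proof is direct: you locate a finite set $S_\eps$ of attachment points of tall hanging trees — finiteness inherited from the cited finiteness of $R_\eps(G)$ — and then shrink $\eta$ so the arcs $]v-\eta e,v[$ avoid $S_\eps$, giving the quantitative estimate $\gamma_G(\eta)\leq\eta+2\eps$ for $\eta<\eta_0(\eps,G)$. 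The trade-off is that the paper's argument is shorter and uses nothing beyond precompactness of $G$, while yours imports the finiteness of $R_\eps(G)$ (itself resting on precompactness) but yields an explicit modulus.

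One small imprecision is worth patching. With $S_\eps=\{x:h(x)\geq\eps\}$, the claim that $T_x\cap R_\eps(G)$ reduces to $\{x\}$ \emph{exactly} when $x\notin S_\eps$ fails at the boundary: one actually has $T_x\cap R_\eps(G)\supsetneq\{x\}$ if and only if $h(x)>\eps$, strictly, since the supremum defining $h(x)$ need not be attained ($G$ is only totally bounded) and even when it is, the boundary point of $R_\eps(G)$ is $x$ itself. This does not harm the conclusion — $S_\eps\subset\{x:h(x)>\eps/2\}$, which your argument applied to $R_{\eps/2}(G)$ shows is finite — but you should either define $S_\eps$ with a strict inequality or invoke $R_{\eps/2}(G)$ to keep the deduction clean.
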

\begin{proof}
Suppose on the contrary that 
$\gamma_G(\eta)\xrightarrow[\eta\rightarrow 0]{}\gamma>0\;.$
Then, one may find an edge $e$ and a sequence of pairs $(x_n,y_n)_{n\in\NN}$ in $\alpha_G^{-1}(]v-e,v[)$ such that:
$$d(\alpha_G(x_n),v)\lor d(\alpha_G(y_n),v)\xrightarrow[n\rightarrow\infty]{}0\;,$$
$$\forall n\in\NN,\;d(x_n,y_n)\geq \gamma\;,$$
and
$$\forall n\in\NN,\; d(\alpha_G(x_n),v)\land d(\alpha_G(y_n),v)>0\;.$$
Let $z_n\in\{x_n,y_n\}$ be such that $d(z_n,v)=d(x_n,v)\lor d(y_n,v)$. Up to extracting a subsequence, one may also suppose that $d(\alpha_G(z_n),v)$ is strictly decreasing and that for any $n$, $d(\alpha_G(z_n),v)<\gamma/4$. This implies that for $n\not=m$,
\begin{eqnarray*}
d(z_n,z_m)&\geq &d(z_n,\alpha_G(z_n))\\
&=&d(z_n,v)-d(\alpha_G(z_n),v)\\
&\geq &\frac{\gamma}{2}-d(\alpha_G(z_n),v)\\
&\geq& \frac{\gamma}{4}\;.
\end{eqnarray*}
This contradicts the precompactness of $G$.
\end{proof}

\begin{lemm}
\label{lemm:shortening}
Let $\bfG=(G,d,\mu)$ be a measured $\RR$-graph with surplus at least one, let $e$ be an edge of $\core(G)$ and $a<b\in[0,\ell(e)]$. Let:
$$\tilde\gamma_G(\eps):=\sum_{e=(u,v)\in\ker(G)}\mu(\alpha_G^{-1}(]v-\eps e,v[))\;.$$
Then, under the natural coupling between $\Frag(\bfG,.)$ and $\Frag(\bfG^{(e,a,b)},.)$ we have, with probability at least $1-t(b-a)$, for any $s\in[0,t]$,
$$L_{2,GHP}^{surplus}(\Frag(\bfG,s),\Frag(\bfG^{(e,a,b)},s))\leq C(G)\sqrt{\tilde\gamma_G(b)\lor \gamma_G(b)}$$
where $C(G)$ is a positive and finite constant depending only on $diam(G)$ and $\mu(G)$.
\end{lemm}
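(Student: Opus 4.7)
The idea is to realize both fragmentations from a single Poisson random set $\calP\subset G\times\RR^+$ of intensity $\ell_G\otimes \leb_{\RR^+}$: use $\calP_s$ for $\Frag(\bfG,s)$ and its restriction $\calP_s\cap H$ to $H:=G\setminus\alpha_G^{-1}(]v-be,v-ae[)$ for $\Frag(\bfG^{(e,a,b)},s)$. Set $\tilde A:=\alpha_G^{-1}(]v-be,v-ae[)$, so $G=H\sqcup \tilde A$, and introduce the good event
$$\calE:=\{\calP_t\cap\, ]v-be,v-ae[\,=\emptyset\}.$$
Since $\ell_G(]v-be,v-ae[)=b-a$, one gets $\PP(\calE^c)\leq t(b-a)$, which is the probability bound claimed. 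All the geometric work is performed on $\calE$.

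\medskip

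\textbf{Structural comparison on $\calE$.} On $\calE$ the open arc is uncut up to time $t$, so $v-be$ and $v-ae$ lie in a common component $m_0$ of $\Frag(\bfG,s)$, while the identified point $[v-be]=[v-ae]$ lies in a component $m_0'$ of $\Frag(\bfG^{(e,a,b)},s)$. The key structural observation is that $\tilde A$ is attached to $H$ only at $\{v-be,v-ae\}$ (by the very definition of $\alpha_G$, hanging trees are stuck inside $\tilde A$). Consequently, any path in $G\setminus\calP_s$ from $x\in H$ to $v-be$ can be pushed to a path in $\bfG^{(e,a,b)}\setminus(\calP_s\cap H)$ by collapsing each $\tilde A$-excursion to the identified point; conversely, any path in $\bfG^{(e,a,b)}$ lifts by inserting a traversal of the uncut arc at each use of the identification. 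This yields the key identity $m_0\cap H=m_0'$. Every other component of $\Frag(\bfG^{(e,a,b)},s)$ is then identical (same metric, measure and surplus) to a unique component of $\Frag(\bfG,s)$ contained in $H\setminus m_0$, while the remaining components $F_1,\dots,F_N$ of $\Frag(\bfG,s)$ are ``extra'' fragments sitting inside $\tilde A$, cut off from $m_0$ by Poisson points inside the hanging trees, each satisfying $\diam(F_i)\leq\gamma_G(b)$, $\surplus(F_i)=0$ and $\sum_i\mu(F_i)\leq\mu(\tilde A)\leq\tilde\gamma_G(b)$.

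\medskip

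\textbf{Distance between the main components.} Define $\psi:m_0\to m_0'$ as the identity on $m_0\cap H=m_0'$ and as the constant map $[v-be]$ on $m_0\cap\tilde A$, and set $\calR:=\mathrm{graph}(\psi)$, $\pi:=(\mathrm{Id}\times\psi)\sharp \mu|_{m_0}$. The inclusion $]v-be,v-ae[\,\subset\,]v-be,v[$ gives $\diam(\tilde A)\leq\gamma_G(b)$, and for $x_1,x_2\in m_0\cap H$ the distances computed in $m_0$ and in $m_0'$ differ by at most the arc length $b-a\leq \gamma_G(b)$ (a shortest path crosses the arc, or its contraction, at most once). Hence $\dis(\calR)\leq 2\gamma_G(b)$; together with $\pi(\calR^c)=0$ and $D(\pi;\mu|_{m_0},\mu|_{m_0'})\leq \mu(\tilde A\cap m_0)\leq \tilde\gamma_G(b)$, this yields $d_{GHP}(m_0,m_0')\leq\gamma_G(b)\lor\tilde\gamma_G(b)$. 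Cycles of $m_0$ either lie in $H$ (preserved unchanged) or use the arc (preserved, merely contracted), so $\surplus(m_0)=\surplus(m_0')$ and $d_{GHP}^{surplus}(m_0,m_0')\leq\gamma_G(b)\lor\tilde\gamma_G(b)$.

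\medskip

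\textbf{Assembly and main obstacle.} For the $\ell^2$ part, $|\mu(m_0)-\mu(m_0')|\leq\tilde\gamma_G(b)$ and $\sum_i\mu(F_i)^2\leq \tilde\gamma_G(b)\max_i\mu(F_i)\leq \tilde\gamma_G(b)^2$ give $\|\sizes(\Frag(\bfG,s))-\sizes(\Frag(\bfG^{(e,a,b)},s))\|_2\leq\sqrt 2\,\tilde\gamma_G(b)$. For the $L_{GHP}^{surplus}$ part, apply Lemma~\ref{lemm:Ldeuxappli} with $\alpha:=\gamma_G(b)\lor\tilde\gamma_G(b)$ and with the injections $\sigma$, $\sigma'$ produced by the bijection of components above: pick a threshold $\eps$ slightly above $\alpha$ so that the extra fragments $F_i$ do not enter $\comp(\cdot)_{>\eps}$, and bound $\#\comp(\cdot)_{>\eps-\alpha}$ by $\mu(G)/(\eps-\alpha)$ via Markov, producing an estimate of the form $\alpha\,(c_1+c_2\mu(G))$ which, after combining with the $\ell^2$ bound and adjusting constants, gives the announced inequality. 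The main difficulty is the identity $m_0\cap H=m_0'$ together with the distortion bound of order $\gamma_G(b)$: both rely critically on the attachment of $\tilde A$ to $H$ occurring only at the two core points $v-be, v-ae$, and on the inequality $b-a\leq b\leq \gamma_G(b)$ that absorbs the arc/shortcut discrepancy into $\gamma_G(b)$ rather than leaving a separate $(b-a)$-term.
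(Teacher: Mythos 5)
Your proposal is correct and follows essentially the same route as the paper's own proof: the same Poisson coupling via restriction to $H$, the same good event $\calE$ that the arc is uncut, the same three-way classification of components (the one hit by the arc, the ones in $H$ unaffected, and the small fragments trapped in $\tilde A$), the same correspondence collapsing $m_0\cap\tilde A$ to a single point (you pick $[v-be]$, the paper picks $v$; both give a distortion of order $\gamma_G(b)$ since $b\leq\gamma_G(b)$), the same $\ell^2$ estimate $\sqrt 2\,\tilde\gamma_G(b)$, and the same invocation of Lemma~\ref{lemm:Ldeuxappli} with a threshold $\eps$ slightly above $\alpha:=\tilde\gamma_G(b)\lor 3\gamma_G(b)$ so the small fragments drop out. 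Your final ``adjusting constants'' step is a bit terse, but the paper's explicit computation is of the same shape, so no substantive gap.
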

\begin{proof}
Let $e=(u,v)$, and $\calP$ be a Poisson random set of intensity $\ell_G\times\leb_{\RR^+}$ on $(G,d)$. Then, $\calP':=[\calP\setminus \alpha_{G}^{-1}(]v-b e,v-a e[)]\times \RR^+$ is a Poisson random set of intensity $\ell_{G'}\times\leb_{\RR^+}$ on $G'\times \RR^+$ with $G':=G\setminus \alpha_{G}^{-1}(]v-b e,v-a e[)$. Let $t>0$ be fixed and let $\calE$ denote the event
$$\calE:=\{\calP_t\cap ]v-be,v-ae[=\emptyset\}\;,$$
and let us suppose that $\calE$ holds. Let $\eps>0$ be such that
\begin{equation}
  \label{eq:epsshortening}
  \eps\geq \mu(\alpha_G^{-1}(]v-be,v-ae[))\;.
\end{equation}
Let us take $s\leq t$ and let $m$ be a component of $\Frag(G,\calP_s)$. Then,
\begin{itemize}
\item if $m\subset\alpha_G^{-1}(]v-b e,v-a e[)$, then $\mu(m)\leq \eps$,
\item if $m\cap \alpha_G^{-1}(]v-b e,v-a e[)=\emptyset$, then $m$ is a component of $\Frag(G',\calP'_s)$,
\item if $m\cap \alpha_G^{-1}(]v-b e,v-a e[)\not=\emptyset$ but $m\not\subset\alpha_G^{-1}(]v-b e,v-a e[)$, then $m$ is the unique component of $\Frag(G,\calP_s)$ which intersects $]v-b e,v-a e[$, and $m\setminus \alpha_G^{-1}(]v-b e,v-a e[)$ is a component of $\Frag(G',\calP'_s)$.
\end{itemize}
This shows that the function 
$$\sigma:\left\lbrace\begin{array}{ccl}\comp(\Frag(G,\calP_s)_{>\eps}) &\rightarrow &\comp(\Frag(G',\calP'_s))\\ m &\mapsto& m\setminus\alpha_G^{-1}(]v-b e,v-a e[)\end{array}\right.$$
is well defined and injective. This shows also that the function $\sigma'$ from $\comp(\Frag(G',\calP'_s)_{>\eps})$ to $\comp(\Frag(G,\calP_s)_{>\eps})$ which maps $m'$ to the unique $m$ which contains it is well defined and injective.

Now, let $m\in\comp(\Frag(G,\calP_s)_{>\eps})$ and let 
$$m':=\sigma(m)=m\setminus\alpha_G^{-1}(]v-b e,v-a e[)\;.$$ 
Let 
$$\calR_m:=\{(x,x):x\in m'\}\cup\{(x,v):x\in m\cap\alpha_G^{-1}(]v-b e,v-a e[\}$$
and $\pi_m$ be the measure in $M(m,m')$ defined by:
$$\pi_m(C)=\mu(\{x\in m':(x,x)\in C\})\;.$$
Let $d'$ be the distance on $m'$. Notice that for any $x,y$ in $m'$,
$$|d(x,y)-d'(x,y)|\leq b$$
Thus,
\begin{equation}
  \label{eq:disrac}
  \dis(\calR_m)\leq b+2\diam(\alpha_G^{-1}(]v-b e,v-a e[))\leq 3\gamma_G(b)\;.
\end{equation}
Also,
\begin{equation}
  \label{eq:pirac}
  \pi_m(\calR_m^c)=\mu|_{m'}(\{x\in m':(x,x)\in\calR_m^c\})=0\;,
\end{equation}
For $A$ a Borel subset of $m$,
$$\pi(A\times m')=\mu(A\cap m')$$
and for $A'$ a Borel subset of $m'$,
$$\pi(m\times A')=\mu(A')$$
Thus,
\begin{equation}
  \label{eq:Drac}
  D(\pi;\mu|_m,\mu|_{m'})\leq \mu(\alpha_G^{-1}(]v-b e,v-a e[)\leq \tilde\gamma_G(b)\;.
\end{equation}
Inequalities \eqref{eq:disrac}, \eqref{eq:pirac} and \eqref{eq:Drac} show that for any $m\in\comp(\Frag(G,\calP_s)_{>\eps})$,
\[d_{GHP}(m,\sigma(m))\leq \alpha:=\tilde\gamma_G(b)\lor 3\gamma_G(b)\]
The same inequalities show also that for any $m'\in\comp(\Frag(G',\calP'_s)_{>\eps})$,
\[d_{GHP}(m',\sigma'(m'))\leq \alpha\]
Now, let us fix 
$$\eps= \sqrt{\alpha}+\alpha$$
so that \eqref{eq:epsshortening} is satisfied. Using the last inequality of Lemma~\ref{lemm:Ldeuxappli} with $p=1$ and $\alpha$, $\eps$, $\sigma$ and $\sigma'$ as above, we have shown  that as soon as $\calE$ holds, for any $s\in[0,t]$,
\begin{eqnarray*}
&&L_{GHP}(\Frag(\bfG,\calP_s),\Frag(\bfG^{(e,a,b)},\calP'_s))\\
  &\leq &8\alpha\frac{\mu(G)}{\sqrt{\alpha}}+16\eps\;,\\
  &=&17(\tilde\gamma_G(b)\lor 3\gamma_G(b))+(16+\mu(G))\sqrt{\tilde\gamma_G(b)\lor 3\gamma_G(b)}\;.
\end{eqnarray*}
Furthermore, 
$$\|\sizes(\Frag(\bfG,\calP_s))-\sizes(\Frag(\bfG^{(e,a,b)},\calP'_s))\|_2^2\leq 2\tilde \gamma_G(b)^2.$$
Also, for any $m$ in $\comp(\Frag(G,\calP_s)_{>\eps})$, $m$ and $\sigma(m)$ have the same surplus (recall the gluing in Definition~\ref{defi:shortening}). The same is true for $m'$ and $\sigma'(m')$. Notice also that:
$$\tilde\gamma_G(b)\leq \mu(G)\text{ and }\gamma_G(b)\leq \diam(G)\;.$$
Thus,
\begin{eqnarray*}
&&L_{2,GHP}^{surplus}(\Frag(\bfG,\calP_s),\Frag(\bfG^{(e,a,b)},\calP'_s))\\
  &\leq &\left[17(\tilde\gamma_G(b)\lor 3\gamma_G(b))+(16+\mu(G))\sqrt{\tilde\gamma_G(b)\lor 3\gamma_G(b)}\right]\lor 2\tilde \gamma_G(b)^2\;,\\
  &\leq & C(G)\sqrt{\tilde\gamma_G(b)\lor \gamma_G(b)}\;,
\end{eqnarray*}
where $C(G)$ is a positive constant depending only on $\diam(G)$ and $\mu(G)$. Finally, notice that $\calE$ has probability at least $\exp(-t(b-a))\geq 1-t(b-a)$. 
\end{proof}

Now, we shall prove Theorem~\ref{theo:FellerGraphs}. 

\begin{proof}{\it (of Theorem~\ref{theo:FellerGraphs})}

  The proofs of {\it (i)} and {\it (iii)} are completely analogous to the proofs of $(i)$ and $(iii)$ in Proposition~\ref{prop:FellerTrees}, so we leave them to the reader.
  
Let us prove  {\it (ii)}. First, we may suppose, thanks to Skorkohod representation theorem, that $\bfG^{n}$ and $\bfG$ are deterministic and that  $\bfG^{n}$ converges to $\bfG$ as $n$ goes to infinity. Thanks to Remarks~\ref{rem:deltagluing} $(iv)$ and \ref{rem:frag} $(iii)$, it is sufficient to prove Theorem~\ref{theo:FellerGraphs} when the components of $\bfG^{n}$ and $\bfG$ are genuine metric spaces, i.e $\RR$-graphs.

The argument at the beginning of the proof of Proposition~\ref{prop:FellerTrees} shows that it is sufficient to prove the result when $\bfG^{n}$ and $\bfG$ have a single component.  Let $\bfG=(G,d,\mu)$ be a measured $\RR$-graph and let $\eps>0$. We want to show that $\Frag(\bfG^{n},t)$ converges in distribution to $\Frag(\bfG^{n},t)$ when $\bfG^{n}$ is a sequence of $\RR$-graphs which converges to $\bfG$ while having the same surplus. 

Let $\delta<\delta(\eps,G)$ be given by Lemma~\ref{lemm:overlay} and let $\bfG'$ be such that $d_{GHP}(\bfG,\bfG')<\delta$ (we will take $\delta$ small enough later). Thus, there is a correspondence $\calR_0\in C(G,G')$ and a measure $\pi_0\in M(G,G')$ such that:
$$\dis(\calR_0)\lor \pi_0(\calR_0^c)\lor D(\pi_0;\mu,\mu')< \delta$$
Lemma~\ref{lemm:overlay} shows that there exists an $\eps$-overlay $\calR\in C(G,G')$ containing $\calR_0$. Let us denote by $\chi$ the multigraph isomorphism from $\ker(G)$ to $\ker(G')$ given by this overlay. For any edge $e\in \ker(G)$, $|\ell(e)-\ell'(\chi(e))|<\eps$. 

We define two graphs $\tilde \bfG$ and $\tilde \bfG'$ obtained from $\bfG$ and $\bfG'$ as follows. For each oriented edge $e=(u,v)\in\ker(G)$, denoting $(u',v')=\chi(e)$ and $\eta_e:=|\ell(e)-\ell'(\chi(e))|$, which is less than $\eps$,
\begin{itemize}
\item if $\ell(e)$ is smaller than $\ell'(e')$, we replace $G'$ by its $(6\eps-\eta_e,6\eps)$-shortening along $e'$ (cf. Definition \ref{defi:shortening}),
\item if $\ell'(e')$ is smaller than $\ell(e)$, we replace $G$ by its  $(6\eps-\eta_e,6\eps)$-shortening along $e$.
\end{itemize}
Let us denote by $(\tilde G,\tilde d)$ and $(\tilde G',\tilde d')$ the resulting $\RR$-graphs, let $\tilde\mu:=\mu|_{\tilde G}$, $\tilde\mu':=\mu'|_{\tilde G'}$ and define $\tilde\bfG:=(\tilde G,\tilde d,\tilde \mu)$, $\tilde\bfG':=(\tilde G',\tilde d',\tilde \mu')$.

Recalling the notation in Lemma~\ref{lem:gammaG}, let 
$$\kappa:=\gamma_G(11\eps)+12\eps$$
and define $\calR_1$ the $\kappa$-enlargement of $\calR$. We will show that 
\begin{equation}
\label{eq:corresGGtilde}
\tilde G\text{ and }\tilde G'\text{ are in correspondence through }\calR_1.
\end{equation} 
If $x\in \tilde G$ and $(x,x')\in\calR$ with $x'\not\in \tilde G'$, then, $x'\in\alpha_{G'}^{-1}(]v'-6\eps e',v'-(6\eps-\eta_e) e'[)$ for some edge $e'=(u',v')$ of $\ker(G')$. Lemma~\ref{lemm:corresalpha} shows that
\begin{equation}
\label{eq:x'x}
 0<6\eps-\eta_e -5\dis(\calR)\leq d(\alpha_G(x),v)\leq 6\eps+5\dis(\calR)\leq 11\eps\;.
\end{equation}
and thus
$$d(x,v)\leq \gamma_G(11\eps)+11\eps\;.$$
Thus,
\begin{equation}
\label{eq:gammaGgammaG'}
d'(x',v')\leq  \gamma_G(11\eps)+12\eps= \kappa\;.
\end{equation}
This shows that $(x,v')\in\calR_1$.

Now, suppose $x'\in\tilde G'$ and $(x,x')\in\calR$ with $x\not\in \tilde G$. Then, $x\in\alpha_{G}^{-1}(]v-6\eps e,v-(6\eps-\eta_e) '[)$ for some edge $e=(u,v)$ of $\ker(G)$ and $\eta<\eps$. Notice that:
$$d(x,v)\leq \gamma_G(6\eps)+6\eps\;,$$
and
$$d'(x',v')\leq d(x,v)+\dis(\calR)\leq \gamma_G(6\eps)+7\eps\leq \kappa\;.$$
Thus $(v,x')\in\calR_1$. This ends the proof of \eqref{eq:corresGGtilde}.

Notice that
$$\dis(\calR_1)\leq \eps+4\kappa\;.$$
Let $\calR_2:=\calR_1|_{\tilde G\times \tilde G'}\in C(\tilde G,\tilde G')$. Let $K$ be the number of edges in $\ker(G)$. Notice that
$$\forall (x,y)\in \tilde G,\;|d(x,y)-\tilde d(x,y)|< K\eps$$
and
$$\forall (x',y')\in \tilde G',\;|d'(x',y')-\tilde d'(x',y')|< K\eps\;.$$
Thus,
\begin{equation}
\label{eq:disR2}
\dis(\calR_2)\leq 2K\eps +\dis(\calR_1)<(2K+49)\eps+4\gamma_G(11\eps)\;.
\end{equation}

Clearly, there exists a homeomorphism $\psi$ from $\core(\tilde G)$ to $\core(\tilde G')$ which preserves the length-measure. For each oriented edge $e=(u,v)\in\ker(G)$, denoting $(u',v')=\chi(e)$, $\psi$ satisfies $\psi(v)=v'$. Furthermore, since $e$ and $e'$ are in correspondence through the overlay $\calR$, we have, for each $x\in [u,v]$, that there exists $x'\in [u',v']$ such that $(x,x')\in\calR$ and:
$$|d(x,u)-d'(x',u')|<\eps\;.$$
If furthermore $x\in \core(\tilde G)$, we know that $d(x,u)=d'(\psi(x),u')$, so
$$|d'(\psi(x),u')-d'(x',u')|<\eps\;.$$
Since $x'$ and $\psi(x)$ belong to $[u',v']$, 
$$d'(\psi(x),x')=|d'(\psi(x),u')-d'(x',u')|<\eps\;,$$
which shows that for every $x\in\core(\tilde G)$, 
\begin{equation}
\label{eq:psicorres}
(x,\psi(x))\text{ belongs to }\calR_2\;,
\end{equation}
the restriction to $\tilde G\times \tilde G'$ of the $\kappa$-enlargement of $\calR$.

Now, let $\pi:=\pi_0|_{\tilde G\times \tilde G'}\in M(\tilde G,\tilde G')$. First,
\begin{equation}
\label{eq:piR2c}
\pi(\calR_2^c)=\pi(\calR_1^c)\leq \pi_0(\calR_0^c)<\eps\;.
\end{equation}

Then, 
\begin{equation}
\label{eq:Dpimumu'}  D(\pi;\tilde \mu,\tilde \mu')\leq 2D(\pi_0; \mu, \mu')+\mu (G \setminus\tilde G )+\mu'(G'\setminus\tilde G')\;.
\end{equation}
Now, recall that
$$\tilde\gamma_G(\eps):=\sum_{e=(u,v)\in\ker(G)}\mu(\alpha_G^{-1}(]v-\eps e,v[))$$
which goes to zero as $\eps$ goes to zero. We have
$$\mu(G\setminus \tilde G)\leq \tilde\gamma_G(6\eps)\;.$$
Furthermore, recall inequality~\eqref{eq:x'x} which shows that if $x'\in G'\setminus \tilde G'$, then for every $x\in G$ such that $(x,x')\in\calR$,
$$x\in\bigcup_{e=(u,v)\in\ker(G)}\alpha_G^{-1}(]v-11\eps e,v[)\;.$$
Thus,
\begin{eqnarray}
\nonumber \mu'(G'\setminus\tilde G')&\leq &\pi_0(G\times (G'\setminus\tilde G'))+D(\pi_0;\mu,\mu')\\
\nonumber  &\leq &\pi_0((G\times (G'\setminus\tilde G'))\cap\calR)+\pi_0(\calR^c)+\eps\\
\nonumber  &\leq &\pi_0\left(\bigcup_{e=(u,v)\in\ker(G)}\alpha_G^{-1}(]v-11\eps e,v[)\times G'\right)+2\eps\\
\nonumber  &\leq &\mu\left(\bigcup_{e=(u,v)\in\ker(G)}\alpha_G^{-1}(]v-11\eps e,v[)\right)+D(\pi_0;\mu,\mu')+2\eps\\
\label{eq:tildegammaGG'}&\leq & \tilde\gamma_G(11\eps)+3\eps\;.
\end{eqnarray}
Thus, using \eqref{eq:Dpimumu'},
\begin{equation}
\label{eq:piD}
D(\pi;\tilde \mu,\tilde \mu')\leq 5\eps+2\tilde\gamma_G(11\eps)\;.
\end{equation}

Gathering \eqref{eq:psicorres}, \eqref{eq:disR2}, \eqref{eq:piR2c} and \eqref{eq:piD} shows that one may apply Lemma~\ref{lemm:FellerRootCore}, in the sense that there is a function $f_G(\eps)$ going to zero as $\eps$ goes to zero such that the Lévy-Prokhorov distance (for the Skorokhod topology associated to $L_{2,GHP}^{surplus}$) between the distributions of $(\Frag(\tilde G,s))_{s\in [0,t]}$ and $(\Frag(\tilde G',s))_{s\in[0,t]}$ is less than $f_G(\eps)$.

On the other hand, let $v'\in\ker(G')$ and $x'\in\alpha_{G'}^{-1}(]v-\eps,v[)$. Let $v\in\ker(G)$ (resp. $x\in G$) be such that $(v,v')\in\calR$ (resp. $(x,x')\in\calR$). Then,
\[d'(x',v') \leq  d(x,v)+\eps\;.\]
But, using Lemma~\ref{lemm:corresalpha},
\[d(\alpha_G(x),v)\leq d'(\alpha_{G'}(x'),v')+5\eps\leq 6\eps\;,\]
which implies
\[d(x,v)\leq \gamma_G(6\eps)\;.\]
Thus,
\[d'(x',v') \leq \gamma_G(6\eps)+\eps\;,\]
which shows that
$$\gamma_{G'}(\eps)\leq \gamma_G(6\eps)+\eps\;.$$
Inequality \eqref{eq:tildegammaGG'} shows that
$$\tilde\gamma_{G'}(\eps)\leq \tilde\gamma_G(11 \eps)+3\eps\;.$$
Then, Lemma~\ref{lemm:shortening} shows that there is a function $f_G(\eps)$ going to zero as $\eps$ goes to zero such that the Lévy-Prokhorov distance between the distributions of $(\Frag(G,s))_{s\in [0,t]}$ and $(\Frag(\tilde G,s))_{s\in [0,t]}$ is less than $f_G(\eps)$ and  the Lévy-Prokhorov distance  (for the Skorokhod topology associated to $L_{2,GHP}^{surplus}$) between the distributions of $(\Frag(G',s))_{s\in [0,t]}$ and $\Frag(\tilde G',s))_{s\in [0,t]}$ is less than $f_G(\eps)$. This ends the proof of {\it (ii)} (through  Lemma~\ref{lemm:compactcv} and inequality~\eqref{eq:dSkdc}).

\end{proof}

\subsection{Application to Erd\H{o}s-Rényi random graphs: proofs of Theorem~\ref{theo:cvfragGnp} and Proposition \ref{prop:retournement} }

Let us first compare the discrete fragmentation process and the continuous one.  Let $\calP^-$ be a Poisson process driving the discrete fragmentation on $G^{n}:=G(n,p(\lambda,n))$. Recall that $N^-(G^{n},\calP^-_t)$ is the state of this process at time $t$, seen as an element of $\calN^{graph}_2$. Let $\calQ^-$ be a Poisson process of intensity $\ell_n\otimes\leb_{\RR^+}$ on $K_n\times\RR^+$ where $K_n$ is the complete graph on $n$ vertices seen as an $\RR$-graph where the edge lengths are $\delta_n=n^{-1/3}$ and $\ell_n$ is its length measure. Then, one may suppose that $\calP^-$ is obtained as follows:
$$\calP^-=\{(e,t):\exists x\in e,\;(x,t)\in\calQ^-\}\;.$$
Then, for any $t$, $N^-(G^{n},\calP^-_t))$ is at $L_{2,GHP}$-distance at most $n^{-1/3}$ from $\Frag(G^{n},\calQ^-_t)$ (cf. for instance \cite[Propositions~3.4]{AdBrGoMi2017}). Recall that by Theorem~\ref{theo:cvGnp}, $\overline{\calG}_{n,\lambda}$ (which is $G^{n}$ with edge length $\delta_n$ and vertex weights $n^{-2/3}$) converges in distribution to $\calG_\lambda$ for $L^{surplus}_{2,GHP}$. Thus Theorem~\ref{theo:FellerGraphs} implies that $(\Frag(G^{n},\calQ^-_t))_{t\geq 0}$, and thus $(N^+(G^{n},\calP^-_t)))_{t\geq 0}$, converges to $(\Frag(\calG_\lambda,t))_{t\geq 0}$ as $n$ goes to infinity (in the Skorokhod topology associated to $L^{surplus}_{2,GHP}$). This shows Theorem~\ref{theo:cvfragGnp}.

We are now able to prove Proposition~\ref{prop:retournement}. Take $\calP_t^+$ of intensity $\gamma=n^{-4/3}$. Notice that the states of the edges are independent and identically distributed in $(\calG(n,p),N^+(\calG(n,p),\calP_t^+))$. Let $(X,Y)$ be the joint distribution of the state of one edge in $(\calG(n,p),N^+(\calG(n,p),\calP_t^+))$. Denoting by $0$ the state ``absent'' and $1$ the state ``present'', it is easy to compute this distribution:
$$\begin{array}{cc}
\PP((X,Y)=(0,0))=(1-p)e^{-\gamma t} & \PP((X,Y)=(0,1))=(1-p)(1-e^{-\gamma t})\\
\PP((X,Y)=(1,0))=0 & \PP((X,Y)=(1,1))=p
\end{array}$$
Now, take $\calP_t^-$ of intensity $\mu=n^{-1/3}$ and let $(X',Y')$ be the joint distribution of the state of one edge in $(N^-(G(n,p'),\calP_{t'}^-),G(n,p'))$. Then,
$$\begin{array}{cc}
\PP((X',Y')=(0,0))=(1-p')& \PP((X,Y)=(0,1))=p'(1-e^{-\mu t'})\\
\PP((X',Y')=(1,0))=0 & \PP((X',Y')=(1,1))=p'e^{-\mu t'}
\end{array}$$
Thus, if one chooses 
$$t=\frac{1}{\gamma}\ln\frac{1-p}{1-p'}\quad\text{ and }t'=\frac{1}{\mu}\ln\frac{p'}{p}\;,$$
then $(\calG(n,p),N^+(\calG(n,p),\calP_t^+))$ and $(N^-(G(n,p'),\calP_{t'}^-),G(n,p'))$ have the same distribution. Now, take $p=p(\lambda,n)$, $p'=p(\lambda+s,n)$. We have:
$$t=n^{4/3}\ln\left(1+\frac{s}{n^{4/3}(1-p')}\right)\xrightarrow[n\rightarrow \infty]{}s\;.$$
We consider that $\calG(n,p)$ is equipped with edge lengths $n^{-1/3}$ and vertex weight $n^{-2/3}$. Thus Theorem~\ref{theo:cvcoalGnp} shows that $(\calG(n,p),N^+(\calG(n,p),\calP_t^+))$ converges in distribution to $(\calG_\lambda,\Coal_0(\calG_\lambda,s))$. Also,
$$t'=n^{1/3}\ln\frac{1+\frac{\lambda+s}{n^{1/3}}}{1+\frac{\lambda}{n^{1/3}}}\xrightarrow[n\rightarrow \infty]{}s$$
thus Theorem~\ref{theo:FellerGraphs} shows that $(N^-(G(n,p'),\calP_{t'}^-),G(n,p'))$ converges in distribution to $(\Frag(\calG_{\lambda+s},s),\calG_{\lambda+s}))$. Thus $(\calG_\lambda,\Coal_0(\calG_\lambda,s))$ and $(\Frag(\calG_{\lambda+s},s),\calG_{\lambda+s}))$ have the same distribution. This ends the proof of Proposition~\ref{prop:retournement}.

Notice a curious fact: in \cite[Theorem~3]{AldousPitman98a}, it is shown that the sizes of the components of a fragmentation on the CRT are the time-reversal (after an exponential time-change) of the standard \emph{additive} coalescent. It would be intersting to make a direct link between additive and multiplicative coalescent in the context of fragmentation on $\calG_\lambda$.

\section{Combining fragmentation and coalescence: dynamical percolation}
\label{sec:dynperc}

\subsection{Almost Feller Property: proof of Theorem~\ref{theo:almostFellerCoalFrag}}
In this section, we prove Theorem~\ref{theo:almostFellerCoalFrag}. The following lemma is a simple variation on the proof of \eqref{eq:supdiamlimite}. 
\begin{lemm}
  \label{lemm:suplengthstable}
  Let $\bfX^{n}=(X^{n},d^{n},\mu^{n})$, $n\geq 0$ be a sequence of random m.s-m.s in $\calN_2^{graph}$ and $(\delta^{n})_{n\geq 0}$ be a sequence of non-negative real numbers. Suppose that: 
\begin{enumerate}[(i)]
\item $(\bfX^{n})$ converges in distribution (for $L_{2,GHP}$) to $\bfX^{\infty}=(X^{\infty},d^{\infty},\mu^{\infty})$ as $n$ goes to infinity
\item $\delta^n\xrightarrow[n\rightarrow\infty]{}0$
\item For any $\alpha>0$ and any $T>0$, $$\limsup_{n\in\NN} \PP(\suplength(\Coal_{\delta^{n}}(\bfX^{n}_{\leq \eps},T))>\alpha)\xrightarrow[\eps\rightarrow 0]{} 0$$
\end{enumerate}
Then, for any $\alpha>0$ and any $T>0$,
$$\PP(\suplength(\Coal_{0}(\bfX^{\infty}_{\leq \eps},T))>\alpha)\xrightarrow[\eps\rightarrow 0]{} 0\;.$$
\end{lemm}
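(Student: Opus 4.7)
The plan is to mimic the proof of~\eqref{eq:supdiamlimite} inside Lemma~\ref{lemm:calSstable}, with $\suplength$ in place of $\supdiam$ throughout. First I would invoke the Feller property of Aldous' multiplicative coalescent (Proposition~5 of~\cite{Aldous97Gnp}): since $\sizes(\bfX^n)\to\sizes(\bfX^\infty)$ in $\ell^2_\searrow$ by hypothesis~(i), this property gives convergence in distribution of $\sizes(\Coal_{\delta^n}(\bfX^n,T))$ to $\sizes(\Coal_\delta(\bfX^\infty,T))$ in $\ell^2_\searrow$. Together with Prokhorov's theorem and Lemma~\ref{lemm:forest}, this gives
\[\PP\bigl(\MG(\bfX^\infty_{\leq\tilde\eps},T)\text{ is not a forest}\bigr)\xrightarrow[\tilde\eps\to 0]{}0\]
and the analogous statement uniformly in $n$ for $\bfX^n$.

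The structural ingredient powering the argument is the analogue of Lemma~\ref{lemm:obviouscouplingforest} for $\suplength$: under the obvious coupling, whenever $\bfX'$ is a union of components of $\bfX$ and $\MG(\bfX,T)$ is a forest, every component of $\Coal_\delta(\bfX',t)$ is isometrically embedded (for the intrinsic metric) in a component of $\Coal_\delta(\bfX,t)$ for any $t\leq T$. An injective path in $\Coal_\delta(\bfX',t)$ is thus also an injective path of the same length in $\Coal_\delta(\bfX,t)$, which yields on the forest event the monotonicity
\[\suplength(\Coal_\delta(\bfX',t))\leq\suplength(\Coal_\delta(\bfX,t)).\]
Specialising to $\bfX'=\bfX^\infty_{\leq\eps'}\subset\bfX^\infty_{\leq\tilde\eps}=\bfX$ (and similarly for $\bfX^n$) furnishes the monotonicity in the mass threshold used throughout.

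Now fix $\alpha,T,\tilde\eps>0$ and a decreasing sequence $(\eps_m)\downarrow 0$ of continuity points of $\sizes(\bfX^\infty)$ with $\eps_0\leq\tilde\eps$, and let $F_{\tilde\eps}:=\{\MG(\bfX^\infty_{\leq\tilde\eps},T)\text{ is a forest}\}$. For $\eps\leq\eps_m$ the monotonicity yields
\[\PP(\suplength(\Coal_\delta(\bfX^\infty_{\leq\eps},T))>\alpha)\leq\PP(\suplength(\Coal_\delta(\bfX^\infty_{\leq\eps_m},T))>\alpha,\ F_{\tilde\eps})+\PP(F_{\tilde\eps}^c).\]
Setting the finite approximation $\bfX^\infty_{m,p}:=(\bfX^\infty_{\leq\eps_m})_{>\eps_p}$, on $F_{\tilde\eps}$ the quantity $\suplength(\Coal_\delta(\bfX^\infty_{m,p},T))$ is non-decreasing in $p$ and converges as $p\to\infty$ to $\suplength(\Coal_\delta(\bfX^\infty_{\leq\eps_m},T))$, because any injective path of finite length is compact and hence visits only finitely many sub-components of $\bfX^\infty_{\leq\eps_m}$. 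Continuity of probability on monotone unions thus gives
\[\PP(\suplength(\Coal_\delta(\bfX^\infty_{\leq\eps_m},T))>\alpha,\ F_{\tilde\eps})=\lim_{p\to\infty}\PP(\suplength(\Coal_\delta(\bfX^\infty_{m,p},T))>\alpha,\ F_{\tilde\eps}).\]
Proposition~\ref{prop:FellerN1} then furnishes the convergence in distribution of $\Coal_{\delta^n}(\bfX^n_{m,p},T)$ to $\Coal_\delta(\bfX^\infty_{m,p},T)$ for $L_{1,GHP}$, which on finite collections of finite $\RR$-graphs amounts to componentwise $d_{GHP}$-convergence. As $\{\suplength>\alpha\}$ is then lower semi-continuous, Portmanteau yields
\[\PP(\suplength(\Coal_\delta(\bfX^\infty_{m,p},T))>\alpha)\leq\liminf_n\PP(\suplength(\Coal_{\delta^n}(\bfX^n_{m,p},T))>\alpha).\]
Applying the forest monotonicity to $\bfX^n$ together with $\bfX^n_{m,p}\subset\bfX^n_{\leq\eps_m}$, the right-hand side is bounded by
\[\limsup_n\PP(\suplength(\Coal_{\delta^n}(\bfX^n_{\leq\eps_m},T))>\alpha)+\limsup_n\PP(\MG(\bfX^n_{\leq\tilde\eps},T)\text{ is not a forest})\]
which goes to $0$ upon letting first $\eps_m\to 0$ (using hypothesis~(iii)) and then $\tilde\eps\to 0$ (using Step~1).

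The main technical point will be the lower semi-continuity of $\suplength$ with respect to $d_{GHP}$ on finite $\RR$-graphs: given an injective path in the limit graph one has to transport it, through the correspondance provided by $d_{GHP}$-closeness, into an injective path of close length in the approximating graph. The natural approach is to use the kernel-preserving $\eps$-overlays of Lemma~\ref{lemm:overlay} to match the combinatorial structure of the cycles, and then to attach the hanging tree branches coherently, in the same spirit as in the proof of Lemma~\ref{lemm:FellerRootCore}.
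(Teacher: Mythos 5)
Your proof follows the paper's route — truncate the small components, apply the $\calN_1$ Feller property (Proposition~\ref{prop:FellerN1}), and climb back using monotonicity of $\suplength$ — but it is noticeably more careful on one point, and that care is not decorative. The paper claims the forest conditioning used for $\supdiam$ in the proof of \eqref{eq:supdiamlimite} can be dropped here because ``$\suplength$ is non-decreasing under coalescence''; taken literally, that unconditional monotonicity fails. If $[0,4]$ is a component of $\bfX_{m,p}$ and $I_p$ is a small interval in $\bfX_{\leq\eps_m}\setminus\bfX_{m,p}$, then Poisson points $(1,x_p)$, $(3,x'_p)$, $(x_p,x'_p)$ with $x_p\neq x'_p\in I_p$ identify $1$ with $3$ in $\Coal_0(\bfX_{\leq\eps_m},T)$ but not in $\Coal_0(\bfX_{m,p},T)$; the simple arc $[0,4]$ then collapses into two sticks of length $1$ and a loop of circumference $2$, and $\suplength$ drops from $4$ to (essentially) $3$. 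Such chains require a multi-edge or cycle in $\MG(\bfX_{\leq\tilde\eps},T)$, so they are ruled out exactly on the forest event through Lemma~\ref{lemm:obviouscouplingforest}, which is how you organize the argument. You also explicitly flag the lower semi-continuity of $\suplength$ under $d_{GHP}$ as the step needed to pass from the distributional convergence of Proposition~\ref{prop:FellerN1} to the $\liminf$ inequality on probabilities, a step the paper leaves silent. Your overlay-based sketch is plausible, with one caveat: Lemma~\ref{lemm:overlay} wants equal surpluses, while Proposition~\ref{prop:FellerN1} only delivers $L_{1,GHP}$ (not $L_{1,GHP}^{surplus}$) convergence. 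This can be closed by observing that surplus cannot drop in a Gromov--Hausdorff limit, so any additional cycles in the approximating graphs can only lengthen injective paths, which is the favourable direction for lower semi-continuity.
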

\begin{proof}
The situation is simpler than in the proof of \eqref{eq:supdiamlimite}, since $\suplength$ is non-decreasing under coalesence. Using the notation of the proof of \eqref{eq:supdiamlimite}, 
\begin{eqnarray*}
  &&\PP(\suplength(\Coal_{0}(\bfX^{\infty}_{\leq \eps_m},T))>\alpha)\\
&=&\lim_{p\rightarrow \infty}\PP(\suplength(\Coal_{0}(\bfX^{\infty}_{m,p},T))>\alpha)
\end{eqnarray*}
Now, Proposition~\ref{prop:FellerN1} implies that $(\Coal_{\delta^{n}}(\bfX^{n}_{m,p},T)$ converges in distribution to $(\Coal_{0}(\bfX^{\infty}_{m,p},T)$ for any $m\leq p$.  Thus, for any $m\leq p$,
\begin{eqnarray*}
&&\PP(\suplength(\Coal_{0}(\bfX^{\infty}_{m,p},T))>\alpha)\\
&\leq & \limsup_{n\rightarrow\infty}\PP(\suplength(\Coal_{\delta^{n}}(\bfX^{n}_{m,p},T))>\alpha)\\
&\leq &\limsup_{n\rightarrow\infty}\PP(\suplength(\Coal_{\delta^{n}}(\bfX^{n}_{\leq \eps_m},T))>\alpha),
\end{eqnarray*}
which goes to zero when $m$ goes to infinity. 
\end{proof}
Now, we are able to prove Theorem~\ref{theo:almostFellerCoalFrag}.

Notice that the strong Markov property was already noticed (see the remark after Definition~\ref{defi:dynperc}). The fact that trajectories lie in $\calS^{length}$ is a consequence of Lemmas~\ref{lemm:calSdansN} and~\ref{lemm:calSstable}. Thus, to prove $(i)$, we only need to prove that the trajectories are càdlàg (almost surely). This will be done in the course of proving point $(ii)$.  

We will reduce the problem to $\calN_1^{graph}$ using a variation on the proof of Theorem~\ref{theo:AlmostFellercoal}. Let us study first $\Frag(\Coal_{\delta^n}(\bfX^{n},\calP^+_t),\calP^-_t)$ with $\calP^+$ and $\calP^-$ as in Definitions~\ref{defi:coal} and \ref{defi:frag}. Let us fix $\eps\in]0,1/2[$ and $0\leq t \leq T$. Any component of size at least $\eps$ in $\Frag(\Coal_{\delta^n}(\bfX^{n},\calP^+_t),\calP^-_t)$ has to belong to a component of size at least $\eps$ in $\Coal_{\delta^n}(\bfX^{n},\calP^+_t)$. Let $x^n:=\sizes(\bfX^{n})$ for $n\in\NNbar$. 

 As in the proof of Theorem~\ref{theo:AlmostFellercoal},  we obtain that there exists $ K(\eps)$, $\eps_1\in ]0,\eps[$ and  $\eps_2\in]0,\eps_1[$ such that for every $n\in\NNbar$, with probability larger than $1-\eps$ the event $\calA_n$ holds, where $\calA_n$ is the event that points (a), (b) and (c) of Corollary~\ref{coro:picture_structure} hold for any $t\in[0,T]$ and $S(x^n,T)\leq K(\eps)$.

Let us place ourselves on $\calA_n$. Then, for a significant component at time $t$, notice that fragmentation on the hanging trees of components does change neither the mass neither the distance in the heart of a component. Thus, the same proof as that of Theorem~\ref{theo:AlmostFellercoal} shows that on $\calA_n$, we have for every time $t\leq T$ and every $\eps'_2\leq \eps_2$:
\begin{eqnarray*}
&&L_{GHP}(\Frag(\Coal_{\delta^n}(\bfX^n,\calP^+_t),\calP^-_t),\Frag(\Coal_{\delta^n}(\bfX^n_{> \eps'_2},\calP^+_t),\calP^-_t))\\
  &\leq & 17(\delta^n+\supdiam(\Frag(\Coal_{\delta^n}(\bfX^n_{\leq \eps_1},\calP^+_T),\calP^-_t))+\eps_1)\left(1+\frac{8K(\eps)}{\eps^2}\right)\\
&&+16\eps\;.
\end{eqnarray*}
A slight difference occurs here: 
$$t\mapsto \supdiam(\Frag(\Coal_{\delta^n}(\bfX^n_{\leq \eps_1},\calP^+_T),\calP^-_t))$$
is not necessarily nonincreasing. However, the supremum of the lengths of injective paths clearly decreases (non-strictly) under fragmentation. Thus, on $\calA_n$,
\begin{eqnarray*}
&&L_{GHP}(\Frag(\Coal_{\delta^n}(\bfX^n,\calP^+_t),\calP^-_t),\Frag(\Coal_{\delta^n}(\bfX^n_{> \eps'_2},\calP^+_t),\calP^-_t))\\
  &\leq & 17(\delta^n+\suplength(\Coal_{\delta^n}(\bfX^n_{\leq \eps_1},\calP^+_T))+\eps_1)\left(1+\frac{8K(\eps)}{\eps^2}\right)+16\eps\;.
\end{eqnarray*}
Let $V=\comp(\Frag(X^n,\calP_t^-))$ and $W=\comp(\Frag(X^n_{>\eps'_2},\calP_t^-))\subset V$. Let $E'$ denote the set of pairs $(i,j)$ in $V^2$ such that $i\sim j$ if and only if $i$ and $j$ are at finite distance in $\Frag(\Coal_{\delta^n}(\bfX^n,\calP^+_t),\calP^-_t)$. Let $E$ denote the set of pairs $(i,j)$ in  $V^2$ such that $i\sim j$ if and only if $i$ and $j$ are at finite distance in $\Coal_{\delta^n}(\bfX^n,\calP^+_t)$. Define:
$$x^n(t):=\sizes(\Frag(\Coal_{\delta^n}(\bfX^n,\calP^+_t),\calP^-_t))$$
and
$$x^n_{>\eps}(t):=\sizes(\Frag(\Coal_{\delta^n}(\bfX^n_{> \eps},\calP^+_t),\calP^-_t))\;.$$
Lemma~\ref{lemm:Aldous17} shows that
\begin{eqnarray*}
&&\|x^{n}(t)-x^{n}_{>\eps'_2}(t)\|_2^2\\
&\leq& \|x^{n}(t)\|_2^2-\|x^{n}_{>\eps'_2}(t)\|_2^2
\end{eqnarray*}
then, we can use Lemma~\ref{lemm:pourSkorL2}, since \eqref{coro:nocycle} of Corollary~\ref{coro:picture_structure} holds on $\calA_n$:
\begin{eqnarray*}
&&\|x^{n}(t)-x^{n}_{>\eps'_2}(t)\|_2^2\\
&\leq& S(x^{n},t)-S(x^{n}_{>\eps_2},t)\\
&\leq &2\eps_1^2\leq \eps
\end{eqnarray*}
since \eqref{coro:S} of Corollary~\ref{coro:picture_structure} holds on $\calA_n$. Now, let us take $\delta_n=0$. Define:
$$\bfX^n(t):=\Frag(\Coal_{0}(\bfX^n,\calP^+_t),\calP^-_t)=\CoalFrag(\bfX^n,t)$$
and
$$\bfX^n_{>\eps_1}(t):=\Frag(\Coal_{0}(\bfX^n_{>\eps_1},\calP^+_t),\calP^-_t)=\CoalFrag(\bfX^{n}_{>\eps_1},t))\;.$$
Notice that we used the commutation relation guaranteed by Lemma~\ref{lemm:calSdansN}. Using the hypothesis~\eqref{eq:suplengthunif} on $\suplength$ and Lemma~\ref{lemm:suplengthstable}, we conclude that for any $\eps>0$,
\begin{equation}
\label{eq:finiteapproxCoalFrag}
\lim_{\eps_1\rightarrow 0}\sup_{n\in\NNbar}\PP^*[\sup_{t\in[0,T]}L_{2,GHP}(\bfX^n(t),\bfX^n_{>\eps_1}(t))>\eps]=0\;.
\end{equation}
Now, let us prove that the trajectories of $\CoalFrag(\bfX^\infty,\cdot)$ are càdlàg. Let $\bfY^n:=R_{\frac{1}{n}}(X^\infty_{>\frac{1}{n}})$. $\CoalFrag(\bfY^n,\cdot)$ is càdlàg (it  has a finite number of jumps on any bounded interval). Now, Lemma~\ref{lemm:approxReta} applied to $X^\infty_{>\frac{1}{n}}$ and equation~\eqref{eq:finiteapproxCoalFrag} applied to $X^n:=X^\infty$ show that the hypotheses of Lemma~\ref{lemm:cvcadlag} are satisfied for $\omega^n=\CoalFrag(\bfY^n,\cdot)$ and $\omega^\infty=\CoalFrag(\bfX^\infty,\cdot)$. This proves that the trajectories of $\CoalFrag(\bfX^\infty,\cdot)$ are càdlàg and ends the proof of $(i)$.

Now, let us prove point $(ii)$. Equation~\eqref{eq:finiteapproxCoalFrag} shows that it is sufficient to show the proposition for $\bfX^n$ converging to $\bfX$ in $L_{1,GHP}$ with $\bfX^n$ and $\bfX$ being  m.s-m.s with a finite number of components which are $\RR$-graphs. We shall only sketch the proof, since it is a variation on the arguments of the proofs of Propositions~\ref{prop:FellerN1} and Theorem~\ref{theo:FellerGraphs}. For any $n$ large enough, the proof of Theorem~\ref{theo:FellerGraphs} shows that one may couple a Poisson random set $\calP^{-,n}$ on $X^n\times \RR^+$ with intensity measure $\ell_{X^n}\otimes \leb_{\RR^+}$ with a Poisson random set $\calP^-$ on $X\times \RR^+$ with intensity $\ell_X\otimes \leb_{\RR^+}$ and one may find $\pi^n\in M(X,X^n)$ and $\calR^n\in C(X,X^n)$ such that there is an event $\calE_n$ in the $\sigma$-algebra of $(\calP^{-,n}_t,\calP^-_t)$ and a sequence $\eps_n$  such that:
\begin{enumerate}[(i)]
\item $\PP(\calE_n^c)\leq \eps_n$
\item $\eps_n\xrightarrow[n\rightarrow\infty]{}0$
\item on $\calE_n$, for any $s\leq t$, $\calR^n\cap(X\setminus\calP^-_s)\times (X^n\setminus \calP^{-,n}_s)\in C(X\setminus\calP^-_s,X^n\setminus \calP^{-,n}_s)$ and 
$$D(\pi|_{(X\setminus\calP^-_s)\times (X^n\setminus \calP^{-,n}_s)};\mu|_{X\setminus\calP^-_s},\mu_{X^n\setminus \calP^{-,n}_s})\lor \pi^n((\calR^n)^c)\lor \dis_s(\calR^n)\leq\eps_n$$
where $\dis_s(\calR^n)$ is the distortion of $\calR^n$ as a correspondence between the semi-metric spaces $\Frag(X,\calP^-_s)$ and $\Frag(X^n,\calP_s^{-,n})$
\item  on $\calE_n$, for any $s\leq t$, $\|\sizes(\Frag(\bfX,\calP_s^-))-\sizes(\Frag(\bfX^n,\calP_s^{-,n}))\|_1\leq \eps_n$.
\end{enumerate}
Then, one may use the proofs of Lemma~\ref{lemm:couplageN1} and Proposition~\ref{prop:FellerN1} to couple  a Poisson random set $\calP^{+,n}$ on $(X^n)^2\times \RR^+$ with intensity measure $\frac{1}{2}(\mu^n)^\otimes\otimes \leb_{\RR^+}$ with a Poisson ransom set $\calP^+$ on $(X)^2\times \RR^+$ with intensity $\frac{1}{2}(\mu)^\otimes\otimes \leb_{\RR^+}$ in such a way that there is an event $\calE'_n$, a sequence $\eps'_n$ such that:
\begin{enumerate}[(i)]
\item $\PP((\calE'_n)^c)\leq \eps'_n$
\item $\eps'_n\xrightarrow[n\rightarrow\infty]{}0$
\item on $\calE'_n$, for any $s\leq t$,
$$D(\pi|_{(X\setminus\calP^-_s)\times (X^n\setminus \calP^{-,n}_s)};\mu|_{X\setminus\calP^-_s},\mu^n|_{X^n\setminus \calP^{-,n}_s})\lor \pi^n((\calR^n)^c)\lor \dis'_s(\calR^n)\leq\eps'_n$$
where $\dis'_s(\calR^n)$ is the distortion of $\calR^n$ as a correspondence between the semi-metric spaces $\Coal_0(\Frag(X,\calP^-_s),\calP^+_s)$ and $\Coal_0(\Frag(X^n,\calP_s^{-,n}),\calP^{+,n}_s)$,
\item on $\calE'_n$, for any $s\leq t$, the multigraphs $\MG(\Frag(\bfX,\calP_s^{-}),\calP_s^+)$ and $\MG(\Frag(\bfX^n,\calP_s^{-,n}),\calP_s^{+,n})$ are the same.
\end{enumerate}
Thanks to the properties on the multigraphs above, on $\calE_n\cap\calE'_n$, we get that for any $s\leq t$,
\begin{eqnarray*}
  &&\|\sizes(\Coal(\Frag(\bfX,\calP_s^-),\calP_s^+))-\sizes(\Coal(\Frag(\bfX^n,\calP_s^{-,n}),\calP_s^+))\|_1\\
  &\leq& \|\sizes(\Frag(\bfX,\calP_s^-))-\sizes(\Frag(\bfX^n,\calP_s^{-,n}))\|_1\\
  &\leq &\eps_n\;.
\end{eqnarray*}
Using  Lemma~\ref{lemm:3GHP}, this ends the proof of the convergence in the sense of $L_{1,GHP}$, and thus the proof of point $(i)$.

Finally, $(iii)$ is a direct consequence of Theorem~\ref{theo:AlmostFellercoal} $(iii)$ and Theorem~\ref{theo:FellerGraphs}  $(iii)$. This ends the proof of Theorem~\ref{theo:almostFellerCoalFrag}.

\subsection{Application to Erd\H{o}s-Rényi random graphs}
Now, we want to prove Theorem~\ref{theo:cvpercodynGnp} and Theorem~\ref{theo:existencecoalfrag}. Intuitively, the dynamical percolation process on the complete graph $K_n$ should be very close to the process $\CoalFrag(K_n,.)$, but such a statement needs some care, essentially because $N^+$ and $N^-$ do not commute: some pairs of vertices might be affected by the two Poisson processes $\calP^+$ and $\calP^-$ in a time interval $[0,T]$. Furthermore, the typical number of such edges is of order $n^{1/3}$. It turns out that these edges will not be important for the $L_{GHP}$-metric, but this issue requires us to adapt the proof of Theorem~\ref{theo:almostFellerCoalFrag}.

\begin{proof} (of Theorem~\ref{theo:cvpercodynGnp}). Let $G^{\infty}=\Glambda$. Let $p=p(\lambda,n)$, let $\bfG^{n}$ be the graph $\calG(n,p)$ seen as a measured $\RR$-graph, with edge-lengths $\delta_n:=(1-p)n^{-1/3}\sim n^{-1/3}$ and measure the counting measure times $\sqrt{pn^{-1/3}}\sim n^{-2/3}$. Let  $\calP^+$ (of intensity $pn^{-1/3}$) and $\calP^-$ (of intensity $(1-p)n^{-1/3}$) be the two Poisson processes driving the dynamical percolation on $G^{n}$. Let us write 
$$G^{n}(t):=N(G^{n},(\calP^+,\calP^-)_t)$$
and
$$G^{n}_{>\eps_1}(t):=N(G^{n}_{>\eps_1},(\calP^+,\calP^-)_t)\;.$$
for the state of this process at time $t$, seen as a member of $\calN_2^{graph}$. Let us fix $\eps>0$ and $0\leq t \leq T$. Any component of size at least $\eps$ in $N(G^{n},(\calP^+,\calP^-)_t)$ has to belong to a component of size at least $\eps$ in $N(G^{n},(\calP^+,\emptyset)_t)$, which is nothing else but $\Coal_{\delta_n}(G^{n},\calP^+_t)$. Now, we claim that
\begin{equation}
\label{eq:hypsuplength}
\limsup_{n\in\NN} \PP(\suplength(\Coal_{\delta^{n}}(\bfG^{n}_{\leq \eps},T))>\alpha)\xrightarrow[\eps\rightarrow 0]{} 0\;.
\end{equation}
Indeed, if $G$ is a discrete graph with diameter $D$ and surplus bounded from above by $s$, 
$$\suplength(G)\leq 2D(1+s)\;.$$
Thus, \eqref{eq:hypsuplength} is a consequence of \eqref{eq:supdiamcalG_n} and the fact that the maximal surplus in $\bfG^{n}$ forms a tight sequence (see for instance \cite[sections 13 and 14]{JansonKnuthLuczakPittel93}). Then, \eqref{eq:hypsuplength} and Lemma \ref{lemm:suplengthstable} show that for any $\alpha>0$ and $T>0$,
$$\PP(\suplength(\Coal_{0}(\bfG^{\infty}_{\leq \eps},T))>\alpha)\xrightarrow[\eps\rightarrow 0]{} 0\;.$$
The arguments leading to \eqref{eq:finiteapproxCoalFrag} show that: 
$$
\lim_{\eps_1\rightarrow 0}\limsup_{n\in\NN}\PP[\sup_{t\in[0,T]}L_{2,GHP}(G^{n}(t),G^{n}_{>\eps_1}(t))>\eps]=0\;. 
$$
and
$$
\lim_{\eps_1\rightarrow 0}\PP[\sup_{t\in[0,T]}L_{2,GHP}(\CoalFrag(G^\infty,t),\CoalFrag(G^\infty_{>\eps_1},t))>\eps]=0\;.
$$
Thus, it is sufficient to show that for any $\eps_1>0$, $(G^{n}_{>\eps_1}(t))_{t\geq 0}$ converges to $(\CoalFrag(G^\infty_{>\eps_1},t))_{t\geq 0}$ in the Skorokhod topology associated to $L_{2,GHP}$. Let $Y_n$ denote the number of discrete coalescence events of $\calP^+_T$ occurring on $G^{n}_{>\eps_1}$. Since the masses of  $G^{n}_{>\eps_1}$ form a tight sequence, $(Y_n)$ is a tight sequence. Since $\delta_n$ goes to zero, the probability that $\calP^-_T$ touches an edge from $\calP^+_T$ in $G^{n}_{>\eps_1}$ goes to zero as $n$ goes to infinity. Thus, with probability going to one, for any $t\in[0,T]$ $G^{n}_{>\eps_1}(t)=\tilde G^{n}_{>\eps_1}(t)$ where
$$\tilde G^{n}_{>\eps_1}(t)=N(\Coal_{\delta_n}(G^{n}_{>\eps_1},\calP^+_t),(\emptyset,\calP^-)_t)\;.$$
Furthermore, since $Y_n$ is a tight sequence and $\delta_n$ goes to zero, 
$$\sup_{t\in[0,T]}L_{2,GHP}(\tilde G^{n}_{>\eps_1}(t),N(\Coal_{0}(G^{n}_{>\eps_1},\calP^+_t),(\emptyset,\calP^-)_t))\xrightarrow[n\rightarrow \infty]{\PP}0\;.$$
Let $\calQ^-$ be a Poisson process of intensity $\ell_n\otimes\leb_{\RR^+}$ on $K_n\times\RR^+$ where $K_n$ is the complete graph on $n$ vertices seen as an $\RR$-graph where the edge lengths are $\delta_n$ and $\ell_n$ is its length measure. Then, one may suppose that $\calP^-$ is obtained as follows:
$$\calP^-=\{(e,t):\exists x\in e,\;(x,t)\in\calQ^-\}\;.$$
Then, for any $t$, $N(\Coal_{0}(G^{n}_{>\eps_1},\calP^+_t),(\emptyset,\calP^-)_t)$ is at $L_{2,GHP}$-distance at most $\delta_n$ from $\Frag(\Coal_{0}(G^{n}_{>\eps_1},\calP^+_t),\calQ^-_t)$ (cf. for instance \cite[Proposition~3.4]{AdBrGoMi2017}). Altogether, we get:
$$\sup_{t\in[0,T]}L_{2,GHP}(G^{n}_{>\eps_1}(t),\Frag(\Coal_{0}(G^{n}_{>\eps_1},\calP^+_t),\calQ^-_t))\xrightarrow[n\rightarrow \infty]{\PP}0\;,$$
and $(\Frag(\Coal_{0}(G^{n}_{>\eps_1},\calP^+_t),\calQ^-_t))_{t\geq 0}$ is distributed as $\CoalFrag(G^{n}_{>\eps_1},t)_{t\geq 0}$.  Now Theorem \ref{theo:almostFellerCoalFrag} shows that the sequence of processes $\CoalFrag(G^{n}_{>\eps_1},\cdot)$ converges to $\CoalFrag(G^{\infty}_{>\eps_1},\cdot)$ for the Skorokhod topology associated to $L_{2,GHP}$, which finishes the proof.
\end{proof}
Finally, let us prove Theorem~\ref{theo:existencecoalfrag}. First, notice that $\Glambda\in\calS^{graph}$ (it is a consequence of Theorem~\ref{theo:AlmostFellercoal}). For $\Frag(\Glambda,\cdot)$, Theorem~\ref{theo:existencecoalfrag} is a consequence of Theorem~\ref{theo:FellerGraphs} and Lemma~\ref{lemm:calSdansN}. The fact that $\Coal(\Glambda,\cdot)$  and $\CoalFrag(\Glambda,\cdot)$  are strong Markov processes with càdlàg trajectories in $\calS^{length}$ is a consequence of the convergences already proven and of Theorems~\ref{theo:AlmostFellercoal}, \ref{theo:FellerGraphs} and \ref{theo:almostFellerCoalFrag}. It remains to prove that almost surely, for any $t\geq 0$, $\Coal(\Glambda,t)$, and $\CoalFrag(\Glambda,t)$ belong to $\calS^{graph}$.

Proposition~\ref{prop:retournement} shows that $\Coal(\Glambda,t)$ has the same distribution as $\calG_{\lambda+t}$. Thus, for a fixed $t\geq 0$, $\Coal(\Glambda,t)$ is in $\calS^{graph}$ almost surely. But if $\Coal(\Glambda,t)$ is in $\calS^{graph}$, then $\Coal(\Glambda,s)$ and $\CoalFrag(\Glambda,s)$ are in $\calS^{graph}$ for any $s\leq t$, cf. Remark~\ref{rem:coalScoalfragS}. This proves that $\Coal(\Glambda,t)$ and $\CoalFrag(\Glambda,\cdot)$ have trajectories in $\calS^{graph}$ and ends the proof of Theorem~\ref{theo:existencecoalfrag}.

\section{Perspectives}
\label{sec:perspectives}

Today, there is a lot of results concerning convergence in distribution of rescaled critical random graphs to $\Glambda$ (see \cite{AdBrGolimit,BhamidiSenWang2017,BhamidiBroutinSenWang2014arXiv}), or to other scaling limits (see \cite{BhamidiHofstadSen2018,BhamidiDharaHofstadSen2017arxiv,BroutinDuquesneWang2018arXiv}). For a lot of them, there is a notion of critical window parametrized by a real number $\lambda$, and the graph components merge approximately like a multiplicative coalescent when $\lambda$ crosses the critical window. This is particularly obvious for critical percolation on the configuration model and inhomogeneous random graphs, \cite{BhamidiBroutinSenWang2014arXiv,BhamidiDharaHofstadSen2017arxiv}, but also parametrized rank-$1$ inhomogeneous random graphs \cite{BhamidiSenWang2017,BhamidiHofstadSen2018}. It is natural to expect the results of the present paper to apply to those random graphs. More precisely, I expect that analogues of Theorems~\ref{theo:cvcoalGnp}, \ref{theo:cvfragGnp} and \ref{theo:cvpercodynGnp} hold. Let me describe quickly what I believe to be straightworward and what I believe no to be.

Concerning fragmentation, in order to apply Theorem~\ref{theo:FellerGraphs}, one needs, informally, joint convergence of the graphs, their size and  their surplus. For instance, \cite{DharaHofstadLeeuwaardenSen2016arXiv} and \cite{BhamidiDharaHofstadSen2017arxiv} should be enough to prove directly through Theorem~\ref{theo:FellerGraphs} that the discrete fragmentation process on the critically percolated heavy-tailed configuration models of \cite{BhamidiDharaHofstadSen2017arxiv} converge to a fragmentation process on the limit. Since discrete fragmentation is itself a coupled percolation process (with a decreasing percolation parameter), this in fact will show convergence of the critical percolation process (with decreasing percolation parameter) to the fragmentation process on the limit. For percolation on  light-tailed configuration models at criticality, the analogous result should hold by resorting to the results of \cite{DharaHofstadLeeuwaardenSen2017} and \cite{BhamidiBroutinSenWang2014arXiv}. For rank-one inhomogeneous random graphs, whether light or heavy-tailed, \cite[Theorem~2.14]{BroutinDuquesneWang2018arXiv} should be enough to prove through Theorem~\ref{theo:FellerGraphs} that the discrete fragmentation process on critical rank-one inhomogeneous random graphs converge to a fragmentation process on the limit. 

Concerning coalescence, it seems to me that applying Theorem~\ref{theo:AlmostFellercoal} is not as straightworward, since it could require substantial work to prove the additional hypothesis \eqref{eq:supdiamunif}. For some models where the convergence of a height process is available, as rank-one inhomogeneous random graphs \cite{BroutinDuquesneWang2018arXiv}, there is some hope to mimick the proof of section~\ref{subsec:CoalER}, but not without substantial additional work, essentially to prove condition $(ii)$ of Lemma~\ref{lem:supdiamunif}. It could also be feasible for the configuration model with i.i.d degrees leading to stable graphs, whose convergence due to Conchon-Kerjan and Golschmidt is announced in \cite{GoldschmidtHaasSenizergues2018arXiv}.

Concerning dynamical percolation, once the difficulties explained above concerning fragmentation and coalescence will be resolved, it should be straightword to obtain a result analogous to Theorem~\ref{theo:cvpercodynGnp}. Also, I believe that a duality result analogous to Proposition~\ref{prop:retournement} should hold also in the heavy-tailed setting.

Finally, let us mention that a version of Theorem~\ref{theo:almostFellerCoalFrag} where the convergence of initial data and of the process would be with the same topology should be true by using a stronger topology. One should be able to do this with a distance compatible with the Feller property of the augmented coalescent proved in \cite{BhamidiBudhirajaWang2014}.

\section*{Acknowledgements}
I want to thank Christophe Leuridan for helpful discussions and three very consciencious referees for their careful reading of the manuscript, which led to a substantial improvement of this article.

\appendix
\section{Topologies for processes}
\label{sec:compactcv}

Let $(M,d)$ denote a separable, complete metric space. Let $\calF([0,\infty),M)$ (respectively $D([0,\infty),M)$) denote the set of functions from $[0,\infty)$ to $M$ (respectively c\`adl\`ag functions from $[0,\infty)$ to $M$). For $\omega_1$ and $\omega_2$ in $\calF([0,\infty),M)$, let us define
\[d_{c,k}(\omega_1,\omega_2):=\sup_{t\in[0,k]}d(\omega_1(t),\omega_2(t)))\]
and 
\[d_c(\omega_1,\omega_2):=\sum_{k\geq 1}2^{-k}(1\land d_{c,k}(\omega_1,\omega_2))\;.\]
  It is easy to see that $(\calF([0,\infty),M),d_c)$ and $(D([0,\infty),M),d_c)$ are complete metric spaces (not separable in general), and that a sequence $\omega^n=(\omega^{n}(t))_{t\in\RR^+}$ converges in this metric space to $\omega^\infty=(\omega^{\infty}(t))_{t\in\RR^+}$ if and only if for every $T>0$, 
$$\sup_{t\in[0,T]}|\omega^{n}(t)-\omega^\infty(t)|\xrightarrow[n\rightarrow +\infty]{}0\;,$$
whence the term ``topology of compact convergence''. A reference when $M=\RR$, with applications, is \cite[section V.5]{PollardConvergence}, but we shall not use it here.

Now, let us define $d_{S}$ the Skorokhod metric as in \cite[section~VI.1]{PollardConvergence}. For each $k\in\NN^*$,  $\omega_1$ and $\omega_2$ in $\calF([0,\infty),M)$, let $d_{S,k}(\omega_1,\omega_2)$ be the infimum of those values $\delta$ for which there exists grids $0=t_0<t_1<\ldots <t_r$ with $t_r\geq k$ and $0=s_0<s_1<\ldots <s_r$, with $s_r\geq k$ such that $|t_i-s_i|\leq \delta$ for $i=1,\ldots,r$ and
\[d(\omega_1(t),\omega_2(s))\leq \delta\quad \text{ if }\quad t_i\leq t< t_{i+1}\text{ and }s_i\leq s< s_{i+1}\;.\]
Then, let 
\[d_{S}(\omega_1,\omega_2):=\sum_{k\geq 1}2^{-k}(1\land d_{S,k}(\omega_1,\omega_2))\;.\]
Then,  $(D([0,\infty),M),d_S)$ is a separable metric space, cf. \cite[Theorem VI.6]{PollardConvergence}. Notice that for any $k$,
\begin{equation}
  \label{eq:dSkdc}
  d_{S,k}(\omega_1,\omega_2)\leq d_{c,k}(\omega_1,\omega_2)
\end{equation}
Notably, $d_{S}\leq d_c$, and thus the topology induced by $d_c$ on $D([0,\infty),M)$ is finer than Skorokhod's topology induced by $d_S$. Notice also that
\begin{equation}
  \label{eq:dcdck}
  d_{S}(\omega_1,\omega_2)\leq d_{S,k}(\omega_1,\omega_2)+2^{-k}\;.
\end{equation}
  Concerning measurability, $(\calF([0,\infty),M),d_c)$ (resp. $(D([0,\infty),M),d_c)$) will always be equipped with its projection $\sigma$-field $\calP_\calF$ (resp. $\calP_D$), the smallest $\sigma$-field making the projections $\pi_t$ measurable, where $\pi_t$ maps $\omega$ to $\omega(t)\in M$. Those $\sigma$-fields are included in the Borel $\sigma$-fields for $d_c$. But $\calP_D$ coincides with the Borel $\sigma$-field induced by the Skorokhod topology on $D([0,\infty),M)$, cf. \cite[Theorem VI.6]{PollardConvergence}.

In this article, we prove convergence in distribution of a sequence of processes $((\bfX_n(t))_{t\geq 0})_{n\geq 1}$ towards $(\bfX(t))_{t\geq 0}$  by exhibiting couplings showing essentially that the Lévy-Prokhorov distance for $d_c$ between the distributions of $\bfX_n$ and  $\bfX$ goes to zero as $n$ goes to infinity. This implies convergence in distribution for the Skorokhod metric. Everything needed is gathered in the following lemmas. Recall that $\PP^*$ denotes the outer measure associated to $\PP$.
\begin{lemm}
  \label{lemm:cvcadlag}
  Let $\omega^n$, $n\in\NNbar$ be random variables with values in $\calF([0,\infty),M)$, defined on the same complete probability space $(\Omega,\calF,\PP)$. Suppose that for every $n\in\NN$, $\omega^n\in D([0,\infty),M)$ almost surely, and that
  \[d_c(\omega^n,\omega^\infty)\xrightarrow[n\rightarrow +\infty]{\PP^*}0\;.\]
  Then, $\omega^\infty$ belongs to $ D([0,\infty),M)$ almost surely.
\end{lemm}
\begin{dem}
  Since $d_c(\omega^n,\omega^\infty)$ converges in probability to zero, one may extract a subsequence $\omega^{n_k}$, $k\geq 0$ such that $d_c(\omega^{n_k},\omega^\infty)$ converges to zero $\PP^*$-a.s, and hence (since $\PP$ is complete), $\PP$-a.s. But since almost surely, $\omega^n$ is càdlàg for any $n$, and any limit of a càdlàg sequence for $d_c$ is càdlàg, we obtain that $\omega^\infty$ is càdlàg $\PP$-a.s. 
\end{dem}

\begin{lemm}
  \label{lemm:compactcv}
  Let $\omega^n$, $n\in\NNbar$ be random variables with values in $D([0,\infty),M)$ such that for any $\eps>0$, any $k\in\NN^*$, there exists $N\in\NN$ such that for every $n\geq N$, there exists a coupling of $\omega^n|_{[0,k]}$ and $\omega^\infty|_{[0,k]}$ such that:
\[\PP[d_{S,k}(\omega^n,\omega^\infty)>\eps]\leq \eps\;.\]
Then, $\omega^n$ converges in distribution to $\omega^\infty$, for the Skorokhod topology.
\end{lemm}
\begin{dem}
  Let $F$ be a closed set in $(D([0,\infty),M),d_{S})$. It is sufficient to prove that
  \[\limsup_{n\rightarrow +\infty}\PP(\omega^n\in F)\leq\PP(\omega^\infty\in F)\;.\]
  Let $\eps>0$. Let us define:
  \[F^\eps:=\{x\in D([0,\infty),M)\,:\,\exists y\in F,\;d_{S}(x,y)\leq \eps\}\;,\]
  which is a closed set. We have:
  \[\PP(\omega^n\in F)\leq \PP(\omega^\infty\in F^{2\eps})+\PP(d_{S}(\omega^n,\omega^{\infty})>\eps)\;.\]
  Now, choose $k\in\NN$ such that $2^{-k}\leq \frac{\eps}{2}$. Then, using \eqref{eq:dSkdc},
  \[\PP(\omega^n\in F)\leq \PP(\omega^\infty\in F^{2\eps})+\PP(d_{S,k}(\omega^n,\omega^{\infty})>\frac{\eps}{2})\;.\]
  By hypothesis, the last term above is less than $\eps$ if $n$ is choosen large enough. Letting $\eps$ go to zero and using the fact that $\bigcap_{\eps>0}F^\eps=F$, we get the result.  
\end{dem}
\section{The set of isometry classes of finite measured semi-metric spaces}
\label{sec:calM}
Even though there is no \emph{set} of finite measured semi-metric spaces in the sense of Zermelo-Frankel set theory\footnote{See for instance \cite[Remark 7.2.5]{BuragoBuragoIvanov}: the problem is that if this class $\calC$ was a set, one can assign to it a finite semi-metric, $d_{GHP}$ and a measure $\mu$, so that $(\calC,d_{GHP},\mu)$ is a member of $\calC$, leading to Russell's paradox.}, $\calC/\calR$ can be considered as a set in the sense that there exists a true set of representatives of elements of $\calC$. The idea is to consider the set of all finite measured semi-metric spaces of a sufficiently large set $\UU$ such that $\UU$ contains a copy of any separable metric space, in the following (quite weak) sense:
\begin{hyp}
  \label{hyp:RN}
  For any separable metric space $M$, there exists a map $\phi$ from $M$ to $\UU$ such that $\phi$ is a bijection from $M$ to $\phi(M)$.
\end{hyp}
In Definition~\ref{def:setclass}, we make the (quite standard) choice $\UU=(\RR_+)^\NN$. Indeed, for any metric space $(M,d)$ with a dense sequence $(x_i)_{i\in\NN}$, the following function shows that Hypothesis~\ref{hyp:RN} is satisfied:
\[ \phi:\left\lbrace\begin{array}{lcl}M&\rightarrow& \UU\\ x&\mapsto &(d(x,x_i))_{i\in\NN}\end{array}\right.\]
\begin{defi}
\label{def:setclass}
Let $\UU=(\RR_+)^\NN$. Let $\calP$ denote the set of measured semi-metric spaces $\bfX=(X,d,\mu)$ such that:
\begin{itemize}
\item $X$ is a subset of $\UU$,
\item $d$ is a finite semi-metric on $X$.
\end{itemize}
We denote by $\calM$ the quotient $\calP/\calR$ of $\calP$ by the equivalence relation $\calR$, where:
$$\bfX\calR \bfX'\Leftrightarrow d_{GHP}(\bfX,\bfX')=0\;.$$
By abuse of language, we may call $\calM$ the ``set of equivalence classes of finite measured semi-metric spaces, equipped with the Gromov-Hausdorff-Prokhorov distance $d_{GHP}$''. 
\end{defi}
$\calM$ is a set of representatives of elements of $\calC$. Indeed, since every separable metric space is isometric to a subspace of $\UU$ equipped with a suitable metric through the function $\phi$ above, every member of $\calC$ will be at zero $d_{GHP}$-distance from some element of $\calP$, and even at zero $d_{GHP}$-distance from some compact element of $\calP$. Thus, for every member $\bfX $ of the class $\calC$, there is an element $[\bfX']$ of $\calM$ such that for any $\bfX''\in[\bfX]$, $d_{GHP}(\bfX,\bfX'')=0$.
Abusing notation, we shall denote by $[\bfX]$ the member of $\calM$ whose elements are at zero $d_{GHP}$-distance from $\bfX$. 

For our purpose, it is in fact not crucial to have Definition~\ref{def:setclass}, and one could reformulate all the results in this article in terms of sequences of random variables, at the expense of much more heavy statements.

Finally, one may define a set of equivalent classes of $\RR$-graphs as follows.
\begin{defi}
  Let $\calP^{graph}$ denote the subset of $\calP$ composed of measured semi-metric spaces which are semi-metric $\RR$-graphs. We denote by $\calM^{graph}$ the quotient $\calP^{graph}/\calR$ of $\calP^{graph}$ by the equivalence relation $\calR$, where:
$$\bfX\calR \bfX'\Leftrightarrow d_{GHP}(\bfX,\bfX')=0\;.$$
\end{defi}

\section{Commutation relations for coalescence and fragmentation}
\label{sec:commutcoalfrag}
\begin{lemm}
  \label{lemm:commutcoal0}
  Let $(X,d)$ be a semi-metric space. For any $A,B\subset X^2$, $\Coal_{0}(\bfX,A\cup B)=\Coal_{0}(\Coal_{0}(\bfX,A),B)$. 
\end{lemm}
\begin{proof}
  Let $Eq(A)$ denote the equivalence relation generated by $A$. Since $Eq(A\cup B)=Eq(Eq(A)\cup Eq(B))$, it is sufficient to prove the lemma when $A$ and $B$ are equivalence relations. So suppose that $A$ and $B$ are equivalence relations.  Let $x$ and $y$ be in $X$. Then,
  \[d_{A\cup B,0}(x,y)=\inf\sum_{i=1}^kd(p_i,q_i)\]
  where the infimum is over all $k\in\NN^*$, $p_1,q_1,\ldots,p_k,q_k$ such that $p_0=x$, $q_k=y$ and $(q_i,p_{i+1})\in Eq(A\cup B)$ for any $i$ in $\{1,\ldots,k-1\}$. Now, if $(q_i,p_{i+1})\in Eq(A\cup B)$, there exists $r_1,s_1,\ldots,r_l,s_l,r_{l+1}$ such that $r_1=q_i$, $s_l=p_{i+1}$ and
  \[\forall j\in\{1,\ldots,l\},\;(r_j,s_j)\in A\text{ and }(s_j,r_{j+1})\in B\]
  Then,
  \[d_X(p_i,q_i)=d(p_i,q_i)+\sum_{j=1}^ld_{A,0}(r_j,s_j)\;.\]
  And we obtain, denoting by $\tilde d$ the distance of $\Coal_0(\Coal_0(X,A),B)$,
  \[d_{A\cup B,0}(x,y)\geq \tilde d(x,y)\;.\]
  In the other direction, let $k\in\NN^*$, $p_1,q_1,\ldots,p_k,q_k$ and $\eps>0$ be such that:
  \[\tilde d(x,y)\leq \sum_{i=1}^kd_{A,0}(p_i,q_i)+\eps\]
  with $p_0=x$, $q_k=y$ and $(q_i,p_{i+1})\in B$ for any $i$ in $\{1,\ldots,k-1\}$. Then, let $h_1,\ldots,h_k$ and $p_{i,j}$, $q_{i,j}$, $i=1,\ldots,k$, $j=1,\ldots,h_i$ be such that for any $i\in\{1,\ldots,k\}$,
  \[d_{A,0}(p_i,q_i)\leq \sum_{j=1}^{h_i}d(p_{i,j},q_{i,j})+\frac{\eps}{k}\;,\]
  with $p_{i,1}=p_i$, $q_{i,h_i}=q_i$ and $(q_{i,j},p_{i,j+1})\in A$. Then we get
  \[\tilde d(x,y)\leq\sum_{i=1}^k\sum_{j=1}^{h_i}d(p_{i,j},q_{i,j})+2\eps\;.\]
  with $p_{1,1}=x$, $q_{k,h_k}=y$ and $(q_{i,j},p_{i,j+1})\in A\cup B$ with the convention $p_{i,h_i+1}=p_{i+1,1}$. Minimizing on $p$ and $q$, we get, for any $\eps>0$:
  \[\tilde d(x,y)\leq d_{A\cup B,0}(x,y) +2\eps\]
  which gives the result.  
\end{proof}
\begin{lemm}
  Let $(X,d)$ be a semi-metric space. For any multisets $A$, $B$ of elements of $X^2$ and any $\delta>0$, $\Coal_{\delta}(X,A\sqcup B)=\Coal_{\delta}(\Coal_{\delta}(X,A),B)$. 
\end{lemm}
\begin{proof}
  For a multiset $A$ of elements of $X^2$, let  us denote by $I_A$ the multiset of intervals added when $\delta>0$:
  \[I_A:=\bigsqcup_{(x,x')\in A}[a_{x,x'},b_{x,x'}]\;,\]
  and let $\tilde A$ denote the set of points to be identified:
  \[\tilde A:=\{(x,a_{x,x'})\,:\,(x,x')\in A\}\cup\{(x',b_{x,x'})\,:\,(x,x')\in A\}\]
  Then,
  \[\Coal_\delta(X,A)=\Coal_0(X\sqcup I_A,\tilde A)\;.\]
  Notice also that $I_{A\sqcup B}=I_A\sqcup I_B$ and $\widetilde{A\sqcup B}=\tilde A\sqcup \tilde B$. Now, 
  \begin{eqnarray*}
    \Coal_{\delta}(X,A\sqcup B)
    &=&\Coal_{0}(X\sqcup I_{A\sqcup B},\widetilde{A\sqcup B})\\
    &=&\Coal_{0}(X\sqcup I_{A}\sqcup I_B,\tilde A\sqcup \tilde B)\\
    &=&\Coal_{0}(\Coal_0(X\sqcup I_{A}\sqcup I_B,\tilde A),\tilde B)\\
    &=&\Coal_{0}(\Coal_0(X\sqcup I_{A},\tilde A)\sqcup I_B,\tilde B)\\
    &=&\Coal_{\delta}(\Coal_0(X\sqcup I_{A},\tilde A),B)\\
    &=&\Coal_{\delta}(\Coal_\delta(X,A),B),\\
  \end{eqnarray*}
  where we used Lemma~\ref{lemm:commutcoal0} at the third line.    
\end{proof}

\begin{lemm}
  \label{lemm:commutFrag}
  Let $X$ be a length-space. Then, for any $A$ and $B\subset X$, $ \Frag(\Frag(X,A),B)=\Frag(X,A\cup B)$.
\end{lemm}
\begin{proof}
  Notice first that if $x\in X\setminus A^d$ and $y\in B^d\setminus A^d$,
  \[d^{\Frag}_A(x,y)=0\Leftrightarrow d(x,y)=0\;.\]
  Thus,
  \[(A\cup B)^d=A^d\cup (B\setminus A)^{d^{\Frag}_A}\;.\]
Now, let $x$ and $y$ belong to $X$ and $\gamma$  a path from $x$ to $y$, indexed by $[0,1]$, such that
\[\gamma\cap A^d=\emptyset\;,\]
then we claim that
\begin{equation}
  \label{eq:pathfrag}
  \ell_{\Frag(X,A)}(\gamma)= \ell_X(\gamma)
\end{equation}
Indeed, trivially,
\[\ell_{\Frag(X,A)}(\gamma)\geq \ell_X(\gamma)\]
On the other hand, if $0=t_1\leq \ldots \leq t_n=1$ is a subdivision of $[0,1]$, $\gamma|_{[t_i,t_{i+1}]}$ is a path from $\gamma(t_i)$ to $\gamma(t_{i+1})$ in $X\setminus A^d$. Thus, 
\begin{eqnarray*}
  \sum_{i=1}^{n-1}d^{\Frag}_A(\gamma(t_i),\gamma(t_{i+1}))
  &\leq& \sum_{i=1}^{n-1}\ell_X(\gamma|_{[t_i,t_{i+1}]})\\
  &=& \ell_X(\gamma)
\end{eqnarray*}
which shows~\eqref{eq:pathfrag}. Finally, denoting by $\tilde d$ the distance on $\Frag(\Frag(X,A),B)$,
\begin{eqnarray*}
  d^{\Frag}_{A\cup B}(x,y)
  &=&\inf_{\substack{\gamma:x\rightarrow y\\ \gamma \cap (A\cup B)^d=\emptyset}}\ell_X(\gamma)\\
  &=&\inf_{\substack{\gamma:x\rightarrow y\\ \gamma \cap (A\cup B)^d=\emptyset}}\ell_{\Frag(X,A)}(\gamma)\\
  &=&\inf_{\substack{\gamma:x\rightarrow y \text{ in }\Frag(X,A)\\ \gamma \cap (B\setminus A)^{d^{\Frag}_A}=\emptyset}}\ell_{\Frag(X,A)}(\gamma)\\
  &=&\tilde d(x,y)\;.
\end{eqnarray*}
\end{proof}
Hereafter, we say that a path $\gamma$ in $\Coal_0(X,A)$ takes a shortcut $(a,b)$ in $A\subset X^2$ if $(a,b)\in A$ and $\gamma\cap \{a,b\}\not=\emptyset$.
\begin{lemm}
\label{lemm:commutdeter}
Let $(X,d)$ be a semi-metric length space and $A\subset X^2$ an equivalence relation. Suppose that for any $(x,y)\in X^2$, every simple rectifiable path in $\Coal_0(X,A)$ from $x$ to $y$ takes only a finite number of shortcuts in $A$. Let $B^d$ denote the set
\[\{x\in X\,:\,\exists y\in B,\;d(x,y)=0\}\;.\]
Then, for any $B\subset X$ such that $B^d\cap\{x\in X:\exists y\in X,\; (x,y)\text{ or }(y,x)\in A\}=\emptyset$, 
$$\Coal_0(\Frag(X,B),A)=\Frag(\Coal_0(X,A),B)\;.$$
\end{lemm}
\begin{dem}
Let $\ell_X(\gamma)$ denote the length of a path $\gamma$ in $X$. Let $d^{fragcoal}$ (resp. $d^{coalfrag}$, resp. $d^{frag}$) denote the distance of $\Frag(\Coal_0(X,A),B)$ (resp. $\Coal_0(\Frag(X,B),A)$, resp. $\Frag(X,B)$) on $X\setminus B^{d}$. We want to show that $d^{fragcoal}=d^{coalfrag}$. First, it is always true that $ d^{fragcoal}\leq d^{coalfrag}$. Indeed, let  $\{p_i\}$ and $\{q_i\}$, $i=0,\ldots ,k$ be such that $(q_i,p_{i+1})\in A$ for all $i= 0,\ldots,k-1$ and $ p_0 = x$, $q_k = y$. Then, the concatenation of $(k+1)$ paths $\gamma_i$ in $X$, $i=0,\ldots,k$ such that $\gamma_i$ goes from $p_i$ to $q_i$ and each path avoids $B^d$ gives a path in $\Coal_0(X,A)$ from $x$ to $y$ avoiding $B^d$. Thus, for any $x$ and $y$ in $X\setminus B^d$,
\begin{eqnarray*}
d^{fragcoal}(x,y)&\leq &\inf_{\substack{k,\{p_i\},\{q_i\}\\ \gamma_i:p_i\rightarrow q_i\\ \gamma_i\cap B^d=\emptyset}}\left(\sum_{i=1}^k\ell_X(\gamma_i)\right)\\
&=&\inf_{k,\{p_i\},\{q_i\}}\left(\sum_{i=1}^kd^{frag}(p_i,q_i)\right)\\
&=&d^{coalfrag}(x,y)\;.
\end{eqnarray*}
Let us show now that $d^{coalfrag}\leq  d^{fragcoal}$. Let $x$ and $y$ be in $X\setminus B^d$ and let $\gamma$ be a rectifiable simple path from $x$ to $y$ in $\Coal_0(X,A)$ such that $\gamma\cap B^d=\emptyset$. Then, $\gamma$ takes only a finite number of shortcuts in $A$. Thus, there exists $\{p_i\}$ and $\{q_i\}$, $i=0,\ldots ,k$ with $p_0=x$ and $q_k=y$ and paths $\gamma_i$, $i= 0,\ldots,k$ such that $(q_i,p_{i+1})\in A$ and $\gamma_i$ is a path from $p_i$ to $q_i$ in $X$ and $\gamma$ is the concatenation of $\gamma_0,\ldots,\gamma_k$. Thus, $\gamma_i\cap B^d=\emptyset$ for any $i$ and \begin{eqnarray*}
\ell_{\Coal_0(X,A)}(\gamma)&=&\sum_{i=1}^k\ell_X(\gamma_i)\\
&\geq &\sum_{i=1}^kd^{frag}(p_i,q_i)\\
&\geq &d^{coalfrag}(x,y)\\
\end{eqnarray*}
Taking the infimum over rectifiable simple paths $\gamma$ from $x$ to $y$ in $\Coal_0(X,A)$ such that $\gamma\cap B^d=\emptyset$ gives that $d^{coalfrag}\leq  d^{fragcoal}$.
\end{dem}
\section{$\RR$-graphs as $\RR$-trees with shortcuts}
\label{sec:shortcut}
 Here, we sketch the proof of Lemma~\ref{lemm:shortcut}. Let us start with the ``if'' direction. By induction, it is sufficient to show that if $(X,d)$ is an $\RR$-graph and $(x,y)\in X^2$, then the quotient metric space obtained from $\Coal_0(X,\{(x,y)\})$ is isomorphic to an $\RR$-graph. Notice that $\Coal_0(X,\{(x,y)\})$ is obviously a totally bounded and finite semi-metric space. Let $R>0$ be such that for any $x\in X$, $(B_R(x),d|_{B_R(x)})$ is an $\RR$-tree, where $B_R(x)$ is the  ball of radius $R$ and center $x$ in $X$.  Then, let $R':=\min\{R/2,d(x,y)/5\}$, $d'$ denote the distance on $\Coal_0(X,\{(x,y)\})$ and $B'_{R'}(z)$ be the  ball of radius $R'$ and center $z$ in $(X,d')$. If $R'=0$, then clearly $\Coal_0(X,\{(x,y)\})$ is equal to $X$, so let us suppose that $R'\neq 0$. The reader can check that for any $z\in X$, $(B'_{R'}(z),d'|_{B'_{R'}(z)})$ is a totally bounded acyclic geodesic finite semi-metric space, and the only pair of points at $d'$-distance zero is $(x,y)$. Its quotient metric space is thus an $\RR$-graph.

 Now, let us look at the ``only if'' direction. Let $(X,d)$ be an $\RR$-graph which is not an $\RR$-tree. The set of branchpoints in an $\RR$-tree is at most countable. Thus, one may find a point $x\in\core(X)$ such that $x$ belongs to a cycle, and $x$ is of degree $2$ in $X$. Let $Y_1$ and $Y_2$ denote the two components of $B_R(x)\setminus\{x\}$ (with $R>0$ small enough). Then, let $(X',d')$ denote the completion of $\Frag(X,\{x\})$. It is shown in \cite[section~7.1]{AdBrGoMi2017} that $X'$ adds exactly two points $x_{(1)}$ and $x_{(2)}$ to $\Frag(X,\{x\})$ and $d'$ can be described as follows:
 \begin{itemize}
 \item If $y,z\not\in \{x_{(1)},x_{(2)}\}$, then $d'(y,z)$ is the minimal length of a path from $y$ to $z$ in $X$ not visiting $x$.
 \item If $y\neq x_{(2)}$, then $d'(x_{(1)},y)$ is the minimal length of an injective path from $x$ to $y$ in $X$ which takes its values in the component $Y_1$ on some small initial interval, and similarly for $d'(x_{(2)},z)$ with $z\neq x_{(1)}$.
 \item $d'(x_{(1)},x_{(2))})$ is the minimal length of a cycle passing through $x$
 \end{itemize}
 The reader can check that $X'$ is an $\RR$-graph, with a surplus strictly smaller than $X$, and that $X$ is isomorphic to the quotient metric space associated to $\Coal(X',\{x_{(1)},x_{(2)}\})$. Thus, one may conclude by induction on the surplus of $X$.
\section{Total variation distance between Poisson random measures}
\begin{lemm}
  \label{lemm:dVTPoiss}
  Let $\calP(\mu)$ denote the distribution of a Poisson random measure with a finite intensity measure $\mu$ on a measurable space $(E,\calE)$. If $\nu$ is another finite measure on $E$, then
  \[\|\calP(\mu)-\calP(\nu)\|\leq 2\|\mu-\nu\|\]
\end{lemm}
\begin{dem}
  Suppose first that $\mu(E)=0$. Then,
  \[\|\calP(\mu)-\calP(\nu)\|=\|\delta_\emptyset-\calP(\nu)\|=1-e^{-\nu(E)}\leq \nu(E)\le 2\|\mu-\nu\|\;,\]
  and the lemma is proved. So suppose now that $\mu(E)$ and $\nu(E)$ are non-zero. Without loss of generality, suppose that $\mu(E)\leq \nu(E)$. Let $\pi$ denote an optimal coupling between $\frac{\mu}{\mu(E)}$ and $\frac{\nu}{\nu(E)}$. Let $N$ (resp. $N'$) be a Poisson random variable with parameter $\mu(E)$ (resp. $\nu(E)$) coupled in an optimal way, i.e:
  \[\PP(N\neq N')=\|\calP(\mu(E))-\calP(\nu(E))\|\]
 It is easy to see, for instance approximating the Poisson distribution by the binomial distribution, that 
 \[\|\calP(\mu(E))-\calP(\nu(E))\|\leq |\mu(E)-\nu(E)|\]
 Next, let $(X_i,Y_i)_{i\geq 1}$ be a sequence of i.i.d random variables of distribution $\pi$, independent from $(N,N')$. Then, $(\sum_{i=1}^N\delta_{X_i},\sum_{j=1}^{N'}\delta_{Y_i})$ is a coupling of $\calP(\mu)$ and $\calP(\nu)$. Notice also that $\sum_{i=1}^N\II_{X_i\not=Y_i}$ has Poisson distribution with parameter
 \[\mu(E)\PP(X_1\not=Y_1)=\mu(E)\|\frac{\mu}{\mu(E)}-\frac{\nu}{\nu(E)}\|\;.\]
  Thus,
 \begin{eqnarray*}
   \|\calP(\mu)-\calP(\nu)\|&\leq & \PP(\sum_{i=1}^N\delta_{X_i}\neq \sum_{j=1}^{N'}\delta_{Y_i})\\
   &\leq& \PP(N\not=N')+\PP(\exists i\in\{1,\ldots,N\},\; X_i\not=Y_i)\\
   &\leq & |\mu(E)-\nu(E)|+ 1-e^{-\mu(E)\|\frac{\mu}{\mu(E)}-\frac{\nu}{\nu(E)}\|}\\
   &\leq & \|\mu-\nu\|+ \mu(E)\|\frac{\mu}{\mu(E)}-\frac{\nu}{\nu(E)}\|
 \end{eqnarray*}
 Now, for any  $A\in\calE$, since $\mu(E)\leq \nu(E)$,
 \[\frac{\nu(A)}{\nu(E)}-\frac{\mu(A)}{\mu(E)}\leq \frac{\nu(A)-\mu(A)}{\mu(E)}\leq \frac{\|\mu -\nu\|}{\mu(E)}\]
 Thus,
 \[\|\frac{\mu}{\mu(E)}-\frac{\nu}{\nu(E)}\|\leq \frac{\|\mu -\nu\|}{\mu(E)}\]
 which gives the result. 
\end{dem}

\bibliographystyle{plain}


\end{document}